\documentclass[12pt,letter]{article}

\usepackage{color}
\definecolor{green}{rgb}{0.834580, 0.893016, 0.718697}


\usepackage{bbm}
\usepackage{natbib}
\usepackage{amstext}
\usepackage{amsthm}
\usepackage{epsfig}
\usepackage{pdfpages}
\usepackage{amsmath,amssymb,amsthm}
\usepackage{graphicx}
\usepackage{fancyhdr}
\usepackage{calc}
\usepackage{dsfont}
\usepackage{subfigure}
\usepackage{rotating}
\usepackage{multirow}
\usepackage{amsmath}
\usepackage{lscape}
\usepackage{tabularx}
\usepackage{bm}
\usepackage{caption}
\usepackage{color}
\usepackage{enumerate,array}
\usepackage{verbatim}
\usepackage{fancyhdr}
\usepackage{stackengine}
\usepackage{enumitem}


\makeatletter
\DeclareRobustCommand\check[1]{{\mathpalette\@widecheck{#1}}}
\def\@widecheck#1#2{%
    \setbox\z@\hbox{\m@th$#1#2$}%
    \setbox\tw@\hbox{\m@th$#1%
       \widehat{%
          \vrule\@width\z@\@height\ht\z@
          \vrule\@height\z@\@width\wd\z@}$}%
    \dp\tw@-\ht\z@
    \@tempdima\ht\z@ \advance\@tempdima2\ht\tw@ \divide\@tempdima\thr@@
    \setbox\tw@\hbox{%
       \raise\@tempdima\hbox{\scalebox{1}[-1]{\lower\@tempdima\box
\tw@}}}%
    {\ooalign{\box\tw@ \cr \box\z@}}}
\makeatother

\usepackage{hyperref}
\definecolor{cornellred}{rgb}{0.7, 0.11, 0.11}
\hypersetup{    
	colorlinks=true,
	linkcolor=blue,
	citecolor=blue,
	urlcolor=black
}

\setlength{\textheight}{9in}
\setlength{\textwidth}{6.5in}
\setlength{\topmargin}{-36pt}
\setlength{\oddsidemargin}{0pt}
\setlength{\evensidemargin}{0pt}
\tolerance=500
\renewcommand{\baselinestretch}{1.5}

\numberwithin{equation}{section}

\renewcommand{\Re}{{\rm Re}}
\renewcommand{\Im}{{\rm Im}}

\allowdisplaybreaks

\newtheorem{theorem}{Theorem}[section]
\newtheorem{lemma}{Lemma}[section]
\newtheorem{proposition}{Proposition}[section]

\newtheorem{corollary}{Corollary}[section]

\theoremstyle{definition}
\newtheorem{definition}{Definition}[section]
\newtheorem{remark}{Remark}[section]
\newtheorem{algorithm}{Algorithm}[section]

\newtheorem{assumption}{Assumption}

\newenvironment{newcondition}[1]
  {\innercustomthm}
  {\endinnercustomthm}

\newcommand{\cov}{{\rm cov}}

\newcommand{\E}{{\rm E}}

\newcommand{\ISE}{\textrm{ISE}}
\renewcommand{\H}{\mathcal{H}}
\newcommand{\HH}{\mathbb{H}}

\newcommand{\one}{\mathbbm{1}}

\def\D{\mathfrak{D}}
\def\BL{{\rm BL}}
\def\G{\mathbb{G}}
\def\bc{\mathbf{c}}
\def\bZ{\mathbf Z}

\newcommand{\var}{{\rm var}}

\newcommand{\e}{\epsilon}

\newcommand{\EE}{\mathcal{E}}

\newcommand{\C}{\mathcal{C}}
\newcommand{\F}{\mathcal{F}}

\newcommand{\X}{\mathcal{X}}

\newcommand{\converged}{\overset{d}{\longrightarrow}}
\newcommand{\weakconverge}{\rightsquigarrow}

\newcommand{\iidsim}{\overset{\small{\text{iid}}}{\sim}}

\renewcommand{\l}{\langle}
\renewcommand{\r}{\rangle}
\renewcommand{\i}{{\rm i}}
\renewcommand{\phi}{\varphi}
\renewcommand{\L}{\mathfrak L}

\renewcommand{\tilde}{\widetilde}
\renewcommand{\hat}{\widehat}
\renewcommand{\epsilon}{\varepsilon}
\renewcommand{\P}{{\rm P}}

\def\boxit#1{\vbox{\hrule\hbox{\vrule\kern6pt  \vbox{\kern6pt#1\kern6pt}\kern6pt\vrule}\hrule}}


\def\boxit#1{\vbox{\hrule\hbox{\vrule\kern6pt
          \vbox{\kern6pt#1\kern6pt}\kern6pt\vrule}\hrule}}

\providecommand{\customgenericname}{}
\newcommand{\newcustomtheorem}[2]{%
  \newenvironment{#1}[1]
  {%
   \renewcommand\customgenericname{#2}%
   \renewcommand\theinnercustomgeneric{##1}%
   \innercustomgeneric
  }
  {\endinnercustomgeneric}
}
\newcustomtheorem{customthm}{Assumption}
\newcustomtheorem{customcor}{Corollary}

\renewcommand{\baselinestretch}{1.4}

\makeatletter
\def\singlespace{\deltaf\baselinestretch{1}\@normalsize}

\baselineskip=21pt

  \renewenvironment{thebibliography}[1]{%
    \begin{oldthebibliography}{#1}%
      \setlength{\parskip}{0.3ex}%
      \setlength{\itemsep}{0ex}%
  }%
  {%
    \end{oldthebibliography}%
  }


\begin{document}

\begin{center}

{\bf \Large Statistical inference for function-on-function linear regression 
} 

\end{center}

\baselineskip=13pt
\begin{center}

Holger Dette, Jiajun Tang\\
Fakult\"at f\"ur Mathematik, Ruhr-Universit\"at Bochum, Bochum, Germany
\end{center}

\baselineskip=20pt
\vspace{.5cm}

\noindent {\bf Abstract:} 
We propose  a reproducing kernel Hilbert space approach to estimate the slope in a function-on-function 
linear regression via penalised least squares, regularized by the 
thin-plate spline smoothness penalty. 
In contrast to most of the work on functional linear regression, 
our main focus is on  statistical inference  with respect to the sup-norm. 
This point of view is motivated by the fact that 
slope (surfaces) with rather different shapes may still be  identified as similar when the 
difference is measured by 
an $L^2$-type norm. However,  in applications it is often desirable to use metrics reflecting the visualization of the objects in the statistical analysis.

We prove the weak convergence of the  slope surface  estimator as a process 
in the space of all continuous functions. This  allows us the construction of simultaneous confidence 
regions for the  slope surface  and simultaneous prediction bands.
As a further consequence,  we derive new tests 
 for the  hypothesis that the maximum deviation between the ``true'' slope surface and a given surface is less or equal than a given threshold. In other words: we are  not trying to test for exact equality (because in many applications this hypothesis is hard to justify), but rather for pre-specified deviations under the null hypothesis.
 To ensure practicability, non-standard bootstrap procedures are developed  addressing
 particular features that arise in  these testing problems. 
  
As a by-product, we also derive several new results and statistical inference tools
for the function-on-function linear regression model, such as  minimax optimal convergence rates and
likelihood-ratio tests. 
 We also demonstrate that  the new methods have good finite sample properties by means of a simulation study and illustrate their practicability 
 by analyzing a  data example.

\medskip

\medskip

\noindent {\bf Keywords:}  function-on-function linear regression, minimax optimality,
simultaneous confidence regions,
relevant hypotheses,  bootstrap, reproducing kernel Hilbert space, maximum deviation
\smallskip

\noindent {\bf AMS Subject Classification:}  62R10, 62F03, 62F25, 46E22

\section{Introduction}\label{key}

Over the past decades, new measurement  technologies
provide enormous amounts of data with complex structure.
A popular and extremely successful  approach to   model  high-dimensional data  on a dense grid
 exhibiting a  certain degree of smoothness is functional data analysis (FDA), which considers the  observations as discretized functions. Meanwhile, numerous   practical and theoretical aspects in FDA have been discussed 
\citep[see, for example, the monographs][among others]{bosq2000,ramsay2005,FerratyVieu2010,horvath2012,hsingeubank2015}.
A large portion of  the literature uses  dimension reduction techniques such as (functional) principal components.  On the other hand, as argued in \cite{AueRiceSonmez2015}, 
there are numerous  applications, where   it is reasonable to assume that the functions are at least continuous, and in  such cases dimension reduction techniques can incur a loss of information and 
fully functional methods can prove advantageous.

Because of its simplicity and good interpretability,  the  {\it scalar-on-function regression model}  
\begin{align}\label{eq:scalar}
Y_i= \alpha_0 + 
\int_0^1\beta_0(s)\,X_i(s)\,ds+\epsilon_i\,,\quad1\leq i\leq n\,.
\end{align}
has found considerable attention 
\citep[see, for exemple, ][among manny others]{james2002,cardot2003,muller2005,yao2005,hall2007,yuancai}.
Here, the $Y_i$ and  the  (centred) errors $\varepsilon_i$
are scalar variables,  the predictors $X_i$ are functions (typically of time or location) defined on the interval $[0,1]$, and 
the scalar $\alpha_0$ and the function  $\beta_0$ are the unknown parameters to be estimated. 
On the other hand, there also  exist many applications, where both,  the predictor and the response, are functions, and in recent years the 
{\it  function-on-function regression model}
\begin{align}\label{model}
Y_i(t)=\alpha_0(t)+\int_0^1\beta_0(s,t)\,X_i(s)\,ds+\epsilon_i(t)\,,\quad t\in[0,1]\,,\ 1\leq i\leq n\,,
\end{align}
 has gained increasing attention \citep[see][]{lian2007,lian2015,scheipl2016,benatia2017,luo2017,sun2018}. 
Here $\alpha_0$, $Y_i$, $X_i,$ $\varepsilon_i$ are functions defined on the 
interval $[0,1]$ and the slope parameter 
$\beta_0$ is a function defined on the square $[0,1]^2$,
which we call slope surface throughout this paper in order to distinguish it from the slope function in model \eqref{eq:scalar}.

The slope $\beta_0$  quantifies the strength of the dependence between the  predictor and  the response,  
and is the main object of statistical inference in this context.
Many methods, such as estimation, testing, confidence regions, have been 
developed in the last decades for the  scalar-on-function linear regression model
 \eqref{eq:scalar}, which  are often based on the $L^2$ metric 
\citep[see, for example,][among many others]{hall2007,horvath2012}. 
A popular estimation tool is  functional principle component (FPC) analysis, which  provides a series representation of the function  $\beta_0$ in the
corresponding  $L^2$ space  \citep[see, for example,][]{yao2005}. 
Other authors  proposed reproducing kernel Hilbert space (RKHS) approaches  
to estimate the slope parameter
in a functional linear regression model. For example, 
\cite{yuancai} used the  RKHS framework to 
construct a minimax optimal  estimate in  the scalar-on-function linear regression, and 
  \cite{caiyuan2012} discussed 
minimax properties of their RKHS estimator in terms of prediction accuracy.  
 We also refer to the work of 
\cite{meister2011} who  showed the asymptotic equivalence of the scalar-on-function linear regression and the Gaussian white noise model in the Le Cam's sense.
Besides estimation, the problem of  testing  the hypotheses
\begin{equation} \label{class}
  H_0:    \beta_0  =  \beta_*\ 
  \mbox{~~~versus ~~}   H_1: \beta_0  \not =   \beta_*~,
\end{equation}
for a prespecified function $\beta_*$ in the scalar-functional linear regression model has been discussed intensively  
 \citep[see][among others]{cardot2003b,cardot2004,hilgertetal2010,lei2014,kong2016,qu2017}. There also exist several proposals to construct $L^2$-based confidence regions \citep[see][among others]{muller2005,imaizumi2019}.

Non-linear and semiparametric scalar-on-function regression models, such as generalized   linear  models and the Cox model,
have been studied by   \cite{shang2015}, \cite{li2020}  and  \cite{hao2021}.
For the  function-on-function model \eqref{model}, the literature is more scarce.
\cite{lian2015} studied the minimax prediction rate in an   RKHS, where regularization of the estimator is only performed in one argument, while \cite{scheipl2016} investigated 
 a penalized B-spline approach.   \cite{benatia2017} used  Tikhonov regularization, \cite{luo2017} proposed a so-called signal compression approach and  \cite{sun2018} considered  a tensor product    RKHS approach 
to estimate the slope surface and the achieved the minimax prediction risk.

This list of references is by no means complete, but a
common feature of most of the work in this context consists in the fact that 
statistical methodology is developed in a Hilbert space framework (often the space or a subspace of the square-integrable functions on an interval), which means that the 
statistical properties of estimators, tests and confidence regions for the slope parameter are usually described in terms of a   norm  corresponding
to a Hilbert space. While this is convenient from a theoretical point of view and also reflects the mathematical structure of the (integral) operator of the functional linear model, it has some drawbacks from a practical perspective. In applications, using a metric that reflects the visualization of the curve/surface is usually more desirable, since functions/surfaces 
with a small  difference with respect to an $L^2$-type distance  can differ significantly in terms of maximum deviation. For example, a  confidence region of the slope function/surface based on an $L^2$-type distance 
 is often hard
to visualize   and does not  give much information about the shape of the curve or surface.

The choice of the metric also matters  if one takes a more careful look at the formulation of the hypotheses in \eqref{class}.
We argue that, in many  regression problems, it is very unlikely that the unknown  slope   $\beta_0$ coincides  with a pre-specified function/surface $\beta_*$ 
on its  complete domain,   and as a consequence, 
 testing the null hypothesis in \eqref{class} might be questionable in such cases.
Usually, hypotheses of the form \eqref{class} are formulated with the intention to investigate the question whether the effect 
of the predictor on the response can be approximately
described by the function/surface $\beta_*$, such that 
the difference $\beta_0 - \beta_*$
is in some sense ``small''.  This question can be better answered
by testing the hypotheses  of a {\it relevant difference} 
 \begin{equation} \label{rel}
    H_0:   \| \beta_0  - \beta_*\| \leq   \Delta  ~~\mbox{ 
 versus   ~~}   H_1:   \|  \beta_0 - \beta_*  \|> \Delta\,,
 \end{equation}
  where $\| \cdot \|$ denotes a norm and $\Delta > 0$  defines a threshold. 
 Hypotheses  of this type 
 have recently found some interest in functional data analysis 
 \citep[see, for example,][]{fogarty2014,dette2020aos},
and here the choice of the norm matters, 
 as different norms define different hypotheses.
  One may also view the  choice of the threshold  $\Delta $ as a particular perspective of a bias-variance trade-off,
which depends sensitively on the specific application, and, of course, also on the metric under consideration. In particular, we argue that the specification of the threshold in \eqref{rel}  is more accessible  for 
a  norm which reflects the visualization, 
such as the sup-norm. 

In the present paper, we address these issues and provide new   statistical methodology for the function-on-function linear regression model \eqref{model}
if inference is based on the maximum deviation. We propose  an estimator  for the slope surface $\beta_0$ 
minimizing an integrated   squared error loss
with a  thin-plate spline smoothness penalty functional,
and 
prove its   minimax  optimality 
using an RKHS framework. 
Based on a Bahadur representation, we establish the 
 weak convergence of this 
 estimator  as a process  in the Banach space $C([0,1]^2)$ with a Gaussian limiting process. As the covariance structure of this process 
 is not easily accessible, we develop a multiplier bootstrap to obtain
 quantiles for the distribution of functionals of the limiting process.
 In contrast to the $L^2$-metric based methods, this enables us  to construct 
 simultaneous asymptotic $(1-\alpha)$-confidence regions
for the slope surface $\beta_0$ in model \eqref{model}.
Moreover, we also provide an efficient 
solution  to the problem of testing for a relevant deviation from a given function $\beta_*$
with respect to the sup-norm. Here, we 
combine the developed bootstrap methodology 
with  estimates of the {\it extremal  set} 
of the function $\beta_0 - \beta_*$,  and develop 
an asymptotic level $\alpha$-test for the relevant   hypotheses in \eqref{rel},
where the norm is given by the sup-norm.
 Although we mainly concentrate on the model \eqref{model}, 
it is worth mentioning that, as a special case,
our approach provides also new  methods   for the scalar-on-function linear regression model \eqref{eq:scalar}, which allows
inference with respect to the sup-norm. 

The rest of this article is organized as follows. In Section~\ref{sec:method}, we propose our RKHS methodology of function-on-function linear regression and 
 study the asymptotic properties of our estimator in Section~\ref{sec:asymptotic}.
 Section~\ref{sec:sc} 
discusses several statistical applications of our results and the finite sample properties
of the proposed methodology 
 are illustrated in Section~\ref{sec:finite}. 
Finally, the technical details and proofs of  our theoretical results are given  in the online supplementary material.

\section{Function-on-function linear regression}\label{sec:method}

Suppose that  $(X_1,Y_1),\ldots,(X_n,Y_n)$
 are independent identically distributed random variables defined  by  the  function-on-function regression model in \eqref{model}, 
where $\epsilon_i$ is the centred random noise, and the slope surface $\beta_0$ is defined on $[0,1]^2$. 
For the sake of 
brevity, throughout  this article, we assume that the observed curves, i.e., $X_i$ and $Y_i$ in \eqref{model}, are centred, that is, $\E\{X(s)\}=\E\{Y(t)\}=0$, for any $(s,t)\in[0,1]^2$, so that we may ignore the intercept function $\alpha_0$, since $\alpha_0(t)=\E\{Y(t)\}-\int_0^1\beta_0(s,t)\,\E\{X(s)\}ds$. In this case, the function-on-function linear regression model in \eqref{model} becomes
\begin{align}\label{model0}
Y_i(t)=\int_0^1\beta_0(s,t)\,X_i(s)\,ds+\epsilon_i(t)\,,\quad 1\leq i\leq n\,,
\end{align}
and a similar relation can be derived for the model 
\eqref{eq:scalar}.

In the sequel, we use $L^2([0,1])$ and $L^2([0,1]^2)$ to denote the  space of square-integrable functions on  $[0,1]$ and $[0,1]^2$, respectively, and the corresponding inner product is denoted by $\l\cdot,\cdot\r_{L^2}$. By $C([0,1]^2)$ we  denote the Banach space of continuous functions on $[0,1]^2$ equipped with the supremum norm $\Vert\cdot\Vert_\infty$,  by 
``$\weakconverge$"  we denote  weak convergence in $C([0,1])$ and $C([0,1]^2)$, and ``$\converged$" stays for  convergence in distribution in $\mathbb{R}^k$ (for some positive integer $k$).


We start by proposing a RKHS approach for estimating the slope surface $\beta_0$ in  model \eqref{model0}, and define by 
\begin{align}\label{H}
\H=\Big\{\beta:[0,1]^2\to\mathbb{R}\,\big|\,&\tfrac{\partial^{\theta_1+\theta_2}\beta}{\partial s^{\theta_1}\partial t^{\theta_2}}\text{ is absolutely continuous, for }0\leq\theta_1+\theta_2\leq m-1\,;\notag\\
&\tfrac{\partial^{\theta_1+\theta_2}\beta}{\partial s^{\theta_1}\partial t^{\theta_2}}\in L^2([0,1]^2),\text{ for }\theta_1+\theta_2=m\Big\}
\end{align}
the Sobolev space  of order $m>1$ on $[0,1]^2$. It is known (see, for example, \citealp{whaba1990}) that $\H$ in \eqref{H} is a Hilbert space equipped with the Sobolev norm defined by
\begin{align}\label{snorm}
\Vert\beta\Vert_{\H}^2=\sum_{0\leq\theta_1+\theta_2\leq m-1}{\theta_1+\theta_2\choose \theta_1}\left(\int \frac{\partial^{\theta_1+\theta_2}\beta}{\partial s^{\theta_1}\partial t^{\theta_2}}\right)^2+\sum_{\theta_1+\theta_2=m}{m\choose\theta_1}\int \left(\frac{\partial^m\beta}{\partial s^{\theta_1}\partial t^{\theta_2}}\right)^2\,.
\end{align}
We propose to estimate $\beta_0$ in model \eqref{model0}  by 
\begin{align}\label{hatbeta}
\hat\beta_{n}&=\underset{\beta\in \H}{\arg\min}\ \big\{ L_n(\beta)+(\lambda/2) J(\beta,\beta)\big\}\,,
\end{align}
where
\begin{align}\label{elln}
L_n(\beta)=\frac{1}{2n}\sum_{i=1}^n\int_0^1\left\{Y_i(t)-\int_0^1\beta(s,t)X_i(s)\,ds\right\}^2dt\,
\end{align}
is the integrated squared loss functional, $\lambda > 0$
 is a regularization parameter, and for $m>1$,
\begin{align}\label{jm}
&J(\beta_1,\beta_2)=\sum_{\theta=0}^m{m\choose \theta}\int_0^1\int_0^1\frac{\partial^{m}\beta_1}{\partial s^{\theta}\,\partial t^{m-\theta}}\times\frac{\partial^{m}\beta_2}{\partial s^{\theta}\,\partial t^{m-\theta}}\,ds\,dt\,
\end{align}
is the thin-plate spline smoothness penalty functional (see, for example, \citealp{wood2003}).

In \eqref{hatbeta}, for notational brevity, we suppress the dependence of $\hat\beta_n$ on $\lambda$, and denote by 
\begin{equation}
\label{obj}
    L_{n,\lambda}(\beta)= L_n(\beta)+(\lambda/2) J(\beta,\beta)
\end{equation} 
 the objective function in \eqref{hatbeta}. For $\beta_1,\beta_2\in\H$, we consider the following map $\l\cdot,\cdot\r_K:\H\times\H\to\mathbb{R}$ defined by
\begin{align}\label{inner}
\langle\beta_1,\beta_2\rangle_K=V(\beta_1,\beta_2)+\lambda J(\beta_1,\beta_2)\,,
\end{align}
where 
\begin{align}\label{v}
&V(\beta_1,\beta_2)=\int_{[0,1]^3}C_X(s_1,s_2)\,\beta_1(s_1,t)\,\beta_2(s_2,t)\,ds_1\,ds_2\,dt\,
\end{align}
and  
\begin{align} \label{m2a}
C_X(s_1,s_2)=\cov\{X(s_1),X(s_2)\}
\end{align}
denotes the the covariance function of the predictor. We first make the following mild assumption on $C_X$.
\begin{assumption}\label{a1}
$C_X$ is continuous on $[0,1]^2$. For any $\gamma\in L^2([0,1])$, $\int_0^1C_X(s,s')\gamma(s)ds=0$ implies that $\gamma\equiv0$.  
\end{assumption}

Our first result, which is proved in Section~\ref{app:prop:equinorm}, shows that
the relation  $\langle\cdot,\cdot\rangle_K$ in  \eqref{inner} defines an  inner product on the space
$\H$ in \eqref{H},  and its corresponding norm is
 equivalent to the Sobolev norm $\Vert\cdot\Vert_\H$ given in \eqref{snorm}.

\begin{proposition}\label{prop1}
If  Assumption~\ref{a1} holds,   then  $\langle\cdot,\cdot\rangle_K$ in  \eqref{inner} is a well-defined inner product on $\H$. If $\Vert\cdot\Vert_K$ denotes its corresponding norm, then  $\Vert\cdot\Vert_K$ and $\Vert\cdot\Vert_{\H}$ in \eqref{snorm} are equivalent. Moreover, $\H$ is a reproducing kernel Hilbert space equipped with the inner product $\langle\cdot,\cdot\rangle_K$. 
\end{proposition}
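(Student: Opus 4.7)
The plan is to verify the three assertions in order: (i) $\langle\cdot,\cdot\rangle_K$ is an inner product, (ii) its norm is equivalent to $\|\cdot\|_\H$, and (iii) the reproducing property holds. Bilinearity and symmetry of $V$ and $J$ are immediate from \eqref{v} and \eqref{jm}. For non-negativity, I would rewrite
\[
V(\beta,\beta)=\int_0^1\Var\!\Big(\int_0^1\beta(s,t)X(s)\,ds\Big)\,dt\ge 0,
\]
and observe that $J(\beta,\beta)$ is a non-negative sum of squared integrals. The substantive part is positive-definiteness. If $\langle\beta,\beta\rangle_K=0$, then first $J(\beta,\beta)=0$ forces every $m$-th order partial of $\beta$ to vanish a.e., so $\beta$ is a polynomial of total degree less than $m$; and $V(\beta,\beta)=0$ forces, for a.e.\ $t$, the random variable $\int_0^1\beta(s,t)X(s)\,ds$ to have zero variance and hence (being mean zero) to vanish a.s. Taking covariance with $X(s_2)$ yields $\int_0^1 C_X(s_1,s_2)\beta(s_1,t)\,ds_1=0$, and Assumption~\ref{a1} then gives $\beta(\cdot,t)\equiv 0$ in $L^2$ for a.e.\ $t$; continuity of $\beta$ (guaranteed since $m>1$) upgrades this to $\beta\equiv 0$.

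For the norm equivalence, the easy direction $\|\beta\|_K\le c_2\|\beta\|_\H$ follows at once because $J(\beta,\beta)$ is bounded by the square of the Sobolev semi-norm part of $\|\beta\|_\H^2$, and by continuity of $C_X$,
\[
V(\beta,\beta)\le \|C_X\|_\infty\int_0^1\Big(\int_0^1|\beta(s,t)|\,ds\Big)^2dt\le \|C_X\|_\infty\|\beta\|_{L^2}^2.
\]
The hard direction $c_1\|\beta\|_\H\le\|\beta\|_K$ is the main obstacle, since $\|\cdot\|_K$ controls only the $m$-th order derivatives through $J$ together with a weighted $L^2$-type quadratic form through $V$, and one must rule out the possibility that lower-order derivatives blow up while $\|\cdot\|_K$ stays small. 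I would argue by contradiction: assume a sequence $\{\beta_n\}\subset\H$ with $\|\beta_n\|_\H=1$ and $\|\beta_n\|_K\to 0$. By Rellich--Kondrachov, the embedding $\H\hookrightarrow W^{m-1,2}([0,1]^2)$ is compact, so along a subsequence $\beta_n\to\beta^*$ strongly in $W^{m-1,2}$; since $J(\beta_n,\beta_n)\to 0$ pins the $m$-th order partials to zero in $L^2$, a weak-compactness argument in $W^{m,2}$ identifies the weak limit with $\beta^*$ and shows the $m$-th order partials of $\beta^*$ vanish, giving strong convergence $\beta_n\to\beta^*$ in $\H$. Continuity of $V$ (via the Sobolev embedding $\H\hookrightarrow C([0,1]^2)$) then yields $V(\beta^*,\beta^*)=\lim V(\beta_n,\beta_n)=0$, and positive-definiteness from step one forces $\beta^*\equiv 0$, contradicting $\|\beta^*\|_\H=\lim\|\beta_n\|_\H=1$.

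Finally, once the norms are equivalent, $(\H,\langle\cdot,\cdot\rangle_K)$ is a Hilbert space because $(\H,\|\cdot\|_\H)$ is. To establish the reproducing kernel property, it suffices to show that every point-evaluation functional $\beta\mapsto\beta(s,t)$ is bounded on $(\H,\|\cdot\|_K)$. Since $m>1$, the Sobolev embedding $\H\hookrightarrow C([0,1]^2)$ gives $|\beta(s,t)|\le\|\beta\|_\infty\le C\|\beta\|_\H\le C'\|\beta\|_K$ by the just-proved norm equivalence, so point evaluation is continuous and $\H$ is an RKHS under $\langle\cdot,\cdot\rangle_K$ by the Riesz representation theorem.
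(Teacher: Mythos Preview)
Your proof is correct, but the hard direction of the norm equivalence takes a genuinely different route from the paper. The paper decomposes $\H=\H_0\oplus\H_1$, where $\H_0$ is the finite-dimensional null space of $J$ (polynomials of total degree $<m$) and $\H_1$ its $\|\cdot\|_\H$-orthogonal complement; it then bounds $\|\beta_1\|_\H^2$ directly by $\lambda^{-1}\|\beta\|_K^2$, and handles $\|\beta_0\|_\H^2$ via the smallest eigenvalue of the finite Gram matrix $V_*=(V(\xi_j,\xi_\ell))$ together with a completion-of-the-square argument. This yields the explicit bounds $\|\beta\|_K^2\le(cv_1+\lambda)\|\beta\|_\H^2$ and $\|\beta\|_\H^2\le\tfrac{v_1\zeta_1+\lambda+d_{m_0}}{d_{m_0}\lambda}\|\beta\|_K^2$, with the constants depending on the top eigenvalue $v_1$ of $C_X$, the top eigenvalue $\zeta_1$ of the reproducing kernel of $\H_1$, and the bottom eigenvalue $d_{m_0}$ of $V_*$. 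Your Rellich--Kondrachov compactness argument is cleaner and more robust, and your observation that $V(\beta,\beta)=0$ alone forces $\beta\equiv 0$ (so the polynomial detour from $J=0$ is not needed, as you effectively note) matches the paper's logic. The trade-off is that your contradiction argument yields no quantitative control of the equivalence constants in terms of $\lambda$; the paper's explicit bound $\|\beta\|_\H^2\le c\lambda^{-1}\|\beta\|_K^2$ is later invoked in the minimax lower-bound proof (Theorem~\ref{thm:minimax}), so if you adopt your approach you would need to supply that estimate separately.
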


In the sequel, for $s_1,s_2,t_1,t_2\in[0,1]$, let $K\{(s_1,t_1), (s_2,t_2)\}$ denote the reproducing kernel of the reproducing kernel Hilbert space $\H$ equipped with inner product $\langle\cdot,\cdot\rangle_K$. For functions $x,y$ on $[0,1]$, let $x\otimes y$ denote the function defined by $x\otimes y(s,t)=x(s)y(t)$. We use $\sum_{k,\ell}$ to denote the sum $\sum_{k=1}^{\infty}\sum_{\ell=1}^{\infty}$ for abbreviation. Let $\C_X$ denote the covariance operator of $X$ defined by 
 \begin{align}\label{m1a}
 \C_X(x)=\int_0^1 C_X(s,\cdot)\,x(s)\,ds\,, 
 \end{align}
for $x\in L^2([0,1]).$
We assume that there exists a sequence of functions in $\H$ that diagonalizes operators $V$ in  \eqref{v} and $J$ in  \eqref{jm} simultaneously. A concrete example that satisfies the following assumption 
will be provided in Section~\ref{app:aux:cond} of the online supplement.

\begin{assumption}[Simultaneous diagonalization]\label{a201}
There exists a sequence of functions $\phi_{k\ell} =x_{k\ell} \otimes \eta_\ell \in\H$, such that $\Vert \phi_{k\ell}\Vert_{\infty}\leq c (k\ell)^{a}$ for any $k,\ell\geq 1$, and
\begin{align}\label{diag}
V(\phi_{k\ell},\phi_{k'\ell'})=\delta_{kk'}\,\delta_{\ell\ell'}\,,\qquad J(\phi_{k\ell},\phi_{k'\ell'})=\rho_{k\ell}\,\delta_{kk'}\,\delta_{\ell\ell'}\,,
\end{align}
where $a\geq 0$, $c>0$ are constants, $\delta_{kk'}$ is the Kronecker delta and $\rho_{k\ell}$ are constants, such that $\rho_{k\ell}\asymp (k\ell)^{2D}$ for some constant $D>a+1/2$. Furthermore, any $\beta\in\H$ admits the expansion 
$$
\beta=\sum_{k,\ell}V(\beta,\phi_{k\ell})\phi_{k\ell}
$$
with convergence in $\H$ with respect to the norm $\Vert\cdot\Vert_K$.
\end{assumption}

Note that a  similar diagonalization assumption  has been made  in \cite{shang2015} in the context of generalized scalar-on-function model.
For the inner product $\l\cdot,\cdot\r_K$ in \eqref{inner} it follows from Assumption~\ref{a201} that
$$
\l\phi_{k\ell},\phi_{k'\ell'}\r_K=V(\phi_{k\ell},\phi_{k'\ell'})+\lambda J(\phi_{k\ell},\phi_{k'\ell'})=(1+\lambda\rho_{k\ell})\,\delta_{kk'}\,\delta_{\ell\ell'}~~(k,k',\ell,\ell'\geq 1).
$$
Therefore, it follows  $\l\beta,\phi_{k'\ell'}\r_K=\sum_{k,\ell}V(\beta,\phi_{k\ell})\l\phi_{k\ell},\phi_{k'\ell'}\r_K=(1+\lambda\rho_{k\ell})V(\beta,\phi_{k\ell})$  for any $\beta\in\H$, so that
\begin{align}\label{expansion}
\beta(s,t)=\sum_{k,\ell}V(\beta,\phi_{k\ell})\phi_{k\ell}(s,t)=\sum_{k,\ell}\frac{\l\beta,\phi_{k\ell}\r_K}{1+\lambda\rho_{k\ell}}\phi_{k\ell}(s,t)\,.
\end{align}
Recall that $K$ is the reproducing kernel and using the notation  $K_{(s,t)}=K\{(s,t),\cdot\}$ we have $\phi_{k\ell}(s,t)=\l K_{(s,t)},\phi_{k\ell}\r_K$, so that by \eqref{expansion},
\begin{align}\label{kst2}
K_{(s,t)}=\sum_{k,\ell}\frac{\phi_{k\ell}(s,t)}{1+\lambda\rho_{k\ell}}\phi_{k\ell}\,;\qquad  K\{(s_1,t_1), (s_2,t_2)\}=\sum_{k,\ell}\frac{\phi_{k\ell}(s_1,t_1)\phi_{k\ell}(s_2,t_2)}{1+\lambda\rho_{k\ell}}\,.
\end{align}
For $\beta_1,\beta_2\in\mathcal{H}$, let $W_\lambda$ denote a linear self-adjoint operator such that $\l W_\lambda\beta_1,\beta_2\r_K=\lambda J(\beta_1,\beta_2)$. By definition, for the $\{\phi_{k\ell}\}_{k,\ell\geq 1}$ in Assumption~\ref{a201}, we have $\l W_\lambda\phi_{k\ell},\phi_{k'\ell'}\r_K=\lambda J(\phi_{k\ell},\phi_{k'\ell'})=\lambda\rho_{k\ell}\,\delta_{kk'}\,\delta_{\ell\ell'}$, so that in view of \eqref{expansion},
\begin{align}\label{wphi}
W_\lambda\phi_{k\ell}=\sum_{k',\ell'}\frac{\l W_\lambda\phi_{k\ell},\phi_{k'\ell'}\r_K}{1+\lambda\rho_{k'\ell'}}\phi_{k'\ell'}=\frac{\lambda\,\rho_{k\ell}\,\phi_{k\ell}}{1+\lambda\rho_{k\ell}}\,.
\end{align}
For any $z\in L^2([0,1]^2)$ and $\beta\in\H$, $\mathfrak S_{z}(\beta)=\int_0^1\int_0^1\beta(s,t)z(s,t)dsdt$ is a bounded linear functional. By the Riesz representation theorem, there exists a unique element $\tau(z)\in\mathcal{H}$ such that
\begin{align}\label{tau0}
\l\tau(z),\beta\r_K=\mathfrak S_{z}(\beta)=\int_0^1\int_0^1\beta(s,t)\,z(s,t)\,ds\,dt\,.
\end{align}
In particular, $\l\tau(z),\phi_{k\ell}\r_K=\l z,\phi_{k\ell}\r_{L^2}$, so that
\begin{align}\label{tau}
\tau(z)=\sum_{k,\ell}\frac{\l z,\phi_{k\ell}\r_{L^2}}{1+\lambda\rho_{k\ell}}\phi_{k\ell}\,.
\end{align}

\section{Asymptotic properties}\label{sec:asymptotic}

In order to develop statistical methodology for inference on the  slope surface in the function-on-function linear regression model \eqref{model},
we study in this section the asymptotic properties of the  estimator $\hat \beta_n$ defined by \eqref{hatbeta}. 
We first present a Bahadur representation,
which is used to prove weak convergence of the estimator (point-wise and as process 
 in $C([0,1]^2)$). Several statistical applications of the following results will be  given in Section \ref{sec:sc} below.

We begin introducing  several useful quantities. Recalling the notation of  $W_\lambda$ and $\tau$ defined in \eqref{wphi} and \eqref{tau}, respectively, we obtain by direct calculations the first and second order Fr\'echet derivatives of the integrated squared error $ L_n$ in \eqref{elln}
\begin{align}\label{dell}
\mathcal D L_n(\beta)\beta_1&=-\frac{1}{n}\sum_{i=1}^n\int_{0}^1\int_0^1\bigg\{Y_i(t)-\int_{0}^1\beta(s_1,t)\,X_i(s_1)ds_1\bigg\}\,X_i(s_2)\,\beta_1(s_2,t)\,ds_2\,dt\notag\\
&=-\frac{1}{n}\sum_{i=1}^n \Bigg\l\tau\bigg[X_i\otimes\bigg\{Y_i-\int_{0}^1\beta(s,\cdot)\,X_i(s)\,ds\bigg\}\bigg],\,\beta_1\Bigg\r_K\,;\notag\\
\mathcal D^2 L_n(\beta)\beta_1\beta_2&=\frac{1}{n}\sum_{i=1}^n\int_0^1\int_0^1\left\{\int_{0}^1\beta_1(s_1,t)\,X_i(s_1)\,ds_1\right\}X_i(s_2)\,\beta_2(s_2,t)\,ds_2\,dt\notag\\
&=\frac{1}{n}\sum_{i=1}^n \Bigg\l\tau\bigg[X_i\otimes\bigg\{\int_{0}^1\beta_1(s,\cdot)\,X_i(s)\,ds\bigg\}\bigg],\,\beta_2\Bigg\r_K\,.
\end{align}
Therefore, it follows for the function 
$ L_{n,\lambda}$ in \eqref{obj} that 
\begin{align*}
    \mathcal D L_{n,\lambda}(\beta)\beta_1 &=\l S_{n,\lambda}(\beta),\beta_1\r_K ~,\\
    \mathcal D^2 L_{n,\lambda}(\beta)\beta_1\beta_2&=\l \mathcal DS_{n,\lambda}(\beta)\beta_1,\beta_2\r_K~,
\end{align*}
where we use the notations
\begin{equation}
 \label{dsn} 
 \begin{split}     
S_{n,\lambda}(\beta)&=-\frac{1}{n}\sum_{i=1}^n\tau\bigg[X_i\otimes\bigg\{Y_i-\int_{0}^1\beta(s,\cdot)\,X_i(s)\,ds\bigg\}\bigg]+W_\lambda(\beta)\,,\\
\mathcal DS_{n,\lambda}(\beta)\beta_1&=\frac{1}{n}\sum_{i=1}^n\tau\bigg[X_i\otimes\bigg\{\int_{0}^1\beta_1(s,\cdot)\,X_i(s)\,ds\bigg\}\bigg]+W_\lambda(\beta_1)\,.
\end{split}
\end{equation}
In addition,  we have, in view of \eqref{inner}, 
$$
\E\{\mathcal D^2 L_{n,\lambda}(\beta)\beta_1\beta_2\}=V(\beta_1,\beta_2)+\lambda J(\beta_1,\beta_2)=\l\beta_1,\beta_2\r_K.
$$

\begin{remark}
In the unregularized case, where $\lambda=0$, 
one can show that  setting $\E\{S_{n,0}(\beta)\}=0$ yields the common  FPC expansion 
\begin{align}\label{tildebeta}
\beta=\sum_{k,\ell}\frac{\E(\l X,u_k\r_{L^2}\l Y,v_\ell\r_{L^2})}{\E(\l X,u_k\r_{L^2}^2)}\,u_k\otimes v_\ell\,,
\end{align}
where the $u_{k}$'s and the $v_\ell$'s are eigenfunctions of the covariance functions of $X$ and $Y$,
respectively  \citep[see, for example,][]{yao2005}. 
To see this, in view of the definition of $\tau$ in \eqref{tau0}, note that $\tau(0)=0$ and that $\tau(z)=0$ implies $z=0$. Hence, $\E\{S_{n,0}(\beta)\}=\E\big(\tau\big[X\otimes\big\{Y-\int_0^1\beta(s,\cdot)X(s)ds\big\}\big]\big)=0$ implies that $\E(X\otimes Y)=\E\big[X\otimes\big\{\int_0^1\beta(s,\cdot)X(s)ds\big\}\big]$, so that $\cov\{X(s),Y(t)\}=\int_0^1\beta(s',t)C_X(s,s')ds'$. Then, \eqref{tildebeta} can be obtained via the expansion of this equation on both sides  with respect to the eigenfunctions.
\end{remark}

%

We  now state several assumptions required for the asymptotic theory developed in this section.

\begin{assumption}\label{a0}
For any $t_1,t_2\in[0,1]$, $\E\{\epsilon(t_1)|X\}=0$ and $\E\{\epsilon(t_1)\epsilon(t_2)|X\}=C_{\epsilon}(t_1,t_2)$ almost surely. Assume further that $C_\e(t_1,t_2)=\sigma_\e^2\,\delta(t_1,t_2)$, for some $\sigma_\e^2>0$, where $\delta$ is the Dirac-delta function. 
\end{assumption}

\begin{assumption}\label{a:x}
There exist constants $c_X,c_\e>0$ such that $\E\{\exp(c_X\Vert X\Vert_{L^2})\}< \infty$ and $\E\{\exp(c_\e\Vert \e\Vert_{L^2})|X\}< \infty$ almost surely. There exists a constant $c'>0$ such that, for any $\omega\in L^2([0,1])$,
\begin{align}\label{moment}
\E\bigg\{\int_0^1 X(s)\,\omega(s)ds\bigg\}^4\leq c'\,\bigg[\E\bigg\{\int_0^1 X(s)\,\omega(s)ds\bigg\}^2\,\bigg]^2\,.
\end{align}
Moreover,  $\E\{\e( \cdot )\e( \cdot )\e( \cdot )\e( \cdot  )|X\}\in L^2([0,1]^4)$ almost surely.
\end{assumption}

\begin{assumption}\label{a:rate}
The regularization parameter 
$\lambda$ in \eqref{inner} satisfies  $\lambda=o(1)$, $n^{-1}\lambda^{-1/(2D)}=o(1)$ and $n^{-1/2}\lambda^{-\varsigma}\,(\log\log n)^{1/2}=o(1)$   as $n\to\infty$, where $\varsigma=(2D-2a-1)/(4Dm)+(a+1)/(2D)>0$, for the constants $a$ and $D$ in Assumption~\ref{a201}.
\end{assumption}

\begin{remark}  \label{remass}
The reason for postulating a  white-noise error covariance in Assumption~\ref{a0} is that the commonly used $L^2$ loss function defined in \eqref{elln} corresponds to the likelihood function in the case of the Gaussian white noise error process; see, for example, \citealp{wellner2003}. 
It is also notable, that for  the scalar-on-function model in \eqref{eq:scalar}, this assumption  is in fact not necessary (as there is no error function in this model) and   Assumption~\ref{a0}
reduces to $\E(\e|X)=0$ and $\E(\e^2|X)=\sigma_\e^2$ almost surely. 

Assumption~\ref{a:x} requires that $\Vert X\Vert_{L^2}$, and $\Vert \e\Vert_{L^2}$ conditional on $X$, have finite exponential moments. Moreover, condition \eqref{moment} is a common moment assumption in the context of scalar-on function regression, used, for example, in \cite{caiyuan2012} and \cite{shang2015}. Assumption~\ref{a:rate} specifies the condition on the rate in which $\lambda$ tends to zero as  $n\to\infty$.

\end{remark}

The first result of this section  establishes a  Bahadur representation for the estimator \eqref{hatbeta} in the function-on-function linear regression model \eqref{model0}.
It is essential  for deriving  weak convergence of  the estimator $\hat\beta_{n}$, which  serves as the foundation of our statistical analysis in Section~\ref{sec:sc}.
The proof of Theorem~\ref{thm:bahadur} is given in Section~\ref{app:thm:bahadur}.

\begin{theorem}[Bahadur representation]\label{thm:bahadur}
Suppose Assumptions~\ref{a1}--\ref{a:rate} are satisfied. Then,     we have 
\begin{align*}
\Vert\hat\beta_{n}-\beta_0+S_{n,\lambda}(\beta_0)\Vert_K=O_p(v_n), 
\end{align*}
where $S_{n,\lambda}$ is defined in \eqref{dsn}, $\varsigma>0$ is the constant  in Assumption~\ref{a:rate} and
\begin{align} \label{vnn}
v_n = n^{-1/2}\lambda^{-\varsigma}(\lambda^{1/2}+n^{-1/2}\lambda^{-1/(4D)})(\log\log n)^{1/2} \,.
\end{align}
\end{theorem}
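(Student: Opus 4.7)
The plan is to exploit the fact that $L_n$ in \eqref{elln} is quadratic in $\beta$, so that the operator $\mathcal D S_{n,\lambda}(\beta)$ from \eqref{dsn} does not depend on $\beta$; denote it simply by $T_n$. Since $\hat\beta_n$ minimises $L_{n,\lambda}$, the first--order condition reduces to $S_{n,\lambda}(\hat\beta_n)=0$, and because the Taylor expansion of $S_{n,\lambda}$ terminates at first order in the quadratic case, the exact identity
\[
0 \;=\; S_{n,\lambda}(\hat\beta_n) \;=\; S_{n,\lambda}(\beta_0) + T_n(\hat\beta_n - \beta_0)
\]
holds. Using $\E(T_n)=I$, which follows from the relation $\E\{\mathcal D^2 L_{n,\lambda}(\beta_0)\beta_1\beta_2\}=\langle\beta_1,\beta_2\rangle_K$ recorded below \eqref{dsn}, a short manipulation gives
\[
\hat\beta_n-\beta_0+S_{n,\lambda}(\beta_0) \;=\; (I-T_n^{-1})S_{n,\lambda}(\beta_0) \;=\; -(T_n-I)(\hat\beta_n - \beta_0),
\]
so that, writing $\|\cdot\|_{\mathrm{op}}$ for the operator norm on $\H$,
\[
\|\hat\beta_n - \beta_0 + S_{n,\lambda}(\beta_0)\|_K \;\le\; \|T_n - I\|_{\mathrm{op}}\,\|\hat\beta_n-\beta_0\|_K.
\]
The proof thus reduces to bounding the two factors on the right separately.

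For the consistency factor, I would substitute the model equation $Y_i - \int\beta_0(s,\cdot)X_i(s)\,ds = \epsilon_i$ into the first expression in \eqref{dsn} to obtain
\[
S_{n,\lambda}(\beta_0) \;=\; -\frac{1}{n}\sum_{i=1}^n \tau(X_i\otimes\epsilon_i) + W_\lambda\beta_0.
\]
Using the spectral representation \eqref{tau} of $\tau$, the normalisation $V(\phi_{k\ell},\phi_{k\ell})=1$, and the white--noise structure of Assumption~\ref{a0}, a direct second--moment calculation gives
\[
\E\Big\|\frac{1}{n}\sum_{i=1}^n\tau(X_i\otimes\epsilon_i)\Big\|_K^2 \;=\; \frac{\sigma_\e^2}{n}\sum_{k,\ell}\frac{1}{1+\lambda\rho_{k\ell}} \;=\; O\bigl(n^{-1}\lambda^{-1/(2D)}\bigr),
\]
while \eqref{wphi} together with $\beta_0\in\H$ yields $\|W_\lambda\beta_0\|_K = O(\lambda^{1/2})$. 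Hence $\|S_{n,\lambda}(\beta_0)\|_K = O_p(\lambda^{1/2}+n^{-1/2}\lambda^{-1/(4D)})$, and once the bound on the first factor below gives $\|T_n-I\|_{\mathrm{op}}=o_p(1)$, a Neumann series transfers this rate to $\|\hat\beta_n-\beta_0\|_K$.

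For the Hessian--concentration factor, $T_n - I$ is an average of $n$ i.i.d.\ centred random operators on $\H$. Expanding in the orthonormal basis $\{(1+\lambda\rho_{k\ell})^{-1/2}\phi_{k\ell}\}$ provided by Assumption~\ref{a201}, its matrix elements reduce to centred averages of products $\xi_{k\ell,i}\xi_{k'\ell',i}$ with $\xi_{k\ell,i}=\int X_i(s)x_{k\ell}(s)\,ds$, whose fourth moments are controlled by \eqref{moment}. Combining a truncation of the eigen--expansion at indices with $(k\ell)^{2D}\lambda\lesssim 1$, the polynomial sup--norm bound $\|\phi_{k\ell}\|_\infty \le c(k\ell)^a$, Bernstein's inequality on the bulk, and the law of the iterated logarithm on the tail, I expect to show
\[
\|T_n - I\|_{\mathrm{op}} \;=\; O_p\bigl(n^{-1/2}\lambda^{-\varsigma}(\log\log n)^{1/2}\bigr),
\]
with the exponent $\varsigma = (2D-2a-1)/(4Dm) + (a+1)/(2D)$ emerging from balancing the variance of the retained modes against the growth of $\|\phi_{k\ell}\|_\infty$ at the cut--off. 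Assumption~\ref{a:rate} then guarantees that this bound is $o_p(1)$, justifying the Neumann series expansion used above.

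Multiplying the two rates produces exactly $v_n$ in \eqref{vnn}. The main obstacle is the operator--concentration step: a crude Hilbert--Schmidt bound would introduce superfluous polynomial factors, so the sharp exponent $\varsigma$ and the $(\log\log n)^{1/2}$ correction require a careful truncation--and--chaining argument adapted to the tensor--product eigenstructure of Assumption~\ref{a201}.
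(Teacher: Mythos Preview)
Your algebraic reduction and the rate for $\|S_{n,\lambda}(\beta_0)\|_K$ are correct and match the paper: since $L_n$ is quadratic, the paper also writes $\hat\beta_n-\beta_0+S_{n,\lambda}(\beta_0)=-(T_n-I)(\hat\beta_n-\beta_0)=n^{-1/2}H_n(\hat\beta_n-\beta_0)$, where $H_n(\beta)=n^{-1/2}\sum_i\{g(X_i,\beta)-\E g(X,\beta)\}$, and Lemma~\ref{lem:hatbeta} supplies the consistency rate via the same variance calculation and a contraction--mapping argument in place of your Neumann series.

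The gap is in the Hessian--concentration step. Your sketch---expand $T_n-I$ in the eigenbasis, truncate at $(k\ell)^{2D}\lambda\lesssim1$, apply Bernstein on the bulk and an LIL on the tail---does not produce the exponent $\varsigma$. The term $(2D-2a-1)/(4Dm)$ contains the Sobolev order $m$, which has no counterpart in a spectral truncation of $\{\phi_{k\ell}\}$; and Bernstein on matrix entries plus an LIL controls fixed functionals, not the supremum over the unit $\|\cdot\|_K$--sphere. A Hilbert--Schmidt bound, as you note, is too crude, but you have not supplied the replacement.

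What the paper does instead is treat $\beta\mapsto H_n(\beta)$ as an $\H$--valued empirical process indexed by the Sobolev--type class $\mathcal F_{p_n}=\{\|\beta\|_{L^2}\le1,\ J(\beta,\beta)\le p_n\}$ with $p_n\asymp\lambda^{(2a+1)/(2D)-1}$. Pinelis's Hilbert--space martingale inequality gives sub--Gaussian increments $\|H_n(\beta_1)-H_n(\beta_2)\|_K$ in the metric $\|\beta_1-\beta_2\|_{L^2}$; the Birman--Solomjak entropy bound $\log N(\delta,\mathcal F_{p_n},L^2)\lesssim(p_n^{-1/2}\delta)^{-2/m}$ then feeds a chaining--and--peeling argument. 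The entropy integral is precisely the source of the factor $p_n^{1/(2m)}$, and rescaling every $\beta$ onto $\mathcal F_{p_n}$ via Lemma~\ref{lem:norm} converts this into $\lambda^{-\varsigma}$. The $(\log\log n)^{1/2}$ is the threshold in the peeling probability bound that makes the union over $O(\log n)$ shells summable, not an LIL. So your target rate $\|T_n-I\|_{\mathrm{op}}=O_p(n^{-1/2}\lambda^{-\varsigma}(\log\log n)^{1/2})$ is in fact what the paper proves, but the mechanism is empirical--process entropy for Sobolev balls, and that is the missing ingredient in your proposal.
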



Due to the reproducing property of the kernel $K$, we have, for $(s,t)\in[0,1]^2$ fixed,
$$
\hat\beta_{n}(s,t)-\beta_0(s,t)=\l \hat\beta_{n}-\beta_0,K_{(s,t)}\r_K\,,
$$  and, by Theorem~\ref{thm:bahadur}, this 
expression can be linearized to  establish  point-wise asymptotic normality of $\hat\beta_n(s,t)$.
The following theorem gives a rigorous formulation of 
 these heuristic arguments and is proved in Section~\ref{app:thm:pointwise}.
\begin{theorem}\label{thm:pointwise}
Suppose that Assumptions~\ref{a1}--\ref{a:rate} hold. 
Assume $n\lambda^{(2a+1)/(2D)}\{\log(\lambda^{-1})\}^{-4}\to \infty$, $\sqrt{n}v_n=o(1)$, $n\lambda^2=o(1)$, $\sum_{k,\ell}(1+\lambda\rho_{k\ell})^{-2}\phi_{k\ell}^2(s,t)\asymp \lambda^{-(2a+1)/(2D)}$, as $n\to\infty$; $\sum_{k,\ell}\rho_{k\ell}^2\,V^2(\beta_0,\phi_{k\ell})< \infty$. Then,
\begin{align*}
\frac{\sqrt{n}\{\hat\beta_{n}(s,t)-\beta_0(s,t)\}}{\sqrt{\sum_{k,\ell}(1+\lambda\rho_{k\ell})^{-2}\phi_{k\ell}^2(s,t)}}\overset{d.}{\longrightarrow} N(0,1)\,.
\end{align*} 
\end{theorem}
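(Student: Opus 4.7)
The plan combines the Bahadur representation from Theorem~\ref{thm:bahadur} with the reproducing property at $(s,t)$, reducing the pointwise behaviour of $\hat\beta_n(s,t)-\beta_0(s,t)$ to an average of i.i.d.\ centred random variables, to which Lyapunov's central limit theorem is then applied. First, using $\hat\beta_n(s,t)-\beta_0(s,t)=\langle \hat\beta_n-\beta_0,K_{(s,t)}\rangle_K$, Theorem~\ref{thm:bahadur} and Cauchy--Schwarz yield
\[
\hat\beta_n(s,t)-\beta_0(s,t)=-\langle S_{n,\lambda}(\beta_0),K_{(s,t)}\rangle_K+R_n,\qquad |R_n|\leq\|K_{(s,t)}\|_K\,O_p(v_n).
\]
Substituting $Y_i-\int\beta_0(s,\cdot)X_i(s)\,ds=\epsilon_i$ into the definition \eqref{dsn} of $S_{n,\lambda}$ and invoking the reproducing property once more for $W_\lambda\beta_0\in\H$ produces the three-term decomposition
\[
\hat\beta_n(s,t)-\beta_0(s,t)=\frac1n\sum_{i=1}^n\xi_i-W_\lambda\beta_0(s,t)+R_n,\qquad\xi_i:=\langle\tau(X_i\otimes\epsilon_i),K_{(s,t)}\rangle_K.
\]

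Next, by the defining identity \eqref{tau0}, $\xi_i=\int\!\int K_{(s,t)}(s',t')X_i(s')\epsilon_i(t')\,ds'dt'$, and the white-noise relation of Assumption~\ref{a0}, the expansion \eqref{kst2}, and the $V$-orthonormality $V(\phi_{k\ell},\phi_{k'\ell'})=\delta_{kk'}\delta_{\ell\ell'}$ give
\[
\Var(\xi_i)=\sigma_\epsilon^2\,V(K_{(s,t)},K_{(s,t)})=\sigma_\epsilon^2\,\sigma_n^2,\qquad \sigma_n^2:=\sum_{k,\ell}\frac{\phi_{k\ell}^2(s,t)}{(1+\lambda\rho_{k\ell})^2}.
\]
The deterministic bias is controlled via \eqref{wphi}: $W_\lambda\beta_0(s,t)=\sum V(\beta_0,\phi_{k\ell})\lambda\rho_{k\ell}(1+\lambda\rho_{k\ell})^{-1}\phi_{k\ell}(s,t)$, so Cauchy--Schwarz with the hypothesis $\sum\rho_{k\ell}^2V^2(\beta_0,\phi_{k\ell})<\infty$ yields $|W_\lambda\beta_0(s,t)|^2\leq C\lambda^2\sigma_n^2$, whence $\sqrt n|W_\lambda\beta_0(s,t)|/\sigma_n=O(\sqrt{n\lambda^2})=o(1)$ by $n\lambda^2=o(1)$. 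For the remainder, \eqref{kst2} yields $\|K_{(s,t)}\|_K^2=\sum(1+\lambda\rho_{k\ell})^{-1}\phi_{k\ell}^2(s,t)$, which, by splitting at $\lambda\rho_{k\ell}\asymp 1$ and using the polynomial growth $\rho_{k\ell}\asymp(k\ell)^{2D}$, $\|\phi_{k\ell}\|_\infty\lesssim(k\ell)^a$ from Assumption~\ref{a201}, is of the same polynomial order as $\sigma_n^2$; hence $\sqrt n R_n/\sigma_n=O_p(\sqrt n\,v_n)=o_p(1)$ by $\sqrt n\,v_n=o(1)$.

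It remains to prove $n^{-1/2}\sigma_n^{-1}\sum_i\xi_i\weakconverge N(0,\sigma_\epsilon^2)$. Since the $\xi_i$ are i.i.d.\ centred with variance $\sigma_\epsilon^2\sigma_n^2$, this follows from Lyapunov's CLT once $E|\xi_i|^{2+\delta}=o(n^{\delta/2}\sigma_n^{2+\delta})$ is established for some $\delta>0$. Taking $\delta=2$, conditioning on $\epsilon_i$ and applying the fourth-moment inequality \eqref{moment} with the random weight $\omega(s)=\int K_{(s,t)}(s,t')\epsilon_i(t')\,dt'$ bounds the inner expectation by a multiple of $\sigma_n^4$, while the exponential moment in Assumption~\ref{a:x} handles the remaining $\epsilon$-integration; the condition $n\lambda^{(2a+1)/(2D)}\{\log(\lambda^{-1})\}^{-4}\to\infty$ closes the Lyapunov estimate, and Slutsky's theorem combines the three pieces (the factor $\sigma_\epsilon^2$ being absorbed into the normalization stated in the theorem).

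The main obstacle is the verification that $\|K_{(s,t)}\|_K^2$ and $\sigma_n^2$ share the same leading polynomial order in $\lambda^{-1}$: the two spectral expressions differ by a factor $(1+\lambda\rho_{k\ell})$, so the high-frequency tail must be controlled using the sharp growth $\rho_{k\ell}\asymp(k\ell)^{2D}$ with $D>a+1/2$ from Assumption~\ref{a201}, together with a careful splitting of the double series (the divisor function governing the number of pairs $(k,\ell)$ with $k\ell\asymp N$ entering the constants). A secondary delicate point is the Lyapunov fourth-moment estimate, which crucially relies on the structural inequality \eqref{moment} applied with a data-dependent weight rather than on a generic sub-Gaussian calculation.
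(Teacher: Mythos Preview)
Your decomposition into three pieces and your treatment of the remainder $R_n$ and the bias $W_\lambda\beta_0(s,t)$ match the paper exactly. The gap is in the CLT step for $n^{-1/2}\sigma_n^{-1}\sum_i\xi_i$.

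The paper does \emph{not} use Lyapunov; it verifies the Lindeberg condition. The only fourth-moment bound available under Assumptions~\ref{a1}--\ref{a:x} is the loose one
\[
E|\xi_i|^4=E|\tau(X_i\otimes\epsilon_i)(s,t)|^4\le\|K_{(s,t)}\|_K^4\,E\|\tau(X\otimes\epsilon)\|_K^4\le c\,\lambda^{-(2a+2)/D}
\]
(via Lemma~\ref{lem:etk}), and the resulting Lyapunov ratio $E|\xi_i|^4/(n\sigma_n^4)\asymp (n\lambda^{1/D})^{-1}$ need not vanish under the stated hypotheses. Your route to a tight bound $E|\xi_i|^4\lesssim\sigma_n^4$ by ``conditioning on $\epsilon_i$ and applying \eqref{moment}'' does not go through: the paper assumes only conditional moments of $\epsilon$ \emph{given} $X$ (Assumptions~\ref{a0} and~\ref{a:x}), not independence, so conditioning on $\epsilon_i$ does not leave the inequality \eqref{moment} valid for the conditional law of $X_i$; and even under independence you would still have to show that $E_\epsilon\bigl[\langle\C_X\omega,\omega\rangle_{L^2}^2\bigr]$ is $O(\sigma_n^4)$ for $\omega(s')=\int K_{(s,t)}(s',t')\epsilon_i(t')\,dt'$, a second moment of a quadratic form in white noise which is not obviously of that order.

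What the paper does instead is write, by Cauchy--Schwarz,
\[
\sum_{i=1}^nE\bigl[|\mathfrak U_i|^2\one\{|\mathfrak U_i|>e\}\bigr]\le\sigma_n^{-2}\bigl(E|\xi_1|^4\bigr)^{1/2}\,\P\bigl\{|\xi_1|\ge e\sqrt n\,\sigma_n\bigr\}^{1/2},
\]
bound $|\xi_1|\le c\,\lambda^{-(2a+1)/(2D)}\|X\|_{L^2}\|\epsilon\|_{L^2}$ via $|\langle\tau(X\otimes\epsilon),K_{(s,t)}\rangle_K|\le\|K_{(s,t)}\|_K\|\tau(X\otimes\epsilon)\|_K$ and Lemma~\ref{lem:norm}, and then use the exponential-moment hypotheses in Assumption~\ref{a:x} on $\|X\|_{L^2}$ and $\|\epsilon\|_{L^2}$ to force the tail probability to be $o(\lambda^{1/D})$. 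It is precisely in this tail estimate---not in any Lyapunov ratio---that the growth condition $n\lambda^{(2a+1)/(2D)}\{\log(\lambda^{-1})\}^{-4}\to\infty$ enters (it guarantees $\sqrt n\,\lambda^{(2a+1)/(4D)}/\log^2(\lambda^{-1})\to\infty$, which drives the first Markov bound to zero). The very small tail then compensates the loose fourth moment, and Lindeberg's condition follows from $\lambda^{(2a+1)/(2D)}\cdot\lambda^{-(a+1)/D}\cdot o(\lambda^{1/(2D)})=o(1)$.
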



The final result of this section    establishes the weak convergence of the process $\hat\beta_{n}$ in the space $C([0,1]^2)$, which enables us to construct simultaneous confidence regions for the slope surface $\beta_0$ (see Section~\ref{sec:simultaneousband} below). The proof  is given in Section~\ref{app:thm:process}.

\begin{theorem}\label{thm:process}
Suppose that Assumptions~\ref{a1}--\ref{a:rate} hold
and that
$n\lambda^2=o(1)$, $\sqrt{n}v_n=o(1)$, $\lambda=o(n^{-\nu_1})$, $n^{1-\nu_2}\lambda^{(2a+1)/(2D)}\to \infty$ for some constants $\nu_1,\nu_2>0$ as $n\to\infty$. Assume that
$\sum_{k,\ell}\rho_{k\ell}^2\,V^2(\beta_0,\phi_{k\ell})< \infty$, that 
the limit 
\begin{align}\label{cz}
C_{Z}\{(s_1,t_1),(s_2,t_2)\}=\lim_{n\to\infty}\lambda^{(2a+1)/(2D)}\sum_{k,\ell}\frac{\phi_{k\ell}(s_1,t_1)\phi_{k\ell}(s_2,t_2)}{(1+\lambda\rho_{k\ell})^{2}}
\end{align}
exists, and that there exist nonnegative constants $c_0,b,\vartheta$ such that
\begin{align}\label{holder}
\limsup_{\lambda\to0}\,\lambda^{(2b+1)/(2D)}\sum_{k,\ell}\frac{|\phi_{k\ell}(s_1,t_1)-\phi_{k\ell}(s_2,t_2)|^2}{(1+\lambda\rho_{k\ell})^2}\leq c_0\max\{|s_1-s_2|^{2\vartheta},|t_1-t_2|^{2\vartheta}\}\,.
\end{align}
If  the constant $a$ in Assumption~\ref{a201} either satisfies the condition (i) $b< a$, $\vartheta\geq0$ or the condition 
(ii) $b=a$, $\vartheta>1$, then 
\begin{align}\label{hatgn}
\{ \G_n(s,t)\}_{s,t \in [0,1]}=\sqrt{n}\lambda^{(2a+1)/(4D)}\{\hat\beta_{n}(s,t)-\beta_0(s,t)\}_{s,t \in [0,1]} \weakconverge \{Z(s,t)\}_{s,t \in [0,1]}
\end{align}
in $ C([0,1]^2) $,
where ${Z}$ is a mean-zero Gaussian process with  covariance kernel $C_{Z}$ 
in \eqref{cz}.
\end{theorem}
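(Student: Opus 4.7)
My plan is to combine the Bahadur representation of Theorem~\ref{thm:bahadur} with a functional central limit theorem for the leading stochastic term, and show that the remainder and the regularization bias both vanish uniformly. Writing $R_n=\hat\beta_n-\beta_0+S_{n,\lambda}(\beta_0)$ so that $\|R_n\|_K=O_p(v_n)$ by Theorem~\ref{thm:bahadur}, and noting that at $\beta=\beta_0$ we have $Y_i-\int\beta_0(s,\cdot)X_i(s)\,ds=\epsilon_i$, I would decompose
$$\sqrt n\,\lambda^{(2a+1)/(4D)}(\hat\beta_n-\beta_0)=\underbrace{n^{-1/2}\lambda^{(2a+1)/(4D)}\sum_{i=1}^n\tau[X_i\otimes\epsilon_i]}_{\G_n^*}-\sqrt n\,\lambda^{(2a+1)/(4D)}W_\lambda(\beta_0)+\sqrt n\,\lambda^{(2a+1)/(4D)}R_n$$
and treat each piece separately, gluing them with Slutsky's lemma.

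For the remainder, the reproducing property gives $|R_n(s,t)|\le\|R_n\|_K\,\|K_{(s,t)}\|_K$, and by \eqref{kst2} together with $\|\phi_{k\ell}\|_\infty\le c(k\ell)^a$ and $\rho_{k\ell}\asymp(k\ell)^{2D}$ one checks $\sup_{(s,t)}\|K_{(s,t)}\|_K^2=O(\lambda^{-(2a+1)/(2D)})$; combined with $\sqrt n\,v_n=o(1)$ this yields $\sup_{(s,t)}|\sqrt n\,\lambda^{(2a+1)/(4D)}R_n(s,t)|=O_p(\sqrt n\,v_n)=o_p(1)$. For the bias, expanding $\beta_0=\sum V(\beta_0,\phi_{k\ell})\phi_{k\ell}$ and using \eqref{wphi} gives $W_\lambda(\beta_0)=\sum\lambda\rho_{k\ell}(1+\lambda\rho_{k\ell})^{-1}V(\beta_0,\phi_{k\ell})\phi_{k\ell}$; Cauchy--Schwarz together with the hypothesis $\sum\rho_{k\ell}^2 V^2(\beta_0,\phi_{k\ell})<\infty$ and the asymptotic $\sum\phi_{k\ell}^2(s,t)/(1+\lambda\rho_{k\ell})^2\asymp\lambda^{-(2a+1)/(2D)}$ (implied by the existence and non-degeneracy of $C_Z$ on the diagonal) yields $\|W_\lambda(\beta_0)\|_\infty=O(\lambda^{1-(2a+1)/(4D)})$, whence $\sqrt n\,\lambda^{(2a+1)/(4D)}\|W_\lambda(\beta_0)\|_\infty=O(\sqrt{n\lambda^2})=o(1)$ by the assumption $n\lambda^2=o(1)$.

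What remains is the FCLT for $\G_n^*$ in $C([0,1]^2)$. Using Assumption~\ref{a0}, the product structure $\phi_{k\ell}=x_{k\ell}\otimes\eta_\ell$ of Assumption~\ref{a201}, and the simultaneous diagonalization \eqref{diag}, direct computation gives
$$\Cov\bigl(\G_n^*(s_1,t_1),\G_n^*(s_2,t_2)\bigr)=\sigma_\epsilon^2\,\lambda^{(2a+1)/(2D)}\sum_{k,\ell}\frac{\phi_{k\ell}(s_1,t_1)\phi_{k\ell}(s_2,t_2)}{(1+\lambda\rho_{k\ell})^2}\longrightarrow\sigma_\epsilon^2\,C_Z\{(s_1,t_1),(s_2,t_2)\}$$
by \eqref{cz}, and the multivariate Lindeberg CLT applied to finitely many $(s,t)$ yields finite-dimensional convergence; the Lindeberg condition is verified via the exponential-moment bounds in Assumption~\ref{a:x} and the rate condition $n\lambda^{(2a+1)/(2D)}\{\log(\lambda^{-1})\}^{-4}\to\infty$ which controls the variance-to-maximum ratios of the summands.

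The hard part is tightness of $\G_n^*$. From \eqref{holder} and the covariance formula above,
$$E|\G_n^*(s_1,t_1)-\G_n^*(s_2,t_2)|^2\le C\,\lambda^{(2a-2b)/(2D)}\max\{|s_1-s_2|^{2\vartheta},|t_1-t_2|^{2\vartheta}\}.$$
Under condition (ii), where $b=a$ and $\vartheta>1$, the exponent $2\vartheta$ exceeds the dimension $d=2$, so Kolmogorov's continuity criterion applied at an even moment order $p\ge 4$---available thanks to Assumption~\ref{a:x}---delivers tightness in $C([0,1]^2)$. Under condition (i), where $b<a$, the vanishing prefactor $\lambda^{(2a-2b)/(2D)}\to 0$ provides the slack needed for a chaining argument on a dyadic grid, controlled by Bernstein-type inequalities for sums of i.i.d.\ truncations of $\tau[X_i\otimes\epsilon_i]$ (again provided by the exponential moments of Assumption~\ref{a:x}), to give asymptotic equicontinuity even with only $\vartheta\ge 0$. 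Stochastic equicontinuity together with the finite-dimensional limits yields $\G_n^*\weakconverge Z$, and Slutsky combined with the preceding remainder and bias controls completes the proof.
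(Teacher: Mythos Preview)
Your proposal is correct and follows essentially the same route as the paper: the identical decomposition into Bahadur remainder, bias $W_\lambda(\beta_0)$, and the i.i.d.\ sum $\G_n^*=n^{-1/2}\lambda^{(2a+1)/(4D)}\sum_i\tau(X_i\otimes\epsilon_i)$; the same uniform bounds $\sup_{(s,t)}\|K_{(s,t)}\|_K^2=O(\lambda^{-(2a+1)/(2D)})$ and $\sqrt n\lambda^{(2a+1)/(4D)}\|W_\lambda\beta_0\|_\infty=O(\sqrt{n\lambda^2})$; and fidis via Cram\'er--Wold plus Lindeberg, using the covariance identity $\E\{\langle X\otimes\epsilon,\phi_{k\ell}\rangle_{L^2}\langle X\otimes\epsilon,\phi_{k'\ell'}\rangle_{L^2}\}=\delta_{kk'}\delta_{\ell\ell'}$.

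The only substantive deviation is in the tightness argument. For case (ii) ($b=a$, $\vartheta>1$) the paper does \emph{not} pass to higher moments: it uses the $L^2$ increment bound directly as an Orlicz bound with $\Psi(x)=x^2$ and applies the maximal inequality of van der Vaart--Wellner, Theorem~2.2.4, under the metric $d_1\{(s_1,t_1),(s_2,t_2)\}=\max\{|s_1-s_2|^\vartheta,|t_1-t_2|^\vartheta\}$, whose entropy integral $\int_0^\eta\zeta^{-1/\vartheta}\,d\zeta$ converges precisely because $\vartheta>1$. Your Kolmogorov-at-$p\ge4$ route is viable but needs an extra step you did not supply: a bound of the form $\E|\G_n^*(s_1,t_1)-\G_n^*(s_2,t_2)|^4\le C\rho^{4\vartheta}$ uniformly in $n$, which requires controlling the cross fourth moments $\E\prod_{j=1}^4\langle X\otimes\epsilon,\phi_{k_j\ell_j}\rangle_{L^2}$ (these are not automatically products of second moments since the $\langle X\otimes\epsilon,\phi_{k\ell}\rangle$ are only uncorrelated, not independent). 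The paper's $L^2$ chaining avoids this entirely. For case (i) ($b<a$) you and the paper agree in spirit: the paper implements exactly the ``vanishing prefactor + Bernstein + chaining'' idea via a two--scale argument (Lemma~A.1 of Kley et al., stated as Lemma~B.7), chaining with the $L^2$ Orlicz norm down to a mesh $\bar\eta\asymp\lambda^{(a-b)/(2D-\vartheta D)}$ and handling the sub-$\bar\eta$ oscillation by a Bernstein bound combined with a union bound over at most $\mathfrak D(\bar\eta,d_2)$ points; the polynomial rate conditions $\lambda=o(n^{-\nu_1})$ and $n^{1-\nu_2}\lambda^{(2a+1)/(2D)}\to\infty$ are used precisely to make the resulting exponential term kill the entropy factor.
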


\section{Statistical consequences}\label{sec:sc}

In this section, we study several statistical inference problems regarding the model in \eqref{model0}. We first show in Section~\ref{sec:minimax} that the estimator $\hat\beta_n$ in \eqref{hatbeta} achieves the minimax convergence rate. In Section~\ref{sec:simultaneousband}, we propose point-wise and  simultaneous confidence regions for the slope surface $\beta_0$.
In Section~\ref{sec:ch}, we develop a new test for the 
the classical hypotheses \eqref{class} based on the sup-norm using the duality between confidence regions and hypotheses testing. Moreover, 
we also extend the penalized likelihood-ratio test for scalar-on-function linear regression proposed in \cite{shang2015} to the function-on-function linear regression model (a numerical comparison of both tests can be found in Section \ref{sec:simulation} and shows some superiority of the confidence region approach.)  In Section~\ref{sec:rele}, we study a test for a relevant deviation of the ``true'' slope function and a given function $\beta_*$. Finally, a simultaneous prediction band for the conditional mean curve $\E\{Y(t)|X=x_0\}$ is proposed in Section~\ref{sec:prediction}. The methodology requires knowledge of the constants $a$ and $D$ in Assumption \ref{a201}, and a data driven rule for this choice will be given in Section \ref{sec:numerical}.

We also emphasize that, although we are mainly concentrating on the function-on-function linear regression model,
all results  presented so far also hold for the scalar-on-function linear model (under even weaker assumptions).
As a consequence, we also obtain new powerful 
methodology for the scalar-on-function linear regression model \eqref{eq:scalar} as well, 
and  we briefly illustrate this  fact for the 
problem of testing relevant hypotheses in Section~\ref{sec:scalar}. 

\subsection{Optimality}\label{sec:minimax}

Under Assumption~\ref{a1}, the  operator  $V$ in \eqref{v} defines a norm, say $\Vert\beta\Vert_V^2=V(\beta,\beta)$, on $\H$.
As a by-product of the Bahadur representation in Theorem~\ref{thm:bahadur}, we are able to show the upper bound for the convergence rate of the estimator $\hat\beta_n$  in  \eqref{hatbeta}  with respect to  the $\Vert\cdot\Vert_V$-norm. Moreover, we also prove that this rate is of the same order as the lower bound for  estimating $\beta_0$, which shows that $\hat\beta_n$  is minimax optimal. To be precise, let $\mathcal G$ denote the collection of all estimators from the data $(X_1,Y_1),\ldots,(X_n,Y_n)$, and let $\mathcal F$ denote the collection of the joint distribution $F$ of the $X$ and $Y$ that satisfies Assumptions~\ref{a1}--\ref{a:x}, according to the linear model in \eqref{model0}. 
The following theorem
is proved in  Section~\ref{app:thm:minimax}.

\begin{theorem}[Optimal convergence rate]
\label{thm:minimax}
Suppose Assumptions~\ref{a1}--\ref{a:rate} hold. \\
(i) By taking $\lambda\asymp n^{-2D/(2D+1)}$, we have
\begin{align*}
\lim_{c\to \infty}\,\limsup_{n\to\infty}\,\sup_{\substack{\beta_0\in\mathcal{\H}\\F\in\mathcal{F}}}\,\P\big(\Vert\hat\beta_{n}-\beta_0\Vert_V^2\geq c\,n^{-2D/(2D+1)}\big)=0\,.
\end{align*}
(ii) There exists a constant $c_0>0$ such that
\begin{align*}
\liminf_{n\to\infty}\,\inf_{\tilde\beta\in\mathcal{G}}\,\sup_{\substack{\beta_0\in\mathcal{\H}\\F\in\mathcal{F}}}\,\P\big(\Vert\tilde\beta-\beta_0\Vert_V^2\geq c_0\,n^{-2D/(2D+1)}\big)>0\,.
\end{align*}
\end{theorem}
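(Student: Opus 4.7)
My plan is to treat the two halves by different methods. Part (i) follows from the Bahadur representation of Theorem~\ref{thm:bahadur} combined with a bias--variance analysis of the leading term $S_{n,\lambda}(\beta_0)$, while part (ii) is a standard Fano-type minimax lower bound that, through the simultaneous diagonalization in Assumption~\ref{a201}, reduces to a Gaussian sequence problem.

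\emph{Upper bound.} Because $\Vert\beta\Vert_K^2=\Vert\beta\Vert_V^2+\lambda J(\beta,\beta)$, we have $\Vert\cdot\Vert_V\leq\Vert\cdot\Vert_K$, so Theorem~\ref{thm:bahadur} reduces the task to bounding $\Vert S_{n,\lambda}(\beta_0)\Vert_V$. Substituting $Y_i=\int\beta_0(s,\cdot)X_i(s)\,ds+\e_i$ into \eqref{dsn} yields
\begin{equation*}
S_{n,\lambda}(\beta_0)=W_\lambda\beta_0-\frac{1}{n}\sum_{i=1}^n\tau(X_i\otimes\e_i),
\end{equation*}
a deterministic bias plus a centred stochastic piece. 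Expanding in $\{\phi_{k\ell}\}$ and using \eqref{wphi}, a direct computation gives $\Vert W_\lambda\beta_0\Vert_V^2\leq\lambda J(\beta_0,\beta_0)$. For the stochastic term, Assumption~\ref{a0}, the product form $\phi_{k\ell}=x_{k\ell}\otimes\eta_\ell$, and the normalization $V(\phi_{k\ell},\phi_{k\ell})=1$ give $\E\langle X\otimes\e,\phi_{k\ell}\rangle_{L^2}^2=\sigma_\e^2$, so by \eqref{tau},
\begin{equation*}
\E\Bigl\Vert\frac{1}{n}\sum_{i=1}^n\tau(X_i\otimes\e_i)\Bigr\Vert_V^2 = \frac{\sigma_\e^2}{n}\sum_{k,\ell}(1+\lambda\rho_{k\ell})^{-2}\asymp n^{-1}\lambda^{-1/(2D)},
\end{equation*}
where the last asymptotic uses $\rho_{k\ell}\asymp(k\ell)^{2D}$. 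Choosing $\lambda\asymp n^{-2D/(2D+1)}$ balances the two pieces at the rate $n^{-2D/(2D+1)}$, and Assumption~\ref{a:rate} ensures the Bahadur remainder $v_n$ is of strictly smaller order.

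\emph{Lower bound.} For (ii), I would apply Fano's inequality to a packing of slope surfaces, restricting to the sub-class $\mathcal{F}_0\subset\mathcal{F}$ of Gaussian $X$ and Gaussian white-noise $\e$ (a lower bound over $\mathcal{F}_0$ implies one over $\mathcal{F}$). Set $N\asymp n^{1/(2D+1)}$, take an index set $\cI_N$ of cardinality $\asymp N$ on which $\rho_{k\ell}\lesssim N^{2D}$, and consider the finite class $\beta_\theta=c_0 N^{-(2D+1)/2}\sum_{(k,\ell)\in\cI_N}\theta_{k\ell}\phi_{k\ell}$ for $\theta\in\{0,1\}^{|\cI_N|}$, normalized so that $\beta_\theta$ lies uniformly in $\H$. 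A Varshamov--Gilbert packing yields $M$ hypotheses with pairwise $\Vert\cdot\Vert_V$-distance $\gtrsim n^{-D/(2D+1)}$ and $\log M\asymp N$. Under the Gaussian structure, the Kullback--Leibler divergence between the two laws of $(Y_1,\ldots,Y_n)$ conditional on the design equals $\frac{n}{2\sigma_\e^2}V(\beta_\theta-\beta_{\theta'},\beta_\theta-\beta_{\theta'})$; taking $c_0$ small enough forces this to be at most $\tfrac{1}{8}\log M$, and Fano's inequality produces the claimed lower bound.

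\emph{Main obstacle.} The delicate technical step is the two-dimensional combinatorics of $\{\rho_{k\ell}\}$: a naive estimate of $\sum(1+\lambda(k\ell)^{2D})^{-2}$ picks up a $\log(1/\lambda)$ factor because $\#\{(k,\ell):k\ell\leq M\}\asymp M\log M$. Matching the upper bound with the packing in (ii) without such logarithmic losses requires exploiting the flexibility in the $\asymp$ of Assumption~\ref{a201}---in typical realizations $\rho_{k\ell}$ behaves more like $k^{2D}+\ell^{2D}$, for which the relevant sums are cleanly of order $\lambda^{-1/(2D)}$---and calibrating the cardinality of $\cI_N$ in the packing to match the effective dimension of the variance computation. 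Everything else is routine bookkeeping built on Assumptions~\ref{a1}--\ref{a:rate} and the operator machinery from Section~\ref{sec:method}.
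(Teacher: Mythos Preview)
Your plan matches the paper's on both halves. For (i) the paper is a touch more direct: it simply cites Lemma~\ref{lem:hatbeta}, which gives $\|\hat\beta_n-\beta_0\|_K=O_p(\lambda^{1/2}+n^{-1/2}\lambda^{-1/(4D)})$, together with $\|\cdot\|_V\leq\|\cdot\|_K$; your bias--variance decomposition of $S_{n,\lambda}(\beta_0)$ is precisely the computation carried out inside that lemma's proof, so the two routes coincide. For (ii) the paper runs the same Varshamov--Gilbert/Tsybakov reduction you describe: the packing is over the square block $\{(k,\ell):\nu_n+1\leq k,\ell\leq 2\nu_n\}$ with $\nu_n=\lfloor n^{1/(4D+2)}\rfloor$ (cardinality $\nu_n^2\asymp n^{1/(2D+1)}$) and amplitude $c_1 n^{-1/2}$, i.e.\ your $|\cI_N|\asymp N\asymp n^{1/(2D+1)}$ and $c_0 N^{-(2D+1)/2}$, with the identical KL calculation against a Gaussian white-noise error.

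On your ``main obstacle'': the paper does \emph{not} resolve it by reinterpreting $\rho_{k\ell}$ as $k^{2D}+\ell^{2D}$---Assumption~\ref{a201} fixes $\rho_{k\ell}\asymp(k\ell)^{2D}$, so that escape route is closed to you. The paper instead invokes its Lemma~\ref{lem:sum}, which asserts $\sum_{k,\ell}(k\ell)^{2\nu}\{1+\lambda(k\ell)^{2D}\}^{-r}\asymp\lambda^{-(2\nu+1)/(2D)}$ without logarithmic correction, and quotes it both in Lemma~\ref{lem:hatbeta} and throughout. Your divisor-counting observation that $\#\{(k,\ell):k\ell\leq M\}\asymp M\log M$ is exactly the case $\nu=0$ of that lemma, so whatever tension exists is absorbed into Lemma~\ref{lem:sum} rather than dealt with inside the proof of Theorem~\ref{thm:minimax} itself.
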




Theorem~\ref{thm:minimax} shows that the  estimator $\hat\beta_n$ in \eqref{hatbeta}
achieves the  minimax optimal convergence rate $n^{-2D/(2D+1)}$ with respect to the $\Vert\cdot\Vert_V$-norm.  It is of interest to compare  this result 
with the minimax prediction rate obtained in \cite{sun2018}. First, we consider the estimation of the slope surface $\beta_0$ in the Sobolev space $\H$ on the square $[0,1]^2$ defined in \eqref{H}, whereas \cite{sun2018} considered a tensor product RKHS on $[0,1]^2$. Second, \cite{sun2018} showed their minimax properties in terms of {\it excess prediction rate} (hereinafter denoted by EPR), defined by
\begin{align}\label{epr}
{\rm EPR}(\tilde\beta_n)&=\int_0^1\E_{n+1}\bigg\{\left|Y_{n+1}-\int_0^1\tilde\beta_n(s,t)X_{n+1}(s)ds\right|^2-\left|Y_{n+1}-\int_0^1\beta_0(s,t)X_{n+1}(s)ds\right|^2\bigg\}dt\notag\\
&=\int_0^1\E_{n+1}\left|\int_0^1\big\{\tilde\beta_n(s,t)-\beta_0(s,t)\big\} X_{n+1}(s)ds\right|^2dt\,.
\end{align}
Here  $\tilde\beta_n$ is  an estimator from  the data $\{(X_i,Y_i)\}_{i=1}^n$,  $(X_{n+1},Y_{n+1})$ is an  independent future observation and $\E_{n+1}$ is the 
conditional expectation  with respect to $(X_1,Y_1),\ldots,(X_n,Y_n)$ (which means that the  expectation is
taken with respect to  $(X_{n+1},Y_{n+1})$). In fact, we have 
\begin{align*}
{\rm EPR}(\tilde\beta_n)&=\int_0^1\bigg[\int_{0}^1\int_0^1C_X(s_1,s_2)\{\tilde\beta_n(s_1,t)-\beta_0(s_1,t)\} \{\tilde\beta_n(s_2,t)-\beta_0(s_2,t)\}ds_1ds_2\bigg]dt\\
&=V(\tilde\beta_n-\beta_0,\tilde\beta_n-\beta_0)=\Vert\tilde\beta_n-\beta_0\Vert_V^2\,,
\end{align*}
which shows  that the difference between $\tilde\beta_n$ and the true $\beta_0$ in squared $\Vert\cdot\Vert_V$-norm is equivalent to ${\rm EPR}(\tilde\beta_n)$. Therefore, 
it follows from Theorem~\ref{thm:minimax}  that for  the  estimator $\hat\beta_n$ in  \eqref{hatbeta}, ${\rm EPR}(\hat\beta_n)$ achieves the minimax rate $n^{-2D/(2D+1)}$, which is determined by the constant $D>0$ that specifies the growing rate of $J(\phi_{k\ell},\phi_{k\ell})$ in Assumption~\ref{a201}.
In comparison, \cite{sun2018} showed that the EPR of their estimator achieves the minimax rate $n^{-2\breve D/(2\breve D+1)}$, where the constant $\breve D>0$ characterises the decay rate of eigenvalues of the kernel $$\Pi\{(s_1,t_1),(s_2,t_2)\}=\int_{[0,1]^3}C_X(s,t)\tilde K^{1/2}\{(s_1,t_1),(s,t)\}\tilde K^{1/2}\{(s_2,t_2),(s,u)\}\,ds\,dt\,du\,,$$ where $\tilde K$ is the reproducing kernel of their tensor product RKHS. 

\subsection{Confidence regions}
\label{sec:simultaneousband}

The asymptotic normality of the estimator  $\hat\beta_n(s,t)$ in Theorem~\ref{thm:pointwise} enables us to construct a  point-wise $(1-\alpha)$-confidence interval of $\beta_0(s,t)$, for fixed $(s,t)\in[0,1]^2$, since
\begin{align*}
\lim_{n\to\infty}\P\Big\{\beta_0(s,t)\in\big[\hat\beta_{n}(s,t)-\mathcal Q_{1-\alpha/2}\,\sigma_\tau(s,t)\,,\hat\beta_{n}(s,t)+\mathcal Q_{1-\alpha/2}\,\sigma_\tau(s,t)\big]\Big\}=1-\alpha\,,
\end{align*}
where $\sigma_\tau(s,t)=\big\{\sum_{k,\ell}(1+\lambda\rho_{k\ell})^{-2}\phi_{k\ell}^2(s,t)\big\}^{1/2}$ and $\mathcal Q_{1-\alpha/2}$ is the $(1-\alpha/2)$-quantile of the standard normal distribution.

On the other hand, the construction of simultaneous confidence regions  based on the sup-norm
for the slope surface $\beta_0$ is more complicated.
In principle, this is possible using 
Theorem~\ref{thm:process} and the continuous mapping theorem,
which give 
\begin{align}\label{t}
\sqrt{n}\lambda^{(2a+1)/(4D)}\sup_{(s,t)\in[0,1]^2}|\hat\beta_{n}(s,t)-\beta_0(s,t)|\converged T=\max_{(s,t)\in[0,1]^2}|Z(s,t)|\,,
\end{align}
where  $Z$  is the mean-zero Gaussian process  defined in \eqref{hatgn}.
Thus, if  $\mathcal Q_{1-\alpha}(T)$ denotes the $(1-\alpha)$-quantile of 
the distribution of  $T$ and 
\begin{align*}
\hat\beta_{n}^{\pm}(s,t)=\hat\beta_{n}(s,t)\pm \frac{\mathcal Q_{1-\alpha}(T)}{\sqrt{n}\lambda^{(2a+1)/(4D)}}\, ,
\end{align*}
then the set $\mathfrak{C}_n(\alpha)=\big\{\beta:\hat\beta_n^-(s,t)\leq\beta(s,t)\leq \hat\beta_n^+(s,t)\big\}$ defines a simultaneous asymptotic $(1-\alpha)$-confidence region for $\beta_0$, i.e., $$
\lim_{n\to\infty}\P\{\beta_0\in\mathfrak{C}_n(\alpha)\}=1-\alpha.
$$
However, the quantiles of the distribution of
$T$ depend on the  covariance function $C_Z$ in \eqref{cz}  of the Gaussian process $Z$, 
and is rarely available in practice.
In order to circumvent this difficulty,  we propose the following bootstrap procedure to approximate $\mathcal Q_{1-\alpha}(T)$.


\begin{algorithm}[Bootstrap simultaneous confidence region for the slope surface $\beta_0$]~\label{algo:cb}
\begin{enumerate}[nolistsep]

\item Generate i.i.d.~bootstrap weights $\{M_{i,q}\}_{1\leq i\leq n,1\leq q\leq Q}$ independent of the data $\{(X_i,Y_i)\}_{i=1}^n$ from a two-point distribution: taking $1-1/\sqrt{2}$ with probability $2/3$ and taking $1+\sqrt{2}$ with probability $1/3$, such that $\E(M_{i,q})=\var(M_{i,q})=1$.

\item Compute  $\hat\beta_{n}$ in \eqref{hatbeta}; for each $1\leq q\leq Q$, compute the bootstrap estimator
\begin{align}\label{hatbetastar}
\hat\beta_{n,q}^*&=\underset{\beta\in \H}{\arg\min}\bigg[\frac{1}{2n}\sum_{i=1}^nM_{i,q}\int_0^1\bigg\{Y_i(t)-\int_0^1\beta(s,t)X_i(s)ds\bigg\}^2dt+\frac{\lambda}{2}\, J(\beta,\beta)\bigg]\,.
\end{align}

\item For $1\leq q\leq Q$, let
\begin{align}\label{gnqstar}
\G_{n,q}^*(s,t)& =\sqrt{n}\lambda^{(2a+1)/(4D)}\{\hat\beta_{n,q}^*(s,t)-\hat\beta_{n}(s,t)\}\,;  \nonumber \\
 \hat T_{n,q}^* & =\sup_{(s,t)\in[0,1]^2}|\G_{n,q}^*(s,t)|\,.
\end{align}
Compute the empirical $(1-\alpha)$-quantile of the  sample $\{\hat T_{n,q}^*\}_{q=1}^Q$, denoted by $\mathcal Q_{1-\alpha}(\hat T_{n,Q}^*)$.

\item Let $\hat\beta_{n,Q}^{*^\pm}(s,t)=\hat\beta_{n}(s,t)\pm \mathcal Q_{1-\alpha}(\hat T^*_{n,Q})/\{\sqrt{n}\lambda^{(2a+1)/(4D)}\}$. Define the set 
\begin{equation}
\label{det1}
\mathfrak{C}_{n,Q}^*(\alpha)=\big\{\beta: [0,1]^2 \to \mathbb{R}  : ~ \hat\beta_{n,Q}^{*^-}(s,t)\leq\beta(s,t)\leq \hat\beta_{n,Q}^{*^+}(s,t)\big\}
\end{equation} 
as the simultaneous $(1-\alpha)$ confidence region for
the slope surface $\beta_0$ in model \eqref{model0}.
\end{enumerate}
\end{algorithm}

The following theorem, which is proved in Section~\ref{app:thm:bootstrap}, provides a theoretical justification of the above bootstrap procedure and establishes the consistency of the simultaneous confidence region in Algorithm~\ref{algo:cb}.

\begin{theorem}\label{thm:cb:boot}

Under the conditions of Theorem~\ref{thm:process} we have 
\begin{equation}
\label{conset}
\{\G^*_{n,q}(s,t)\}_{s,t\in[0,1]}\weakconverge \{Z(s,t)\}_{s,t\in[0,1]}   \text{ ~~in } C([0,1]^2)
\end{equation}
conditionally on the data $\{(X_i,Y_i)\}_{i=1}^n$, 
where $Z$ is the Gaussian process in Theorem~\ref{thm:process}.
\\
In particular, the set $\mathfrak C_n^*(\alpha)$ in Algorithm~\ref{algo:cb} defines a simultaneous asymptotic $(1-\alpha)$ confidence region for the slope surface $\beta_0$ in model \eqref{model0}, that is 
\begin{equation}
\label{consetboot}
\lim_{Q\to\infty}\lim_{n\to\infty}\P\{\beta_0\in\mathfrak{C}^*_{n,Q}(\alpha)\}=1-\alpha\,.
\end{equation}
\end{theorem}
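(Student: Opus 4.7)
The plan is to mirror the derivation of Theorem~\ref{thm:process} inside the bootstrap world: identify the leading linear term of $\hat\beta_{n,q}^* - \hat\beta_n$ and show that this term obeys a conditional functional central limit theorem with covariance kernel $C_Z$, then derive \eqref{consetboot} from \eqref{conset} via the continuous mapping theorem.

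First, I would establish a \emph{bootstrap Bahadur representation}. Denoting by $S_{n,\lambda}^*$ the weighted analogue of $S_{n,\lambda}$ in \eqref{dsn} (the summands multiplied by $M_{i,q}$), the bootstrap estimator satisfies $S_{n,\lambda}^*(\hat\beta_{n,q}^*)=0$. Expanding around $\hat\beta_n$ and using the first-order condition $S_{n,\lambda}(\hat\beta_n)=0$, which in particular gives $W_\lambda(\hat\beta_n) = n^{-1}\sum_{i=1}^n \tau[X_i \otimes \hat\e_i]$ with residuals $\hat\e_i = Y_i - \int_0^1 \hat\beta_n(s,\cdot)X_i(s)\,ds$, one arrives at
\begin{equation*}
\Bigl\Vert \hat\beta_{n,q}^* - \hat\beta_n - \tfrac{1}{n}\sum_{i=1}^n (M_{i,q}-1)\,\tau[X_i\otimes\hat\e_i]\Bigr\Vert_K = O_P(v_n)
\end{equation*}
conditionally on the data, by adapting the fixed-point/perturbation argument used for Theorem~\ref{thm:bahadur}. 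Since the weights $M_{i,q}$ are bounded with mean and variance one, the Bernstein-type concentration bounds behind the unweighted proof transfer to the conditional weighted setting. I would then replace $\hat\e_i$ by $\e_i$; the induced error is controlled by Theorem~\ref{thm:minimax} together with Assumption~\ref{a:x} and is negligible after rescaling by $\sqrt{n}\lambda^{(2a+1)/(4D)}$ under the given rate conditions.

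Second, I analyse the leading linear term
\begin{equation*}
\G_{n,q}^{*,\mathrm{lin}}(s,t) = \frac{\lambda^{(2a+1)/(4D)}}{\sqrt{n}}\sum_{i=1}^n (M_{i,q}-1)\,\bigl\langle \tau[X_i\otimes\e_i],\,K_{(s,t)}\bigr\rangle_K\,,
\end{equation*}
conditionally on $\{(X_i,Y_i)\}_{i=1}^n$. Applying the Lindeberg conditional CLT yields convergence of the finite-dimensional distributions to a mean-zero Gaussian process, and the identification of the limiting covariance reduces to showing that
$\lambda^{(2a+1)/(2D)} n^{-1}\sum_i \langle\tau[X_i\otimes\e_i],K_{(s_1,t_1)}\rangle_K \langle\tau[X_i\otimes\e_i],K_{(s_2,t_2)}\rangle_K$
converges in probability to $C_Z\{(s_1,t_1),(s_2,t_2)\}$. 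This follows from the law of large numbers combined with Assumption~\ref{a0} (Dirac covariance of $\e$), the diagonalisation in Assumption~\ref{a201} and the spectral expansion \eqref{kst2}. Tightness of $\G_{n,q}^*$ in $C([0,1]^2)$ conditional on the data follows from a Kolmogorov-type criterion: the boundedness of the weights ensures that conditional moments of the increment $\G_{n,q}^*(s_1,t_1)-\G_{n,q}^*(s_2,t_2)$ are dominated, up to constants, by their unconditional counterparts, which in turn are controlled by the Hölder bound \eqref{holder} under the two regimes (i) $b<a,\vartheta\geq 0$ and (ii) $b=a,\vartheta>1$. This establishes \eqref{conset}.

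Assertion \eqref{consetboot} then follows from \eqref{conset} by the continuous mapping theorem applied to $\sup_{(s,t)}|\cdot|$: conditionally on the data, $\hat T_{n,q}^* \weakconverge T=\Vert Z\Vert_\infty$, and because the distribution of $T$ is continuous (a standard Gaussian-concentration consequence under the non-degeneracy of $C_Z$), the empirical quantile $\mathcal Q_{1-\alpha}(\hat T_{n,Q}^*)$ converges to $\mathcal Q_{1-\alpha}(T)$ as $Q,n\to\infty$; combining this with \eqref{t} gives the advertised asymptotic coverage. The \emph{main obstacle} is establishing the bootstrap Bahadur representation conditionally on the data: one must show that the Hessian-type operator $\mathcal DS_{n,\lambda}^*(\hat\beta_n)$ is invertible on $\H$ with operator-norm control holding uniformly in $n$ with high probability, which reduces to a concentration inequality for the weighted random operator $n^{-1}\sum_{i=1}^n(M_{i,q}-1)\tau[X_i\otimes\int_0^1 X_i(s)\cdot(s,\cdot)\,ds]$ on $\H$ and requires a careful adaptation of the RKHS concentration arguments underpinning Theorem~\ref{thm:bahadur}.
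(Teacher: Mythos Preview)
Your overall strategy is sound, but the paper's proof takes a different and considerably simpler route in two places.

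First, the paper expands $\hat\beta_{n,q}^*$ around $\beta_0$, not around $\hat\beta_n$. Writing $\hat\beta_{n,q}^*-\hat\beta_n=(\hat\beta_{n,q}^*-\beta_0)-(\hat\beta_n-\beta_0)$ and applying the Bahadur representation to each piece separately, the $W_\lambda(\beta_0)$ bias terms cancel and the leading linear term is
\[
H_{n,q}^*(s,t)=n^{-1/2}\lambda^{(2a+1)/(4D)}\sum_{i=1}^n(1-M_{i,q})\,\tau(X_i\otimes\e_i)(s,t),
\]
involving the \emph{true} errors $\e_i$ directly. Your detour through fitted residuals $\hat\e_i$ and the subsequent replacement $\hat\e_i\to\e_i$ is unnecessary; incidentally, Theorem~\ref{thm:minimax} (which controls the $V$-norm) is not the right tool for that replacement---you would need a weighted analogue of the uniform empirical-process bound behind Theorem~\ref{thm:bahadur}.

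Second, and more importantly, the paper sidesteps the conditional-CLT programme entirely. By a result of B\"ucher and Kojadinovic (2019), conditional weak convergence of $\G_{n,q}^*$ given the data is equivalent to the \emph{unconditional} joint convergence
\[
(\G_n,\G_{n,1}^*,\ldots,\G_{n,Q}^*)\weakconverge (Z,Z_1,\ldots,Z_Q)\quad\text{in }\{C([0,1]^2)\}^{Q+1}
\]
for every fixed $Q\geq2$, with $Z_1,\ldots,Z_Q$ i.i.d.\ copies of $Z$. This reduces the whole argument to an unconditional Lindeberg CLT for the finite-dimensional distributions (the asymptotic independence of the $Z_q$'s comes from $\E\{(M_{1,q}-1)(M_{1,q'}-1)\}=\delta_{qq'}$) and unconditional tightness of $H_{n,q}^*$, which follows immediately from $|1-M_{i,q}|\leq\sqrt{2}$ a.s.\ together with the tightness of $I_{3,n}$ already established in the proof of Theorem~\ref{thm:process}. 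The ``main obstacle'' you flag---a conditional operator-concentration bound for $\mathcal DS_{n,\lambda}^*(\hat\beta_n)$---never arises: the bootstrap Bahadur remainder $\sqrt{n}\lambda^{(2a+1)/(4D)}\{\hat\beta_{n,q}^*-\beta_0+S_{n,\lambda}^*(\beta_0)\}$ is handled unconditionally by the same contraction-mapping argument as Theorem~\ref{thm:bahadur}, with the bounded weights absorbed into the constants.

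Your route would ultimately work, but the paper's two devices---centring at $\beta_0$ and the joint-unconditional reduction---eliminate precisely the hardest parts of your plan.
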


\subsection{Classical hypotheses}\label{sec:ch}

For a given surface $\beta_*$ on $[0,1]^2$, consider the  ``classical'' hypotheses 
\begin{align}\label{eq:ch}
H_0:\,\beta_0=\beta_*\qquad&\text{versus}\qquad H_1:\,\beta_0\neq\beta_*\, .
\end{align}
 In the special case where $\beta_*\equiv0$, \eqref{eq:ch} becomes $H_0:\beta_0=0$ versus $H_1:\beta_0\neq0$, which is the conventional hypothesis for linear effect; we refer to \cite{Tekbudak} for a review in the scalar-on-function regression context.


In order to construct a test for \eqref{eq:ch}, we may utilize the duality between   hypotheses thesing and confidence regions \citep[see, for example,][]{aitchison1964}. Specifically,  recall from Section~\ref{sec:simultaneousband} that  we are able to construct a simultaneous confidence region $\mathfrak C_{n,Q}^*(\alpha)$ for $\beta_0$ using Algorithm~\ref{algo:cb}, such that
$\P\{\beta_0\in\mathfrak{C}^*_{n,Q}(\alpha)\}\to1-\alpha$ as $n,Q\to \infty$. Then, the decision rule, which rejects 
the null hypothesis, whenever
\begin{align}\label{eq:chrule}
\beta_*\notin\mathfrak C_{n,Q}^*(\alpha)\,,
\end{align}
defines an asymptotic level $\alpha$
test for the classical hypotheses in \eqref{eq:ch}.


An alternative approach to construct a test for these classical hypotheses  is to extend the penalized likelihood ratio test (hereinafter denoted by PLRT), proposed in \cite{shang2015} for the scalar-on-function regression context, to the functional response context. Specifically, for the objective function $L_{n,\lambda}$
in \eqref{obj}, consider the penalized likelihood ratio test statistic defined by
\begin{align}\label{lrts}
\L_n(\beta_*)= L_{n,\lambda}(\beta_*)- L_{n,\lambda}(\hat\beta_{n})\,.
\end{align} 
In order to find the asymptotic distribution of $\L_n(\beta_*)$ under the null hypothesis,
we  define the sequences 
\begin{align}\label{usigma}
u_n=\frac{\big\{\sum_{k,\ell}(1+\lambda\rho_{k\ell})^{-1}\big\}^2}{\sum_{k,\ell}(1+\lambda\rho_{k\ell})^{-2}}\,,\qquad\sigma_n^2=\frac{\sum_{k,\ell}(1+\lambda\rho_{k\ell})^{-1}}{\sum_{k,\ell}(1+\lambda\rho_{k\ell})^{-2}}\,,
\end{align}
and obtain the following result, which is proved in Section \ref{app:thm:lrt}.

\begin{theorem}\label{thm:lrt}
Let Assumptions~\ref{a1}--\ref{a:rate} be satisfied. Assume that as $n\to\infty$, $n\lambda^{(2D+1)/(2D)}=o(1)$, $nv_n^2=o(1)$ and $nv_n\lambda^{(D+1)/(2D)}=o(1)$, where $v_n$ is
defined in \eqref{vnn}. Then, under the null hypothesis in \eqref{eq:ch}, 
\begin{align*}
\frac{1}{\sqrt{2u_n}}\{2n\sigma_n^2\,\L_n(\beta_*)-u_n\}\converged N(0,1)\,,
\end{align*}
where  $u_n$ and $\sigma_n^2$ are given  in \eqref{usigma}.
\end{theorem}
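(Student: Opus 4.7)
The plan is to exploit the quadratic structure of the objective $L_{n,\lambda}$ defined in \eqref{obj}, so that its Taylor expansion around $\hat\beta_n$ is exact. Since $\hat\beta_n$ is an unconstrained minimiser in $\H$ the first-order term vanishes (equivalently, $S_{n,\lambda}(\hat\beta_n)=0$ in \eqref{dsn}), and thus under $H_0$, with $\delta=\hat\beta_n-\beta_0$,
\begin{equation*}
\L_n(\beta_*)=\tfrac12\big\{V_n(\delta,\delta)+\lambda J(\delta,\delta)\big\},
\end{equation*}
where $V_n(\gamma_1,\gamma_2)=n^{-1}\sum_{i=1}^n\int_0^1\{\int_0^1\gamma_1(s,t)X_i(s)\,ds\}\{\int_0^1\gamma_2(s',t)X_i(s')\,ds'\}\,dt$ is the empirical analogue of $V$. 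Writing $V_n=V+(V_n-V)$ and controlling the fluctuation $(V_n-V)(\delta,\delta)$ via the fourth-moment condition \eqref{moment} reduces this to $\L_n(\beta_*)=\tfrac12\|\delta\|_K^2+o_p(\sqrt{u_n}/(n\sigma_n^2))$, so that it suffices to study the limiting distribution of $n\sigma_n^2\|\delta\|_K^2$.

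Next, by the Bahadur representation of Theorem~\ref{thm:bahadur}, $\delta=-S_{n,\lambda}(\beta_0)+R_n$ with $\|R_n\|_K=O_p(v_n)$, and by \eqref{dsn} the score splits as $S_{n,\lambda}(\beta_0)=-\bar\xi+W_\lambda\beta_0$, where $\bar\xi=n^{-1}\sum_{i=1}^n\tau(X_i\otimes\e_i)$. A direct calculation using $W_\lambda\phi_{k\ell}=\lambda\rho_{k\ell}\phi_{k\ell}/(1+\lambda\rho_{k\ell})$ together with the summability condition $\sum_{k,\ell}\rho_{k\ell}^2V^2(\beta_0,\phi_{k\ell})<\infty$ yields $\|W_\lambda\beta_0\|_K^2=O(\lambda)$, while the three rate conditions $nv_n^2=o(1)$, $n\lambda^{(2D+1)/(2D)}=o(1)$ and $nv_n\lambda^{(D+1)/(2D)}=o(1)$ imposed in the theorem are exactly what is needed to render each cross-product among $\bar\xi$, $R_n$ and $W_\lambda\beta_0$ of order $o_p(\sqrt{u_n}/(n\sigma_n^2))$. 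Hence it remains to prove $(n\sigma_n^2\|\bar\xi\|_K^2-u_n)/\sqrt{2u_n}\converged N(0,1)$.

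For this central limit step, Assumption~\ref{a201} together with \eqref{tau} gives the explicit representation
\begin{equation*}
\|\bar\xi\|_K^2=\frac{1}{n^2}\sum_{i,j=1}^n\sum_{k,\ell}\frac{Z_{k\ell,i}Z_{k\ell,j}}{1+\lambda\rho_{k\ell}},\qquad Z_{k\ell,i}=\int_0^1\!\!\int_0^1\phi_{k\ell}(s,t)X_i(s)\e_i(t)\,ds\,dt,
\end{equation*}
and Assumption~\ref{a0} combined with the $V$-orthonormality of $\{\phi_{k\ell}\}$ yields $\E(Z_{k\ell,i}Z_{k'\ell',i})=\sigma_\e^2\delta_{kk'}\delta_{\ell\ell'}$. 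The diagonal part $n^{-2}\sum_i\sum_{k,\ell}Z_{k\ell,i}^2/(1+\lambda\rho_{k\ell})$ has mean $n^{-1}\sigma_\e^2\sum_{k,\ell}(1+\lambda\rho_{k\ell})^{-1}$, which produces exactly the centring $u_n$ after multiplication by $n\sigma_n^2$, and its variance is of order $u_n^2/n$ and hence negligible against $\sqrt{u_n}$. The off-diagonal part $\frac{2}{n^2}\sum_{i<j}\sum_{k,\ell}Z_{k\ell,i}Z_{k\ell,j}/(1+\lambda\rho_{k\ell})$ is a degenerate, growing-dimensional $U$-statistic with variance equal to $2\sigma_\e^4 n^{-2}\sum_{k,\ell}(1+\lambda\rho_{k\ell})^{-2}$, i.e.\ precisely $2u_n/(n\sigma_n^2)^2$.

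The main obstacle is promoting this variance identity to a distributional limit. My plan is to view the off-diagonal sum as a martingale sequence $M_j=n^{-2}\sum_{i<j}\sum_{k,\ell}Z_{k\ell,i}Z_{k\ell,j}/(1+\lambda\rho_{k\ell})$ adapted to the filtration $\F_{j-1}=\sigma(X_1,\e_1,\dots,X_{j-1},\e_{j-1})$ and to invoke the martingale central limit theorem (or, equivalently, de Jong's theorem for generalised quadratic forms). Convergence of $\sum_j\E(M_j^2\,|\,\F_{j-1})$ to the target variance reduces to second-moment estimates of the partial sums $\sum_{i<j}Z_{k\ell,i}$, which are controlled via \eqref{moment}, while the Lindeberg condition is obtained from the exponential moments in Assumption~\ref{a:x}. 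The delicate point, not accessible from any fixed-dimensional $U$-statistic CLT, is that the effective dimension $u_n$ diverges with $n$, and that divergence is precisely what forces a Gaussian, rather than a quadratic-form-in-Gaussian, limit.
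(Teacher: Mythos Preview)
Your proposal is correct and follows essentially the same route as the paper: both reduce $2n\L_n(\beta_*)$ to $n\|\bar\xi\|_K^2$ plus negligible terms, split the latter into a diagonal part (supplying the centring $u_n$, with fluctuation of order $u_n^2/n$) and a degenerate off-diagonal $U$-statistic, and then invoke de~Jong's CLT for generalised quadratic forms (your martingale CLT is an equivalent device). The only organisational difference is that you Taylor-expand $L_{n,\lambda}$ around the minimiser $\hat\beta_n$ to get $2\L_n(\beta_*)=V_n(\delta,\delta)+\lambda J(\delta,\delta)$ in one stroke, whereas the paper expands around $\beta_*$ and then rewrites $\|\hat\beta_\Delta\|_K^2$ as $\|S_{n,\lambda}(\beta_*)\|_K^2$ plus cross-terms; the two book-keepings are algebraically equivalent.

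Two places in your sketch deserve tightening. First, the summability condition $\sum_{k,\ell}\rho_{k\ell}^2V^2(\beta_0,\phi_{k\ell})<\infty$ you invoke for $\|W_\lambda\beta_0\|_K^2=O(\lambda)$ is \emph{not} among the hypotheses of Theorem~\ref{thm:lrt}; the paper gets $\|W_\lambda\beta_0\|_K^2=o(\lambda)$ from only $\beta_0\in\H$ (i.e.\ $\sum_{k,\ell}\rho_{k\ell}V^2(\beta_0,\phi_{k\ell})<\infty$) via dominated convergence, since $\lambda\rho_{k\ell}^2/(1+\lambda\rho_{k\ell})\le\rho_{k\ell}$ pointwise and tends to~$0$. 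Second, saying $(V_n-V)(\delta,\delta)$ is ``controlled via the fourth-moment condition~\eqref{moment}'' is too casual: $\delta$ depends on the same data as $V_n$, so a variance bound at a fixed argument does not suffice. What is actually needed is the uniform empirical-process bound $n^{-1/2}\|H_n(\delta)\|_K=O_p(v_n)$ already established in the proof of the Bahadur representation (Lemma~\ref{lem:hnbeta} together with the localisation argument in \eqref{hn}); the paper makes this explicit through its term $I_{3,n}=-n\langle(\mathcal DS_{n,\lambda}-\mathcal DS_\lambda)\hat\beta_\Delta,\hat\beta_\Delta\rangle_K$. Similarly, the cross-term $n\langle\bar\xi,W_\lambda\beta_0\rangle_K$ should be handled by computing its second moment directly (it is a sum of i.i.d.\ mean-zero terms with $\E\langle\tau(X\otimes\e),W_\lambda\beta_0\rangle_K^2=V(W_\lambda\beta_0,W_\lambda\beta_0)=o(\lambda)$), not by Cauchy--Schwarz, which would require the unassumed condition $n\lambda=o(1)$.
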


Then, the PLRT  at nominal level $\alpha$ 
rejects the null hypothesis in 
\eqref{eq:ch}, whenever
\begin{align}\label{eq:plrt}
2n\sigma_n^2\,\L_n(\beta_*)-u_n\geq\sqrt{2 u_n}\,\mathcal Q_{1-\alpha}\,,
\end{align}
where $\mathcal Q_{1-\alpha}$ is the $(1-\alpha)$-quantile of the standard normal distribution. 
We compare the test \eqref{eq:chrule} and PLRT  \eqref{eq:plrt} for the classical hypotheses \eqref{eq:ch} through simulated data in Section~\ref{sec:simulation}.

\subsection{Relevant hypotheses}\label{sec:rele}

It turns out that the construction of an asymptotic level $\alpha$ test  for  relevant hypotheses as formulated
in \eqref{rel} is substantially more difficult.  Recall that we are interested in testing whether the maximum deviation between  a given surface $\beta_*$ 
and the unknown ``true'' slope surface $\beta_0$ exceeds a given value $\Delta\geq0$, and note  that with the notation  $d_\infty=\sup_{(s,t)\in[0,1]^2}|\beta_0(s,t)-\beta_*(s,t)|$ the  relevant hypothesis in \eqref{rel} can be rewritten as
\begin{align}\label{rele}
H_0:\,d_\infty\leq\Delta\quad&\text{versus}\quad H_1:\, d_\infty>\Delta~.
\end{align}
Therefore, a reasonable decision rule is to reject the null hypothesis for large values of the statistic
\begin{equation} \label{estsup}
    \hat d_\infty=\sup_{(s,t)\in[0,1]^2}|\hat\beta_{n}(s,t)-\beta_*(s,t)|~.
\end{equation}
When $\Delta=0$, the above relevant hypothesis reduces to the classical hypotheses
in \eqref{eq:ch}. In this case, under the null
hypothesis  $H_0:\beta_0=\beta_*$, there exists only  one function-on-function linear model,
which simplifies the  asymptotic analysis of  the corresponding test statistics substantially, because
basically the asymptotic distribution 
can be obtained from  Theorem \ref{thm:process} via continuous mapping (see also the discussion in Section \ref{sec:ch}). 
On the other hand, if $\Delta >0$, there
appear additional nuisance parameters in the asymptotic distribution of the difference 
$ \hat d_\infty -   d_\infty$, which makes the analysis of a decision rule more intricate.

For a precise description of the asymptotic distribution of $\hat d_\infty$ in  the case $\Delta >0$,  let
\begin{align}\label{epm}
\EE^{\pm}=\{(s,t)\in[0,1]^2:\beta_0(s,t)-\beta_*(s,t)=\pm d_\infty\}\,,
\end{align}
denote the set of points, where the surface   $\beta_0-\beta_*$ attains it sup-norm (the set  $\EE^{+}$)
or its negative sup-norm  (the set $\EE^{-}$).
Here we take the convention that $\EE^{+}=\EE^{-}=[0,1]^2$ if $d_\infty=0$
and denote by  
$\EE=\EE^+\cup\EE^-$ the 
 set of {\it  extremal points}
 of the difference $\beta_0-\beta_*$.
The following result describes 
the asymptotic properties of $\hat d_\infty$ 
and is crucial for constructing a test for the relevant hypothesis. It 
is proved in Section~\ref{app:thm:extreme}.

\begin{corollary} \label{cordet}
If the assumptions of Theorem~\ref{thm:process} are satisfied, then 
\begin{align}\label{te}
\sqrt{n}\lambda^{(2a+1)/(4D)}(\hat d_\infty-d_\infty)\converged T_{\EE}= \max\Big \{\sup_{(s,t)\in\EE^+}Z(s,t),\sup_{(s,t)\in\EE^-}\{-Z(s,t)\}\Big \}\, , 
\end{align}
where $Z$  is  the mean-zero Gaussian process defined in \eqref{hatgn}.
\end{corollary}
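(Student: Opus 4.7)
The plan is to reduce the global supremum to a functional depending only on the extremal sets $\EE^\pm$ and then to apply the continuous mapping theorem. Set $r_n=\sqrt{n}\lambda^{(2a+1)/(4D)}$, $f=\beta_0-\beta_*$ and $\G_n=r_n(\hat\beta_{n}-\beta_0)$, so that Theorem~\ref{thm:process} yields $\G_n\weakconverge Z$ in $C([0,1]^2)$. Since $\hat\beta_n-\beta_*=f+r_n^{-1}\G_n$, I would first decompose
\begin{align*}
r_n(\hat d_\infty-d_\infty)=\max\{A_n^+,A_n^-\},\qquad A_n^\pm=\sup_{(s,t)\in[0,1]^2}\bigl[r_n\{\pm f(s,t)-d_\infty\}\pm\G_n(s,t)\bigr],
\end{align*}
noting that on $\EE^+$ one has $f=d_\infty$ and on $\EE^-$ one has $-f=d_\infty$, while off $\EE^\pm$ the deterministic drift $r_n\{\pm f-d_\infty\}$ is strictly negative and multiplied by $r_n\to\infty$.

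The main step will be a localization argument. Because $\H$ embeds continuously into $C([0,1]^2)$ for $m>1$, the function $f$ is continuous on the compact square and the sets $\EE^\pm$ are closed. Fix $\eta>0$ and write $\EE^+_\eta=\{(s,t)\in[0,1]^2:\mathrm{dist}((s,t),\EE^+)<\eta\}$; compactness of $[0,1]^2\setminus\EE^+_\eta$ together with continuity of $f$ yields a constant $c_\eta>0$ such that $f-d_\infty\le -c_\eta$ outside $\EE^+_\eta$. Combining this with tightness of $\{\G_n\}$ in $C([0,1]^2)$ (so in particular $\|\G_n\|_\infty=O_p(1)$) shows that $r_n(f-d_\infty)+\G_n\to-\infty$ in probability on $[0,1]^2\setminus\EE^+_\eta$, so the supremum defining $A_n^+$ is attained on $\EE^+_\eta$ with probability going to one. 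On this neighbourhood one has $A_n^+\le \sup_{\EE^+_\eta}\G_n$, and asymptotic equicontinuity of $\{\G_n\}$, a consequence of weak convergence in $C([0,1]^2)$, gives $\sup_{\EE^+_\eta}\G_n=\sup_{\EE^+}\G_n+o_p(1)$ as $\eta\to 0$ uniformly in $n$. Combined with the trivial lower bound $A_n^+\ge \sup_{\EE^+}\G_n$ obtained by restricting the supremum, a standard diagonal argument in $\eta$ then yields $A_n^+=\sup_{\EE^+}\G_n+o_p(1)$, and analogously $A_n^-=\sup_{\EE^-}(-\G_n)+o_p(1)$.

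It remains to collect the two pieces. The functional $\Phi:C([0,1]^2)\to\mathbb{R}$ defined by $\Phi(z)=\max\{\sup_{\EE^+}z,\sup_{\EE^-}(-z)\}$ is $1$-Lipschitz with respect to the supremum norm, hence continuous, so the continuous mapping theorem combined with the previous step yields
\begin{align*}
r_n(\hat d_\infty-d_\infty)=\Phi(\G_n)+o_p(1)\converged \Phi(Z)=T_\EE,
\end{align*}
which is the claim. The degenerate case $d_\infty=0$, where by convention $\EE^+=\EE^-=[0,1]^2$, reduces directly to $r_n\hat d_\infty=\|\G_n\|_\infty\converged\|Z\|_\infty=T_\EE$ by continuous mapping. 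The principal obstacle is precisely the localization step: the sets $\EE^\pm$ may be highly irregular (isolated points, curves, or even a full subdomain), so the argument cannot rely on any regularity of $\EE^\pm$ and must instead exploit the uniform asymptotic equicontinuity implied by Theorem~\ref{thm:process}, which is precisely why weak convergence in $C([0,1]^2)$, as opposed to the mere pointwise convergence of Theorem~\ref{thm:pointwise}, is indispensable here.
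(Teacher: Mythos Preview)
Your argument is correct and follows the same underlying idea as the paper: localize the supremum defining $\hat d_\infty$ to neighbourhoods of the extremal sets $\EE^\pm$ using that $r_n\to\infty$ and $\|\G_n\|_\infty=O_p(1)$, then pass to the limit via the continuous mapping theorem applied to the Lipschitz functional $z\mapsto\max\{\sup_{\EE^+}z,\sup_{\EE^-}(-z)\}$. The paper simply outsources this step, invoking Theorem~B.1 of \cite{dettebio} together with Theorem~\ref{thm:process} after checking the rate condition $r_n^{-1}\log(r_n^2)=o(1)$; your proof is a self-contained version of that cited result, and in fact only uses the weaker (and here automatic) fact that $r_n\to\infty$.
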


Note that the distribution of $T_{\EE}$ 
depends on the covariance structure of the limiting process $Z$ in \eqref{hatgn} and implicitly through 
the sets of extremal points $\EE^+$ and $\EE^-$ 
on the ``true'' (unknown) difference $\beta_0-\beta_*$. In order to motivate the final test, assume for the moment
the quantile, say $\mathcal Q_{1-\alpha}(T_{\EE})$, of this distribution would be  available (we will soon provide an estimate for it),
then we will show in Section~\ref{app:thm:extreme}
that 
\begin{align}\label{eq:rele}
\lim_{n\to\infty}\P\bigg\{\hat d_\infty>\Delta+\frac{\mathcal Q_{1-\alpha}(T_{\EE})}{\sqrt{n}\lambda^{(2a+1)/(4D)}}\bigg\}=
\left\{\begin{array}{ll}
\vspace{-0.5em}0 & \quad \text{if}\ d_\infty<\Delta\,\\
\vspace{-0.5em}\alpha &\quad \text{if}\ d_\infty=\Delta\,\\
1 &\quad \text{if}\ d_\infty>\Delta\,
\end{array}\right. ~.
%
\end{align}
Here the first two lines correspond to the null hypothesis $d_\infty \leq \Delta$ and the third line to the alternative in \eqref{rele}.  

This yields, in principle, a consistent asymptotic level $\alpha$ test for the  relevant hypotheses \eqref{rele}.
To implement such  a test  we need to approximate the quantiles of the random variable $T_\EE$ in \eqref{te}.
While the covariance structure of the process $Z$ can be again estimated by the multiplier bootstrap (see the discussion below), the estimation of the extremal sets is a little more tricky. For this purpose   
we propose
to estimate the sets $\EE^{+}$ and $\EE^-$ by
\begin{align}\label{extremal}
&\hat{\EE}^{+}=\Big\{(s,t)\in[0,1]^2:\hat\beta_{n}(s,t)-\beta_*(s,t)\geq\hat d_\infty-c\,\frac{\log n}{\sqrt{n}}\Big\}\,,\notag\\
&\hat{\EE}^{-}=\Big\{(s,t)\in[0,1]^2:\hat\beta_{n}(s,t)-\beta_*(s,t)\leq -\hat d_\infty+c\,\frac{\log n}{\sqrt{n}}\Big\}\,,
\end{align}
respectively, where we use a term $c\log n/\sqrt{n}$ in the cut-off values, for some tuning parameter $c>0$. Then, the 
random variable $T_{\EE}$ in \eqref{te} can be approximated  by
\begin{align}\label{hatte}
\hat T_{\EE}= \max\bigg\{\sup_{(s,t)\in\hat{\EE}^+}Z(s,t)\,,\sup_{(s,t)\in\hat{\EE}^-}\{-Z(s,t)\}\bigg\}\,.
\end{align}
In view of \eqref{eq:rele}, the null hypothesis should be rejected at nominal level $\alpha\in(0,1)$, if 
\begin{align}\label{d}
\hat d_\infty=\sup_{(s,t)\in[0,1]^2}|\hat\beta_{n}(s,t)-\beta_*(s,t)|>\Delta+\frac{\mathcal Q_{1-\alpha}(\hat T_{\EE})}{\sqrt{n}\lambda^{(2a+1)/(4D)}}\,,
\end{align}
where $\mathcal Q_{1-\alpha}(\hat T_{\EE})$ denotes the $(1-\alpha)$-quantile of $\hat T_{\EE}$.
Now, we still need to approximate the quantile $\mathcal Q_{1-\alpha}(\hat T_{\EE})$ of $\hat T_\mathcal E$. Since the asymptotic distribution of $\hat T_\EE$ depends on the unknown covariance function $C_Z$ in \eqref{cz}, we propose to combine a multiplier  bootstrap  similar to the ones introduced in Section~\ref{sec:simultaneousband} with the estimation of the extremal sets. Specifically, for $1\leq q\leq Q$ and the process $\G_{n,q}^*(s,t)$ defined in \eqref{gnqstar}, let 
\begin{align}\label{hattnq}
\hat T_{\EE,n,q}^*=\max\bigg\{\sup_{(s,t)\in\hat{\EE}^+}\G_{n,q}^*(s,t)\,,\sup_{(s,t)\in\hat{\EE}^-}\{-\G_{n,q}^*(s,t)\}\bigg\}\,,
\end{align}
where $\hat\EE^{\pm}$ are the estimated extremal sets defined in \eqref{extremal}. Then, the quantile of $\hat T_\EE$ can be approximated by the quantiles of the bootstrap extremal value estimates $\{\hat T_{\EE,n,q}^*\}_{q=1}^Q$. We summarize the bootstrap procedures for the relevant hypothesis in \eqref{rele} at nominal level $\alpha$ in the following algorithm.

\begin{algorithm}[Bootstrap for relevant hypotheses]~\label{algo:rt}
\begin{enumerate}[nolistsep]

\item Generate i.i.d.~bootstrap weights $\{M_{i,q}\}_{1\leq i\leq n,1\leq q\leq Q}$ and compute the bootstrap process     $\G_{n,q}^*(s,t)$  in \eqref{gnqstar}.

\item Compute the extremal sets $\hat\EE^{\pm}$ in \eqref{extremal}. For $1\leq q\leq Q$, compute $\hat T_{\EE,n,q}^*$ in \eqref{hattnq} and obtain the empirical $(1-\alpha)$-quantile of the  sample $\{\hat T_{\EE,n,q}^*\}_{q=1}^Q$, denoted by $\mathcal Q_{1-\alpha}(\hat T_{\EE,n,Q}^*)$.

\item Reject the null hypothesis in \eqref{rele} at nominal level $\alpha$, if
\begin{align}\label{d2}
\hat d_\infty=\sup_{(s,t)\in[0,1]^2}|\hat\beta_{n}(s,t)-\beta_*(s,t)|>\Delta+\frac{\mathcal Q_{1-\alpha}(\hat T_{\EE,n,Q}^*)}{\sqrt{n}\lambda^{(2a+1)/(4D)}}\,.
\end{align}
\end{enumerate}
\end{algorithm}

The following theorem, which is proved in Section~\ref{app:proof:rt}, provides a theoretical justification of the test  \eqref{d2}.
\begin{theorem}\label{thm:rt:boot}
Suppose the conditions of Theorem~\ref{thm:process} hold. Then, the decision rule \eqref{d2} defines 
a consistent and asymptotic level $\alpha$ test for the hypotheses \eqref{rele}, that is
\begin{align}\label{eq:rele:boot}
\lim_{Q\to\infty}\lim_{n\to\infty}\P\bigg\{\hat d_\infty>\Delta+\frac{\mathcal Q_{1-\alpha}(\hat T_{\EE,n,Q}^*)}{\sqrt{n}\lambda^{(2a+1)/(4D)}}\bigg\}=
\left\{\begin{array}{ll}
\vspace{-0.5em}0 & \quad \text{if}\ d_\infty<\Delta\\
\vspace{-0.5em}\alpha &\quad \text{if}\ d_\infty=\Delta\\
1 &\quad \text{if}\ d_\infty>\Delta
\end{array}\right. ~.
%
\end{align}
\end{theorem}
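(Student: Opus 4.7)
My plan is to reduce the theorem to two components: (i) the asymptotic distribution $\sqrt{n}\lambda^{(2a+1)/(4D)}(\hat d_\infty - d_\infty) \converged T_\EE$, which is already supplied by Corollary~\ref{cordet}; and (ii) the convergence of the bootstrap quantile $\mathcal Q_{1-\alpha}(\hat T_{\EE,n,Q}^*)$ to $\mathcal Q_{1-\alpha}(T_\EE)$ in probability in the iterated limit $n\to\infty$ followed by $Q\to\infty$. Given (i) and (ii), Slutsky's theorem will cover the three regimes of \eqref{eq:rele:boot}: at the boundary $d_\infty=\Delta$ the rejection probability converges to $\P(T_\EE > \mathcal Q_{1-\alpha}(T_\EE))=\alpha$; for $d_\infty<\Delta$ (respectively $d_\infty>\Delta$) the difference $\hat d_\infty - \Delta$ is asymptotically bounded away from zero and negative (respectively positive), while the bootstrap correction $\mathcal Q_{1-\alpha}(\hat T_{\EE,n,Q}^*)/(\sqrt n\lambda^{(2a+1)/(4D)})$ vanishes, forcing the rejection probability to $0$ (respectively $1$).

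The core work lies in (ii). I would first establish that the estimated extremal sets $\hat\EE^{\pm}$ in \eqref{extremal} approximate $\EE^{\pm}$ from \eqref{epm}. Setting $\delta^{\pm}(s,t):=d_\infty \mp (\beta_0(s,t)-\beta_*(s,t))$, this function is non-negative and continuous on $[0,1]^2$ and vanishes precisely on $\EE^{\pm}$; hence $\inf_{(s,t)\notin U^{\pm}}\delta^{\pm}(s,t)>0$ for every open neighborhood $U^{\pm}$ of $\EE^{\pm}$. Using Theorem~\ref{thm:process} to obtain $\|\hat\beta_n-\beta_0\|_\infty=o_p(1)$ and $|\hat d_\infty - d_\infty|=o_p(1)$, and combining these uniform approximations with the vanishing cut-off $c\log n/\sqrt n$ in the defining inequalities of $\hat\EE^{\pm}$, one gets $\P(\hat\EE^{\pm} \subseteq U^{\pm})\to 1$ and, conversely, that $\EE^{\pm}$ is absorbed into $\hat\EE^{\pm}$ up to asymptotically negligible discrepancies; thus $\hat\EE^{\pm} \to \EE^{\pm}$ in Hausdorff distance, in probability.

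With the extremal-set approximation in hand, Theorem~\ref{thm:cb:boot} supplies the conditional process convergence $\G_{n,q}^*\weakconverge Z$ in $C([0,1]^2)$. Joint continuity of the functional $(g,E)\mapsto \sup_E g$ with respect to uniform convergence of $g$ and Hausdorff convergence of $E$ (invoked together with the almost-sure uniform continuity of the Gaussian limit $Z$) then yields, conditionally on the data, the distributional convergence $\hat T_{\EE,n,q}^* \converged T_\EE$. Standard bootstrap-quantile arguments---using continuity of the distribution function of $T_\EE$ at $\mathcal Q_{1-\alpha}(T_\EE)$, a generic property of suprema of centred Gaussian fields---then give that the empirical quantile $\mathcal Q_{1-\alpha}(\hat T_{\EE,n,Q}^*)$ converges in probability to $\mathcal Q_{1-\alpha}(T_\EE)$ as $n\to\infty$ followed by $Q\to\infty$, completing (ii).

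The main obstacle is the calibration of the cut-off $c\log n/\sqrt n$ in the extremal-set estimators. It must be small enough to exclude points at which $|\beta_0(s,t)-\beta_*(s,t)|$ is strictly less than $d_\infty$, while simultaneously absorbing the uniform stochastic error in both $\hat\beta_n$ and $\hat d_\infty$; verifying both directions requires a delicate comparison between the cut-off and the weak-convergence rate $(\sqrt n\lambda^{(2a+1)/(4D)})^{-1}$, and plausibly leverages tight exponential tail bounds on $\|\hat\beta_n-\beta_0\|_\infty$ rooted in the moment conditions of Assumption~\ref{a:x}. A minor subtlety is the degenerate case $d_\infty=0$, where by convention $\EE^{\pm}=[0,1]^2$; however, under the null-alternative split with $\Delta>0$ this falls automatically into the strict-inequality branch $d_\infty<\Delta$ and requires no special treatment.
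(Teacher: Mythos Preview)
Your approach matches the paper's almost exactly. The paper's proof is a two-sentence sketch: it invokes the continuous mapping theorem, Theorem~\ref{thm:process}, and Lemma~B.3 of \cite{dette2020aos} to conclude that, conditionally on the data, $\hat T_{\EE,n,q}^*\converged T_\EE$ (the same limit as $\sqrt n\lambda^{(2a+1)/(4D)}(\hat d_\infty-d_\infty)$ from Corollary~\ref{cordet}), and then refers to the three-case argument already given for \eqref{eq:rele}. Your decomposition (i)--(ii) and your Slutsky reasoning for the three regimes reproduce exactly this structure. The Hausdorff-type convergence of the estimated extremal sets and the continuity of $(g,E)\mapsto\sup_E g$ that you spell out are precisely the content that the paper outsources to the cited external lemma; you are simply re-deriving that lemma rather than citing it, and you correctly use Theorem~\ref{thm:cb:boot} (the bootstrap analogue of Theorem~\ref{thm:process}) for the conditional weak convergence of $\G_{n,q}^*$.

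One point worth flagging: the concern you raise about the cut-off calibration is real and not merely cosmetic. For $\EE^\pm\subseteq\hat\EE^\pm$ with probability tending to one you need the cut-off $c\log n/\sqrt n$ to dominate the uniform estimation error, which is of order $(\sqrt n\,\lambda^{(2a+1)/(4D)})^{-1}$; equivalently $\lambda^{(2a+1)/(4D)}\log n\to\infty$. Under the standing assumption $\lambda=o(n^{-\nu_1})$ this fails, so neither your sketch nor a naive reading of \eqref{extremal} resolves the inclusion direction on its own. The paper does not address this explicitly either---it delegates the entire step to Lemma~B.3 of \cite{dette2020aos}---so your identification of this as ``the main obstacle'' is accurate, and your proposal is otherwise on the same track as the published argument.
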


\subsection{Simultaneous prediction bands}\label{sec:prediction}

Based on the estimator $\hat\beta_{n}$ in \eqref{hatbeta}, we can construct a simultaneous confidence region of the conditional mean $\mu_{x_0}(t)=\E\{Y(t)|X=x_0\}=\int_0^1\beta_{0}(s,t)x_0(s)ds$, using the consistent estimator 
\begin{equation}\label{m3}
\hat\mu_{x_0}(t)=\int_0^1\hat\beta_{n}(s,t)x_0(s)ds. 
\end{equation}
The following theorem establishes the weak convergence of the process $\hat\mu_{x_0}$ in the space $C([0,1])$, which enables us to construct simultaneous confidence regions for the function $\mu_{x_0}$. The proof   is given in Section~\ref{app:thm:prediction:band}.

\begin{theorem}[Simultaneous prediction band]\label{thm:prediction:band}

Suppose that the conditions of Theorem~\ref{thm:process} are satisfied. Then,
$$
\sqrt{n}\lambda^{(2a+1)/(4D)}
\big \{\hat\mu_{x_0}(t)-\mu_{x_0}(t)
\big  \}_{t \in [0,1]} \weakconverge  \big \{ Z_{x_0}(t) \big \}_{t \in [0,1]} 
~~~~\text{in } C([0,1])\,,
$$  
where $Z_{x_0}$ is a mean zero Gaussian process with covariance function 
\begin{align}\label{czx0}
C_{Z,{x_0}}(t_1,t_2)=\int_0^1\int_0^1C_{Z}\{(s_1,t_1),(s_2,t_2)\}\,x_0(s_1)\,x_0(s_2)\,ds_1\,ds_2\,
\end{align}
and  $C_Z$ is defined in \eqref{cz},
Moreover, 
$$
\sqrt{n}\lambda^{(2a+1)/(4D)}\sup_{t\in[0,1]}|\hat\mu_{x_0}(t)-\mu_{x_0}(t)|\converged R_{x_0}=\max_{t\in[0,1]}|Z_{x_0}(t)|\,.
$$
\end{theorem}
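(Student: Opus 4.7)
The plan is to deduce both statements from Theorem~\ref{thm:process} via the continuous mapping theorem. Introduce the integration operator $\Psi: C([0,1]^2) \to C([0,1])$ defined by
\[
\Psi(g)(t) = \int_0^1 g(s,t)\, x_0(s)\, ds\,.
\]
Then by linearity $\Psi(\hat\beta_n-\beta_0)(t) = \hat\mu_{x_0}(t)-\mu_{x_0}(t)$, so that the rescaled difference $\sqrt{n}\lambda^{(2a+1)/(4D)}\{\hat\mu_{x_0}(t)-\mu_{x_0}(t)\}$ equals $\Psi(\G_n)(t)$, where $\G_n$ is the process in \eqref{hatgn}. Theorem~\ref{thm:process} gives $\G_n \weakconverge Z$ in $C([0,1]^2)$, so it suffices to show that $\Psi$ is a continuous linear map from $C([0,1]^2)$ to $C([0,1])$ and to identify the law of $\Psi(Z)$.

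For continuity, assume the natural mild condition $x_0 \in L^1([0,1])$ (which is satisfied in particular when $x_0$ is a realization of the predictor process, since $X \in L^2([0,1])$ a.s.\ by Assumption~\ref{a:x}). For $g \in C([0,1]^2)$, uniform continuity of $g$ on the compact square $[0,1]^2$ combined with dominated convergence gives continuity in $t$ of $\Psi(g)(t)$, so $\Psi(g)\in C([0,1])$; and $\|\Psi(g)\|_\infty \leq \|g\|_\infty \int_0^1 |x_0(s)|\,ds$ shows $\Psi$ is a bounded linear operator. The continuous mapping theorem then yields
\[
\sqrt{n}\lambda^{(2a+1)/(4D)}\bigl\{\hat\mu_{x_0}-\mu_{x_0}\bigr\} \weakconverge \Psi(Z) =: Z_{x_0} \quad \text{in } C([0,1]).
\]
Since $Z$ is a centred Gaussian process in $C([0,1]^2)$, integrating against $x_0$ preserves both centredness and Gaussianity (finite linear combinations of $Z_{x_0}(t_j)$ are limits of finite linear combinations of $Z(s,t)$-values), giving a mean-zero Gaussian process $Z_{x_0}$. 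Its covariance is obtained directly by interchanging expectation with the double integral:
\[
\mathrm{Cov}\bigl(Z_{x_0}(t_1),Z_{x_0}(t_2)\bigr) = \int_0^1\int_0^1 C_Z\{(s_1,t_1),(s_2,t_2)\}\, x_0(s_1)\,x_0(s_2)\,ds_1\,ds_2 = C_{Z,x_0}(t_1,t_2),
\]
matching \eqref{czx0}. The interchange is justified since $x_0 \otimes x_0 \in L^1([0,1]^2)$ and $C_Z$ is bounded on the compact $[0,1]^4$ (being the covariance of a Gaussian process with continuous sample paths).

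For the second claim, apply the continuous mapping theorem once more with the supremum functional $h \mapsto \|h\|_\infty$, which is continuous $C([0,1])\to\mathbb{R}$, to obtain
\[
\sqrt{n}\lambda^{(2a+1)/(4D)}\sup_{t\in[0,1]}|\hat\mu_{x_0}(t)-\mu_{x_0}(t)| \converged \sup_{t\in[0,1]}|Z_{x_0}(t)| = R_{x_0}.
\]
The main obstacle is the verification that $\Psi$ indeed maps $C([0,1]^2)$ continuously into $C([0,1])$ (in particular, that $\Psi(g)$ is continuous in $t$); this is a routine dominated convergence argument but requires a mild integrability condition on $x_0$ that should be stated or tacitly understood from context. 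Everything else is a direct consequence of Theorem~\ref{thm:process} together with the preservation of Gaussianity under bounded linear maps.
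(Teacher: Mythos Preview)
Your proof is correct and takes a genuinely different route from the paper's. The paper does not invoke the continuous mapping theorem applied to the weak convergence $\G_n\weakconverge Z$ of Theorem~\ref{thm:process}; instead it essentially reproves the process convergence from scratch. It writes
\[
\sqrt{n}\lambda^{(2a+1)/(4D)}\{\hat\mu_{x_0}(t)-\mu_{x_0}(t)\}=I_{1,n}(t)+I_{2,n}(t)+I_{3,n}(t),
\]
where the three terms are the $\langle\,\cdot\,,\tau(x_0\otimes\delta_t)\rangle_K$-analogues of the decomposition \eqref{in1in2}; it then bounds $I_{1,n}$ and $I_{2,n}$ uniformly via the Bahadur representation and the bias estimate (exactly as in \eqref{in1a}--\eqref{in1}), and for the leading term $I_{3,n}$ it re-verifies finite-dimensional convergence by the Lindeberg CLT (including a tail bound via the exponential moment assumption) and asymptotic tightness by repeating the packing-number/chaining argument from Section~\ref{app:thm:process}. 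The covariance identification is obtained by a direct $o(1)$-calculation using \eqref{core}, not by applying Fubini to $C_Z$.

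Your argument is considerably shorter and more transparent: once Theorem~\ref{thm:process} is in hand, the bounded linear map $\Psi$ transports both the weak convergence and the Gaussianity for free, and the covariance formula drops out by Fubini. The paper's approach buys nothing extra for the statement as written; its only conceivable advantage is that it gives explicit quantitative control of each piece (e.g.\ the $O_p(\sqrt{n}v_n)$ and $O(\sqrt{n}\lambda)$ rates for $I_{1,n},I_{2,n}$), which could matter if one wanted refinements, but for Theorem~\ref{thm:prediction:band} itself your route is the cleaner one. Your side remark that $x_0\in L^1([0,1])$ (or $L^2$) is needed is appropriate; the paper uses $\Vert x_0\Vert_{L^2}$ in its bounds without stating this explicitly.
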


As the quantiles of the distribution of 
$R_{x_0}$ depend in a complicate way on the covariance structure of the process $Z_{x_0}$, 
    we propose the following bootstrap procedure for a simultaneous asymptotic $(1-\alpha)$ prediction band for
    the finction $ t \to \mu_{x_0}(t)=\E\{Y(t)|X=x_0\}$.

\begin{algorithm}[Bootstrap simultaneous prediction band]~\label{algo:pb}
\begin{enumerate}[nolistsep]

\item Generate i.i.d.~weights $\{M_{i,q}\}_{1\leq i\leq n,1\leq q\leq Q}$ and compute the bootstrap estimators $\{\hat\beta_{n,q}^*\}_{q=1}^Q$ in \eqref{hatbetastar}. Compute $\hat\beta_{n}$ in \eqref{hatbeta} and  $\hat\mu_{x_0}(t)$ in \eqref{m3}.

\item For $1\leq q\leq Q$, compute $\mathbb L_{x_0,q}^*(t)=\sqrt{n}\lambda^{(2a+1)/(4D)}\int_0^1\{\hat\beta_{n,q}^*(s,t)-\hat\beta_{n}(s,t)\}x_0(s)ds$ and define $\hat R_{x_0,q}^*=\sup_{t\in[0,1]}|\mathbb L_{x_0,q}^*(t)|$. Compute the empirical $(1-\alpha)$-quantile of the bootstrap sample $\{\hat R_{x_0,q}^*\}_{q=1}^Q$, denoted by $\mathcal Q_{1-\alpha}(\hat R_{x_0,Q}^*)$.

\item Let $\hat\mu_{x_0,Q}^{*^\pm}(t)=\hat\mu_{x_0}(t)\pm \mathcal Q_{1-\alpha}(\hat R_{x_0,Q}^*)/\{\sqrt{n}\lambda^{(2a+1)/(4D)}\}$. Define the set 
\begin{equation} \label{m4}
\mathfrak{B}_{n,Q}^*(\alpha)=\big\{\mu:\hat\mu_{x_0,Q}^{*^-}(t)\leq\mu(t)\leq \hat\mu_{x_0,Q}^{*^+}(t)\big\}
\end{equation}
as simultaneous $(1-\alpha)$ prediction band for the function $\mu_{x_0}$.
\end{enumerate}
\end{algorithm}

The following theorem provides a formal justification of the bootstrap procedure  in Algorithm~\ref{algo:pb}, the proof   uses similar arguments as given in 
the proof of Theorem~\ref{thm:cb:boot}  and is therefore omitted.

\begin{theorem}[Bootstrap simultaneous prediction band]\label{thm:pb}
Suppose the assumptions in Theorem~\ref{thm:process} are satisfied. Then, the  set $\mathfrak B_{n,Q}^*(\alpha)$ in \eqref{m4} defines a simultaneous asymptotic $(1-\alpha)$ prediction band for the function $\mu_{x_0}$, that is
$$
\lim_{Q\to\infty}\lim_{n\to\infty}\P\{\mu_{x_0}\in\mathfrak{B}^*_{n,Q}(\alpha)\}=1-\alpha\,.
$$
\end{theorem}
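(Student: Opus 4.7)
The plan is to reduce the claim to the bootstrap consistency of the slope-surface process established in Theorem~\ref{thm:cb:boot} together with the continuous mapping theorem. The main obstacle is the same as in the proof of Theorem~\ref{thm:cb:boot}, namely the rigorous handling of the conditional weak convergence of the multiplier-bootstrap process in $C([0,1]^2)$; once that ingredient is taken for granted, the remaining steps for the prediction band reduce to transporting the convergence through a continuous linear map and a standard quantile-continuity argument.

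First, I would introduce the bounded linear integration operator
\begin{equation*}
T_{x_0}: C([0,1]^2) \to C([0,1]), \qquad (T_{x_0}\beta)(t) = \int_0^1 \beta(s,t)\,x_0(s)\,ds,
\end{equation*}
which satisfies $\|T_{x_0}\beta\|_\infty \leq \|x_0\|_{L^1}\,\|\beta\|_\infty$, hence is continuous. By construction,
\begin{equation*}
\mathbb L_{x_0,q}^*(t) = (T_{x_0}\G_{n,q}^*)(t), \qquad \sqrt{n}\lambda^{(2a+1)/(4D)}\{\hat\mu_{x_0}(t)-\mu_{x_0}(t)\} = (T_{x_0}\G_n)(t),
\end{equation*}
with $\G_n$ as in \eqref{hatgn}. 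Applying the continuous mapping theorem to \eqref{conset} in Theorem~\ref{thm:cb:boot} yields the conditional weak convergence
\begin{equation*}
\{\mathbb L_{x_0,q}^*(t)\}_{t\in[0,1]} \weakconverge \{(T_{x_0}Z)(t)\}_{t\in[0,1]} = \{Z_{x_0}(t)\}_{t\in[0,1]} \quad \text{in } C([0,1]),
\end{equation*}
where the covariance kernel of $Z_{x_0}$ is precisely \eqref{czx0} by Fubini's theorem applied to $C_Z$ from \eqref{cz}.

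Next, since $f\mapsto \|f\|_\infty$ is continuous on $C([0,1])$, a second application of the continuous mapping theorem gives
\begin{equation*}
\hat R_{x_0,q}^* = \sup_{t\in[0,1]}|\mathbb L_{x_0,q}^*(t)| \converged R_{x_0} = \sup_{t\in[0,1]}|Z_{x_0}(t)|
\end{equation*}
conditionally on the data. Because $Z_{x_0}$ is a centred Gaussian process on a compact interval whose variance is nontrivial (using non-degeneracy of $C_X$ via Assumption~\ref{a1}, propagated through \eqref{czx0}), the distribution function of $R_{x_0}$ is continuous by standard anti-concentration results for suprema of Gaussian processes. Consequently, the empirical bootstrap quantile $\mathcal Q_{1-\alpha}(\hat R_{x_0,Q}^*)$ converges in probability to $\mathcal Q_{1-\alpha}(R_{x_0})$ in the iterated regime $Q\to\infty$ after $n\to\infty$.

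Finally, combining the previous step with Theorem~\ref{thm:prediction:band}, which states $\sqrt{n}\lambda^{(2a+1)/(4D)}\sup_{t\in[0,1]}|\hat\mu_{x_0}(t)-\mu_{x_0}(t)| \converged R_{x_0}$, and invoking Slutsky's lemma together with continuity of the distribution function of $R_{x_0}$ at $\mathcal Q_{1-\alpha}(R_{x_0})$, I obtain
\begin{equation*}
\lim_{Q\to\infty}\lim_{n\to\infty}\P\{\mu_{x_0}\in\mathfrak{B}^*_{n,Q}(\alpha)\} = \P\{R_{x_0}\leq \mathcal Q_{1-\alpha}(R_{x_0})\} = 1-\alpha,
\end{equation*}
which is the desired conclusion. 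The only subtle part is the bootstrap weak convergence input from Theorem~\ref{thm:cb:boot}; the rest of the argument is a clean transfer along the continuous map $T_{x_0}$ and the standard quantile-continuity lemma.
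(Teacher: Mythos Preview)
Your proposal is correct. The paper itself omits the proof, stating only that it ``uses similar arguments as given in the proof of Theorem~\ref{thm:cb:boot}.'' Judging from how the paper proves Theorem~\ref{thm:prediction:band} (Section~\ref{app:thm:prediction:band})---where it redoes the full decomposition, Lindeberg CLT, and tightness argument for the integrated process rather than applying continuous mapping to Theorem~\ref{thm:process}---the intended route for Theorem~\ref{thm:pb} is presumably to replicate the joint weak convergence machinery of Section~\ref{app:thm:bootstrap} directly for the bootstrap prediction process $\mathbb L_{x_0,q}^*$.

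Your route is genuinely different and more economical: you recognise that integration against $x_0$ is a bounded linear map $T_{x_0}:C([0,1]^2)\to C([0,1])$, transport the conditional weak convergence \eqref{conset} through $T_{x_0}$ via continuous mapping, and identify $T_{x_0}Z=Z_{x_0}$ by matching covariance kernels. This bypasses the need to reprove finite-dimensional convergence and tightness for the prediction process. What the paper's approach buys is self-containment (it does not rely on the process-level result \eqref{conset} but only on the ingredients behind it); what your approach buys is brevity and a clearer structural explanation of why the prediction band inherits bootstrap validity from the slope-surface band. Both are sound; the one nontrivial point you correctly flag is the continuity of the distribution function of $R_{x_0}$, which is needed for the quantile argument and follows from Gaussian anti-concentration once $Z_{x_0}$ is nondegenerate (implicitly $x_0\not\equiv 0$).
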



\subsection{Scalar response}\label{sec:scalar}

The results presented so  far provide also new inference tools for the scalar-on-function linear model
\begin{align}\label{sof}
Y_i=\int_0^1\beta_0(s)\,X_i(s)\,ds+\epsilon_i\,,\quad1\leq i\leq n\, , 
\end{align}
which can be considered as a special case of  model \eqref{model}, where the response $Y$ is a scalar variable. In this setting, the estimator defined in \eqref{hatbeta} becomes
\begin{align*}
\hat\beta_{n}&=\underset{\beta\in \H_{\rm s}}{\arg\min}\bigg[\frac{1}{2n}\sum_{i=1}^n\left\{Y_i-\int_0^1\beta(s)X_i(s)\,ds\right\}^2+\frac{\lambda}{2}\, J_{\rm s}(\beta,\beta)\bigg]\,,
\end{align*}
where $\H_{\rm s}=\big\{\beta:[0,1]\to\mathbb{R}\,|\,\beta,\beta',\ldots,\beta^{(m-1)}\text{ are absolutely continuous};\,\beta^{(m)}\in L^2([0,1])\big\}$ is the Sobolev space on $[0,1]$ of order $m$, and $J_{\rm s}(\beta,\beta)=\int_0^1\{\beta^{(m)}(s)\}^2ds$. A direct consequence of Theorem~\ref{thm:process} in the function-response setting is the  weak convergence
\begin{align}\label{gnsalar}
\{ \G_n(s) \}_{s \in [0,1]} =\sqrt{n}\lambda^{(2a+1)/(4D)} \big \{\hat\beta_{n}(s)-\beta_0(s) \big \}_{s \in [0,1]} \weakconverge \{ Z(s)\}_{s \in [0,1]}
\end{align}
in $C([0,1]),$
where $Z$ is a mean-zero Gaussian process.
Hence, the methodology proposed in Section~\ref{sec:sc}, namely the bootstrap procedures for simultaneous confidence regions, relevant hypothesis tests and simultaneous prediction bands, carries over naturally to the scalar response case. 

Exemplary, we consider (for a given constant $\Delta\geq 0$) the problem of constructing a test for 
the  relevant hypotheses 
\begin{align}\label{h0scalar}
H_0:\,\sup_{s\in[0,1]}|\beta_0(s)-\beta_*(s)|\leq\Delta\quad&\text{versus}\quad H_1:\, \sup_{s\in[0,1]}|\beta_0(s)-\beta_*(s)|>\Delta\,,
\end{align}
in model \eqref{sof}, which is more challenging in nature to tackle.
 A consistent estimator of the maximum deviation $d_\infty=\sup_{s\in[0,1]}|\beta_0(s)-\beta_*(s)|$ is $\hat d_\infty=\sup_{s\in[0,1]}|\hat\beta_{n}(s)-\beta_*(s)|$, so that the null hypothesis in \eqref{h0scalar} should be rejected for large values of $\hat d_\infty$. 
As analog of  Algorithm~\ref{algo:rt}, we obtain 
 the following bootstrap test for the relevant hypotheses in \eqref{h0scalar}.

\begin{algorithm}[Bootstrap   test for relevant hypotheses in the scalar-on-function mdoel]~\label{algo:rt:scalar}
\begin{enumerate}[nolistsep]

\item Generate i.i.d.~bootstrap weights $\{M_{i,q}\}_{1\leq i\leq n,1\leq q\leq Q}$ as in Algorithm~\ref{algo:cb} and for each $1\leq q\leq Q$, compute the bootstrap estimator
\begin{align*}
\hat\beta_{n,q}^*=\underset{\beta\in \H_{\rm s}}{\arg\min}\bigg[\frac{1}{2n}\sum_{i=1}^nM_{i,q}\bigg\{Y_i-\int_0^1\beta(s)\,X_i(s)\,ds\bigg\}^2+\frac{\lambda}{2}\, J_{\rm s}(\beta,\beta)\bigg]
\end{align*}
and $\G_{n,q}^*(s)=\sqrt{n}\lambda^{(2a+1)/(4D)}\{\hat\beta_{n,q}^*(s)-\hat\beta_{n}(s)\}$.

\item Compute the extremal sets 
$$
\hat{\EE}^{\pm}=\Big\{s\in[0,1]:\pm\{\hat\beta_{n}(s)-\beta_*(s)\}\geq\hat d_\infty-c\,\frac{\log n}{\sqrt{n}}\Big\}\,.
$$
\item For $1\leq q\leq Q$, compute 
$$
\hat T_{\EE,n,q}^*=\max\bigg\{\sup_{s\in\hat{\EE}^+}\G_{n,q}^*(s)\,,\sup_{s\in\hat{\EE}^-}\{-\G_{n,q}^*(s)\}\bigg\}\,,
$$
and obtain the empirical $(1-\alpha)$-quantile of  the bootstrap sample $\{\hat T_{\EE,n,q}^*\}_{q=1}^Q$, denoted by $\mathcal Q_{1-\alpha}(\hat T_{\EE,n,Q}^*)$.

\item Reject the null hypothesis in \eqref{h0scalar} at nominal level $\alpha$, if
\begin{align} \label{testscalar} 
\sup_{s\in[0,1]}|\hat\beta_{n}(s)-\beta_*(s)|>\Delta+\frac{\mathcal Q_{1-\alpha}(\hat T_{\EE,n,Q}^*)}{\sqrt{n}\lambda^{(2a+1)/(4D)}}\,.
\end{align}
\end{enumerate}
\end{algorithm}
 
 It can be shown by similar arguments as given in the proof of Theorem \ref{thm:rt:boot} that the test \eqref{testscalar}
 is a consistent and asymptotic level $\alpha$ test.  The details are omitted for the sake of brevity.

\section{Finite sample properties}\label{sec:finite}

\subsection{Implementation}\label{sec:numerical}

Because the  estimators $\hat\beta_{n}$   in \eqref{hatbeta} and  its bootstrap analog $\hat\beta_{n,q}^*$ in \eqref{hatbetastar} are defined as the solution of a (penalized) minimization problem on  an infinite dimensional function space, exact solution are inaccessible. In this section, we introduce finite-sample methods to circumvent this difficulty, and propose a method to choose the regularization parameter $\lambda$. We shall only present our approach  for computing  the bootstrap estimator $\hat\beta_{n,q}^*$ in \eqref{hatbetastar}, since  the estimator \eqref{hatbeta} can be viewed as a special case of \eqref{hatbetastar} by taking $M_{i,q}=1$ for any $1\leq i\leq n$ and $1\leq q\leq Q$.

We start by deducing from Assumption~\ref{a201} that $J(x_{k\ell}\otimes\eta_\ell,x_{k'\ell'}\otimes\eta_{\ell'})=\rho_{k\ell}\,\delta_{kk'}\,\delta_{\ell\ell'}$, so that for $\beta(s,t)=\sum_{k,\ell}b_{k\ell}\,\phi_{k\ell}(s,t)\in\H$ and for $b_{k\ell}\in\mathbb{R}$, we have $J(\beta,\beta)=\sum_{k,\ell}b_{k\ell}^2\,\rho_{k\ell}$. We consider the Sobolev space on $[0,1]^2$ of order $m=2$. In this case, the penalty functional in \eqref{hatbeta} is $J(\beta,\beta)=\int_0^1\int_0^1(\beta_{ss}^2+2\beta_{st}^2+\beta_{tt}^2)\,ds\,dt$, where $\beta_{st} = \frac{\partial^2 \beta  }{\partial s \partial t}$.  For the choice of the basis, we use Proposition~\ref{prop:1.3} in Section~\ref{app:cond:prop1.3} of the online supplement. More precisely,
$
\eta_1(t) \equiv 1$,
$\eta_\ell(t) = \sqrt{2}\cos\{(\ell-1)\pi t\}$   ($ \ell=  2,3, \ldots $) and for 
$\ell\geq 1$ the functions  $\{\tilde x_{k\ell}\}_{k\geq1}$ are the eigenfunctions of  integro-differential equation
\begin{align}\label{eq}
\left\{ 
\begin{array}{ll}
\displaystyle\rho_\ell\int_{0}^1C_X(s_1,s_2)\,\tilde x(s_2)\,ds_2=\tilde x^{(4)}(s_1)-2(\ell-1)^2\pi^2\tilde x^{(2)}(s_1)+(\ell-1)^4\pi^4\\
\tilde x^{(\theta)}(0)=\tilde x^{(\theta)}(1)=0\,,\qquad\text{ for }\theta=3\text{ and }4\,
\end{array}\right.
\end{align}
with corresponding eigenvalues $\{\rho_{k\ell} \}_{k\geq1}$.
In order to find the eigenvalue and the eigenfunction of \eqref{eq}, we use \texttt{Chebfun}, an efficient open-source Matlab add-on package, available at \nolinkurl{https://www.chebfun.org/}. We substitute the covariance function $C_X$ in \eqref{eq} by its empirical version $\hat{C}_X$, and find the eigenvalues $\hat\rho_{k\ell}$ and the normalized eigenfunctions $\hat x_{k\ell}$.
Observing that the functions $\{\eta_\ell\}_{\ell\geq1}$
are given by the cosine basis, we take the empirical eigenfunctions $\hat\phi_{k\ell}=\hat x_{k\ell}\otimes\eta_\ell$. Now, we approximate the space $\H$ by a finite dimensional subspace spanned by $\{\hat\phi_{k\ell}\}_{1\leq k,\ell\leq v}$, defined by $\tilde\H=\big\{\sum_{1\leq k,\ell\leq v}b_{k\ell}\,\hat \phi_{k\ell}\big\}$, where $v$ is a truncation parameter that depends on the sample size $n$.

For $1\leq q\leq Q$, for the $q$-th bootstrap estimator $\hat\beta_{n,q}^*$ and for the bootstrap weights $\{M_{i,q}\}_{i=1}^n$ in Algorithms~\ref{algo:cb}--\ref{algo:pb}, let $\tilde M_q={\rm diag}(M_{1,q},\ldots,M_{n,q})$ denote an $n\times n$ diagonal matrix. For $1\leq i\leq n$ and $1\leq k,\ell\leq v$, let $\omega_{ik\ell}=\int_0^1X_i(s)\hat x_{k\ell}(s)ds$ and let $\Omega_\ell=(\omega_{ik\ell})$ denote a $n\times v$ matrix; let $\hat\Lambda_{\ell}={\rm diag}\big\{\hat\rho_{1\ell},\ldots,\hat\rho_{v\ell}\big\}$ denote a $v\times v$ diagonal matrix; let $\tilde Y_{i\ell}=\l Y_i,\eta_\ell\r_{L^2}$ and let $\tilde Y_\ell=(\tilde Y_{1\ell},\ldots,\tilde Y_{n\ell})^{\rm T}\in\mathbb{R}^v$. If we write $\tilde\beta_{n,q}=\sum_{k=1}^v\sum_{\ell=1}^v\tilde b_{k\ell}^{(q)}\,\hat\phi_{k\ell}\in\tilde\H$, then, in order to approximate $\hat\beta_{n,q}^*$ in \eqref{hatbetastar}, we find the $\tilde b_{k\ell}^{(q)}$'s by solving the following optimization problem
\begin{small}
\begin{align}\label{lambda}
\{\tilde b_{k\ell}^{(q)}\}&=\underset{\{b_{k\ell}^{(q)}\}}{\arg\min}\left\{\frac{1}{2n}\sum_{i=1}^nM_{i,q}\int_0^1\bigg| Y_i(t)-\sum_{k,\ell=1}^{v}b_{k\ell}^{(q)}\,\eta_\ell(t)\int_0^1 X_i(s)\hat x_{k\ell}(s)ds\bigg|^2dt+\frac{\lambda_q }{2}  \sum_{k,\ell=1}^{v}b_{k\ell}^{(q)^2}\,\hat \rho_{k\ell}\right\}\notag\\
&=\underset{\{b_{k\ell}^{(q)}\}}{\arg\min}\left\{\frac{1}{2n}\sum_{i=1}^n\sum_{\ell=1}^{v}M_{i,q}\bigg|\tilde Y_{i\ell}-\sum_{k=1}^{v}b_{k\ell}^{(q)}\int_0^1 X_i(s)\hat x_{k\ell}(s)ds\bigg|^2+\frac{\lambda_q }{2} \sum_{k,\ell=1}^{v}b_{k\ell}^{(q)^2}\,\hat \rho_{k\ell}\right\}\notag\\
&=\underset{\{b_{\ell}^{(q)}\}}{\arg\min}\Bigg\{\frac{1}{2n}\sum_{\ell=1}^v\big(\tilde Y_\ell-\Omega_{\ell}\,b_{\ell}^{(q)}\big)^{\rm T}\,\tilde M_q\,\big(\tilde Y_\ell-\Omega_{\ell}\,b_{\ell}^{(q)}\big)+\frac{\lambda_q }{2} \sum_{\ell=1}^v b_{\ell}^{(q)^{\rm T}}\,\hat\Lambda_\ell\, b_{\ell}^{(q)}\,\Bigg\}\,,
\end{align}
\end{small}
where we write $b_{\ell}^{(q)}=(b_{1\ell}^{(q)},\ldots,b_{v\ell}^{(q)})^{\rm T}\in\mathbb{R}^v$. By direct calculations, for $1\leq\ell\leq v$, we have 
\begin{align}\label{hatb}
\hat b_{\ell}^{(q)}=(\Omega_\ell^{\rm T}\,\tilde M_q\,\Omega_\ell+n\lambda_q\hat\Lambda_\ell)^{-1}\Omega_\ell^{\rm T}\,\tilde M_q\,\tilde Y_\ell\,,
\end{align}
so that we can approximate $\hat\beta_{n,q}^*$ in \eqref{hatbetastar} by 
\begin{align*}
\tilde\beta_{n,q}^*=\sum_{\ell=1}^v(\hat b_{\ell}^{(q)^{\rm T}}\,\hat x_{\ell})\otimes\eta_\ell\,, 
\end{align*}
if we let $\hat x_\ell=(\hat x_{1\ell},\ldots,\hat x_{v\ell})^{\rm T}$ denote a function-valued $v$-dimensional vector. We propose to use generalized cross-validation (GCV, see, for example, \citealp{whaba1990}) to choose the smoothing parameter $\lambda_q$ in \eqref{lambda}. For the $q$-th bootstrap estimator $\tilde\beta_{n,q}^*$, we choose $\lambda_q$ that minimizes the GCV score
\begin{align*}
\text{GCV}(\lambda_q)=\frac{n^{-1}\sum_{\ell=1}^v\Vert \hat Y_{\ell,q}-\tilde Y_\ell\Vert_{2}^2}{\{1-{\rm{tr}}(H_q)/n\}^2}\,,
\end{align*}
where $\hat Y_{\ell,q}=\Omega_\ell(\Omega_\ell^{\rm T}\,\tilde M_q\,\Omega_\ell+n\lambda_q\hat\Lambda_\ell)^{-1}\Omega_\ell^{\rm T}\,\tilde M_q\,\tilde Y_\ell$ and $H_{q}$ is the so-called hat matrix with ${\rm tr}(H_{q})=\sum_{\ell=1}^v{\rm tr}\{\Omega_\ell(\Omega_\ell^{\rm T}\,\tilde M_q\,\Omega_\ell+n\lambda_q\,\hat\Lambda_\ell)^{-1}\,\Omega_\ell^{\rm T}\,\tilde M_q\}$.

The statistical inference methods in Section~\ref{sec:sc} rely on the parameters $a$ and $D$, and we propose to estimate these two parameters from the data. To achieve this, we make use of the growing rate of the eigenvalues of the integro-differential equation~\eqref{eq}. As indicated by Proposition~\ref{prop:1.3} in Section~\ref{app:cond:prop1.3} of the inline supplement, the $\rho_{k\ell}$'s diverge at a rate of $(k\ell)^{2D}$, so that we exploit the linear relationship between  $\log(\rho_{k\ell})$ and $\log(k\ell)$. Specifically, for the empirical eigenvalues $\{\hat\rho_{k\ell}\}$ of equation \eqref{eq}, we fit a line through the points $\{(\log(k\ell),\log(\hat\rho_{k\ell}))\}_{1\leq k,\ell\leq 2v}$, and take $\tilde D$ to be the value of the slope of this line divided by 2, where we use a total number of $4v^2$ eigenvalues. In the case of $m=2$, by Proposition~\ref{prop:1.3}, $D\geq 3$ and $a=D-2$, so that we take 
\begin{align*}
\hat D=\max\{\tilde D,3\} \text{~~  and ~~} \hat a=\hat D-2\,.
\end{align*}


\subsection{Simulated data}\label{sec:simulation}


For evaluating the functions  $X$ and $Y$  on their domain  $[0,1]$ we take 
$100$ equally spaced time points. For the data generating process (DGP), we used the following three settings:
\begin{enumerate}[]
\item[(1)] Let $f_1(s)\equiv1$, $f_{j+1}(s)=\sqrt{2}\cos(j\pi s)$, for $j\geq 1$, and define
$$
\beta_0(s,t)=f_1(s)f_1(t)+4\sum_{j=2}^{50}(-1)^{j+1}j^{-2}f_j(s)f_j(t)\,.$$
Let $X_i=\sum_{j=1}^{50}j^{-1}Z_{ij} \,f_j$, where $Z_{ij}\iidsim \text{unif}(-\sqrt{3},\sqrt{3})$, for $1\leq i\leq n, 1\leq j\leq 50$.

\item[(2)] Let $\beta_0(s,t)=e^{-(s+t)}$; the $X_i$'s are the same as DGP 1.

\item[(3)]
Let $f_1(s)\equiv1$, $f_{j+1}(s)=\sqrt{2}\cos(j\pi s)$ and $g_{j+1}(s)=\sqrt{2}\{1+\cos(j\pi s)\}$, for $j\geq 1$ and define 
$$
\beta_0(s,t)=f_1(s)f_1(t)+4\sum_{j=2}^{50}(-1)^{j+1}j^{-2}g_j(s)f_j(t) \,;
$$
the $X_i$'s are the same as DGP 1.
\end{enumerate}
The first setting is similar to the ones used in \cite{yuancai} and \cite{sun2018}; the second setting is exactly the same as Scenario 1 in \cite{sun2018}; the third setting is a non-standard setting that involves an asymmetric slope surface 
$\beta_0$. We took $\e$ to be the Gaussian process with the following three covariance settings: 
\begin{itemize}[]
\item[(i)] For $t_1,t_2\in[0,1]$, $C_\e(t_1,t_2)=\sigma_1^2\delta(t_1,t_2)$, where $\sigma_1^2=0.1\times\int \var\{\tilde Y(t)\}dt$ and $\tilde Y(t)=\int_0^1\beta_0(s,t)X(s)ds$, for $t\in[0,1]$.
\item[(ii)] For $t_1,t_2\in[0,1]$, $C_\e(t_1,t_2)=\sigma_2^2(t)\delta(t_1,t_2)$, where $\sigma_2^2(t)=0.1\times\var\{Y(t)\}$, for $t\in[0,1]$.
\item[(iii)] For $t_1,t_2\in[0,1]$,
$C_\e(t_1,t_2)=2\sigma_1^2\delta(t_1,t_2)$, where $\sigma_1^2$ is as in (i). 
\end{itemize}
For each above setting, we simulated $1000$ Monte Carlo samples, each of size $n=30$ or 60, and we took the bootstrap sample size $Q=300$. We compared our method (hereinafter referred to as RK) with the tensor product reproducing kernel Hilbert space method proposed in \cite{sun2018} (hereinafter referred to as TP). For our method, we took the number of components $v=\lceil n^{2/5}\rceil$ in Section~\ref{sec:numerical}. To evaluate the performance of different estimators, we considered the following three criteria. The first criterion is the integrated squared error of $\hat\beta$, defined by 
\begin{align*}
\ISE(\hat\beta)=\int_0^1\int_0^1|\hat\beta(s,t)-\beta_0(s,t)|^2dsdt.
\end{align*}
The second criterion is the excess prediction risk ${\rm EPR}(\hat\beta)$ defined in \eqref{epr}. The third criterion is the maximum deviation, defined by
\begin{align*}
{\rm MD}(\hat\beta)=\sup_{(s,t)\in[0,1]^2}|\hat\beta(s,t)-\beta_0(s,t)|\,.
\end{align*}
In Table~\ref{table:estimation}, we report the three quartiles of ${\rm ISE}$, ${\rm EPR}$ and ${\rm MD}$ of the estimators computed from the $1000$ Monte Carlo samples under the data generating process 1--3 with error processes (i)--(iii), using our method (RK) and \cite{sun2018}'s method (TP). Figure~\ref{fig:setting1} displays the plots of the true slope surface $\beta_0$ and their corresponding estimators using RK and TP, under the data generating processes 1--3 with error (i) and sample size $n=60$.

The results in Table~\ref{table:estimation} indicate that, for DGPs 1 and 3, our method (RK) produces higher estimation accuracy in terms of ISE, EPR and MD compared to \cite{sun2018}'s method (TP), whereas \cite{sun2018}'s produces slightly better estimators in DGP 2. These results are in accordance with the fact that, in contrast to DGPs 1 and 3, the true $\beta_0$ in DGP 2 is multiplicatively separable
and the approach of \cite{sun2018} is based on this assumption. However, it is notable that the loss 
of the RK-method, which does not require this condition, is not substantial.
From Table~\ref{table:estimation}, we also notice that, in some cases, both methods perform better in error setting (ii) than in error setting (i). An explanation for this observation is  that, the point-wise signal-to-noise ratio is $10$ in error setting (i), whereas this value is smaller than 10 for some $t\in[0,1]$ in setting (ii). As for computation, \cite{sun2018}'s method involves computing the inverse or the Cholesky decomposition of matrices, whose size are larger than $n^2$-by-$n^2$, which means their method could be time consuming for sample size $n$ larger than, say, $100$.

We also evaluated the performance of the 
simultaneous confidence region $\mathfrak{C}^*_{n,Q}$ defined in Algorithm~\ref{algo:cb}, using the uniform covering probability 
$$
{\rm UCP}(\mathfrak{C}^*_{n,Q})=\P\big\{\beta_0(s,t)\in\mathfrak{C}^*_{n,Q},\text{ for all }(s,t)\in[0,1]^2\big\}.
$$
In Table~\ref{table:cp} we report the empirical empirical UCP from  $1000$ simulation runs for data generating processes 1--3 with all error setting (i) and nominal level $\alpha=0.10$ and 0.05. We observe a reasonable  approximation of the confidence level in all cases under consideration.
The simultaneous confidence regions for the slope function for the  DGPs 1--3 and the error process (i)  are displayed in Figure \ref{fig:1}.

For the finite sample properties of classical hypothesis tests proposed in Section~\ref{sec:ch}, we consider the following hypothesis:
\begin{align}\label{simu:ch}
H_0:\,\beta_0=0\qquad&\text{versus}\qquad H_1:\,\beta_0\neq0\,,
\end{align}
that is, we put $\beta_*\equiv0$ in \eqref{eq:ch}. We compared the decision rule based on the bootstrap confidence regions defined in \eqref{eq:chrule} (denoted by BT)  and the penalized likelihood ratio test (PLRT) at \eqref{eq:plrt}. Here, for the PLRT, in view of \eqref{lambda} and \eqref{hatb}, substituting $\tilde M_q$ by $I_n$ and observing that $\Omega_\ell^{\rm T}\,\Omega_\ell=nI_v$, the statistic $\L_n(0)=L_{n,\lambda}(0)-L_{n,\lambda}(\hat\beta_n)$ (defined in equation \eqref{lrts}) can be estimated by
\begin{align*}
\tilde \L_n(0)=\frac{1}{2n}\sum_{\ell=1}^v\tilde Y_\ell^{\rm T}\Omega_\ell\,(nI_v+n\lambda\hat\Lambda_\ell)^{-1}\Omega_\ell^{\rm T}\,\tilde Y_\ell\,.
\end{align*}
We took $n=30$ and $60$, and chose the nominal level $\alpha=0.05$ and used DGPs 1--3 with error settings (i)--(iii). For DGPs 1 and 3, the empirical rejection probabilities are all 1.0 for both methods for all settings. Table~\ref{table:ch} displays the empirical rejection probabilities under DGP~2 with error settings (i)--(iii), together with the empirical sizes under $H_0$ (that is, $\beta_0=0$), of both BT and PLRT for the classical hypothesis \eqref{simu:ch} out of 1000 simulation runs. From the results we observe reasonable approximation of both BT and PLRT of the nominal level 0.05 under $H_0$; BT outperforms PLRT in terms of empirical power, and as expected, the empirical powers increases for larger sample sizes.

Next, we study the finite sample properties of the test \eqref{d2} for the relevant hypotheses
\begin{align}\label{simu:rele}
H_0:\sup_{(s,t)\in[0,1]^2}|\beta_0(s,t)|\leq\Delta \quad\text{  versus } \quad H_1:\sup_{(s,t)\in[0,1]^2}|\beta_0(s,t)|>\Delta\,,
\end{align}
(we put $\beta_*\equiv0$ in \eqref{rele}), where the nominal level is chosen as $\alpha=0.05$. We used the data generating processes 1--3 with error setting (i), where the true $\Vert\beta_0\Vert_\infty=6.0$, $1.0$ and $11.0$ for the three DGPs, respectively. We took the cut-off parameter $c=\Vert\hat\beta_n\Vert_\infty/4$ in \eqref{extremal}, which scales according to the magnitude of $\hat\beta_n$. In Figure~\ref{fig:rele}, we display the empirical rejection probabilities of test \eqref{rele} based on $1000$ simulation runs, for the three data generating processes, for different values of $\Delta$ in \eqref{rele}. 
The results shown in Figure~\ref{fig:rele} indicate that, when $\Delta\leq d_\infty$, the empirical rejection probabilities are smaller than $\alpha=0.05$, and when $\Delta>d_\infty$, the rejection probabilities increases towards 1 as $\Delta$ increases, which is consistent with our theory.

\begin{figure}[h!]
\centering
\includegraphics[width=5cm]{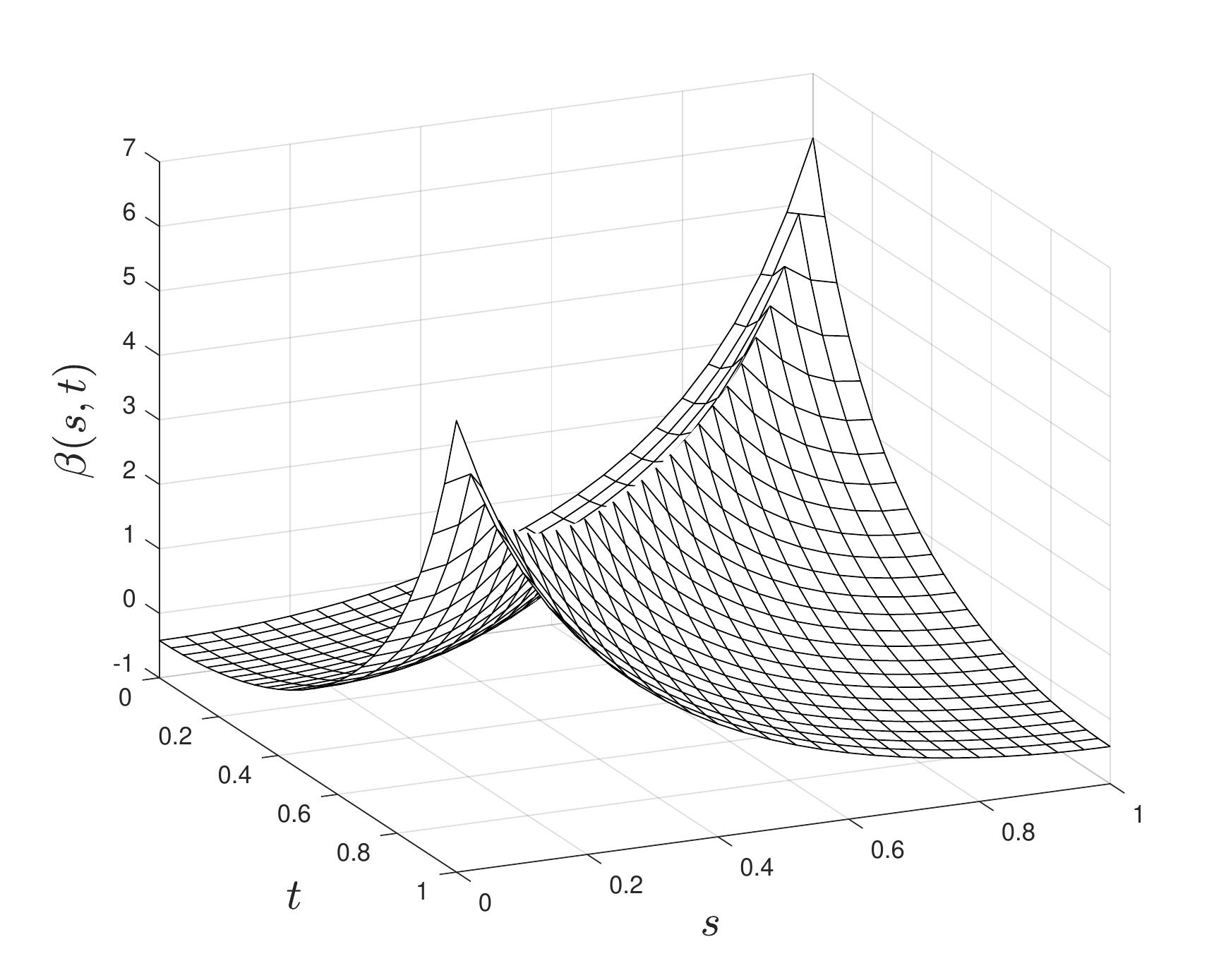}
\hspace{-0.2cm}\includegraphics[width=5cm]{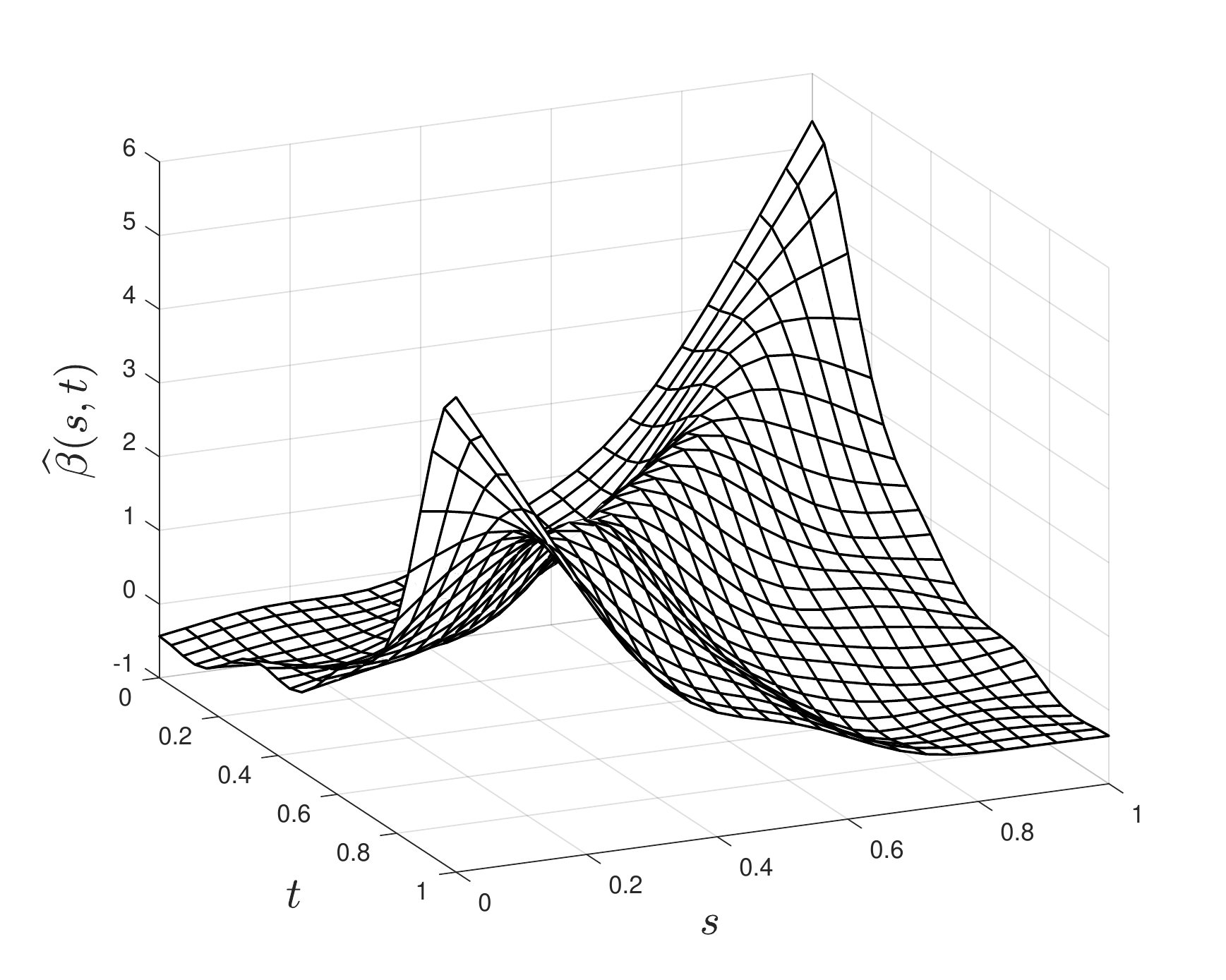}
\hspace{-0.2cm}\includegraphics[width=5cm]{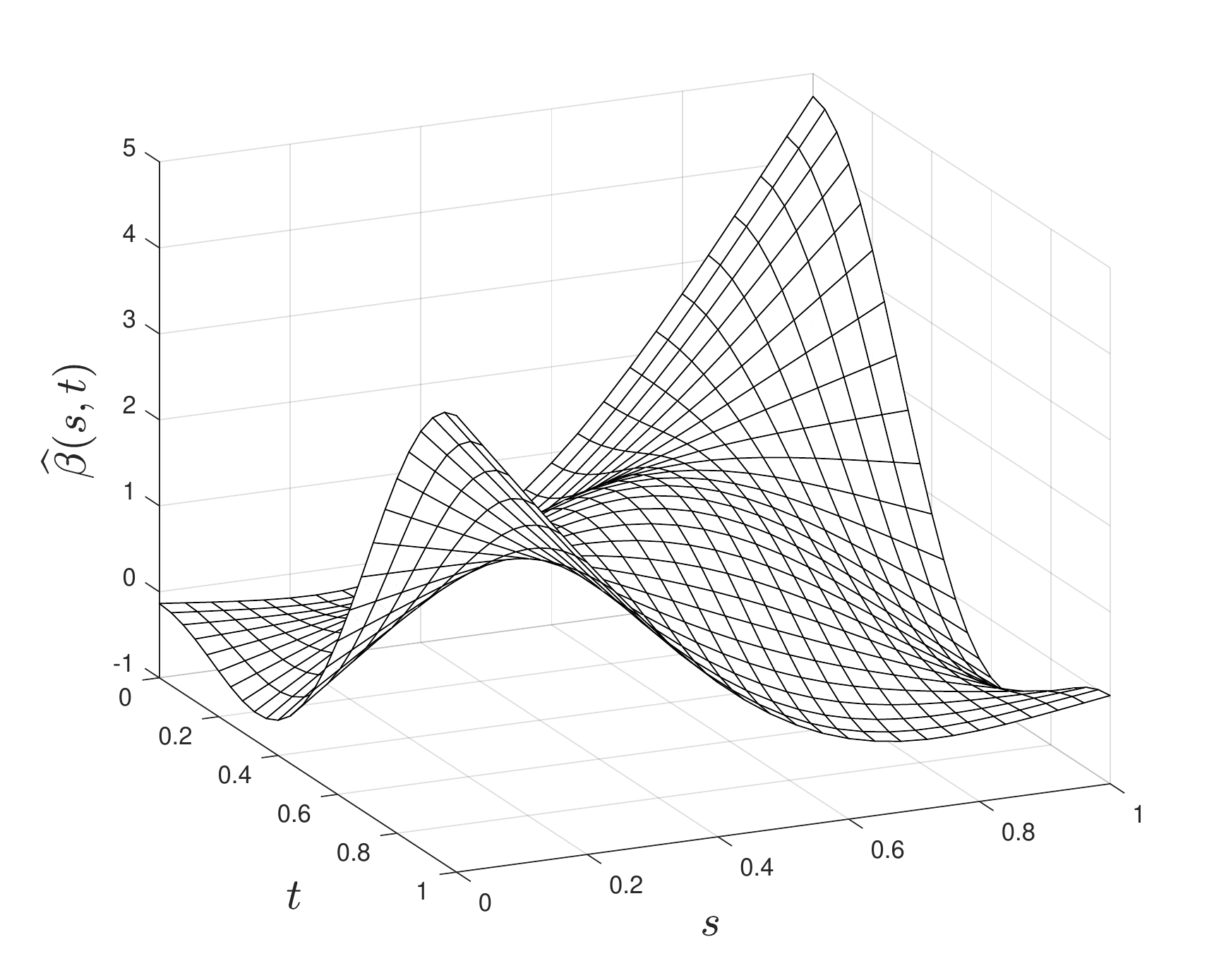}
\hspace{-0.2cm}\includegraphics[width=5cm]{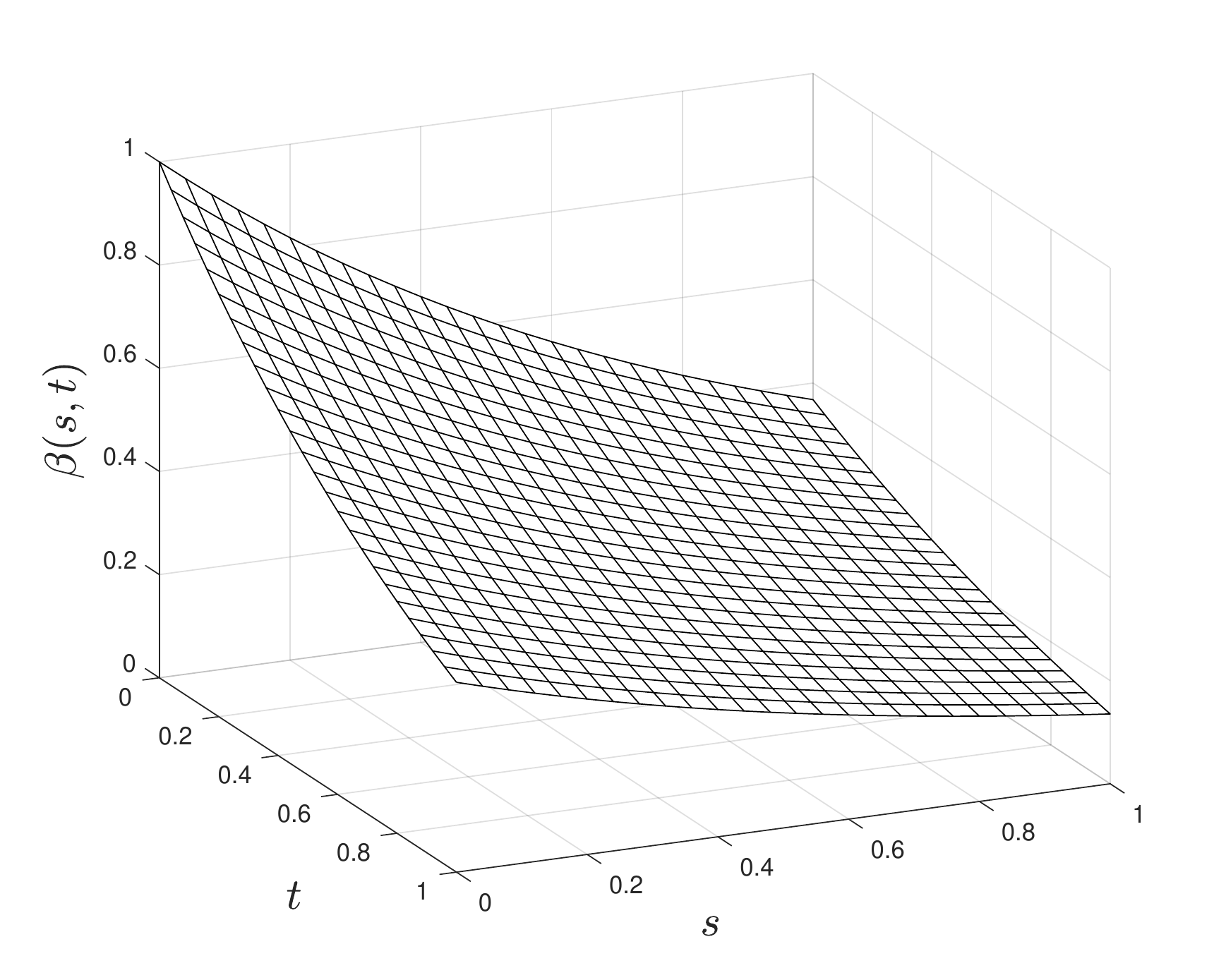}
\hspace{-0.2cm}\includegraphics[width=5cm]{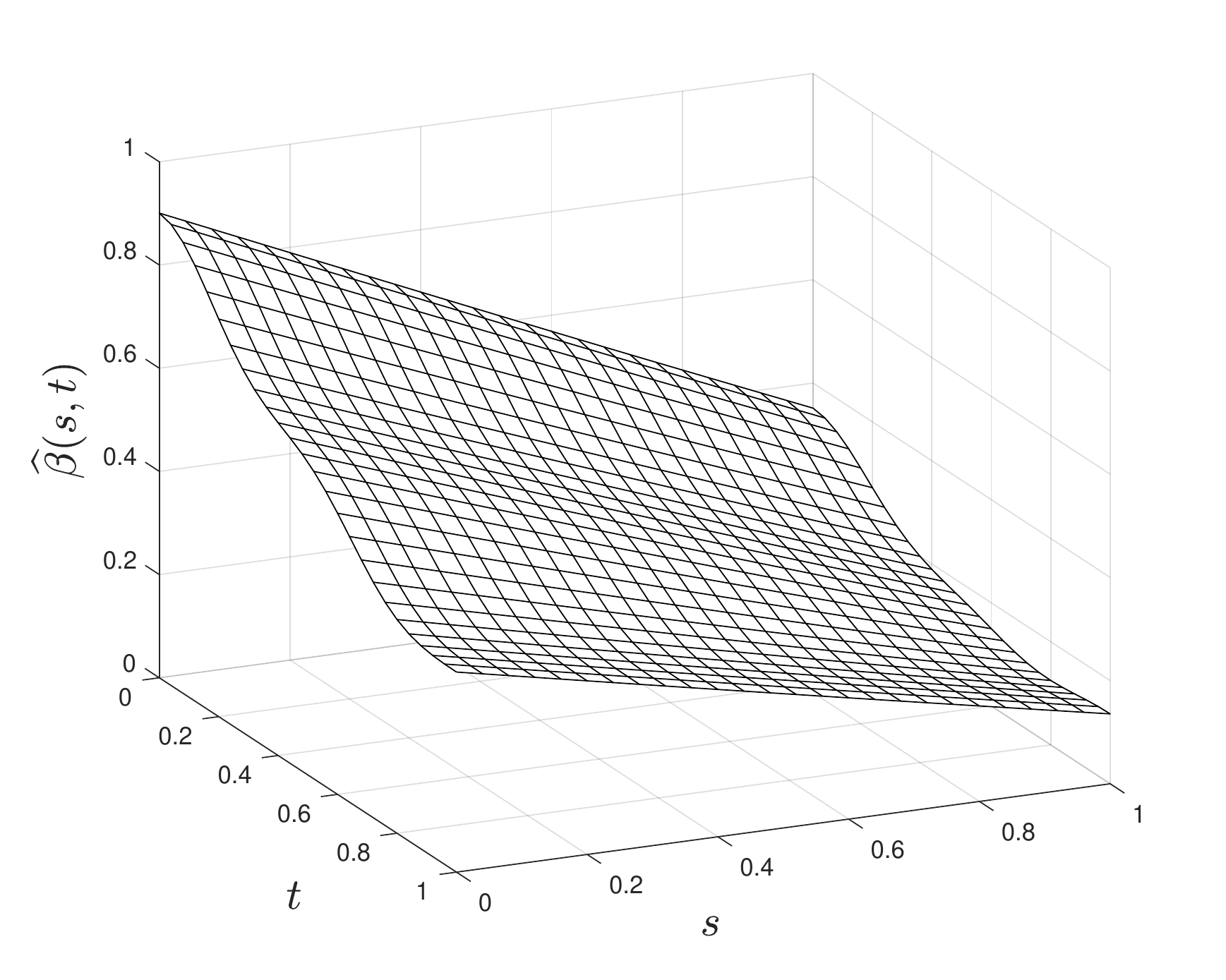}
\hspace{-0.2cm}\includegraphics[width=5cm]{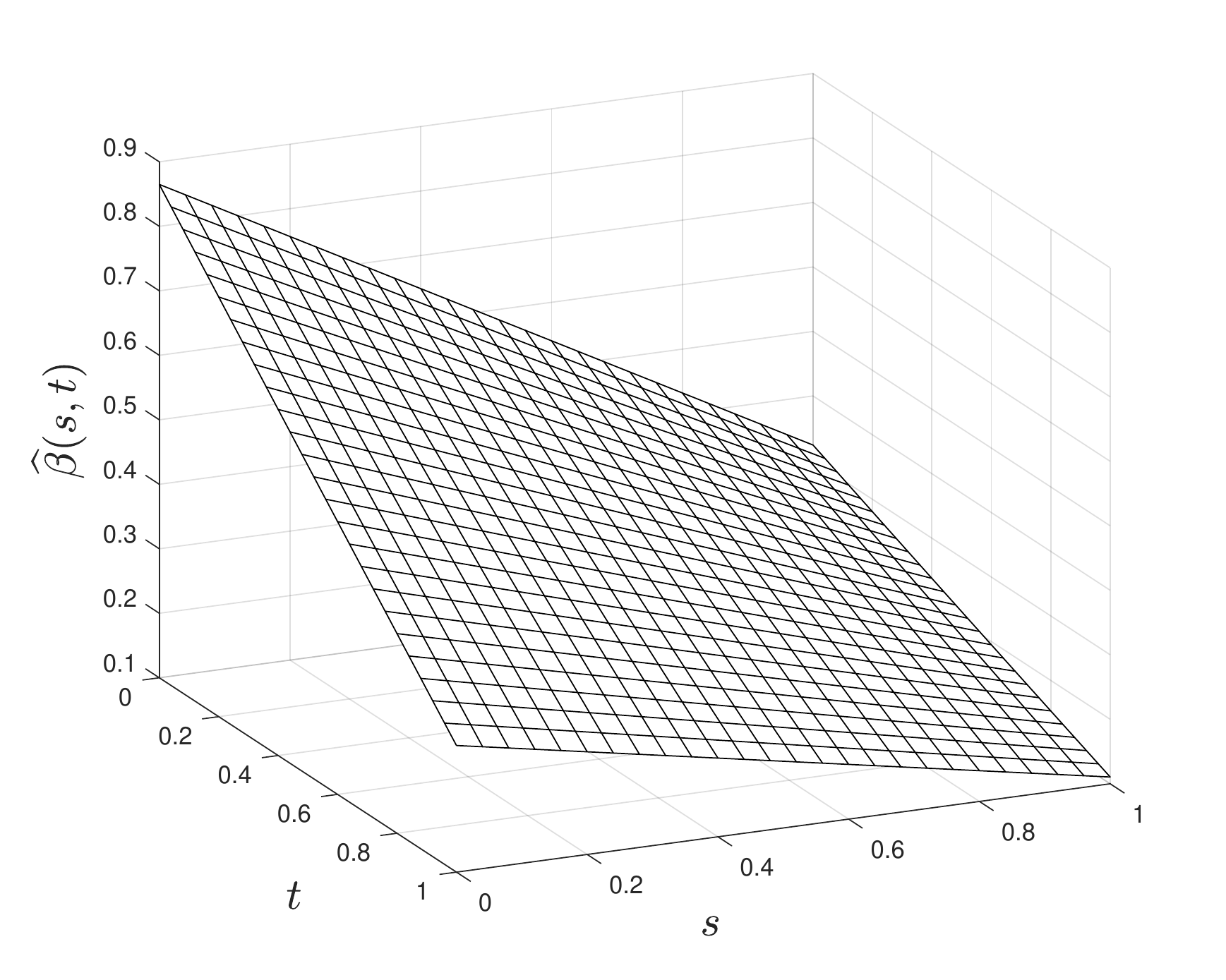}
\hspace{-0.2cm}\stackunder[5pt]{\includegraphics[width=5cm]{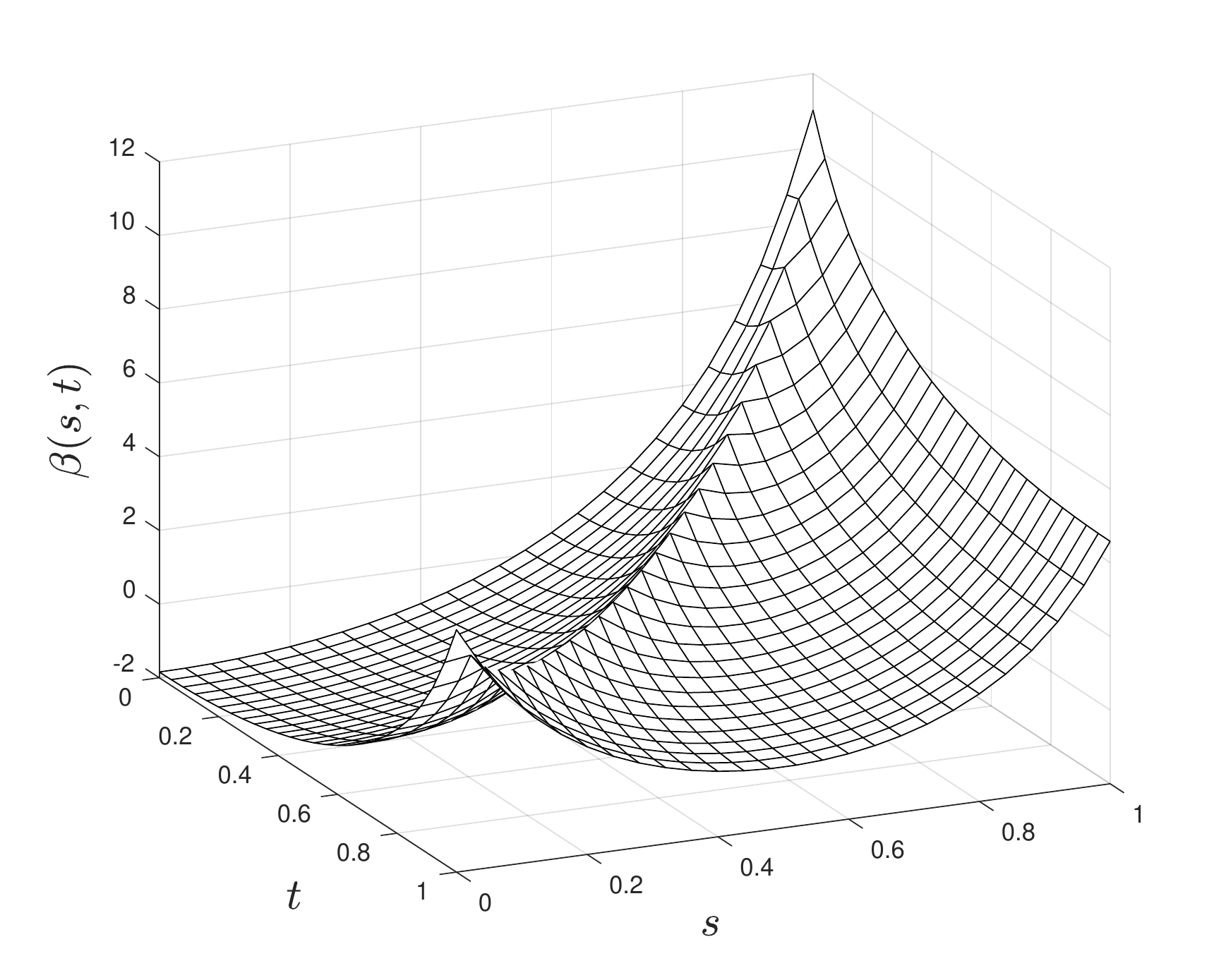}}{True}
\hspace{-0.2cm}\stackunder[5pt]{\includegraphics[width=5cm]{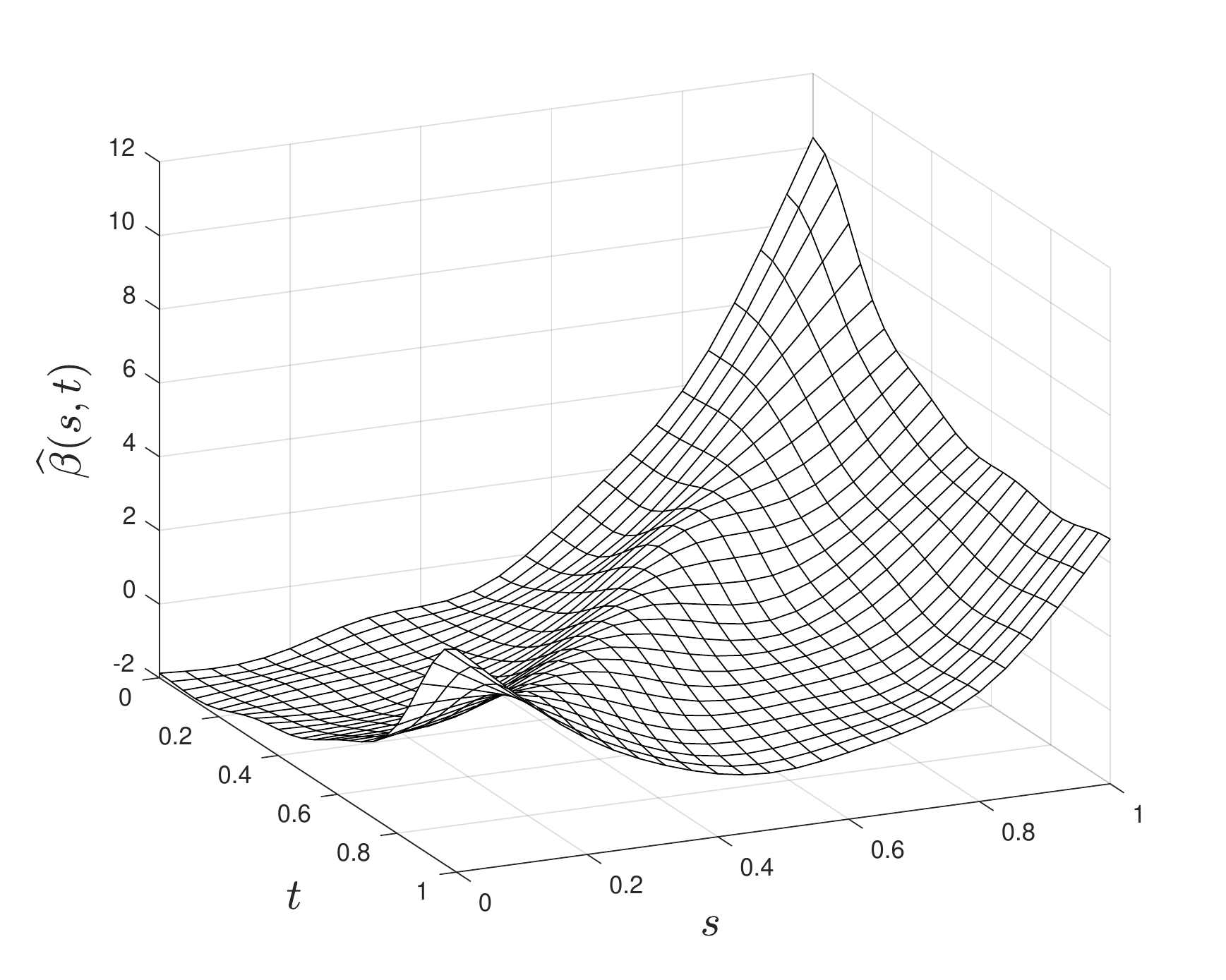}}{RK}
\hspace{-0.2cm}\stackunder[5pt]{\includegraphics[width=5cm]{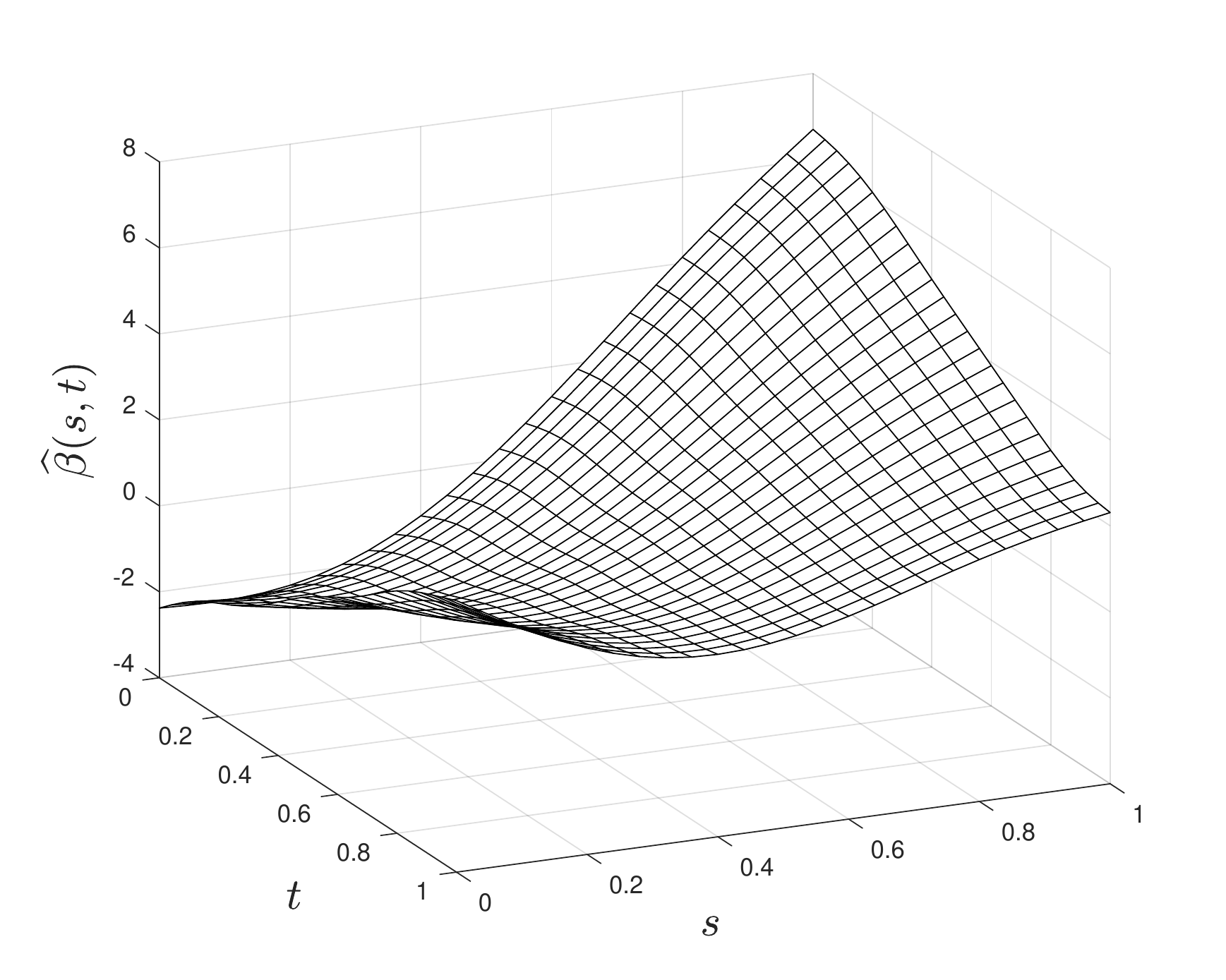}}{TP}
\caption{\it Plots of the ``true'' slope surface $\beta_0$  in model \eqref{model0} and the corresponding estimators using our method (RK) and \cite{sun2018}'s method (TP), under DGP 1--3 (rows 1--3) with error (i) with sample size $n=60$.}\label{fig:setting1}
\end{figure}

\begin{sidewaystable}
\begin{small}
\centering
\begin{tabular}{ccccc|cc|ccc}
\cline{1-9}
\multirow{2}{*}{DGP}&\multirow{2}{*}{Error}&\multirow{2}{*}{Method}&\multicolumn{2}{c|}{${\rm ISE}\ (\times10^{-3})$}&\multicolumn{2}{c|}{${\rm EPR}\ (\times10^{-4})$}&\multicolumn{2}{c}{${\rm MD}\ (\times 10^{-1})$}&\\
\cline{4-9}
&&&$n=30$&$n=60$& $n=30$& $n=60$&$n=30$&$n=60$\\    
\cline{1-9}
\multirow{6}{*}{1}&\multirow{2}{*}{(i)} &{RK} &17.6 [16.4, 20.1]&13.4 [11.6, 16.2]&31.7 [28.0, 35.6]&27.3 [23.7, 31.3]& 11.0 [9.32, 12.2]& 7.21 [6.23, 8.50]\\ 
&&{TP}& 73.8 [73.1, 74.6]&53.1 [52.4, 54.3]&87.8 [81.9, 95.2]&61.7 [56.5, 68.8]& 13.2 [12.2, 14.3]& 11.4 [10.3, 12.5]\\
\cline{2-9}
&\multirow{2}{*}{(ii)}&{RK} &17.0 [16.1, 18.5]&12.6 [10.6, 15.1]&28.3 [24.7, 33.0]&25.3 [21.9, 29.6]& 10.5 [9.27, 12.0]& 6.58 [5.83, 7.59]\\
&&{TP}&68.7 [68.1, 69.6]&33.0 [32.4, 33.7]&85.9 [80.9, 91.5]&38.0 [34.9, 45.3]& 13.0 [12.0, 13.9]& 10.4 [9.13, 11.7]\\
\cline{2-9}
&\multirow{2}{*}{(iii)}&{RK} &23.2 [19.7, 28.6]&21.1 [17.5, 26.0]&44.7 [38.6, 53.9]&39.0 [34.2, 44.3]& 11.7 [9.23, 14.2]& 7.51 [6.57, 9.01]\\
&&{TP}&79.3 [78.0, 80.6]&34.0 [32.8, 37.6]&96.1 [88.9, 112]&57.3 [47.9, 68.4]& 18.9 [15.5, 22.2]& 11.8 [10.4, 13.5]\\
\cline{1-9}
\multirow{6}{*}{2}&\multirow{2}{*}{(i)} &{RK} &1.14 [0.75, 1.74]&0.65 [0.46, 0.88]&7.45 [5.37, 9.76]&4.62 [3.36, 5.61]& 0.86 [0.67, 1.13]& 0.65 [0.54, 0.85]\\ 
&&{TP} & 0.87 [0.67, 1.23]&0.38 [0.41, 0.48]&5.67 [4.59, 7.56]&3.96 [3.19, 4.20]& 0.45 [0.32, 0.49]& 0.34 [0.29, 0.39]\\
\cline{2-9}
&\multirow{2}{*}{(ii)}&{RK} &0.32 [0.23, 0.40]&0.23 [0.17, 0.29]&2.29 [1.83, 2.96]&1.97 [1.68, 2.37]& 0.53 [0.46, 0.59]& 0.51 [0.45, 0.55]\\
&&{TP}&0.22 [0.18, 0.28]&0.18 [0.11, 0.25]&2.18 [2.09, 2.51]&1.05 [1.96, 2.14]& 0.33 [0.32, 0.35]& 0.33 [0.32, 0.35]\\
\cline{2-9}
&\multirow{2}{*}{(iii)}&{RK} &2.39 [1.49, 3.47]&1.08 [0.75, 1.57]&13.1 [9.38, 17.4]&6.96 [5.09, 8.96]& 1.28 [0.95, 1.65]& 0.80 [0.65, 1.14]\\
&&{TP}&1.50 [0.99, 1.95]&0.70 [0.40, 0.86]&7.77 [5.71, 9.87]&5.52 [4.44, 7.52]& 0.47 [0.37, 0.68]& 0.42 [0.35, 0.51]\\
\cline{1-9}
\multirow{6}{*}{3}&\multirow{2}{*}{(i)} &{RK} &28.6 [26.7, 31.6]&15.0 [12.9, 18.6]&37.7 [33.0, 43.6]&31.5 [27.2, 36.2]& 12.8 [11.0, 15.6]& 8.97 [7.88, 10.2]\\ 
&&{TP}&146 [142, 150]&83.8 [81.8, 86.9]&138 [129, 149]&77.7 [71.7, 73.3]& 14.5 [11.8, 17.5]& 11.3 [9.31, 13.5]\\
\cline{2-9}
&\multirow{2}{*}{(ii)}&{RK} &29.1 [27.1, 32.4]&17.7 [14.0, 22.2]&39.7 [33.4, 46.7]&34.6 [29.8, 39.4]& 11.9 [10.2, 14.6]& 8.81 [7.27, 10.0]\\
&&{TP}&150 [147, 153]& 84.4 [82.6, 88.6]&141 [129, 156]&81.8 [72.4, 92.9]& 15.8 [13.1, 19.7]& 11.9 [10.0, 14.6]\\
\cline{2-9}
&\multirow{2}{*}{(iii)}&{RK} &32.5 [29.2, 39.0]&22.5 [18.4, 26.9]&54.3 [44.3, 62.4]&41.0 [36.3, 48.1]& 15.5 [12.9, 18.1]& 9.79 [8.27, 11.6]\\
&&{TP}&170 [166, 174]&86.6 [83.3, 92.5]&148 [134, 163]&94.8 [84.4, 105.1]& 23.5 [22.2, 24.9]& 13.8 [11.6, 16.7]\\
\cline{1-9}
\end{tabular}
\caption{\it The three quartiles $(2^{\rm nd}\, [1^{\rm st},\, 3^{\rm rd}])$ of integrated squared error $({\rm ISE})$, excess prediction rate $({\rm EPR})$ and maximum deviation $({\rm MD})$ of estimators computed from 1000 simulation runs under the data generating processes 1--3 with error processes (i)--(iii), using our method (RK) and \cite{sun2018}'s method (TP).\label{table:estimation}}
\end{small}
\end{sidewaystable}

\begin{figure}[h!]
\centering
\includegraphics[width=5cm]{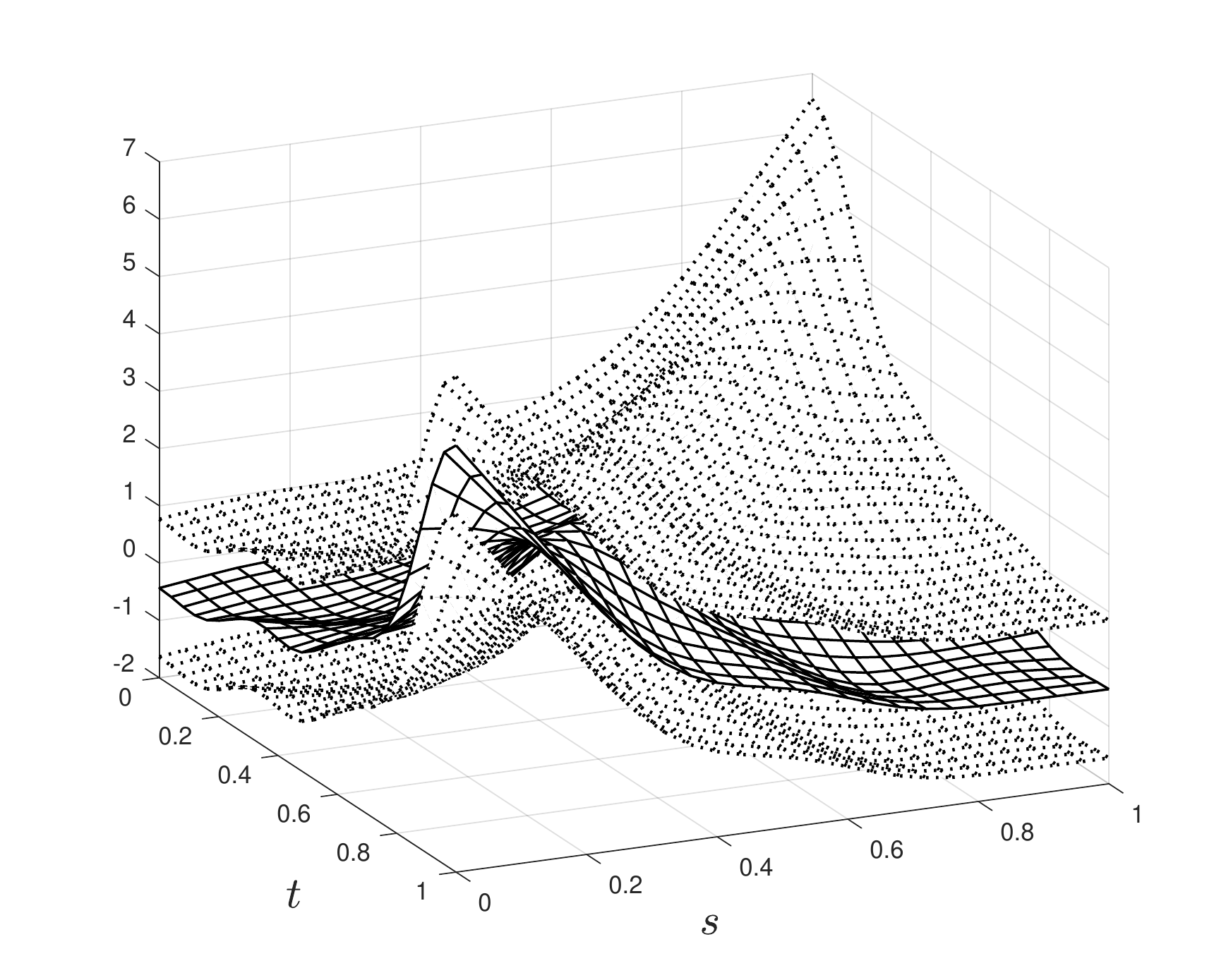}
\includegraphics[width=5cm]{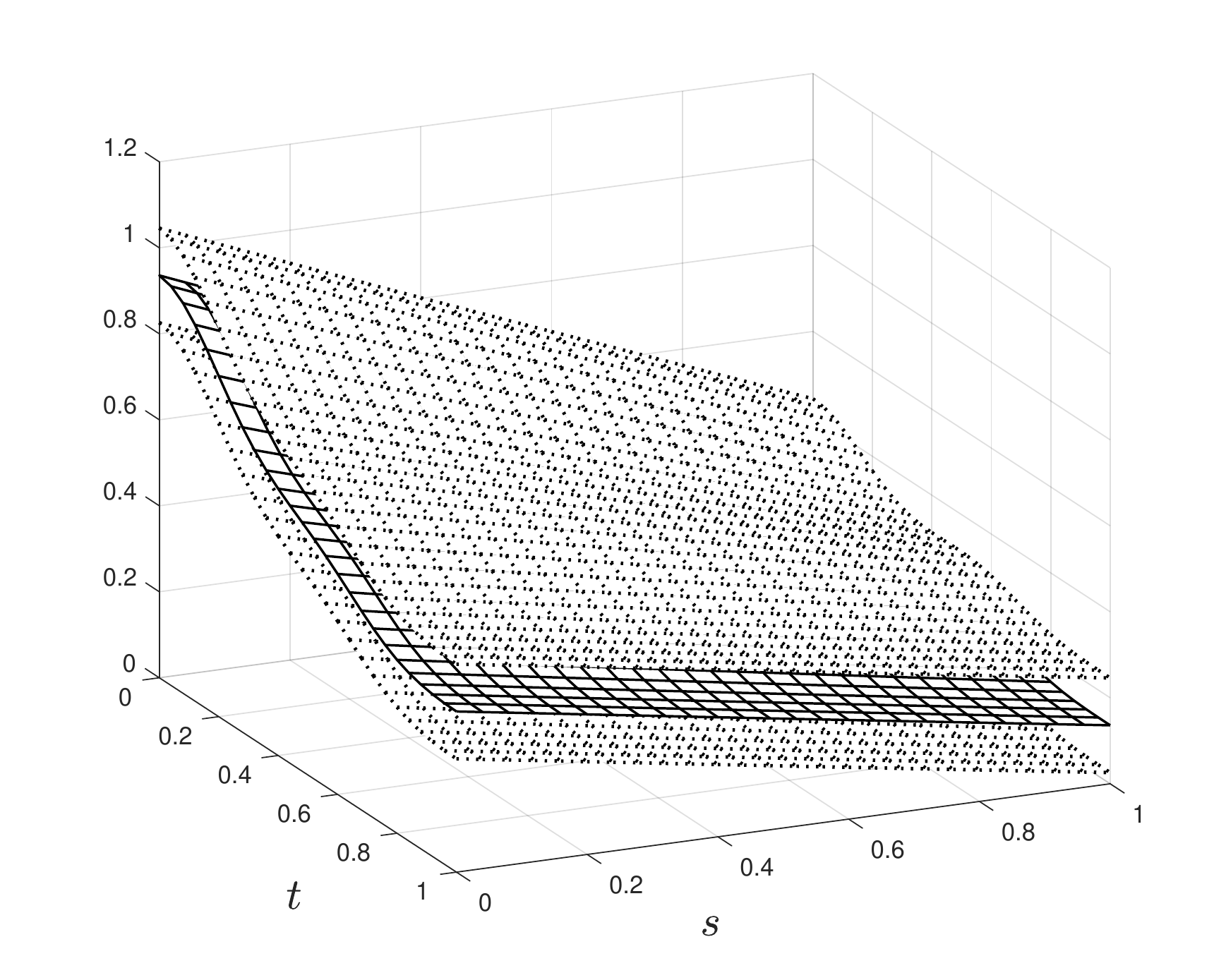}
\includegraphics[width=5cm]{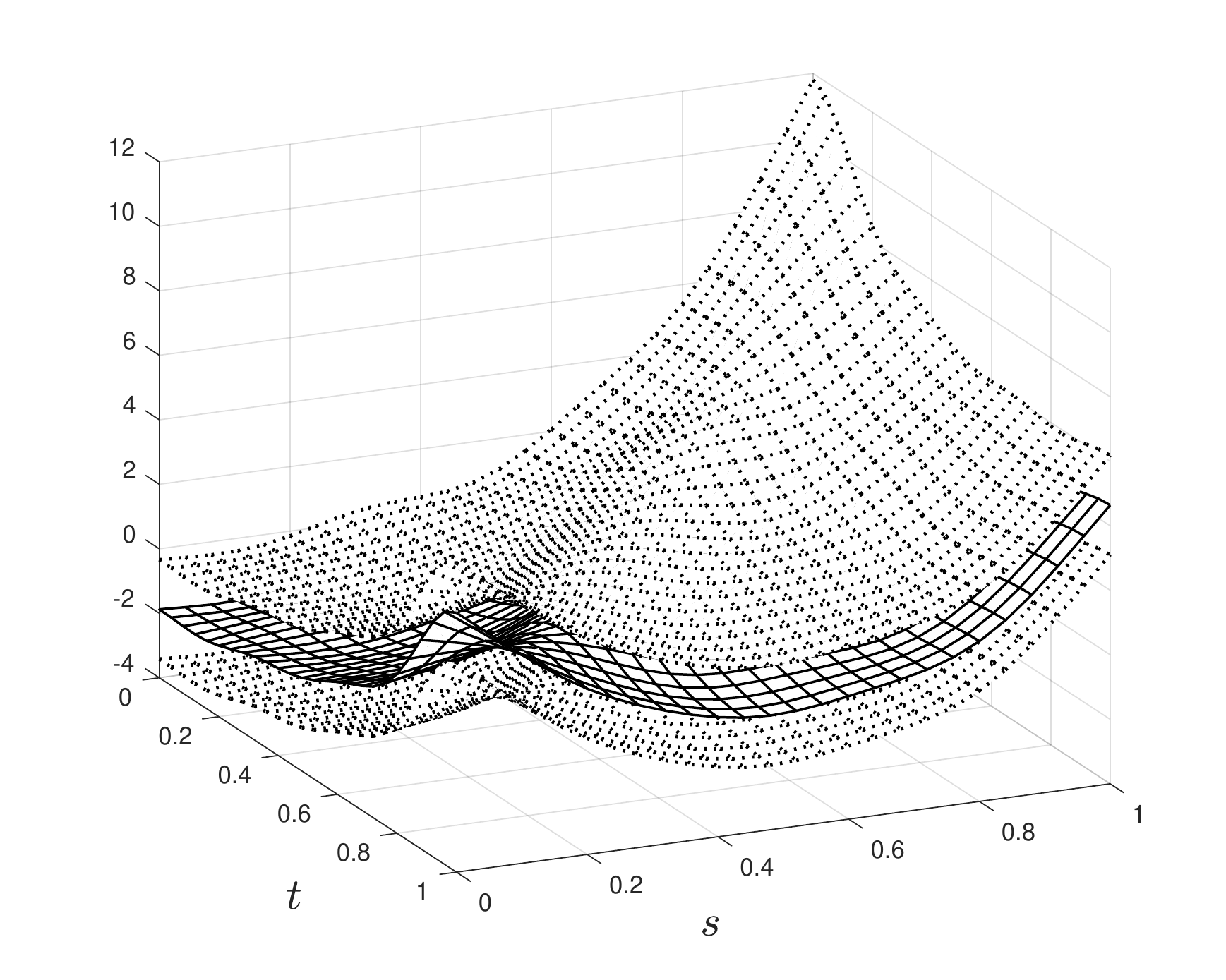}
\caption{\it Simultaneous $0.95$ confidence regions for the slope surface $\beta_0$ under DGPs 1--3 with error (i) and sample size $n=60$.}\label{fig:1}
\end{figure}

\begin{table}[h!]
\centering
\begin{small}
\begin{tabular}{c|cc|cc|cc} 
\cline{1-7}
DGP&\multicolumn{2}{c|}{1}&\multicolumn{2}{c|}{2}&\multicolumn{2}{c}{3}\\
\cline{1-7}
$\alpha$&0.10&0.05&0.10&0.05&0.10&0.05\\
\cline{1-7}
$n=30$&0.863&0.932&0.882&0.963&0.858&0.915\\
$n=60$&0.881&0.944&0.913&0.960&0.870&0.939\\
\cline{1-7}
\end{tabular} \caption{\it Empirical  covering probabilities  of the simultaneous confidence region \eqref{det1} for the slope surface $\beta_0$ under DGPs 1--3 with error setting (i).\label{table:cp}}
\end{small}
\end{table}

\begin{table}[h!]
\centering
\begin{small}
\begin{tabular}{ccc|cc|ccccc} 
\cline{1-7}
&\multicolumn{2}{c|}{(i)}&\multicolumn{2}{c|}{(ii)}&\multicolumn{2}{c}{(iii)}\\
\cline{2-7}
&$n=30$&$n=60$& $n=30$& $n=60$& $n=30$& $n=60$\\    
\cline{1-7}
\multirow{2}{*}{BT} &0.536 & 0.693  &0.328  &0.494 &0.478  &0.546  \\ 
 &(0.061)&(0.037) & (0.012)&(0.028) &(0.063)  &(0.042) \\
\cline{1-7}
\multirow{2}{*}{PLRT}& 0.367& 0.562 & 0.304 &0.418 &  0.330&0.513\\
 &(0.059) &(0.020) & (0.074)&(0.057) &(0.027) &(0.060) \\
\cline{1-7}
\end{tabular} \caption{\it Empirical rejection probabilities under DGP~2, together with empirical sizes (in brackets) of the decision rule based on the bootstrap confidence region $({\rm BT})$ in \eqref{eq:chrule} and the penalized likelihood ratio test $({\rm PLRT})$ in \eqref{eq:plrt} for the classical hypothesis \eqref{simu:ch} with error settings (i)--(iii) at nominal level $\alpha=0.05$.\label{table:ch}}
\end{small}
\end{table}

\begin{figure}[h!]
\centering
\hspace{-0.5cm}\includegraphics[width=6cm]{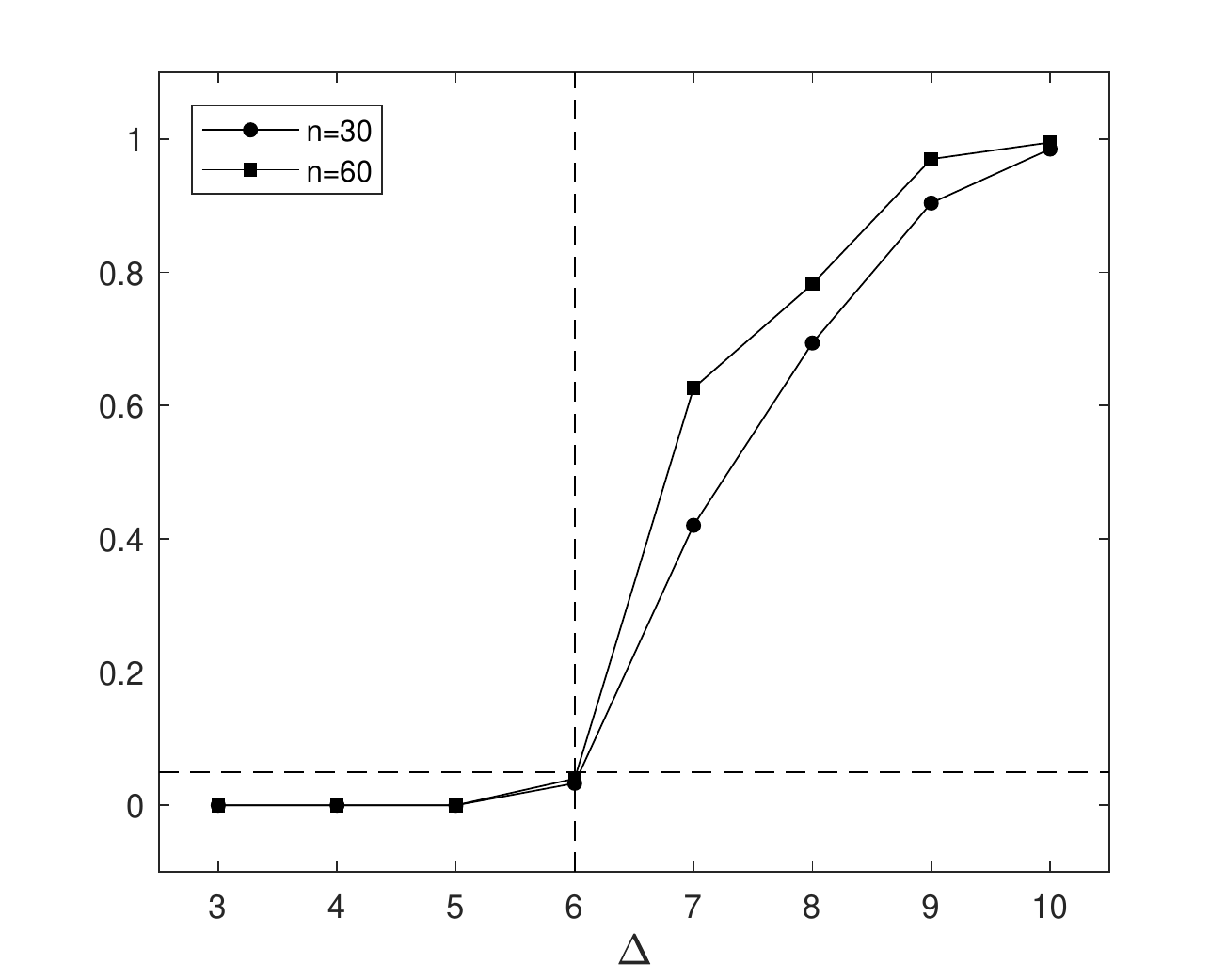}\hspace{-0.5cm}\includegraphics[width=6cm]{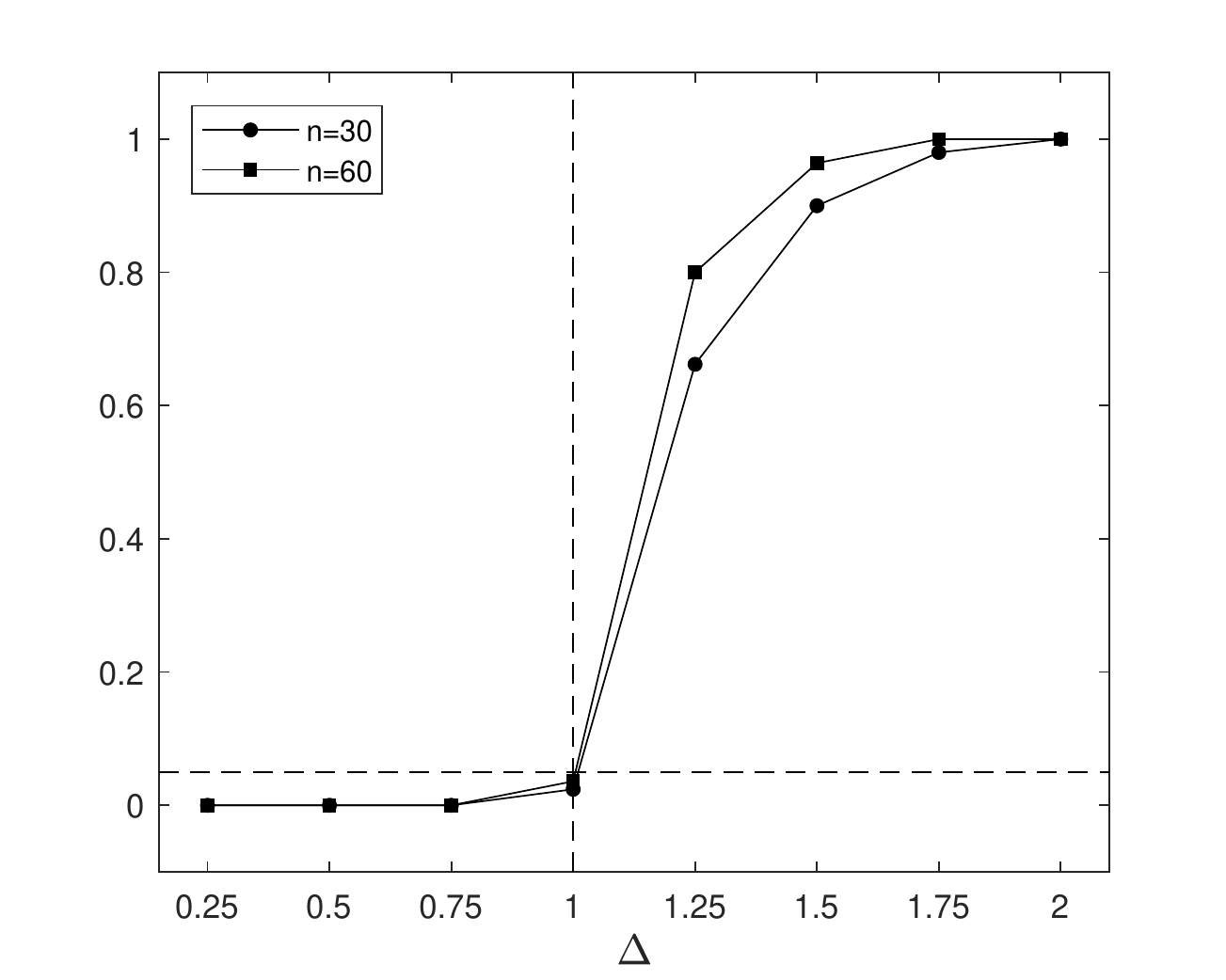}\hspace{-0.5cm}\includegraphics[width=6cm]{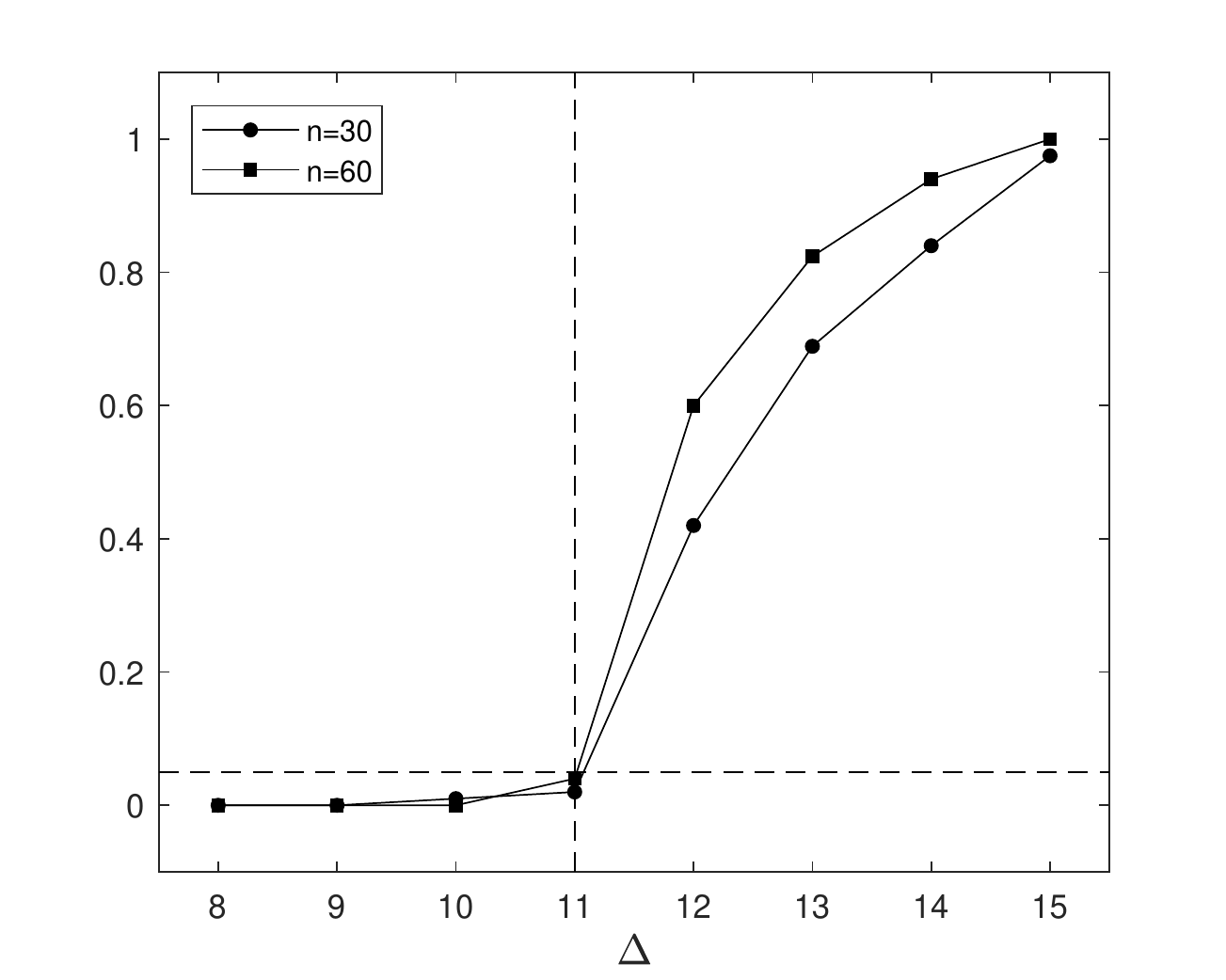}
\caption{\it Empirical rejection probabilities of test \eqref{d2} for the relevant hypothesis in \eqref{simu:rele} at nominal level $\alpha=0.05$, under DGPs 1--3 with error setting (i) and sample size $n=30,60$, for different values of $\Delta$ in \eqref{rele}. The horizontal dashed line is the nominal level 0.05; the vertical dashed line is $\Delta=d_\infty$.\label{fig:rele}}
\end{figure}

\subsection{Real data example}\label{sec:real}


We applied the new methodology to the Canadian weather data in \cite{ramsay2005}, which consists of daily temperature and precipitation at $n=35$ locations in Canada averaged over 1960 to 1994. In this case, for $1\leq i\leq 35$, $X_i$ is the average daily temperature for each day of the year at the $i$-th location, and $Y_i$ is the base 10 logarithm of the corresponding average precipitation; see \cite{ramsay2005}, p.~248. We took the domain of $X$ and $Y$ to be $[0,1]$ with $365$ equality spaced time points. The size of the  bootstrap sample is  $Q=300$ and  the truncation parameter is chosen as  $v=\lceil n^{2/5}\rceil=4$.
In Figure~\ref{fig:weather}, we display the estimated slope function $\beta_0$ and the 0.95 confidence region, using our method RK. In order to evaluate the prediction accuracy, for both our method RK and \cite{sun2018}'s method TP, we computed the integrated squared prediction error (ISPE) and maximum prediction deviation (MPD), for each observation ($1\leq i\leq n$), defined by
\begin{align}\label{m5}
\begin{split}
&{\rm ISPE}_i=\int_0^1\bigg|Y_i(t)-\int_0^1X_i(s)\,\hat\beta_{-i}(s)\,ds\bigg|^2\,dt\,;\\
&{\rm MPD}_i=\sup_{t\in[0,1]}\bigg|Y_i(t)-\int_0^1X_i(s)\,\hat\beta_{-i}(s)\,ds\bigg|\,,
\end{split}
\end{align}
where $\hat\beta_{-i}$ is the estimator of the slope function based on the data with the $i$-th observation removed. 
In Figure~\ref{fig:ispe}, we display the boxplot of $\{\sqrt{\rm ISPE}_i\}_{i=1}^n$ and $\{{\rm MPD}_i\}_{i=1}^n$, for both methods RK and TP. The results in Figure~\ref{fig:ispe} show that, in general, our method performs better in terms of prediction accuracy and robustness, which is indicated by a smaller median, smaller interquartile range in terms of $\sqrt{\rm ISPE}$ and ${\rm MPD}$, and fewer outliers of $\sqrt{\rm ISPE}$. In contrast, \cite{sun2018}'s method achieves a smaller minimum value of both $\sqrt{\rm ISPE}$ and ${\rm MPD}$.

\begin{figure}[h!]
\centering
\includegraphics[width=5cm]{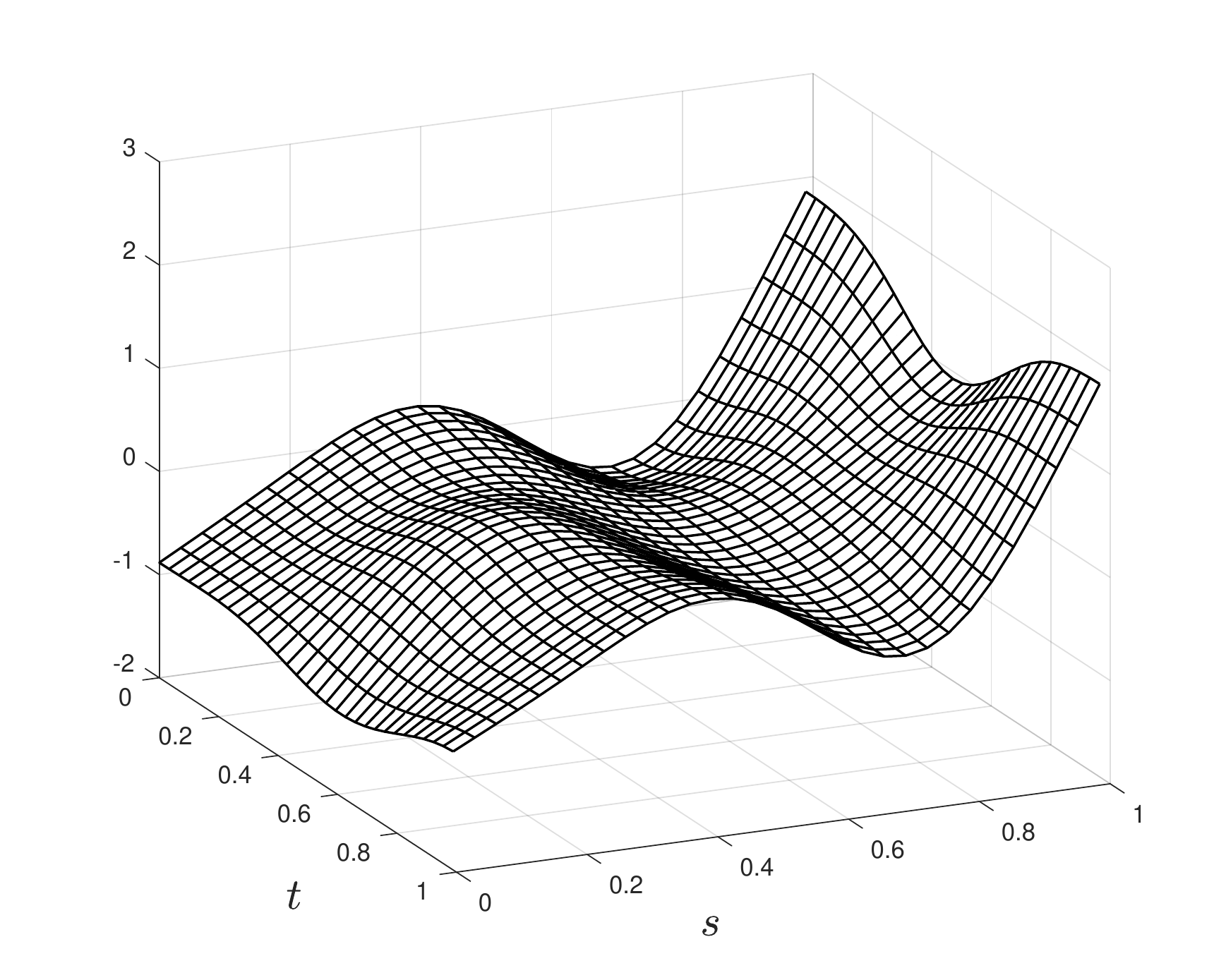}
\includegraphics[width=5cm]{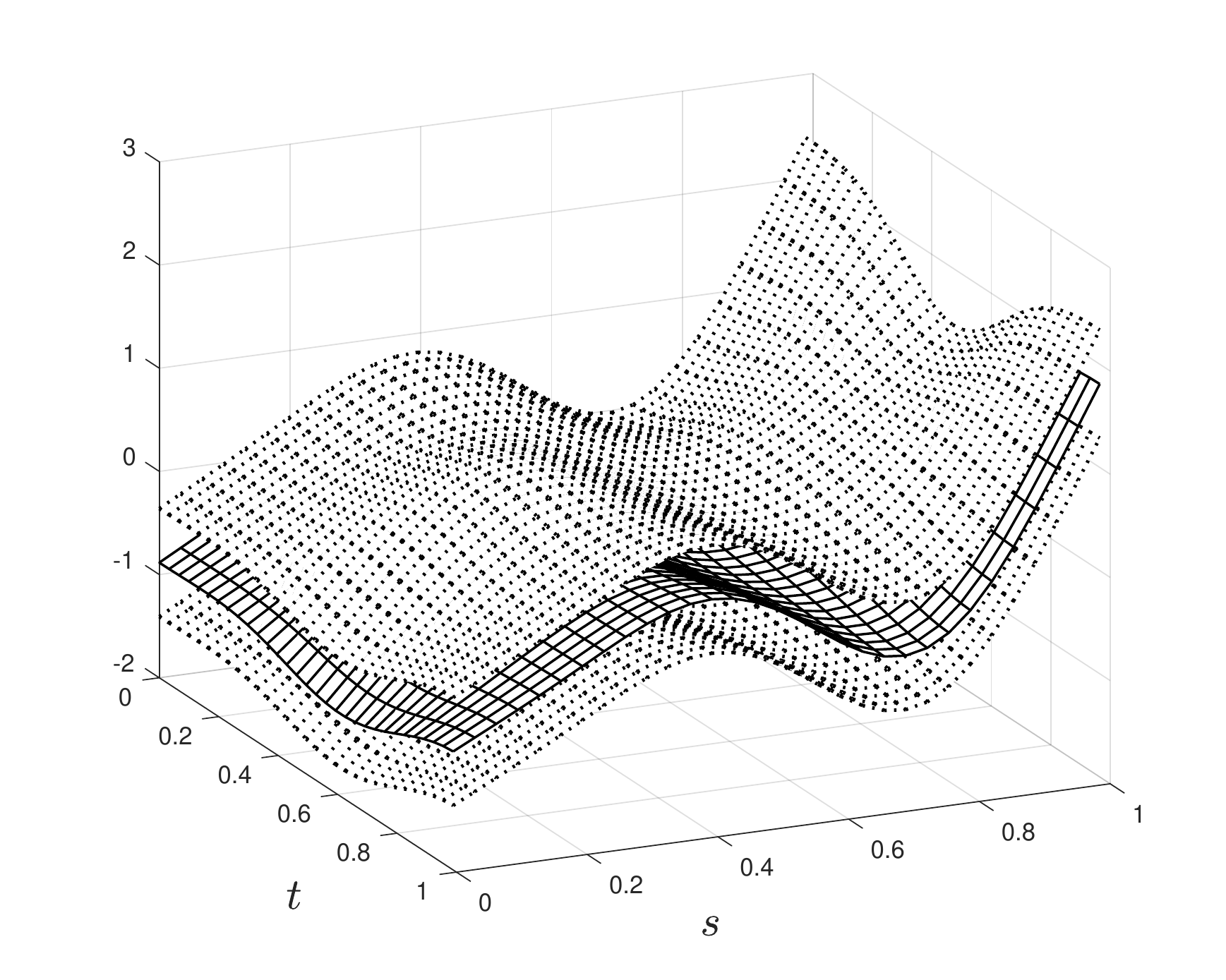}
\caption{\it Estimated slope surface in model \eqref{model0}
(left panel) and its 0.95 simultaneous band (right panel), using the Canadian weather data.}\label{fig:weather}
\end{figure}

\begin{figure}[h!]
\centering
\includegraphics[width=7cm]{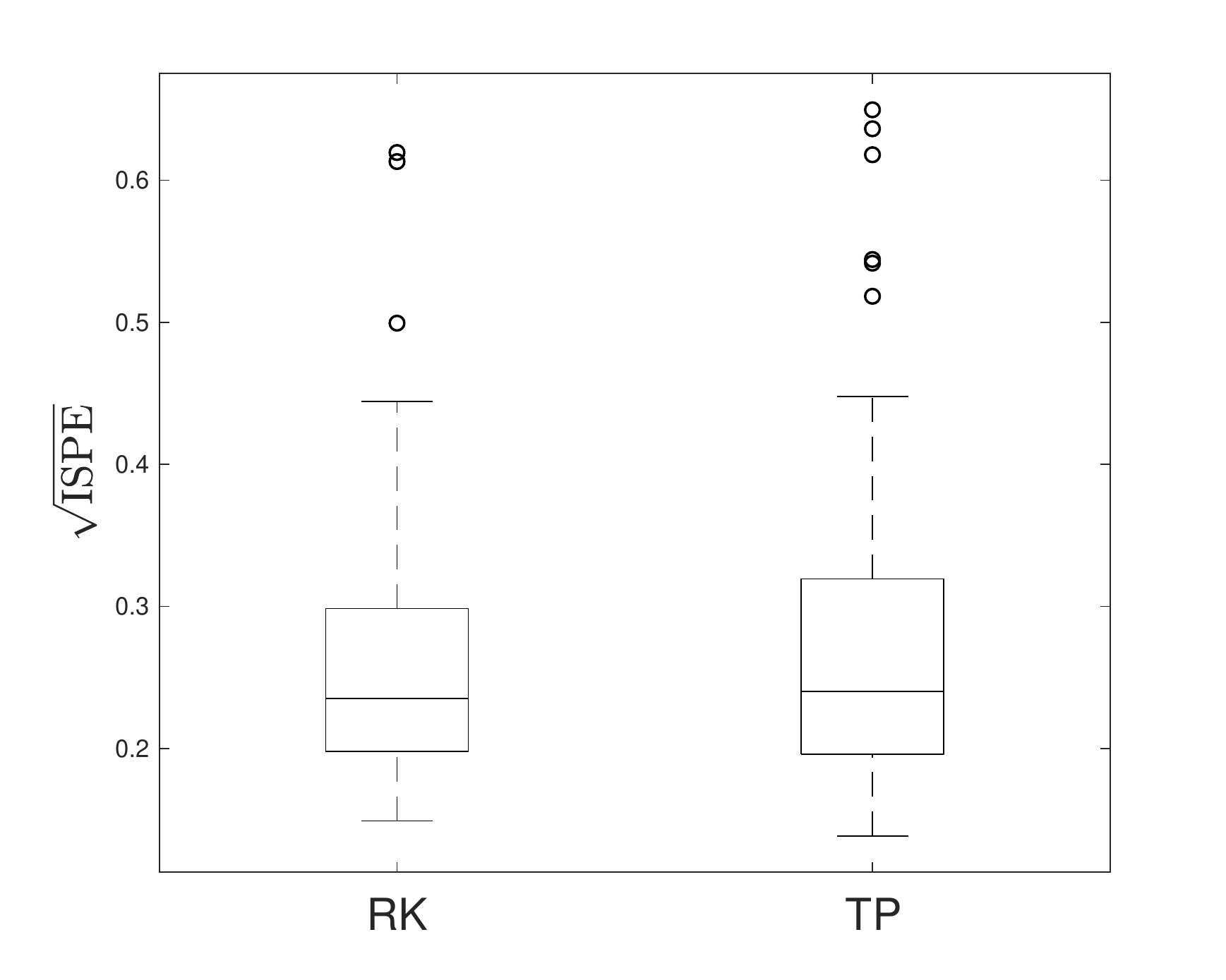}
\includegraphics[width=7cm]{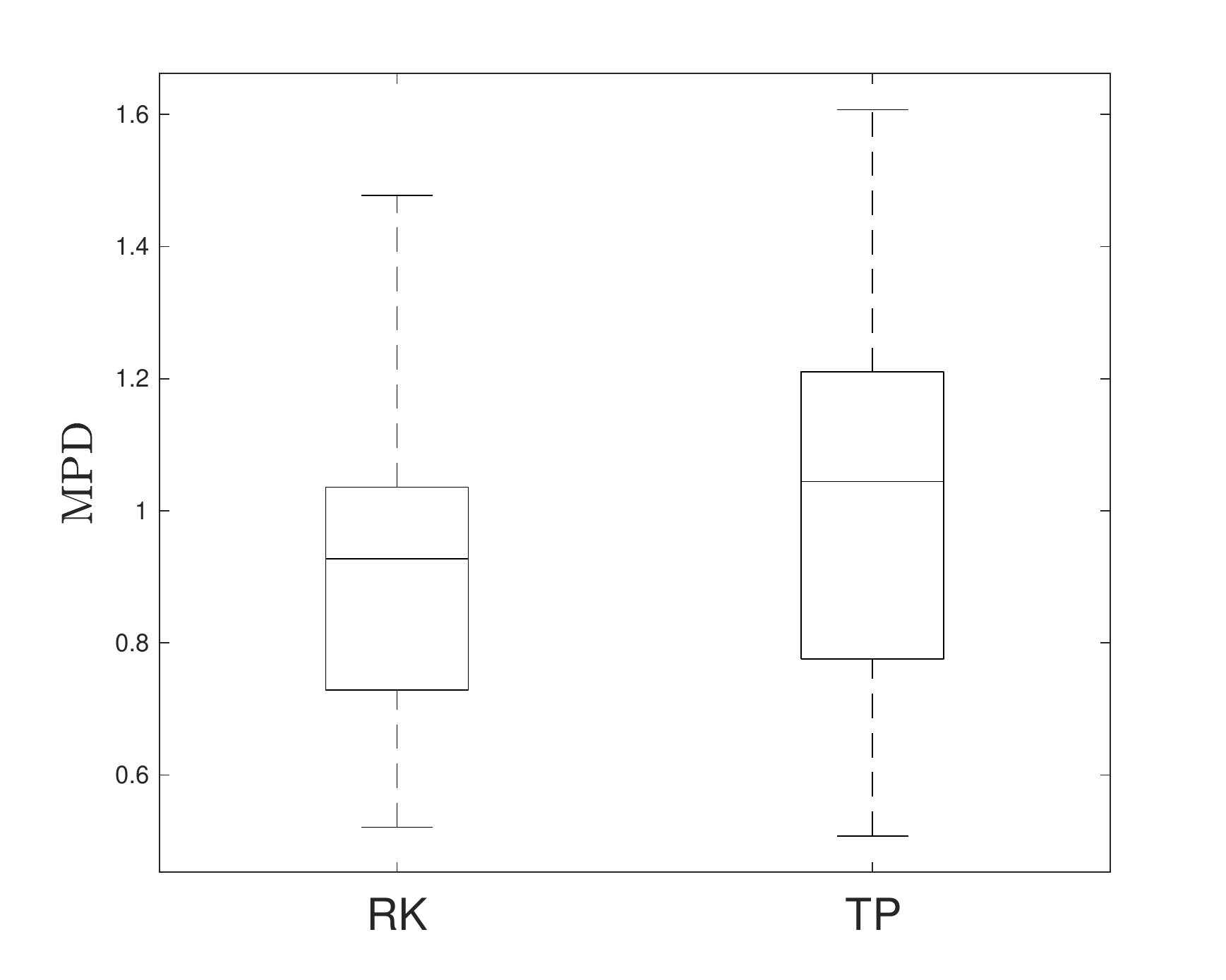}
\caption{\it Boxplot of the square root of the integrated squared prediction error $\sqrt{\rm ISPE}$ (left penal), and the maximum prediction deviation $({\rm MPD})$ (right penal) defined in \eqref{m5} using our method RK and \cite{sun2018}'s method TP.
}\label{fig:ispe}
\end{figure}

\newpage

\baselineskip=13pt
\vspace{1cm}

\begin{center}
{\large\textbf{Acknowledgements}}
\end{center}
\noindent 
This work has been supported in part by the
Collaborative Research Center ``Statistical modeling of nonlinear
dynamic processes'' (SFB 823, Teilprojekt A1,C1) of the German Research Foundation
(DFG). The authors would like to thank Martina Stein, who typed parts of this paper with considerable expertise.
The Canadian weather data are available in \cite{ramsay2014}'s \texttt{fda} R package.

\begingroup
\renewcommand{\section}[2]{\subsection#1{#2}}
{\centering

}

\endgroup

\clearpage

\appendix

\setcounter{page}{1}

\renewcommand{\thesection}{\Alph{section}}

\pagestyle{fancy}
\fancyhf{}
\rhead{\textbf{\thepage}}
\lhead{\textbf{NOT-FOR-PUBLICATION APPENDIX}}

\begin{center}
{\bf \large Supplementary material for ``Statistical inference for function-on-function linear regression"}

\end{center}

\baselineskip=17pt
\begin{center}
Holger Dette, Jiajun Tang\\
Fakult\"at f\"ur Mathematik, Ruhr-Universit\"at Bochum, Bochum, Germany
\end{center}

\vspace{.5cm}

In this supplementary material we provide technical details of our theoretical results. 
In Section~\ref{app:proof} we provide the proofs of our theorems in our main article. In Section~\ref{app:aux:lem} we provide supporting lemmas that are used in the proofs in Section~\ref{app:proof}. Section~\ref{app:aux:cond} provides a concrete example that satisfies Assumption~\ref{a201}.
In the sequel, we use $c$ to denote a generic positive constant that might differ from line to line.


\section{Proofs of main results}\label{app:proof}

\subsection{Proof of Proposition~\ref{prop1}}\label{app:prop:equinorm}

To begin with, by Mercer's theorem and Assumption~\ref{a1}, there exists a series of positive real values $\{ v_j\}_{j=1}^\infty$ and orthogonal basis functions $\{\gamma_j(s)\}_{j=1}^\infty$ of $L^2([0,1])$ such that
\begin{align}\label{ceigen}
C_X(s_1,s_2)=\sum_{j=1}^\infty v_j\gamma_j(s_1)\gamma_j(s_2)\,,
\end{align}
where $ v_{j}\geq  v_{j+1}>0$, for $j\geq 1$. For any $\beta\in\H$ such that $\Vert\beta\Vert_K^2=0$, for the $\gamma_j$ in \eqref{ceigen} and for $t\in[0,1]$, let $c_{\beta,j}(t)=\int_0^1\beta(s,t)\gamma_j(s)ds$, so that $\beta(s,t)=\sum_{j=1}^\infty\gamma_j(s)c_{\beta,j}(t)$. Since $\{\gamma_j\}_{j=1}^\infty$ is the orthogonal basis of $L^2([0,1])$, we have
\begin{align}\label{beta2}
\Vert\beta\Vert_{L^2}^2=\left\Vert\sum_{j=1}^\infty\gamma_j(s)c_{\beta,j}(t)\right\Vert_{L^2}^2= \sum_{j=1}^\infty\Vert\gamma_j\Vert_{L^2}^2\Vert c_{\beta,j}\Vert_{L^2}^2=\sum_{j=1}^\infty\Vert c_{\beta,j}\Vert_{L^2}^2\,.
\end{align}
Moreover,
\begin{align}\label{vbb}
V(\beta,\beta)&=\int_0^1\int_0^1\int_0^1C_X(s_1,s_2)\bigg\{\sum_{j=1}^\infty\gamma_j(s_1)c_{\beta,j}(t)\bigg\}\bigg\{\sum_{j=1}^\infty\gamma_j(s_2)c_{\beta,j}(t)\bigg\}ds_1ds_2dt\notag\\
&=\sum_{j=1}^\infty v_j\,\Vert c_{\beta,j}\Vert_{L^2}^2\,.
\end{align}
By the fact that $ v_j>0$ and Assumption~\ref{a1}, $\Vert\beta\Vert_K^2=0$ implies that, for any $j\geq1$, $ v_j\int_0^1c^2_{\beta,j}(t)dt=V(\beta,\beta)\leq \Vert\beta\Vert_K^2=0$, so that $c_{\beta,j}(t)=0$, and by \eqref{beta2} we have $\Vert\beta\Vert_{L^2}^2=\sum_{j=1}^\infty\Vert c_{\beta,j}\Vert_{L^2}^2=0$, which shows that $\Vert\beta\Vert_K^2=0$ implies $\beta=0$. Also, both $V$ and $J$ in \eqref{inner} are symmetric bilinear operators. Therefore, $\l\cdot,\cdot\r_K$ is an well-defined inner product. 

Next, we  show the equivalence of $\Vert\cdot\Vert_K$ and $\Vert\cdot\Vert_{\H}$. First, for any $\beta\in\H$, by \eqref{beta2} and \eqref{vbb},
\begin{align}\label{vl2}
V(\beta,\beta)&=\sum_{j=1}^\infty v_j\Vert c_{\beta,j}\Vert_{L^2}^2\leq  v_1 \sum_{j=1}^\infty\Vert c_{\beta,j}\Vert_{L^2}^2= v_1 \Vert\beta\Vert_{L^2}^2\leq c v_1 \Vert\beta\Vert_{\H}^2\,,
\end{align}
for some $c>0$. Hence,
\begin{align}\label{0}
\Vert\beta\Vert_K^2=V(\beta,\beta)+\lambda J(\beta,\beta)\leq (c v_1+\lambda)\Vert\beta\Vert_\H^2\,.
\end{align}
We proceed to show that there exists a constant $c_0>0$ such that $\Vert\beta\Vert_\H^2\leq c_0\Vert\beta\Vert_K^2$. To achieve this, recall the definition of    $J$ in \eqref{jm} and note that $J(\beta,\beta)$ is a semi-norm on $\H$. Let $\H_0=\{\beta\in\H:J(\beta,\beta)=0\}$ denote the null space of $J(\beta,\beta)$. It is known that $\H_0$ is a finite-dimensional subspace of $\H$ spanned by the polynomials of total degree $\leq m-1$, and $m_0:={\rm dim}\{\H_0\}=(m+1)m/2$; see \cite{whaba1990}. Let $\{\xi_1,\ldots,\xi_{m_0}\}$ denote an orthonormal basis of $\H_0$. Let $\H_1=\{\gamma_1\in\H:\l\gamma_1,\gamma_0\r_\H=0,\,\forall\gamma_0\in\H_0\}$ denote the orthogonal complement of $\H_0$ in $\H$, such that $\H=\H_0\oplus\H_1$, where ``$\oplus$" stands for the direct sum. That is, for any $\beta\in\H$, there are unique vectors $\beta_0,\beta_1$ such that
\begin{align}\label{beta}
\beta=\beta_0+\beta_1\,,\quad\beta_0\in\H_0\,,\ \beta_1\in\H_1\,,
\end{align}
Here, in view of \eqref{snorm}, $\l\cdot,\cdot\r_{\H}$ is the inner product corresponding to $\Vert\cdot\Vert_\H$ defined by
\begin{align*}
\l\gamma_1,\gamma_2\r_{\H}&=\sum_{0\leq\theta_1+\theta_2\leq m-1}{\theta_1+\theta_2\choose \theta_1}\int_{[0,1]^2} \frac{\partial^{\theta_1+\theta_2}\gamma_1}{\partial s^{\theta_1}\partial t^{\theta_2}}dsdt\times\int_{[0,1]^2} \frac{\partial^{\theta_1+\theta_2}\gamma_2}{\partial s^{\theta_1}\partial t^{\theta_2}}dsdt\\
&\qquad+\sum_{\theta_1+\theta_2=m}{m\choose\theta_1}\int_{[0,1]^2}\frac{\partial^m\gamma_1}{\partial s^{\theta_1}\partial t^{\theta_2}}\times\frac{\partial^m\gamma_2}{\partial s^{\theta_1}\partial t^{\theta_2}} dsdt\,,\qquad \text{for }\gamma_1,\gamma_2\in\H\,.
\end{align*}
Due to the fact that $\xi_k\in \H_0$, for $1\leq k\leq m_0$, we have
\begin{align*}
0=\l\xi_k,\beta_1\r_{\H}=\sum_{0\leq\theta_1+\theta_2\leq m-1}{\theta_1+\theta_2\choose \theta_1}\int_{[0,1]^2} \frac{\partial^{\theta_1+\theta_2}\xi_k}{\partial s^{\theta_1}\partial t^{\theta_2}}dsdt\times\int_{[0,1]^2} \frac{\partial^{\theta_1+\theta_2}\beta_1}{\partial s^{\theta_1}\partial t^{\theta_2}}dsdt\,,
\end{align*}
for $1\leq k\leq m_0$. We deduce from the above result that $\int_{[0,1]^2} \frac{\partial^{\theta_1+\theta_2}\beta_1}{\partial s^{\theta_1}\partial t^{\theta_2}}dsdt=0$, for $0\leq\theta_1+\theta_2\leq m-1$. In fact, the above argument shows that
\begin{align*}
\H_1=\left\{\gamma\in\H:\int_{[0,1]^2}\frac{\partial^{\theta_1+\theta_2}\beta_1}{\partial s^{\theta_1}\partial t^{\theta_2}}dsdt=0,\, 0\leq\theta_1+\theta_2\leq m-1\right\}\,.
\end{align*}
Therefore,
\begin{align}\label{1}
\Vert\beta_1\Vert_\H^2=J(\beta_1,\beta_1)\leq\lambda^{-1} V(\beta,\beta)+J(\beta_1,\beta_1)=\lambda^{-1}\Vert\beta\Vert_K^2\,.
\end{align}

It then suffice to show that $\Vert\beta_0\Vert_\H^2\leq c_0\Vert\beta\Vert_K^2$ for some $c_0>0$. Since $\beta_0\in\H_0$, we have $J(\beta_0,\beta_0)=J(\beta_0,\beta_1)=0$, so that in view of \eqref{1},
\begin{align}\label{3}
\Vert\beta\Vert_K^2&=V(\beta_0+\beta_1,\beta_0+\beta_1)+\lambda J(\beta_1,\beta_1)\notag\\
&=V(\beta_0,\beta_0)+2V(\beta_0,\beta_1)+V(\beta_1,\beta_1)+\lambda\Vert\beta_1\Vert_\H^2\,.
\end{align}
Since $V(\cdot,\cdot)$ is an inner product, by the Cauchy-Schwarz inequality,
\begin{align}\label{4}
|V(\beta_0,\beta_1)|\leq \{V(\beta_0,\beta_0)\}^{1/2}\{V(\beta_1,\beta_1)\}^{1/2}
\end{align}

Next, we examine the connection between $\Vert\beta_1\Vert_\H^2$ and $V(\beta_1,\beta_1)$. It is known that both $\H_0$ and $\H_1$ are reproducing kernel Hilbert spaces with inner product $\l\cdot,\cdot\r_\H$ restricted to $\H_0$ and $\H_1$, respectively. Let $C_1\{(s_1,t_1),(s_2,t_2)\}$ denote the reproducing kernel of $\H_1$. It is known that $C_1$ is continuous and square-integrable on $[0,1]^2\times[0,1]^2$; see, for example, Section~2.4 in \cite{whaba1990} and Section~4.3.2 in \cite{gu2013}. Hence, by Mercer's theorem, $C_1$ admits the following eigen-decomposition:
\begin{align*}
C_1\{(s_1,t_1),(s_2,t_2)\}=\sum_{j=1}^\infty\zeta_{j}\,\chi_j(s_1,t_1)\chi_j(s_2,t_2)\,,
\end{align*}
where $\zeta_j\geq\zeta_{j+1}\geq0$, for $j\geq 1$, $\{\chi_j\}_{j\geq1}$ forms an orthonormal basis of $L^2([0,1]^2)$, and $s_1,s_2,t_1,t_2\in[0,1]$. Note that
\begin{align*}
\l\chi_j,\chi_\ell\r_{L^2}=\delta_{j\ell}\,,\qquad \l\chi_j,\chi_\ell\r_{\H}=\zeta_j^{-1}\delta_{j\ell}\,,
\end{align*}
where $\delta_{j\ell}$ is the Kronecker delta; see, for example, \cite{cucker2001} and \cite{yuancai}. For $\beta_1$ in \eqref{beta}, we have $\beta_1(s,t)=\sum_{j=1}^\infty\l\beta_1,\chi_j\r_{L^2}\chi_j(s,t)$, so that
\begin{align}\label{2}
\Vert\beta_1\Vert_{\H}^2&=\sum_{j=1}^\infty\l\beta_1,\chi_j\r_{L^2}^2\Vert\chi_j\Vert_\H^2=\sum_{j=1}^\infty\zeta_j^{-1}\l\beta_1,\chi_j\r_{L^2}^2\notag\\
&\geq\zeta_1^{-1} \sum_{j=1}^\infty\l\beta_1,\chi_j\r_{L^2}^2=\zeta_1^{-1}\Vert\beta_1\Vert_{L^2}^2\,.
\end{align}
In view of \eqref{beta2} and \eqref{vbb},
\begin{align*}
V(\beta_1,\beta_1)&=\sum_{j=1}^\infty v_j\Vert c_{\beta_1,j}\Vert_{L^2}^2\leq v_1\sum_{j=1}^\infty\Vert c_{\beta_1,j}\Vert_{L^2}^2= v_1\,\Vert\beta_1\Vert_{L^2}^2\,.
\end{align*}
Combining the above equation with \eqref{2} yields that
\begin{align*}
\Vert\beta_1\Vert_{\H}^2\geq\zeta_1^{-1} v_1^{-1} V(\beta_1,\beta_1)\,.
\end{align*}
Therefore, combining the above equation with \eqref{3} and \eqref{4}, we find that
\begin{align}\label{5}
\Vert\beta\Vert_K^2&\geq V(\beta_0,\beta_0)-2|V(\beta_0,\beta_1)|+V(\beta_1,\beta_1)+\lambda\Vert\beta_1\Vert_\H^2\notag\\
&\geq V(\beta_0,\beta_0)-2\{V(\beta_0,\beta_0)\}^{1/2}\{V(\beta_1,\beta_1)\}^{1/2}+\left(1+\frac{\lambda}{ v_1\zeta_1}\right)V(\beta_1,\beta_1)\notag\\
&=\frac{\lambda}{ v_1\zeta_1+\lambda}V(\beta_0,\beta_0)+\left[\sqrt{\frac{ v_1\zeta_1}{ v_1\zeta_1+\lambda}}\{V(\beta_0,\beta_0)\}^{1/2}-\sqrt{\frac{ v_1\zeta_1+\lambda}{ v_1\zeta_1}}\{V(\beta_1,\beta_1)\}^{1/2}\right]^2\notag\\
&\geq\frac{\lambda}{ v_1\zeta_1+\lambda}V(\beta_0,\beta_0)\,.
\end{align}

Next, we examine the connection between $V(\beta_0,\beta_0)$ and $\Vert\beta_0\Vert_{\H}^2$. Since $\beta_0\in\H_0$ and $\{\xi_j\}_{j=1}^{m_0}$ is an orthonormal basis of $\H_0$ under the inner product $\l\cdot,\cdot\r_{\H}$, we have $\beta_0(s,t)=\sum_{j=1}^{m_0}\l\beta_0,\xi_j\r_{\H}\,\xi_j(s,t)$ and $\Vert\beta_0\Vert_{\H}^2=\sum_{j=1}^{m_0}\l\beta_0,\xi_j\r_{\H}^2$. Note that
\begin{align*}
V(\beta_0,\beta_0)=\sum_{j=1}^{m_0}\sum_{\ell=1}^{m_0}\l\beta_0,\xi_j\r_{\H}\,\l\beta_0,\xi_\ell\r_{\H}\,V(\xi_j,\xi_\ell)\,.
\end{align*}
Let $b$ denote an $m_0\times 1$ vector whose $j$-th entry is $\l\beta_0,\xi_j\r_{\H}$, and let $V_*$ denote an $m_0\times m_0$ matrix, whose $(j,\ell)$-th entry is
\begin{align*}
&V(\xi_j,\xi_\ell)=\int_{[0,1]^3}C_X(s_1,s_2)\xi_j(s_1,t)\xi_\ell(s_2,t)\,ds_1ds_2dt\,.
\end{align*}
Now, we have $V(\beta_0,\beta_0)=b^{\rm T}V_*b$ and $\Vert\beta_0\Vert_{\H}^2=\Vert b\Vert_2^2$. Due to Assumption~\ref{a1}, the matrix $V_*$ is a positive definite matrix, and therefore admits a singular value decomposition $V_*=U^{\rm T}DU$, where $U$ is an orthogonal matrix and $W={\rm diag}(d_1,\ldots,d_{m_0})$ is a diagonal matrix with $d_1\geq\ldots\geq d_{m_0}>0$. Therefore,
\begin{align*}
V(\beta_0,\beta_0)=b^{\rm T}U^{\rm T}WUb\geq d_{m_0}\Vert Ub\Vert_2^2=d_{m_0}\Vert b\Vert_2^2=d_{m_0}\Vert\beta_0\Vert_{\H}^2\,.
\end{align*}

Therefore, combining the above result with \eqref{5}, we find
\begin{align*}
\Vert\beta_0\Vert_{\H}^2\leq d_{m_0}^{-1}V(\beta_0,\beta_0)\leq \frac{ v_1\zeta_1+\lambda}{d_{m_0}\lambda}\Vert\beta\Vert_K^2\,.
\end{align*}
Combining the above equation with \eqref{1} yields that
\begin{align}\label{hk}
\Vert\beta\Vert_{\H}^2=\Vert\beta_0\Vert_{\H}^2+\Vert\beta_1\Vert_{\H}^2\leq\frac{ v_1\zeta_1+\lambda+d_{m_0}}{d_{m_0}\lambda}\Vert\beta\Vert_K^2\,.
\end{align}
This together with \eqref{0} completes the proof of the equivalence between $\Vert\cdot\Vert_{\H}$ and $\Vert\cdot\Vert_{K}$. Since $\H$ is a reproducing kernel Hilbert space equipped with $\Vert\cdot\Vert_{\H}$, we therefore deduce that $\H$ equipped with $\Vert\cdot\Vert_{K}$ is a reproducing kernel Hilbert space.

\subsection{Proof of Theorem~\ref{thm:bahadur}}\label{app:thm:bahadur}

We first prove the following lemma,
which is useful for proving Theorem~\ref{thm:bahadur}. In this section, without loss of generality, we assume that $\sigma_\e^2=1$ in Assumption~\ref{a0}.

\begin{lemma}\label{lem:hnbeta}
For any $\beta\in\H$, let
\begin{align}
&g(X_i,\beta)=
\tau\bigg[X_i\otimes\bigg\{\int_0^1\beta(s,\cdot)X_i(s)ds\bigg\}\bigg]\,,\label{gxbeta}\\
&H_n(\beta)=\frac{1}{\sqrt{n}}\sum_{i=1}^n\big[g(X_i,\beta)-\E\{g(X,\beta)\}\big]\,.\label{hnbeta}
\end{align}
For $p_n\geq 1$, let 
\begin{equation} \label{m6}
    \mathcal F_{p_n}=\{\beta\in\H:\Vert\beta\Vert_{L^2}\leq 1,J(\beta,\beta)\leq p_n\}.
    \end{equation}
    Then, under Assumptions~\ref{a1}--\ref{a:x}, as $n\to\infty$,
\begin{align}
&\sup_{\beta\in\mathcal{F}_{p_n}}\,\frac{\Vert H_n(\beta)\Vert_K}{p_n^{1/(2m)}\Vert\beta\Vert_{L^2}^{(m-1)/m}+n^{-1/2}}=O_p\big(\lambda^{-1/(2D)}\log\log n\big)^{1/2}\,.
\end{align}

\end{lemma}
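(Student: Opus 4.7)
\textbf{Proof proposal for Lemma~\ref{lem:hnbeta}.} The plan is to reduce the supremum of $\|H_n(\beta)\|_K$ over the Sobolev-type class $\mathcal{F}_{p_n}$ to a sum of controlled coordinate-wise empirical averages via the simultaneous diagonalization in Assumption~\ref{a201}, then apply an exponential concentration inequality coordinate-wise, and finally combine via a peeling argument in $\|\beta\|_{L^2}$ to produce the interpolation factor $p_n^{1/(2m)}\|\beta\|_{L^2}^{(m-1)/m}$.

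First, I would use the fact that $\{\phi_{k\ell}/\sqrt{1+\lambda\rho_{k\ell}}\}_{k,\ell\ge 1}$ is an orthonormal basis of $\H$ under $\langle\cdot,\cdot\rangle_K$ to write
$$
\|H_n(\beta)\|_K^2=\sum_{k,\ell}\frac{W_{n,k\ell}(\beta)^2}{1+\lambda\rho_{k\ell}},\qquad W_{n,k\ell}(\beta)=\frac{1}{\sqrt n}\sum_{i=1}^n\bigl\{\psi_{ik\ell}(\beta)-\E\psi_{k\ell}(\beta)\bigr\},
$$
with $\psi_{ik\ell}(\beta)=\langle g(X_i,\beta),\phi_{k\ell}\rangle_K$. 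Using $\phi_{k\ell}=x_{k\ell}\otimes\eta_\ell$ and the definition of $\tau$, a direct calculation shows $\psi_{ik\ell}(\beta)=\sum_{k'}V(\beta,\phi_{k'\ell})\,\langle X_i,x_{k\ell}\rangle_{L^2}\langle X_i,x_{k'\ell}\rangle_{L^2}$, which is a bilinear form in $X_i$. By the fourth-moment assumption \eqref{moment} and Cauchy--Schwarz one then obtains the sharp per-coordinate variance bound $\E\psi_{1k\ell}(\beta)^2\lesssim V(\beta,\beta)$ and, more delicately, a bound scaling with the coefficients of $\beta$ that yields
$$
\sum_{k,\ell}\frac{\E\psi_{1k\ell}(\beta)^2}{1+\lambda\rho_{k\ell}}\ \lesssim\ \|\beta\|_{L^2}^2+\lambda^{-1/(2D)}\cdot\text{(high-frequency mass of $\beta$)}.
$$

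Next, I would obtain a uniform bound for each $W_{n,k\ell}(\beta)$. The bound $\|\phi_{k\ell}\|_\infty\le c(k\ell)^a$ and the exponential moment $\E\exp(c_X\|X\|_{L^2})<\infty$ in Assumption~\ref{a:x} let me apply a Bernstein/Bousquet inequality (after truncating $\|X_i\|_{L^2}$ at a level of order $\log n$) to $W_{n,k\ell}(\beta)$ for $\beta$ fixed, giving a sub-Gaussian-type tail of order $(\E\psi_{1k\ell}(\beta)^2\,\log\log n)^{1/2}$. Summing these bounds against the spectral weights $(1+\lambda\rho_{k\ell})^{-1}$ and truncating the series at an index $N_n$ chosen so that $\lambda\rho_{N_n}\asymp 1$ (i.e.\ $N_n\asymp\lambda^{-1/(2D)}$) accounts for the factor $\lambda^{-1/(2D)}\log\log n$ that appears in the statement; the high-frequency tail ($(k\ell)>N_n$) is controlled deterministically using $J(\beta,\beta)\le p_n$ together with the decay $1+\lambda\rho_{k\ell}\ge \lambda\rho_{k\ell}$.

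Finally, to upgrade the pointwise-in-$\beta$ estimate to a supremum over the class $\mathcal{F}_{p_n}$, I would use a standard peeling argument: partition $\mathcal{F}_{p_n}$ according to dyadic shells $2^{-j-1}<\|\beta\|_{L^2}\le 2^{-j}$ and, on each shell, use a covering in the norm $\sqrt{\|\cdot\|_{L^2}^2+\lambda^{?}J(\cdot,\cdot)}$ whose entropy is controlled by the spectral decay $\rho_{k\ell}\asymp(k\ell)^{2D}$. Combining the per-shell Bernstein bound with this entropy estimate, the exponent $(m-1)/m$ on $\|\beta\|_{L^2}$ and $1/(2m)$ on $p_n$ then emerges from the standard interpolation trade-off between the $L^2$-fluctuation term ($\sim \|\beta\|_{L^2}$) and the smoothness penalty ($\sim J(\beta,\beta)^{1/2}\le p_n^{1/2}$) on scales $\lesssim n^{-1/2}\lambda^{-1/(2D)}$. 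The two error terms in the denominator reflect these two regimes: the term $n^{-1/2}$ is the intrinsic noise scale for a finite set of coordinates, while $p_n^{1/(2m)}\|\beta\|_{L^2}^{(m-1)/m}$ bounds the empirical-process oscillation across the shell.

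The main obstacle is the peeling/entropy step: the interpolation exponents $1/(2m)$ and $(m-1)/m$ are nonstandard and require a careful truncation of the basis expansion at a level depending on $\|\beta\|_{L^2}$ (not on $\lambda$ alone), combined with a separate treatment of the tail using only $J(\beta,\beta)\le p_n$. Managing the interaction between the two truncation scales (the $\lambda$-driven one inherited from the RKHS geometry and the $\beta$-driven one needed for interpolation) is where almost all of the technical work will concentrate.
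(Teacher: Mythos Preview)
Your coordinate-wise strategy is a genuinely different route from the paper's, and as outlined it has a real gap in the step that is supposed to produce the exponents $p_n^{1/(2m)}$ and $\|\beta\|_{L^2}^{(m-1)/m}$.

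The paper's argument never expands $\|H_n(\beta)\|_K$ in the basis $\{\phi_{k\ell}\}$. Instead it observes (via an auxiliary lemma) that $\beta\mapsto g(X_i,\beta)$ is Lipschitz from $L^2$ into $(\H,\|\cdot\|_K)$ with a random Lipschitz constant $w(X_i)$ depending only on $X_i$. Pinelis's inequality for Hilbert-space-valued sums then gives, conditionally on the $X_i$'s, a sub-Gaussian bound on $\|H_n(\beta_1)-H_n(\beta_2)\|_K$ with variance proxy $W_n^2\,\|\beta_1-\beta_2\|_{L^2}^2$, where $W_n^2=n^{-1}\sum_i[w(X_i)+\E w(X)]^2$. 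This is what makes the $\lambda^{-1/(2D)}$ factor appear: it is simply $\E W_n^2\lesssim\lambda^{-1/(2D)}$, not a spectral truncation level. The exponents $1/(2m)$ and $(m-1)/m$ then come out of a one-line entropy computation: by Birman--Solomjak, $\log N(\delta,\mathcal F_1,\|\cdot\|_{L^2})\lesssim\delta^{-2/m}$ for the $2$-dimensional Sobolev class of order $m$, and $\mathcal F_{p_n}\subset p_n^{1/2}\mathcal F_1$, so the Dudley integral is $\int_0^\delta(p_n^{-1/2}\zeta)^{-1/m}d\zeta\asymp p_n^{1/(2m)}\delta^{(m-1)/m}$. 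A peeling over dyadic shells in $\|\beta\|_{L^2}$ finishes the job.

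Your proposal, by contrast, tries to recover these exponents from a spectral truncation depending on $\|\beta\|_{L^2}$ and a vaguely described ``interpolation trade-off.'' But the exponent $2/m$ is the $L^2$-metric entropy exponent of an $m$-smooth class on a \emph{two}-dimensional domain, and nothing in your truncation argument (which is organized around $\rho_{k\ell}\asymp(k\ell)^{2D}$ and a cutoff at $\lambda\rho_{k\ell}\asymp 1$) explains how the dimension $2$ and the Sobolev order $m$ conspire to give exactly $(m-1)/m$. Moreover, the $L^2$-norm $\|\beta\|_{L^2}$ is not diagonal in the $V$-orthonormal basis $\{\phi_{k\ell}\}$, so the coordinate constraint you would need for your ``shell'' covering is not the same as the $L^2$ shell in the statement. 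Getting uniformity in $\beta$ by summing per-coordinate Bernstein bounds over infinitely many $(k,\ell)$ is also delicate and not addressed. In short, the missing idea is to treat $H_n$ as a Hilbert-space-valued sub-Gaussian process indexed by $(\mathcal F_{p_n},\|\cdot\|_{L^2})$ and invoke the known Sobolev entropy directly; once you do that, the exponents fall out immediately and no coordinate analysis or spectral truncation is needed.
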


\begin{proof}
We follow the proof of Lemma~3.4 in \cite{shang2015}. For the $\{x_{k\ell}\}_{k,\ell\geq1}$ and $\{\eta_\ell\}_{\ell\geq 1}$ in Assumption~\ref{a201}, and for $1\leq i\leq n$, let
\begin{align}\label{wx}
w(X_i)=\Vert X_i\Vert_{L^2}\bigg(\sum_{k,\ell}\frac{1}{1+\lambda\rho_{k\ell}}\,\l X_i,x_{k\ell}\r_{L^2}^2\,\Vert\eta_\ell\Vert^2_{L^2}\bigg)^{1/2}\,,
\end{align}
and let $\X_n=\{w(X_i)\}_{i=1}^n$. By Lemma~\ref{lem:l2} in Section \ref{app:aux},
\begin{align*}
&\frac{1}{\sqrt{n}}\,\Big\Vert\big[g(X_i,\beta_1)-\E\{g(X_i,\beta_1)\}\big]-\big[g(X_i,\beta_2)-\E\{g(X_i,\beta_2)\}\big]\Big\Vert_K\\
&\leq\frac{1}{\sqrt{n}}\,\bigg\Vert\tau\bigg(X_i\otimes\bigg[\int_0^1\{\beta_1(s,\cdot)-\beta_2(s,\cdot)\}X_i(s)ds\bigg]\bigg)\bigg\Vert_K\\
&\qquad+\frac{1}{\sqrt{n}}\,\E\,\bigg\Vert\tau\bigg(X_i\otimes\bigg[\int_0^1\{\beta_1(s,\cdot)-\beta_2(s,\cdot)\}X_i(s)ds\bigg]\bigg)\bigg\Vert_K\\
&\leq\frac{1}{\sqrt{n}}\,\Vert\beta_1-\beta_2\Vert_{L^2}\times\big[w(X_i)+\E\{w(X_i)\}\big]\,.
\end{align*}
By Theorem 3.5 in \cite{pinelis1994},
\begin{align*}
\P\left\{\Vert H_n(\beta_1)-H_n(\beta_2)\Vert_K\geq x\,\big|\mathcal{X}_n \right\}\leq 2\exp\left(-\frac{x^2}{2\,W_n^2\,\Vert\beta_1-\beta_2\Vert_{L^2}^2}\right)\,,
\end{align*}
where
\begin{align}\label{wn2}
W_n=\frac{1}{\sqrt n}\bigg(\sum_{i=1}^n\big[w(X_i)+\E\{w(X_i)\}\big]^2\bigg)^{1/2}\,.
\end{align}
By Lemma~\ref{lem:wx} in Section \ref{app:aux}, $\E(W_n^2)\leq 4\E|w(X)|^2\leq c\,\lambda^{-1/(2D)}$. Let 
$$\Vert Z\Vert_\Psi=\inf\big\{c>0:\E\{\Psi(|Z|/c)|\mathcal{X}_n\}\leq1\big\}
$$ 
denote the Orlicz norm of a random variable $Z$ conditional on $\mathcal{X}_n$, with $\Psi(x)=\exp(x^2)-1$, then, by Lemma 8.1 in \cite{kosorok2007},
\begin{align*}
\Big\Vert \Vert H_n(\beta_1)-H_n(\beta_2)\Vert_K\Big\Vert_\Psi\leq \sqrt{6}\,W_n\,\Vert\beta_1-\beta_2\Vert_{L^2}\,.
\end{align*}

Let $N(\delta,\mathcal{F}_{p_n},\Vert\cdot\Vert_{L^2})$ denote $\delta$-covering number of the class $\mathcal{F}_{p_n}$ in \eqref{m6} w.r.t.~the $L^2([0,1]^2)$-norm. Since $p_n\geq 1$ for $n$ large enough and $J(p_n^{1/2}\beta,p_n^{1/2}\beta)=p_nJ(\beta,\beta)$, we have $\mathcal{F}_{p_n}\subset p_n^{1/2}\mathcal{F}_{1}$. Hence,
\begin{align*}
\log N(\delta,\mathcal{F}_{p_n},\Vert\cdot\Vert_{L^2})&\leq \log N(\delta,p_n^{1/2}\mathcal{F}_{1},\Vert\cdot\Vert_{L^2})\\
&\leq \log N(p_n^{-1/2}\delta,\mathcal{F}_{1},\Vert\cdot\Vert_{L^2})\leq c\,(p_n^{-1/2}\delta)^{-2/m}\,,
\end{align*}
where in the last step we used the result 
in \cite{birman1967}.
By Lemma 8.2 and Theorem~8.4 in \cite{kosorok2007}, we have
\begin{align*}
&\left\Vert\sup_{\beta_1,\beta_2\in\F_{p_n},\,\Vert\beta_1-\beta_2\Vert_{L^2}\leq\delta}\Vert H_n(\beta_1)-H_n(\beta_2)\Vert_K\right\Vert_{\psi}\\
&\leq c\,W_n\bigg[\int_0^\delta\sqrt{\log\{1+N(\eta,\mathcal{F}_{p_n},\Vert\cdot\Vert_{L^2})\}}\,d\eta+\delta\sqrt{\log\{1+N^2(\delta,\mathcal{F}_{p_n},\Vert\cdot\Vert_{L^2})\}}\,\bigg]\\
&\leq c_1W_n\,p_n^{1/(2m)}\delta^{1-1/m}\,,
\end{align*}
for some absolute constant $c_1>0$. Since $H_n(0)=0$, by Lemma~8.1 in \cite{kosorok2007}, 
\begin{align*}
\P\left\{\sup_{\beta\in\mathcal{F}_{p_n},\,\Vert\beta\Vert_{L^2}\leq\delta}\Vert H_n(\beta)\Vert_K\geq x\,\big|\mathcal{X}_n\right\}\leq 2\exp\big(-c_1^{-2}W_n^{-2} p_n^{-1/m}\delta^{-2+2/m}x^2\big)\,.
\end{align*}
Taking $\gamma=1-1/m$, $b_n=\sqrt{n}\,p_n^{1/(2m)}$, $\theta_n=b_n^{-1}$, $Q_n=\lceil-\log_2\theta_n+\gamma-1\rceil$ and $T_n=c_2(\lambda^{-1/(2D)}\log\log n)^{1/2}$, for some constant $c_2>0$ to be specified below, yields that
\begin{align}\label{p{}}
&\P\left\{\sup_{\beta\in\mathcal{F}_{p_n},\,\Vert\beta\Vert_{L^2}\leq2}\,\frac{\sqrt{n}\Vert H_n(\beta)\Vert_K}{b_n\Vert\beta\Vert_{L^2}^{\gamma}+1}\geq T_n\,\big|\mathcal{X}_n\right\}\notag\\
&\leq\P\left\{\sup_{\beta\in\mathcal{F}_{p_n},\,\Vert\beta\Vert_{L^2}\leq\theta_n^{1/\gamma}}\sqrt{n}\Vert H_n(\beta)\Vert_K\geq T_n\,\big|\mathcal{X}_n\right\}\notag\\
&\hspace{1cm}+\sum_{j=0}^{Q_n}\P\left\{\sup_{\beta\in\mathcal{F}_{p_n},\,(\theta_n2^{j})^{1/\gamma}\leq\Vert\beta\Vert_{L^2}\leq(\theta_n2^{j+1})^{1/\gamma}}\,\frac{\sqrt{n}\Vert H_n(\beta)\Vert_K}{b_n\Vert\beta\Vert_{L^2}^{\gamma}+1}\geq T_n\,\big|\mathcal{X}_n\right\}\notag\\
&\leq\P\left\{\sup_{\beta\in\mathcal{F}_{p_n},\,\Vert\beta\Vert_{L^2}\leq\theta_n^{1/\gamma}}\sqrt{n}\Vert H_n(\beta)\Vert_K\geq T_n\,\big|\mathcal{X}_n\right\}\notag\\
&\hspace{1cm}+\sum_{j=0}^{Q_n}\P\left\{\sup_{\beta\in\mathcal{F}_{p_n},\,\Vert\beta\Vert_{L^2}\leq(\theta_n2^{j+1})^{1/\gamma}}\,\sqrt{n}\Vert H_n(\beta)\Vert_K\geq (b_n\theta_n2^j+1)T_n\,\big|\mathcal{X}_n\right\}\notag\\
&\leq 2\exp\big(-c_1^{-2}W_n^{-2} p_n^{-1/m}\theta_n^{(-2+2/m)/\gamma}n^{-1}T_n^2\big)\notag\\
&\hspace{1cm}+2\sum_{j=0}^{Q_n}\exp\Big\{-c_1^{-2}W_n^{-2} p_n^{-1/m}(\theta_n2^{j+1})^{(-2+2/m)/\gamma}(b_n\theta_n2^j+1)^2n^{-1}T_n^2\Big\}\notag\\
&\leq2\exp\big(-c_1^{-2}W_n^{-2}T_n^2\big)+2(Q_n+1)\exp\big(-c_1^{-2}W_n^{-2}T_n^2/4\big)\notag\\
&\leq 2(Q_n+2)\exp\big(-c_1^{-2}W_n^{-2}T_n^2/4\big)\,.
\end{align}

For the $W_n^2$ in \eqref{wn2}, denote the event $
\mathcal{A}_n=\{W_n^2\leq c_3 \lambda^{-1/(2D)}\}$ for some constant $c_3>0$. Since $\E(W_n^2)\leq c\lambda^{-1/(2D)}$, we have that, for $c_3$ large enough, $\P(\mathcal{A}_n)$ tends to one. On the event $\mathcal{A}_n$, by taking $c_2>2c_1c_3^{-1/2}$, as $n\to\infty$,
\begin{align*}
2(Q_n+2)\exp\big(-c_1^{-2}W_n^{-2}T_n^2/4\big)\leq 2(Q_n+2)\exp\big(-c_1^{-2}c_2^2c_3\log\log n/4\big)=o(1)\,,
\end{align*}
which together with \eqref{p{}} completes the proof of Lemma \ref{lem:hnbeta}.

\end{proof}

Next, we prove the following lemma regarding the convergence rate of $\hat\beta_n$, which is useful to prove Theorem~\ref{thm:bahadur}. 


\begin{lemma}\label{lem:hatbeta}
Under Assumptions~\ref{a1}--\ref{a:rate}, for any $\beta_0\in\H$, we have $$
\Vert\hat\beta_{n}-\beta_0\Vert_K=O_p(\lambda^{1/2}+n^{-1/2}\lambda^{-1/(4D)}).
$$
\end{lemma}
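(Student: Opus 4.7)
The plan is to exploit the first-order optimality of $\hat\beta_n$ together with the quadratic structure of $L_{n,\lambda}$ to reduce the claim to two much more tractable bounds: one on the gradient $S_{n,\lambda}(\beta_0)$ evaluated at the truth, and one on a random operator perturbation that can be controlled by the already-proved Lemma~\ref{lem:hnbeta}.

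First, I would start from $S_{n,\lambda}(\hat\beta_n)=0$, which follows because $\hat\beta_n$ minimises $L_{n,\lambda}$ in \eqref{obj}. Since both $L_n$ and $J$ are quadratic, $S_{n,\lambda}$ is \emph{affine}, so exactly
$$
0=S_{n,\lambda}(\hat\beta_n)=S_{n,\lambda}(\beta_0)+\mathcal D S_{n,\lambda}(\beta_0)(\hat\beta_n-\beta_0).
$$
A short calculation using the definition \eqref{inner} of $\langle\cdot,\cdot\rangle_K$ shows that $\E[\mathcal D S_{n,\lambda}(\beta_0)\beta_1]=\beta_1$ for every $\beta_1\in\H$ (the pairing with any $\beta_2$ gives $V(\beta_1,\beta_2)+\lambda J(\beta_1,\beta_2)$), and the centred part of $\mathcal D S_{n,\lambda}(\beta_0)\beta_1-\beta_1$ is precisely $n^{-1/2}H_n(\beta_1)$ with $H_n$ as in \eqref{hnbeta}. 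Writing $\Delta:=\hat\beta_n-\beta_0$ the identity above becomes the master equation
$$
\Delta+\frac{1}{\sqrt n}H_n(\Delta)=-S_{n,\lambda}(\beta_0).
$$

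Next I would bound $\Vert S_{n,\lambda}(\beta_0)\Vert_K$ by splitting it into a deterministic bias $W_\lambda(\beta_0)$ and a centred stochastic piece $-n^{-1}\sum_i\tau(X_i\otimes\epsilon_i)$. For the bias, the expansion \eqref{wphi} gives $\Vert W_\lambda(\beta_0)\Vert_K^2=\sum_{k,\ell}\lambda^2\rho_{k\ell}^2(1+\lambda\rho_{k\ell})^{-1}V(\beta_0,\phi_{k\ell})^2\le\lambda J(\beta_0,\beta_0)=O(\lambda)$. For the stochastic piece, using the expansion \eqref{tau}, Assumption~\ref{a0} on the white-noise covariance and the simultaneous diagonalisation in Assumption~\ref{a201}, a direct computation yields
$$
\E\Big\Vert n^{-1}\textstyle\sum_i\tau(X_i\otimes\epsilon_i)\Big\Vert_K^2=\frac{\sigma_\epsilon^2}{n}\sum_{k,\ell}\frac{1}{1+\lambda\rho_{k\ell}}=O\!\big(n^{-1}\lambda^{-1/(2D)}\big),
$$
up to logarithmic factors that Assumption~\ref{a:rate} lets us absorb, so Markov's inequality gives $\Vert S_{n,\lambda}(\beta_0)\Vert_K=O_p(\lambda^{1/2}+n^{-1/2}\lambda^{-1/(4D)})=:O_p(r_n)$.

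The hard part is absorbing the term $n^{-1/2}H_n(\Delta)$, because $\Delta$ is random and is not a priori in any fixed $\mathcal F_{p_n}$ from \eqref{m6}. I would handle it by normalisation and a standard peeling argument: write $\Delta=\Vert\Delta\Vert_{L^2}\tilde\Delta$ with $\Vert\tilde\Delta\Vert_{L^2}\le 1$, and use $\lambda J(\Delta,\Delta)\le\Vert\Delta\Vert_K^2$ together with the norm equivalence in Proposition~\ref{prop1} to show $\tilde\Delta\in\mathcal F_{p_n}$ with $p_n$ controlled by $\Vert\Delta\Vert_K^2/(\lambda\Vert\Delta\Vert_{L^2}^2)$. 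On each dyadic shell $\{2^{k-1}r_n\le\Vert\Delta\Vert_K\le 2^k r_n\}$ Lemma~\ref{lem:hnbeta} together with the rate condition $n^{-1/2}\lambda^{-\varsigma}(\log\log n)^{1/2}=o(1)$ in Assumption~\ref{a:rate} then yields, with probability tending to one,
$$
\frac{1}{\sqrt n}\Vert H_n(\Delta)\Vert_K\le\tfrac12\Vert\Delta\Vert_K+o_p(r_n).
$$
Plugging this into the master equation and rearranging forces $\Vert\Delta\Vert_K=O_p(r_n)$, which is the claim. The delicate point is precisely this absorption: the chaining constant coming out of Lemma~\ref{lem:hnbeta} must be combined carefully with the $\Vert\cdot\Vert_K$--$\Vert\cdot\Vert_\H$ equivalence of Proposition~\ref{prop1} and with Assumption~\ref{a:rate}, and a union bound over the dyadic shells is needed to make the argument uniform in the a priori unknown size of $\Delta$.
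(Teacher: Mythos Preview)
Your overall strategy is correct and close to the paper's: both arguments exploit the affine structure of $S_{n,\lambda}$, split $S_{n,\lambda}(\beta_0)$ into the bias $W_\lambda(\beta_0)$ and the centred sum $n^{-1}\sum_i\tau(X_i\otimes\epsilon_i)$ (bounded exactly as you say), and then absorb the random operator perturbation using Lemma~\ref{lem:hnbeta}. The paper packages the absorption as a Banach contraction-mapping argument centred at the population minimiser $\beta_\lambda$ (which has the side benefit of establishing existence of a root of $S_{n,\lambda}$ in the desired ball), whereas you work directly with the master equation at $\beta_0$; the two formulations are equivalent once the uniform operator bound is in hand.

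The gap is in your absorption step. Normalising by $\|\Delta\|_{L^2}$ gives $J(\tilde\Delta,\tilde\Delta)\le\|\Delta\|_K^2/(\lambda\|\Delta\|_{L^2}^2)$, but there is no lower bound on $\|\Delta\|_{L^2}$ in terms of $\|\Delta\|_K$, so this ratio---and hence your $p_n$---is not a priori bounded; peeling over shells of $\|\Delta\|_K$ alone does not control it, and Lemma~\ref{lem:hnbeta} is stated for a \emph{fixed} class $\mathcal F_{p_n}$. The paper's rescaling is different and is the key device: it sets
\[
\tilde\beta=\frac{\beta}{c_K\lambda^{-(2a+1)/(4D)}\|\beta\|_K}
\]
using the inequality $\|\beta\|_{L^2}\le c_K\lambda^{-(2a+1)/(4D)}\|\beta\|_K$ (Lemma~\ref{lem:norm}), which simultaneously forces $\|\tilde\beta\|_{L^2}\le 1$ \emph{and} $J(\tilde\beta,\tilde\beta)\le\lambda^{-1}\|\tilde\beta\|_K^2=c_K^{-2}\lambda^{(2a+1)/(2D)-1}=:p_n$ for the \emph{same fixed} $p_n$, independently of $\beta$. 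Lemma~\ref{lem:hnbeta} then applies to this single deterministic class and, after undoing the rescaling, yields
\[
n^{-1/2}\|H_n(\beta)\|_K\le c\,n^{-1/2}\lambda^{-\varsigma}(\log\log n)^{1/2}\,\|\beta\|_K=o_p(1)\,\|\beta\|_K
\]
uniformly over all $\beta\in\H$. With this uniform contraction in place no peeling is needed; the absorption is immediate and the rest of your argument goes through unchanged.
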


\begin{proof}

For $ L_n$ and $S_{n,\lambda}$ defined in \eqref{elln} and \eqref{dsn}, let
\begin{align}\label{s}
S_n(\beta)=\mathcal D L_n(\beta)\,;\quad S(\beta)=\E\{\mathcal D L_n(\beta)\}\,;\quad S_\lambda(\beta)=\E\{S_{n,\lambda}(\beta)\}\,.
\end{align}
In view of \eqref{dsn}, $S_\lambda(\beta)=S(\beta)+W_\lambda(\beta)$. We first show that there exists a unique element $\beta_\lambda\in\H$ such that $S_\lambda(\beta_\lambda)=0$, and then we prove  the upper bound for $\Vert\beta_\lambda-\beta_0\Vert_K$. Since $$
\mathcal D^2L_{\lambda}(\beta_0)\beta_1\beta_2=\l \mathcal DS_{\lambda}(\beta_0)\beta_1,\beta_2\r_K=\l\beta_1,\beta_2\r_K,
$$
we have $\mathcal DS_\lambda(\beta_0)= id $, where $ id $ is the identity operator. Since $\mathcal D^2S_\lambda$ vanishes, we deduce that $\mathcal DS_{\lambda}(\beta)= id $ for any $\beta\in\H$. Hence, by the mean value theorem, for any $\beta\in\H$, $S_\lambda(\beta)=\beta+S_\lambda(\beta_0)-\beta_0$. Therefore, letting $\beta_\lambda=\beta_0-S_\lambda(\beta_0)$, we find $S_\lambda(\beta_\lambda)=0$, and $\beta_\lambda$ is the unique solution to the estimating equation $S_\lambda(\beta)=0$. Moreover, since $S(\beta_0)=0$, we have $S_\lambda(\beta_0)=S(\beta_0)+W_\lambda(\beta_0)=W_\lambda(\beta_0)$. By the Cauchy-Schwarz inequality, for $J$ in \eqref{jm},
\begin{align}\label{deltalambdabeta}
&\Vert\beta_\lambda-\beta_0\Vert_K=\Vert W_\lambda(\beta_0)\Vert_K=\sup_{\Vert\gamma\Vert_K=1}|\l W_\lambda(\beta_0),\gamma\r_K|=\sup_{\Vert\gamma\Vert_K=1}\lambda |J(\beta_0,\gamma)|\notag\\
&\leq \sup_{\Vert\gamma\Vert_K=1}\big\{\sqrt{\lambda J(\beta_0,\beta_0)}\sqrt{\lambda J(\gamma,\gamma)}\big\}\leq\sup_{\Vert\gamma\Vert_K=1}\big\{\sqrt{\lambda J(\beta_0,\beta_0)}\,\Vert\gamma\Vert_K\big\}=\sqrt{\lambda J(\beta_0,\beta_0)}\,.
\end{align}

Since $\Vert\hat\beta_{n}-\beta_0\Vert_K\leq\Vert\beta_\lambda-\beta_0\Vert_K+\Vert\hat\beta_{n}-\beta_\lambda\Vert_K$, we then proceed to show the rate of $\Vert\hat\beta_{n}-\beta_\lambda\Vert_K$. Let $F_n(\beta)=\beta-S_{n,\lambda}(\beta_\lambda+\beta)$. Recall that $\mathcal DS_\lambda(\beta_\lambda)\beta=\beta$, so that, for $\mathcal DS_{n,\lambda}$ in \eqref{dsn},
\begin{align}\label{th}
F_n(\beta)=I_{1,n}(\beta)+I_{2,n}(\beta)-S_{n,\lambda}(\beta_\lambda)\,,
\end{align}
where
\begin{align}
&I_{1,n}(\beta)=-\{S_{n,\lambda}(\beta_\lambda+\beta)-S_{n,\lambda}(\beta_\lambda)-\mathcal DS_{n,\lambda}(\beta_\lambda)\beta\}\,,\notag\\
&I_{2,n}(\beta)=-\{\mathcal DS_{n,\lambda}(\beta_\lambda)\beta-\mathcal DS_\lambda(\beta_\lambda)\beta\}\,.\label{i1}
\end{align}

First, for $I_{1,n}(\beta)$ in \eqref{i1}, in view of $S_{n,\lambda}$ and $\mathcal DS_{n,\lambda}$ defined in \eqref{dsn},
\begin{align}\label{zero}
I_{1,n}(\beta)&=\frac{1}{n}\sum_{i=1}^n\tau\bigg(X_i\otimes\bigg[Y_i-\int_0^1\{\beta_\lambda(s,\cdot)+\beta(s,\cdot)\}X_i(s)ds\bigg]\bigg)\notag\\
&\quad-\frac{1}{n}\sum_{i=1}^n\tau\bigg[X_i\otimes\bigg\{Y_i-\int_0^1\beta(s,\cdot)X_i(s)ds\bigg\}\bigg]\notag\\
&\quad+\frac{1}{n}\sum_{i=1}^n\tau\bigg[X_i\otimes\bigg\{\int_0^1\beta_\lambda(s,\cdot)X_i(s)ds\bigg\}\bigg]=0\,.
\end{align}
For $I_{2,n}(\beta)$ in \eqref{i1}, in view of \eqref{dsn},
\begin{align}\label{i1nnorm}
\Vert I_{2,n}(\beta)\Vert_K&=\big\Vert \mathcal DS_{n,\lambda}(\beta_\lambda)\beta-\mathcal DS_\lambda(\beta_\lambda)\beta\big\Vert_K\notag\\
& =\big\Vert \mathcal DS_{n}(\beta_\lambda)\beta-\mathcal DS(\beta_\lambda)\beta\big\Vert_K
%
%
=\frac{1}{\sqrt{n}}\Vert H_n(\beta)\Vert_K\,,
\end{align}
where $H_n(\beta)$ is defined in \eqref{hnbeta} in Lemma~\ref{lem:hnbeta}. For $a,D$ in Assumption~\ref{a201} and $c_K$ in Lemma~\ref{lem:norm}, let $p_n=c_K^{-2}\lambda^{(2a+1)/(2D)-1}$. In order to apply Lemma~\ref{lem:hnbeta}, we shall rescale $\beta$ such that the $L^2$-norm of its rescaled version is bounded by $1$. For the constant $c_K$ in Lemma~\ref{lem:norm}, let
\begin{align}\label{tildeb}
\tilde\beta=\left\{
\begin{array}{ll}
\big(c_K\lambda^{-(2a+1)/(4D)}\Vert\beta\Vert_K\big)^{-1}\beta&\text{ if }\beta\neq0\,,\\
0&\text{ if }\beta=0\,.
\end{array}\right.
\end{align}
We have $\Vert\tilde\beta\Vert_{L^2}\leq c_K\lambda^{-(2a+1)/(4D)}\Vert\tilde\beta\Vert_K\leq 1$, since $\Vert\tilde\beta\Vert_K\leq (c_K\lambda^{-(2a+1)/(4D)})^{-1}$ by Lemma~\ref{lem:norm}. In addition, in view of \eqref{inner}, $J(\tilde\beta,\tilde\beta)\leq\lambda^{-1}\Vert\tilde\beta\Vert_K^2\leq c_K^{-2}\lambda^{(2a+1)/(2D)-1}=p_n$, which implies $\tilde\beta\in\mathcal{F}_{p_n}$. By Lemma~\ref{lem:hnbeta}, since $n^{-1/2}=o(p_n^{1/(2m)})$ by Assumption~\ref{a:rate}, we find that, for some constant $c>0$ large enough, with probability tending to one,
\begin{align*}
\Vert H_n(\tilde\beta)\Vert_K\leq c\big(p_n^{1/(2m)}+n^{-1/2}\big)\big(\lambda^{-1/(2D)}\log\log n\big)^{1/2}\leq c\,p_n^{1/(2m)}\lambda^{-1/(4D)}(\log\log n)^{1/2}\,.
\end{align*}
Therefore, in view of \eqref{tildeb}, we deduce from the above inequality that, for some constant $c>0$ large enough, with probability tending to one,
\begin{align*}
\Vert H_n(\beta)\Vert_K&\leq \big(c_K\lambda^{-(2a+1)/(4D)}\Vert\beta\Vert_K\big)\Vert H_n(\tilde\beta)\Vert_K\leq c\,p_n^{1/(2m)}\lambda^{-(a+1)/(2D)}(\log\log n)^{1/2}\Vert\beta\Vert_K\,.
\end{align*}
Recall that $p_n=O(\lambda^{(2a+1)/(2D)-1})$. Therefore, for $I_{2,n}(\beta)$ in \eqref{i1}, in view of \eqref{i1nnorm}, we deduce from the equation result that, for some constant $c>0$ large enough, with probability tending to one,
\begin{align}\label{i1nrate}
\Vert I_{2,n}(\beta)\Vert_K\leq n^{-1/2}\Vert H_n(\beta)\Vert_K&\leq c\,n^{-1/2}p_n^{1/(2m)}\lambda^{-(a+1)/(2D)}(\log\log n)^{1/2}\Vert\beta\Vert_K\notag\\
&\leq c\,n^{-1/2}\lambda^{-\varsigma}(\log\log n)^{1/2}\,\Vert\beta\Vert_K=o(1)\,\Vert\beta\Vert_K\,,
\end{align}
where we used Assumption~\ref{a:rate} in the last step. 

For estimating the remaining term $-S_{n,\lambda}(\beta_\lambda)$ in \eqref{th}, we recall the definition of  $\tau$  in \eqref{tau} and define  $O_i=\tau\big[X_i\otimes\big\{Y_i-\int_0^1\beta_\lambda(s,\cdot)X_i(s)ds\big\}\big]$, for $1\leq i\leq n$. Since $S_{\lambda}(\beta_\lambda)=0$, 
we obtain observing \eqref{dell}  that  
$$
-S_{n,\lambda}(\beta_\lambda)=-\{S_{n,\lambda}(\beta_\lambda)-S_{\lambda}(\beta_\lambda)\}=n^{-1}\sum_{i=1}^n\{O_i-\E(O_i)\}.
$$ Let $\Delta_\lambda\beta=\beta_0-\beta_\lambda$, so that from \eqref{deltalambdabeta} we obtain $\Vert\Delta_\lambda\beta\Vert_K^2\leq c\lambda$ for some constant $c>0$. We notice that
\begin{align}\label{t2}
&\E\Vert S_{n,\lambda}(\beta_\lambda)\Vert_K^2=n^{-1}\E\Vert O_i-\E(O_i)\Vert_K^2
\leq n^{-1}\E\Vert O_i\Vert_K^2\notag\\
&=n^{-1}\E\bigg\Vert\tau\bigg[X_i\otimes\bigg\{Y_i-\int_0^1\beta_\lambda(s,\cdot)X_i(s)ds\bigg\}\bigg]\bigg\Vert_K^2\notag\\
&=n^{-1}\E\bigg\Vert\tau(X_i\otimes\e_i)+\tau\bigg[X_i\otimes\bigg\{\int_0^1\Delta_\lambda\beta(s,\cdot)X_i(s)ds\bigg\}\bigg]\bigg\Vert_K^2\notag\\
&\leq 2n^{-1}\E\big\Vert\tau(X_i\otimes\e_i)\big\Vert_K^2+2n^{-1}\E\bigg\Vert\tau\bigg[X_i\otimes\bigg\{\int_0^1\Delta_\lambda\beta(s,\cdot)X_i(s)ds\bigg\}\bigg]\bigg\Vert_K^2\,.
\end{align}
By Lemma~\ref{lem:etau2} in Section \ref{app:aux}, we have
\begin{align}\label{t3}
&\E\bigg\Vert\tau\bigg[X_i\otimes\bigg\{\int_0^1\Delta_\lambda\beta(s,\cdot)X_i(s)ds\bigg\}\bigg]\bigg\Vert_K^{2}\leq c\,\lambda^{-1/D} \,\Vert\Delta_\lambda\beta\Vert_K^2\,,
\end{align}
and  Lemmas~\ref{lem:sum},  \ref{lem:etk} and Assumption~\ref{a201} give for the first term in \eqref{t2}
\begin{align*}
\E\big\Vert\tau(X_i\otimes\e_i)\big\Vert_K^2=\sum_{k,\ell}\frac{1}{1+\lambda\rho_{k\ell}}\leq c\sum_{k,\ell}\frac{1}{1+\lambda({k\ell})^{2D}}\leq c\,\lambda^{-1/(2D)}\,.
\end{align*}
Combining this equality with \eqref{deltalambdabeta}, \eqref{t2}, \eqref{t3} and Lemma~\ref{lem:etk} yields
\begin{align}\label{snlambda}
\E\Vert S_{n,\lambda}(\beta_\lambda)\Vert_K^2&\leq c\,n^{-1}\E\big\Vert\tau(X_i\otimes\e_i)\big\Vert_K^2+c\,n^{-1}\lambda^{-1/D} \,\Vert\Delta_\lambda\beta\Vert_K^2\notag\\
&\leq c\,n^{-1}\lambda^{-1/(2D)}+c\,n^{-1}\lambda^{1-1/D}\leq c\,n^{-1}\lambda^{-1/(2D)}\,.
\end{align}

Let $q_n=2c_0n^{-1/2}\lambda^{-1/(4D)}$ and denote by $\mathcal{B}(r)=\{\gamma\in\mathcal{H},\,\Vert\gamma\Vert_K\leq r\}$  the $\Vert\cdot\Vert_K$-ball with radius $r>0$ in $\H$. In view of \eqref{i1nrate}, for any $\beta\in\mathcal{B}(q_n)$, with probability tending to one, $\Vert I_{2,n}(\beta)\Vert_K\leq \Vert\beta\Vert_K/2\leq q_n/2$. Therefore, observing \eqref{th} and \eqref{zero}, we obtain for the term $F_n(\beta)$  in \eqref{th}, with probability tending to one, for any $\beta\in\mathcal{B}(q_n)$,
\begin{align*}
\Vert F_n(\beta)\Vert_K\leq\Vert I_{2,n}(\beta)\Vert_K+\Vert S_{n,\lambda}(\beta_\lambda)\Vert_K\leq c_0\,n^{-1/2}\lambda^{-1/(4D)}+q_n/2\leq q_n\,,
\end{align*}
which implies that $F_n\{\mathcal{B}(q_n)\}\subset\mathcal{B}_n(q_n)$ with probability converging to one. Note that for any $\beta_1,\beta_2\in\mathcal B(q_n)$, $F_n(\beta_1)-F_n(\beta_2)=I_{2,n}(\beta_1)-I_{2,n}(\beta_2)$. Due to \eqref{i1nrate}, with probability tending to one,
\begin{align*}
&\Vert F_n(\beta_1)-F_n(\beta_2)\Vert=\Vert I_{2,n}(\beta_1)-I_{2,n}(\beta_2)\Vert_K\leq \Vert\beta_1-\beta_2\Vert_K/2\,,
\end{align*}
which indicates that $F_n$ is a contraction mapping on $\mathcal{B}(q_n)$. By the Banach contraction mapping theorem with probability converging to one, there exists a unique element $\beta^*\in\mathcal{B}_n$ such that $\beta^*=F_n(\beta^*)=\beta^*-S_{n,\lambda}(\beta_\lambda+\beta^*)$. Letting $\hat\beta_{n}=\beta_\lambda+\beta^*$, we have $S_{n,\lambda}(\hat\beta_{n})=0$, which indicates that $\hat\beta_{n}$ is the estimator defined by \eqref{hatbeta}. In view of \eqref{deltalambdabeta},
\begin{align*}
\Vert\hat\beta_{n}-\beta_0\Vert_K&\leq\Vert\beta_{\lambda}-\beta_0\Vert_K+\Vert\hat\beta_{n}-\beta_\lambda\Vert_K=O_p(\lambda^{1/2}+q_n)=O_p\big(\lambda^{1/2}+n^{-1/2}\lambda^{-1/(4D)}\big)\,.
\end{align*}

\end{proof}

Finally, we conclude the proof of Theorem~\ref{thm:bahadur} using Lemmas~\ref{lem:hnbeta} and \ref{lem:hatbeta}.

\begin{proof}[Proof of Theorem~\ref{thm:bahadur}]

Let $\beta_\Delta=\hat\beta_{n}-\beta_0$.
Since $\mathcal D^2S_\lambda$ vanishes and $\mathcal DS_\lambda(\beta_0)=id$, we have $S_\lambda(\hat\beta_{n})-S_\lambda(\beta_0)=\mathcal DS_\lambda(\beta_0)\beta_\Delta=\beta_\Delta$. For $S_n$ and $S$ defined in \eqref{s}, since $S_{n,\lambda}(\hat\beta_{n})=0$,
\begin{align}\label{co}
\hat\beta_{n}-\beta_0+S_{n,\lambda}(\beta_0)&=\beta_\Delta+S_{n,\lambda}(\beta_0)=-S_{n,\lambda}(\hat\beta_{n})+S_{n,\lambda}(\beta_0)+S_\lambda(\hat\beta_{n})-S_\lambda(\beta_0)\notag\\
&=-S_{n}(\hat\beta_{n})+S_{n}(\beta_0)+S(\hat\beta_{n})-S(\beta_0)\,.
\end{align}
Let $r_n=\lambda^{1/2}+n^{-1/2}\lambda^{-1/(4D)}$. For $c_1>0$, denote the event $\mathcal M_n=\big\{\Vert\beta_\Delta\Vert_K\leq c_1r_n\big\}$.  From Lemma~\ref{lem:hatbeta}, we obtain that $\P(\mathcal M_n) $ is arbitrarily close to one except for a finite number of $n$ if   $c_1>0$ is large enough. For the constant $c_K>0$ in Lemma~\ref{lem:norm}, let $q_n=c_1c_K\lambda^{-(2a+1)/(4D)} r_n$ and let $p_n=c_1^2q_n^{-2}\lambda^{-1}r_n^2=c_K^{-2}\lambda^{(-2D+2a+1)/(2D)}$. We have $p_n\geq 1$ for $n$ large enough since $D>a+1/2$ by Assumption~\ref{a201}. In order to apply Lemma~\ref{lem:hnbeta}, we shall rescale $\beta_\Delta$ such that the $L^2$-norm of its rescaled version is bounded by $1$. Let $\tilde\beta_\Delta=q_n^{-1}\beta_\Delta$. By Lemma~\ref{lem:norm}, we have that, on the event $\mathcal M_n$,
\begin{align*}
\Vert\tilde\beta_\Delta\Vert_{L^2}\leq c_K\lambda^{-(2a+1)/(4D)} \Vert\tilde\beta_\Delta\Vert_K\leq c_Kq_n^{-1}\lambda^{-(2a+1)/(4D)} \Vert\beta_\Delta\Vert_K\leq c_1c_Kq_n^{-1}\lambda^{-(2a+1)/(4D)} r_n\leq 1\,.
\end{align*}
In addition, since $J(\beta_\Delta,\beta_\Delta)\leq\lambda^{-1}\Vert\beta_\Delta\Vert_K^2$, we have
\begin{align*}
J(\tilde\beta_\Delta,\tilde\beta_\Delta)\leq q_n^{-2}J(\beta_\Delta,\beta_\Delta)\leq q_n^{-2}\lambda^{-1}\Vert\beta_\Delta\Vert_K^2\leq c_1^2\,q_n^{-2}\lambda^{-1}r_n^2=p_n\,. 
\end{align*}
Hence, we have shown that $\tilde\beta_\Delta\in\mathcal{F}_{p_n}$, where the set $\mathcal{F}_{p_n}$ is defined in \eqref{m6}.

Recalling the notations \eqref{dsn}, \eqref{hnbeta} and the identity \eqref{co}, we have
\begin{align}\label{coo}
\big\Vert\hat\beta_{n}-\beta_0+S_{n,\lambda}(\beta_0)\big\Vert_K&=\big\Vert-S_{n}(\hat\beta_{n})+S_{n}(\beta_0)+S(\hat\beta_{n})-S(\beta_0)\big\Vert_K\notag\\
&=n^{-1/2}\Vert H_n(\beta_\Delta)\Vert_K\,.
\end{align}
Since $\tilde\beta_\Delta\in\mathcal{F}_{p_n}$, by Lemma~\ref{lem:hnbeta},
\begin{align*}
\Vert H_n(\tilde\beta_\Delta)\Vert_K&=O_p\big\{\big(p_n^{1/(2m)}+n^{-1/2}\big)\big(\lambda^{-1/(2D)}\log\log n\big)^{1/2}\big\}\\
&=O_p\big\{p_n^{1/(2m)}\lambda^{-1/(4D)}(\log\log n)^{1/2}\big\}\,.
\end{align*}
since $n^{-1/2}=o(p_n^{1/(2m)})$ by Assumption~\ref{a:rate}. Therefore, we deduce from the above equation that, for some constant $c>0$ large enough, with probability tending to one,
\begin{align}\label{hn}
n^{-1/2}\Vert H_n(\beta_\Delta)\Vert_K&\leq n^{-1/2}q_n\,\Vert H_n(\tilde\beta_\Delta)\Vert_K\leq c\,n^{-1/2}q_n\,p_n^{1/(2m)}\lambda^{-1/(4D)}(\log\log n)^{1/2}\notag\\
&\leq c\,n^{-1/2}(\lambda^{-(2a+1)/(4D)}\, r_n)\,\lambda^{(-2D+2a+1)/(4Dm)}\,\lambda^{-1/(4D)}(\log\log n)^{1/2}\notag\\
&=c\,n^{-1/2}\lambda^{-\varsigma}(\lambda^{1/2}+n^{-1/2}\lambda^{-1/(4D)})(\log\log n)^{1/2}\,,
\end{align}
where $\varsigma>0$ is the constant in Assumption~\ref{a:rate}. Combining the above result with \eqref{coo} yields that
\begin{align*}
\big\Vert\hat\beta_{n}-\beta_0+S_{n,\lambda}(\beta_0)\big\Vert_K&
=O_p\big\{n^{-1/2}\lambda^{-\varsigma}(\lambda^{1/2}+n^{-1/2}\lambda^{-1/(4D)})(\log\log n)^{1/2}\big\}\,,
\end{align*}
which completes the proof of Theorem~\ref{thm:bahadur}.

\end{proof}

\subsection{Proof of Theorem~\ref{thm:pointwise}}\label{app:thm:pointwise}

Recall that
\begin{align}\label{vn}
v_n=n^{-1/2}\lambda^{-\varsigma}(\lambda^{1/2}+n^{-1/2}\lambda^{-1/(4D)})(\log\log n)^{1/2}\,,
\end{align}
where $\varsigma>0$ is the constant in Assumption~\ref{a:rate}, so that by Theorem~\ref{thm:bahadur}, $\Vert\hat\beta_{n}-\beta_0+S_{n,\lambda}(\beta_0)\Vert_K=O_p(v_n)$. In view of \eqref{dsn}, $S_{n,\lambda}(\beta_0)=-n^{-1}\sum_{i=1}^n\tau(X_i\otimes\e_i)+W_\lambda(\beta_0)$, so that
\begin{align}\label{sn}
\hat\beta_{n}-\beta_0=(\hat\beta_{n}-\beta_0+S_{n,\lambda}\beta_0)-W_\lambda(\beta_0)+\frac{1}{n}\sum_{i=1}^n\tau(X_i\otimes\e_i)\,.
\end{align}

We first denote
\begin{align}\label{m7}
\sigma_\tau(s,t)=\bigg\{\sum_{k,\ell}(1+\lambda\rho_{k\ell})^{-2}\phi_{k\ell}^2(s,t)\bigg\}^{1/2}\,,
\end{align}
so that
\begin{align*}
\frac{\sqrt{n}}{\sigma_\tau(s,t)}\{\hat\beta_{n}(s,t)-\beta_0(s,t)\}=I_{1,n}(s,t)+I_{2,n}(s,t)+I_{3,n}(s,t)\,,
\end{align*}
where
\begin{align*}
&I_{1,n}(s,t)=\frac{\sqrt{n}}{\sigma_\tau(s,t)}\big\l\hat\beta_{n}-\beta_0+S_{n,\lambda}\beta_0, K_{(s,t)}\r_K\,,\\
&I_{2,n}(s,t)=-\frac{\sqrt{n}}{\sigma_\tau(s,t)}W_\lambda\beta_0(s,t)\,, \\
&I_{3,n}(s,t)=\frac{1}{\sqrt{n}\,\sigma_\tau(s,t)}\sum_{i=1}^n\tau(X_i\otimes\e_i)(s,t)\,.
\end{align*}

By Theorem \ref{thm:bahadur}, \eqref{kst} and Cauchy-Schwarz inequality,
\begin{align}\label{sn2}
|I_{1,n}(s,t)|&=\frac{\sqrt{n}}{\sigma_\tau(s,t)}\big|\big\l\hat\beta_{n}-\beta_0+S_{n,\lambda}\beta_0, K_{(s,t)}\big\r_K\big|\notag\\
&\leq \frac{\sqrt{n}}{\sigma_\tau(s,t)}\Vert K_{(s,t)}\Vert_K\times\Vert\hat\beta_{n}-\beta_0+S_{n,\lambda}(\beta_0)\Vert_K\notag\\
&\leq\frac{\sqrt{n}}{\sigma_\tau(s,t)}\bigg\{\sum_{k,\ell}\frac{\phi_{k\ell}^2(s,t)}{1+\lambda\rho_{k\ell}}\bigg\}^{1/2}\times\Vert\hat\beta_{n}-\beta_0+S_{n,\lambda}(\beta_0)\Vert_K\notag\\
&\leq O(\sqrt{n}\lambda^{(2a+1)/(4D)})\times O_p(v_n)\times\bigg\{\sum_{k,\ell}\frac{\Vert\phi_{k\ell}\Vert^2_\infty}{1+\lambda\rho_{k\ell}}\bigg\}^{1/2}=O_p(\sqrt{n}v_n)\,,
\end{align}
where we used Assumption~\ref{a201} and Lemma~\ref{lem:sum}, which imply $\sigma_\tau(s,t)\asymp \lambda^{-(2a+1)/(4D)}$.

For the term $I_{2,n}$, in view of \eqref{wphi}, by the assumption that $\sum_{k,\ell}\rho_{k\ell}^2V^2(\beta_0,\phi_{k\ell})< \infty$, Assumption~\ref{a201} and Lemma~\ref{lem:sum},
\begin{align}\label{in2}
&|I_{2,n}(s,t)|=\frac{\sqrt{n}}{\sigma_\tau(s,t)}|W_\lambda\beta_0(s,t)|=\frac{\sqrt{n}\lambda}{\sigma_\tau(s,t)}\bigg|\sum_{k,\ell}\frac{\rho_{k\ell}\,V(\beta_0,\phi_{k\ell})}{1+\lambda\rho_{k\ell}}\,\phi_{k\ell}(s,t)\bigg|\notag\\
&\leq \frac{\sqrt{n}\,\lambda}{\sigma_\tau(s,t)}\bigg\{\sum_{k,\ell}\rho_{k\ell}^2\,V^2(\beta_0,\phi_{k\ell})\bigg\}^{1/2}\bigg\{\sum_{k,\ell}\frac{\Vert\phi_{k\ell}\Vert_\infty^2}{(1+\lambda\rho_{k\ell})^2}\bigg\}^{1/2}\leq c\sqrt{n}\,\lambda=o(1)\,.
\end{align}

Finally, the term $I_{3,n}$ can be estimated as follows. For $1\leq i\leq n$, let 
\begin{align*}
\mathfrak U_i=\frac{\tau(X_i\otimes\e_i)(s,t)}{\sqrt{n}\,\sigma_\tau(s,t)}=\frac{\tau(X_i\otimes\e_i)(s,t)}{\sqrt{n\sum_{k,\ell}(1+\lambda\rho_{k\ell})^{-2}\phi_{k\ell}^2(s,t)}}\,,
\end{align*}
so that $I_{3,n}=\sum_{i=1}^n\mathfrak{U}_i$. We start by noticing that, Assumption~\ref{a0} indicates that
\begin{align*}
&\E\Big\{\big\l \tau(X\otimes\epsilon),\phi_{k\ell}\big\r_{L^2}\Big\}=\E\bigg[\int_{[0,1]^2}\E\{\epsilon(t)|X\}\phi_{k\ell}(s,t)X_i(s)dsdt\bigg]=0\,,
\end{align*}
so that in view of \eqref{tau}, $\E\{\tau(X\otimes\epsilon)(s,t)\}=\sum_{k,\ell}\E\big(\l X\otimes\epsilon,\phi_{k\ell}\r_{L^2}\big)(1+\lambda\rho_{k\ell})^{-1}\phi_{k\ell}(s,t)=0$, so that $\E(\mathfrak{U}_i)=0$. In view of \eqref{tk} in the proof of Lemma~\ref{lem:etk} in Section~\ref{app:aux},
\begin{align*}
&\E|\tau(X\otimes\epsilon)(s,t)|^2=\E\left(\sum_{k,\ell}\frac{\l X\otimes\epsilon,\phi_{k\ell}\r_{L^2}}{1+\lambda\rho_{k\ell}}\phi_{k\ell}(s,t)\right)^2\notag\\
&=\sum_{k,k',\ell,\ell'}\frac{\phi_{k\ell}(s,t)\,\phi_{k'\ell'}(s,t)}{(1+\lambda\rho_{k\ell})(1+\lambda\rho_{k'\ell'})}\E\Big\{\l X\otimes\epsilon,\phi_{k\ell}\r_{L^2}\,\l X\otimes\epsilon,\phi_{k'\ell'}\r_{L^2}\Big\}=\sigma_\tau^2(s,t)\,,
\end{align*}
where $\sigma_\tau$ is defined in \eqref{m7} and we used $\sigma_\tau^2(s,t)$ \eqref{core} in the last step. By the assumption that $\sigma_\tau^2(s,t)\asymp \lambda^{-(2a+1)/(2D)}$, we deduce that $\E|\tau(X\otimes\epsilon)(s,t)|^2=\sigma_\tau^2(s,t)\geq c_0^2\lambda^{-(2a+1)/(2D)}$ for some constant $c_0>0$.

To conclude the proof, we shall check that the triangular array of random variables $
\{\mathfrak U_i\}_{i=1}^n = \{n^{-1/2}\sigma^{-1}_\tau(s,t)\tau(X_i\otimes\e_i)(s,t)\}_{i=1}^n$ satisfies the Lindeberg's condition. By the Cauchy-Schwarz inequality, for any $e>0$,
\begin{align}\label{lind}
&\sum_{i=1}^n\E\big[|\mathfrak{U_i}|^2\times\one\{|\mathfrak{U}_i|>e\}\big]\notag\\
%
%
&=\frac{1}{\sigma^2_\tau(s,t)}\,\E\Big[|\tau(X\otimes\epsilon)(s,t)|^2\times\one\Big\{|\tau(X\otimes\epsilon)(s,t)|\geq e\sqrt{n}\,\sigma_\tau(s,t)\Big\}\Big]\notag\\
&\leq\frac{1}{\sigma^2_\tau(s,t)}\,\Big\{\E|\tau(X\otimes\e)(s,t)|^4\Big\}^{1/2}\times\Big[\P\Big\{|\tau(X\otimes\epsilon)(s,t)|\geq e\sqrt{n}\,\sigma_\tau(s,t)\Big\}\Big]^{1/2}\,.
%
\end{align}
We shall deal with the above moment term and the tail probability separately. In order to find the order of $\E|\tau(X\otimes\e)(s,t)|^4$, in view of \eqref{kst} and Lemma~\ref{lem:sum}, by Assumption~\ref{a201},
\begin{align}\label{supst}
\sup_{(s,t)\in[0,1]^2}\Vert K_{(s,t)}\Vert_K^2=\sum_{k,\ell}\frac{\Vert\phi_{k\ell}\Vert_\infty^2}{1+\lambda\rho_{k\ell}}\leq c\lambda^{-(2a+1)/(2D)}\,.
\end{align}
Therefore, by Lemma~\ref{lem:etk}, we find
\begin{align}\label{supe4}
&\sup_{(s,t)\in[0,1]^2}\E\big|\tau(X\otimes\e)(s,t)\big|^4\leq \sup_{(s,t)\in[0,1]^2}\Vert K_{(s,t)}\Vert_K^4\times\E\Vert\tau(X\otimes\e)\Vert_K^4\leq c\lambda^{-(2a+2)/D}\,.
\end{align}

In order to find the order of the tail probability $\P\big\{|\tau(X\otimes\epsilon)(s,t)|\geq e\sqrt{n}\sigma_\tau(s,t)\big\}$, we first show an upper bound of $|\tau(X\otimes\epsilon)(s,t)|$. To achieve this, in view of \eqref{tk} in the proof of Lemma~\ref{lem:etk} in Section~\ref{app:aux}, by Lemma~\ref{lem:norm},
\begin{align*}
&\Vert\tau(X\otimes\e)\Vert_K=\sup_{\Vert\gamma\Vert_K=1}|\l\tau(X\otimes\e),\gamma\r_K|=\sup_{\Vert\gamma\Vert_K=1}|\l X\otimes\e,\gamma\r_{L^2}|\notag\\
&\leq \sup_{\Vert\gamma\Vert_K=1}\Vert\gamma\Vert_{L^2}\times\Vert X\otimes\e\Vert_{L^2}\leq c_K\lambda^{-(2a+1)/(4D)}\Vert X\Vert_{L^2}\Vert\e\Vert_{L^2}\,.
\end{align*}
By \eqref{supst} and the Cauchy-Schwarz inequality, we deduce from the above equation that
\begin{align}\label{taust}
\sup_{(s,t)\in[0,1]^2}|\tau(X\otimes\epsilon)(s,t)|&=\sup_{(s,t)\in[0,1]^2}|\l K_{(s,t)},\tau(X\otimes\e)\r_K|\leq \sup_{(s,t)\in[0,1]^2}\Vert K_{(s,t)}\Vert_K\times\Vert \tau(X\otimes\e)\Vert_K\notag\\
&\leq c\lambda^{-(2a+1)/(2D)}\Vert X\Vert_{L^2}\Vert\e\Vert_{L^2}\,.
\end{align}
Now, by assumption, $\sigma_\tau(s,t)\geq c_0\lambda^{-(2a+1)/(4D)}$ for some $c_0>0$, hence for any $e>0$, we can choose $c_1>(c_\e D)^{-1}$, for $c_\e>0$ in Assumption~\ref{a:x}, so that
\begin{align}\label{11}
&\P\Big\{|\tau(X\otimes\epsilon)(s,t)|\geq e\sqrt{n}\,\sigma_\tau(s,t)\Big\}\notag\\
&\leq \P\Big(c\lambda^{-(2a+1)/(2D)}\Vert X\Vert_{L^2}\Vert\e\Vert_{L^2}\geq ec_0\sqrt{n}\lambda^{-(2a+1)/(4D)}\Big)\notag\\
&= \P\Big(\Vert X\Vert_{L^2}\Vert\e\Vert_{L^2}\geq ec^{-1}c_0\sqrt{n}\lambda^{(2a+1)/(4D)}\Big)\notag\\
&\leq\P\Big\{\Vert X\Vert_{L^2}\geq ec_1^{-1}c^{-1}c_0\sqrt{n}\lambda^{(2a+1)/(4D)}/{\log(\lambda^{-1})}\Big\}+\P\Big\{\Vert\e\Vert_{L^2}\geq c_1{\log(\lambda^{-1})}\Big\}\notag\\
&\leq \exp\{-c_Xec_1^{-1}c^{-1}c_0\sqrt{n}\lambda^{(2a+1)/(4D)}/\log(\lambda^{-1})\}\E\{\exp(c_X\Vert X\Vert_{L^2})\}\notag\\
&\qquad+\lambda^{c_1c_\e}\E\{\exp(c_\e\Vert\e\Vert_{L^2})\}\notag\\
&=O\big(\lambda^{c_Xec_1^{-1}c^{-1}c_0\{\sqrt{n}\lambda^{(2a+1)/(4D)}/\log^2(\lambda^{-1})\}}\big)+O(\lambda^{c_1c_\e})=o(\lambda^{1/D})\,,
\end{align}
where we used the assumption that $\sqrt{n}\lambda^{(2a+1)/(4D)}/\log^2(\lambda^{-1})\to \infty$ in the last step. Since, by assumption, $\sigma^2_\tau(s,t)\asymp\lambda^{-(2a+1)/(2D)}$, combining the above equation with \eqref{lind} and \eqref{supe4} yields that, for any $e>0$,
\begin{align*}
&\sum_{i=1}^n\E\big[|\mathfrak{U_i}|^2\times\one\{|\mathfrak{U}_i|>e\}\big]\leq\frac{c}{\sigma^2_\tau(s,t)}\times O(\lambda^{-(a+1)/D})\times o(\lambda^{1/(2D)})=o(1)\,.
\end{align*}
Therefore, by Lindeberg's CLT,
\begin{align*}
\frac{1}{\sqrt{n}\,\sigma_\tau(s,t)}\sum_{i=1}^n\tau(X_i\otimes\e_i)(s,t)\overset{d.}{\longrightarrow} N(0,1)\,.
\end{align*}
Combining the above result with \eqref{sn}--\eqref{in2}, we deduce that
\begin{align*}
&\frac{\sqrt{n}}{\sigma_\tau(s,t)}\big\{\hat\beta_{n}(s,t)-\beta_0(s,t)\big\}=\frac{1}{\sqrt{n}\,\sigma_\tau(s,t)}\sum_{i=1}^n\tau(X_i\otimes\e_i)(s,t)+o_p(1)\overset{d.}{\longrightarrow} N(0,1)\,,
\end{align*}
which completes the proof.

\subsection{Proof of Theorem~\ref{thm:process}}\label{app:thm:process}

Recall the definition of the process $\G_n$ in \eqref{hatgn} and note that in view of \eqref{sn},
\begin{align}\label{gn}
\G_n(s,t)=I_{1,n}(s,t)+I_{2,n}(s,t)+I_{3,n}(s,t)\,,
\end{align}
where
\begin{align}\label{in1in2}
&I_{1,n}(s,t)=\sqrt{n}\lambda^{(2a+1)/(4D)}\{\hat\beta_{n}(s,t)-\beta_0(s,t)+S_{n,\lambda}\beta_0(s,t)\}\notag\\
&I_{2,n}(s,t)=-\sqrt{n}\lambda^{(2a+1)/(4D)}W_\lambda\beta_0(s,t)\,,\notag\\
&I_{3,n}(s,t)=n^{-1/2}\lambda^{(2a+1)/(4D)}\sum_{i=1}^n\tau(X_i\otimes\e_i)(s,t)\,.
\end{align}
In view of \eqref{sn2} and Theorem~\ref{thm:bahadur}, for $v_n$ in \eqref{vn},
\begin{align}\label{in1a}
\sup_{(s,t)\in[0,1]^2}|I_{1,n}(s,t)|&\leq \sqrt{n}\lambda^{(2a+1)/(4D)}\bigg\{\sum_{k,\ell}\frac{\Vert\phi_{k\ell}\Vert_\infty^2}{1+\lambda\rho_{k\ell}}\bigg\}^{1/2}\Vert\hat\beta_{n}-\beta_0+S_{n,\lambda}(\beta_0)\Vert_K\notag\\
&=O_p(\sqrt{n}v_n)=o_p(1)\,,\\
\label{in1}
\sup_{(s,t)\in[0,1]^2}|I_{2,n}(s,t)|&=\sqrt{n}\lambda^{1+(2a+1)/(4D)}\sup_{(s,t)\in[0,1]^2}\bigg|\sum_{k,\ell}\frac{\rho_{k\ell}\,V(\beta_0,\phi_{k\ell})}{1+\lambda\rho_{k\ell}}\,\phi_{k\ell}(s,t)\bigg|\notag\\
&\leq \sqrt{n}\lambda^{1+(2a+1)/(4D)}\bigg\{\sum_{k,\ell}\rho_{k\ell}^2\,V^2(\beta_0,\phi_{k\ell})\bigg\}^{1/2}\bigg\{\sum_{k,\ell}\frac{\Vert\phi_{k\ell}\Vert_\infty^2}{(1+\lambda\rho_{k\ell})^2}\bigg\}^{1/2}\notag\\
&=O(\sqrt{n}\lambda)=o(1)\,.
\end{align}
For the term $I_{3,n}$ we use  $\E\big\{\tau(X\otimes\e)(s,t)\big\}=0$
and \eqref{core} to obtain as $n\to\infty$
\begin{align}\label{var}
&\cov\big\{I_{3,n}(s_1,t_1),I_{3,n}(s_2,t_2)\big\}\notag\\
&=\lambda^{(2a+1)/(2D)}\cov\big\{\tau (X\otimes\e)(s_1,t_1),\tau (X\otimes\e)(s_2,t_2)\big\}\notag\\
&=\lambda^{(2a+1)/(2D)}\,\E\Bigg[\bigg\{\sum_{k,\ell}\frac{\l X\otimes\e,\phi_{k\ell}\r_{L^2}}{1+\lambda\rho_{k\ell}}\phi_{k\ell}(s_1,t_1)\bigg\}\bigg\{\sum_{k',\ell'}\frac{\l X\otimes\e,\phi_{k'\ell'}\r_{L^2}}{1+\lambda\rho_{k'\ell'}}\phi_{k'\ell'}(s_2,t_2)\bigg\}\Bigg]\notag\\
&=\lambda^{(2a+1)/(2D)}\sum_{k,\ell}\frac{\phi_{k\ell}(s_1,t_1)\phi_{k\ell}(s_2,t_2)}{(1+\lambda\rho_{k\ell})^2}=C_Z\{(s_1,t_1),(s_2,t_2)\}+o(1)\,.
\end{align}
Note that the results in Section~1.5 in \cite{vaart1996} are valid if the $\ell^\infty([0,1]^2)$ space is replaced by $C([0,1]^2)$. We shall prove the weak convergence in $C([0,1]^2)$ through the following two steps.

\vspace{1em}
\textbf{Step 1}. \emph{Weak convergence of the finite-dimensional marginals of $\G_n$}
\vspace{1em}

In order to prove the weak convergence of the finite-dimensional marginal distributions of $\G_n$, by the Cram\'er-Wold device, we shall show that, for any $q\in\mathbb{N}$, $(c_1,\ldots,c_q)^{\rm T}\in\mathbb{R}^q$ and $(s_1,t_1),\ldots,(s_q,t_q)\in[0,1]^2$,
\begin{align}\label{valid}
\sum_{j=1}^qc_j\G_n(s_j,t_j)\overset{d.}{\longrightarrow}\sum_{j=1}^qc_j Z(s_j,t_j)\,.
\end{align}

For $1\leq i\leq n$, let $\mathfrak U_{i,q}=n^{-1/2}\lambda^{(2a+1)/(4D)}\sum_{j=1}^qc_j\tau(X_i\otimes\e_i)(s_j,t_j)$. In view of \eqref{in1a} and \eqref{in1}, we deduce that
\begin{align*}
\sum_{j=1}^qc_j\G_n(s_j,t_j)&=\sum_{j=1}^qc_jI_{3,n}(s_j,t_j)+\sum_{j=1}^qc_j\{I_{1,n}(s_j,t_j)+I_{2,n}(s_j,t_j)\}=\sum_{i=1}^n\mathfrak U_{i,q}+o_p(1)\,.
\end{align*}
By \eqref{var} and assumption, we find, as $n\to\infty$,
\begin{align*}
\var(\mathfrak U_{i,q})&=n^{-1}\sum_{j_1,j_2=1}^qc_{j_1}c_{j_2}\,\cov\big\{\tau(X_i\otimes\e_i)(s_{j_1},t_{j_1}),\tau(X_i\otimes\e_i)(s_{j_2},t_{j_2})\big\}\\
&=n^{-1}\sum_{j_1,j_2=1}^qc_{j_1}c_{j_2}C_Z\{(s_{j_1},t_{j_1}),(s_{j_2},t_{j_2})\}+o(n^{-1})\,.
\end{align*}
When $\sum_{j_1,j_2=1}^qc_{j_1}c_{j_2}C_Z\{(s_{j_1},t_{j_1}),(s_{j_2},t_{j_2})\}=0$, we have that $\sum_{j=1}^qc_j Z(s_j,t_j)$ has a degenerate distribution with a point mass at zero, so that \eqref{valid} is followed by the Markov's inequality. When $\sum_{j_1,j_2=1}^qc_{j_1}c_{j_2}C_Z\{(s_{j_1},t_{j_1}),(s_{j_2},t_{j_2})\}\neq0$, to prove \eqref{valid}, we shall check that the triangular array of random variables $\{\mathfrak U_{i,q}\}_{i=1}^n$ satisfies Lindeberg's condition. By the Cauchy-Schwarz inequality, we find, for any $e>0$,
\begin{align*}
&\sum_{i=1}^n\E\big\{\mathfrak U_{i,q}^2\,\one(|\mathfrak U_{i,q}|>e)\big\}\\
&=\E\bigg[\lambda^{(2a+1)/(2D)}\bigg|\sum_{j=1}^qc_j\tau(X\otimes\e)(s_j,t_j)\bigg|^2\times\one\bigg\{\lambda^{(2a+1)/(4D)}\bigg|\sum_{j=1}^qc_j\tau(X \otimes\e )(s_j,t_j)\bigg|\geq e\sqrt{n}\bigg\}\bigg]\\
&\leq\lambda^{(2a+1)/(2D)}\bigg\{\E\bigg|\sum_{j=1}^qc_j\tau(X \otimes\e )(s_j,t_j)\bigg|^4\bigg\}^{\frac{1}{2}}\,\P\bigg\{\lambda^{(2a+1)/(4D)}\bigg|\sum_{j=1}^qc_j\tau(X \otimes\e )(s_j,t_j)\bigg|\geq e\sqrt{n}\bigg\}^{\frac{1}{2}}\,.
\end{align*}
In view of \eqref{supe4},
\begin{align*}
&\bigg\{\E\bigg|\sum_{j=1}^qc_j\tau(X \otimes\e )(s_j,t_j)\bigg|^4\bigg\}^{\frac{1}{2}}\leq c\, \bigg\{\sup_{(s,t)\in[0,1]^2}\E|\tau(X \otimes\e )(s,t)|^4\bigg\}^{\frac{1}{2}}\leq c\lambda^{-(a+1)/D}\,.
\end{align*}
Let $\mathfrak s_q=\sum_{j=1}^q|c_j|$. We have $\mathfrak s_q>0$, since
$\sum_{j_1,j_2=1}^qc_{j_1}c_{j_2}C_Z\{(s_{j_1},t_{j_1}),(s_{j_2},t_{j_2})\} \not =0$. In view of \eqref{taust}, by arguments similar to the ones used in \eqref{11},  we find, by taking $c_1>(c_\e D)^{-1}$, for $c_\e>0$ in Assumption~\ref{a:x},
\begin{align*}
&\P\bigg\{\lambda^{(2a+1)/(4D)}\bigg|\sum_{j=1}^qc_j\tau(X \otimes\e )(s_j,t_j)\bigg|\geq e\sqrt{n}\bigg\}\notag\\
&\leq\P\bigg\{\mathfrak s_q\lambda^{(2a+1)/(4D)}\sup_{(s,t)\in[0,1]^2}\big|\tau(X \otimes\e )(s,t)\big|\geq e\sqrt{n}\bigg\}\notag\\
&\leq\P\Big(\Vert X\Vert_{L^2}\Vert\e\Vert_{L^2}\geq ec^{-1} \mathfrak s_q^{-1}\sqrt{n}\lambda^{(2a+1)/(4D)}\Big)\notag\\
&\leq\P\Big\{\Vert X\Vert_{L^2}\geq ec_1^{-1}c^{-1} \mathfrak s_q^{-1}\sqrt{n}\lambda^{(2a+1)/(4D)}/{\log(\lambda^{-1})}\Big\}+\P\Big\{\Vert\e\Vert_{L^2}\geq c_1{\log(\lambda^{-1})}\Big\}\notag\\
&\leq \exp\{-c_Xec_1^{-1}c^{-1} \mathfrak s_q^{-1}\sqrt{n}\lambda^{(2a+1)/(4D)}/\log(\lambda^{-1})\}\E\{\exp(c_X\Vert X\Vert_{L^2})\}\notag\\
&\qquad+\lambda^{c_1c_\e}\E\{\exp(c_\e\Vert\e\Vert_{L^2})\}\notag\\
&=O\big(\lambda^{c_Xec_1^{-1}c^{-1} \mathfrak s_q^{-1}\{\sqrt{n}\lambda^{(2a+1)/(4D)}/\log^2(\lambda^{-1})\}}\big)+O(\lambda^{c_1c_\e})=o(\lambda^{1/D})\,,
\end{align*}

Therefore, for any $e>0$,
\begin{align*}
\sum_{i=1}^n\E\big\{\mathfrak U_{i,q}^2\,\one(|\mathfrak U_{i,q}|>e)\big\}\leq c\lambda^{(2a+1)/(2D)}\times\lambda^{-(a+1)/D}\times o(\lambda^{1/(2D)})=o(1)\,.
\end{align*}
By Lindeberg's CLT,
\begin{align*}
\sum_{j=1}^qc_j\G_n(s_j,t_j)& =\sum_{i=1}^n\mathfrak U_{i,q}+o_p(1) \\
& 
\converged \sum_{j=1}^qc_j Z(s_j,t_j) \sim 
{\cal N} \Big (0,\sum_{j_1,j_2=1}^qc_{j_1}c_{j_2}C_Z\{(s_{j_1},t_{j_1}),(s_{j_2},t_{j_2})\}\Big)
\,.
\end{align*}

\vspace{1em}
\textbf{Step 2}. \emph{Asymptotic tightness of $\G_n$}
\vspace{1em}

Next, we show the equicontinuity of the process $\G_n$ in \eqref{hatgn}. We first focus on the leading term $I_{3,n}$ in \eqref{in1in2}, and recall that
\begin{align}\label{hnst}
I_{3,n}(s,t)=\sum_{i=1}^n\mathfrak U_i(s,t)=n^{-1/2}\lambda^{(2a+1)/(4D)}\sum_{i=1}^n\tau(X_i\otimes\e_i)(s,t)\,.
\end{align}
Let $\Psi(x)=x^2$ and let $\Vert U\Vert_\Psi=\inf\{c>0:\E\{\Psi(|U|/c)\}\leq 1\}$ denote the Orlicz norm for a real-valued random variable $U$. For some metric $d$ on $[0,1]^2$, let $\D(w,d)$ denote the $w$-packing number of the metric space $([0,1]^2,d)$, where $d$ is an appropriate metric specified below. Since $\E\{\tau(X\otimes\e)(s,t)\}=0$ for any $(s,t)\in[0,1]^2$ and $\E\{\l X\otimes\e,\phi_{k\ell}\r_{L^2}\l X\otimes\e,\phi_{k'\ell'}\r_{L^2}\}=\delta_{kk'}\delta_{\ell\ell'}$, for $k,k',\ell,\ell'\geq 1$, by \eqref{holder}, for any $(s_1,t_1),(s_2,t_2)\in[0,1]^2$,
\begin{align}\label{m1}
&\E|I_{3,n}(s_1,t_1)-I_{3,n}(s_2,t_2)|^2\notag\\
&=\lambda^{(2a+1)/(2D)}\,\E\big|\tau(X\otimes\e)(s_1,t_1)-\tau(X\otimes\e)(s_2,t_2)\big|^2\notag\\
&=\lambda^{(2a+1)/(2D)}\,\E\bigg|\sum_{k,\ell}\frac{\l X\otimes\e,\phi_{k\ell}\r_{L^2}}{1+\lambda\rho_{k\ell}}\big\{\phi_{k\ell}(s_1,t_1)-\phi_{k\ell}(s_2,t_2)\big\}\bigg|^2\notag\\
&=\lambda^{(2a+1)/(2D)}\,\sum_{k,\ell}\frac{1}{(1+\lambda\rho_{k\ell})^2}\big|\phi_{k\ell}(s_1,t_1)-\phi_{k\ell}(s_2,t_2)\big|^2\notag\\
%
%
&\leq c\,\lambda^{(a-b)/D}\,\max\{|s_1-s_2|^{2\vartheta},|t_1-t_2|^{2\vartheta}\}\,.
\end{align}
We therefore deduce from \eqref{m1} that 
\begin{align}\label{m2}
\Vert I_{3,n}(s_1,t_1)-I_{3,n}(s_2,t_2)\Vert_\Psi\leq c\,\lambda^{(a-b)/(2D)}\,\max\{|s_1-s_2|^{\vartheta},|t_1-t_2|^{\vartheta}\}\,.
\end{align}

Next, we shall show that, there exists a metric $d$ on $[0,1]^2$ such that, for any $e>0$, 
\begin{align}\label{p}
\lim_{\delta\downarrow0}\,\limsup_{n\to\infty}\,\P\bigg\{\sup_{d\{(s_1,t_1),(s_2,t_2)\}\leq\delta}|I_{3,n}(s_1,t_1)-I_{3,n}(s_2,t_2)|>e\bigg\}=0\,,
\end{align}
where we distinguish the cases: $\vartheta>1$ and $0\leq\vartheta\leq1$.

\vspace{1em}
\emph{Case (i):} $\vartheta>1$
\vspace{1em}

Recall that in the case of $\vartheta>1$, we have assumed $b=a$, and let $d_1\{(s_1,t_1),(s_2,t_2)\}=\max\{|s_1-s_2|^{\vartheta},|t_1-t_2|^{\vartheta}\}$. In view of \eqref{m1}, we have $\Vert I_{3,n}(s_1,t_1)-I_{3,n}(s_2,t_2)\Vert_\Psi\leq c\,d_1\{(s_1,t_1),(s_2,t_2)\}$. Note that the packing number of $[0,1]^2$ with respect to the metric $d_1$ satisfies $\D(\zeta,d_1)\lesssim \zeta^{-2/\vartheta}$. By Theorem~2.2.4 in \cite{vaart1996}, for any $e,\eta>0$,
\begin{align*}
&\P\bigg\{\sup_{d_1\{(s_1,t_1),(s_2,t_2)\}\leq\delta}|I_{3,n}(s_1,t_1)-I_{3,n}(s_2,t_2)|>e\bigg\}\\
&\leq c\,\bigg\Vert\sup_{d_1\{(s_1,t_1),(s_2,t_2)\}\leq\delta}|I_{3,n}(s_1,t_1)-I_{3,n}(s_2,t_2)\bigg\Vert_\Psi\leq c\int_0^{\eta}\sqrt{\D(\zeta,d_1)}d\zeta+\delta\,\D(\eta,d_1)\\
&\leq c\int_0^{\eta}\zeta^{-1/\vartheta}d\zeta+\delta\,w^{-2/\vartheta}=c\,\eta^{(\vartheta-1)/\vartheta}+\delta\,\eta^{-2/\vartheta}\,.
\end{align*}
Using $\eta=\sqrt{\delta}$ and $\vartheta>1$, it follows that \eqref{p} holds by taking the metric $d=d_1$.

\vspace{1em}
\emph{Case (ii):} $0\leq\vartheta\leq1$
\vspace{1em}

In this case, in order to show \eqref{p}, we shall use Lemma~\ref{lem:kley} in Section~\ref{app:aux:lem}, which is a modified version of Lemma~A.1 in \cite{kley2016}. Let 
\begin{align*}
d_2\{(s_1,t_1),(s_2,t_2)\}=\max\{|s_1-s_2|^{2},|t_1-t_2|^{2}\}
\end{align*}
and let
$\overline\eta= \lambda^{(a-b)/(2D-\vartheta D)}$. In view of \eqref{m2}, we have, when $d_2\{(s_1,t_1),(s_2,t_2)\}\geq\overline\eta/2>0$,
\begin{align*}
\Vert I_{3,n}(s_1,t_1)-I_{3,n}(s_2,t_2)\Vert_\Psi\leq c\, \lambda^{(a-b)/(2D)}\big[d_2\{(s_1,t_1),(s_2,t_2)\}\big]^{\vartheta/2}\leq c\, d_2\{(s_1,t_1),(s_2,t_2)\}\,.
\end{align*}
By Assumption~\ref{a:x} and Markov's inequality, by taking $c>c_X^{-1}$, for $c_X$ in Assumption~\ref{a:x}, $\sum_{n=1}^\infty\P\big(\Vert X \Vert_{L^2}\geq c\log n\big)\leq \E\{\exp(c_X\Vert X\Vert_{L^2})\}\sum_{n=1}^\infty n^{-cc_X}< \infty$. By the Borel-Cantelli lemma and applying the same argument to $\Vert\e\Vert_{L^2}$ yields that $\Vert X \otimes\e \Vert_{L^2}=\Vert X \Vert_{L^2}\times\Vert\e \Vert_{L^2}\leq (c\log n)^2$ holds for $n$ large enough almost surely. Hence, by Assumption~\ref{a201} and Lemma~\ref{lem:sum}, for $n$ large enough,
\begin{align}\label{bound}
\sup_{(s,t)\in[0,1]^2}|\mathfrak U_i(s,t)|\leq&\sup_{(s,t)\in[0,1]^2}n^{-1/2}\lambda^{(2a+1)/(4D)}\sum_{k,\ell}\frac{1}{1+\lambda\rho_{k\ell}}|\l X_i\otimes\e_i,\phi_{k\ell}\r_{L^2}|\times\Vert\phi_{k\ell}\Vert_\infty\notag\\
&\leq n^{-1/2}\lambda^{(2a+1)/(4D)}\sum_{k,\ell}\frac{1}{1+\lambda\rho_{k\ell}}\Vert X_i\otimes\e_i\Vert_{L^2}\times\Vert\phi_{k\ell}\Vert_{L^2}\times\Vert\phi_{k\ell}\Vert_\infty\notag\\
&\leq c\,n^{-1/2}\lambda^{(2a+1)/(4D)}(\log n)^2\sum_{k,\ell}\frac{(k\ell)^{2a}}{1+\lambda(k\ell)^{2D}}\notag\\
&\leq c\,n^{-1/2}\lambda^{-(2a+1)/(4D)}(\log n)^2
\end{align}
almost surely. In addition, by \eqref{m1},
\begin{align*}
\sup_{(s_1,t_1),(s_2,t_2)\in[0,1]^2}\E|I_{3,n}(s_1,t_1)-I_{3,n}(s_2,t_2)|^2\leq c\,\lambda^{(a-b)/D}.
\end{align*}
By Bernstein's inequality, combining the above equation with \eqref{bound}, we deduce that, for $n$ large enough, for any $(s_1,t_1),(s_2,t_2)\in[0,1]^2$ and for any $e>0$,
\begin{align}\label{bern}
&\P\Big\{ |I_{3,n}(s_1,t_1)-I_{3,n}(s_2,t_2)|>e/4\Big\}\notag\\
&\leq 2\exp\bigg\{-\frac{e^2/16}{2\lambda^{(a-b)/D}+en^{-1/2}\lambda^{-(2a+1)/(4D)}(\log n)^2/6}\bigg\}\,.
\end{align}

Now, note that $\D(\zeta,d_2)\leq c\,\zeta^{-1}$ and recall that in the case of $0\leq\vartheta\leq1$ we have assumed $b<a$. By Lemma~\ref{lem:kley} and \eqref{bern}, there exists a set $\tilde[0,1]^2\subset [0,1]^2$ that contains at most $\D(\zeta,d_2)$ points, such that, for any $\delta,e>0$ and $\eta>\overline\eta$, as $n\to\infty$,
\begin{align*}
&\P\bigg\{\sup_{d_2\{(s_1,t_1),(s_2,t_2)\}\leq\delta}|I_{3,n}(s_1,t_1)-I_{3,n}(s_2,t_2)|>e\bigg\}\notag\\
&\leq c\,\bigg\{\int_{\overline\eta/2}^\eta\sqrt{\D(\zeta,d_2)}d\zeta+(\delta+2\overline\eta)\,\D(\eta,d_2)\Bigg\}^2\notag\\
&\qquad+\P\Bigg\{\sup_{\substack{d_2\{(s_1,t_1),(s_2,t_2)\}\leq\overline\eta\\(s_1,t_1)\in\tilde[0,1]^2}}|I_{3,n}(s_1,t_1)-I_{3,n}(s_2,t_2)|>e/4\Bigg\}\notag\\
&\leq c\,\bigg\{\int_{\overline\eta/2}^\eta\zeta^{-1/2}d\zeta+(\delta+2\lambda^{(a-b)/(2D-\vartheta D)})\eta^{-1}\bigg\}^2\notag\\
&\qquad+\D(\overline\eta,d_2)\times\sup_{(s_1,t_1),(s_2,t_2)\in\tilde[0,1]^2}\P\Big\{ |I_{3,n}(s_1,t_1)-I_{3,n}(s_2,t_2)|>e/4\Big\}\notag\\
&\leq c\,(\eta+\delta^2\eta^{-2})+c\,\lambda^{-(a-b)/(2D-\vartheta D)}\exp\bigg\{-\frac{e^2/16}{2\lambda^{(a-b)/D}+en^{-1/2}\lambda^{-(2a+1)/(4D)}(\log n)^2/6}\bigg\}\notag\\
&\leq c\,(\eta+\delta^2\eta^{-2})+o(1)\,,
\end{align*}
where in the last step we used $\lambda^{-1}\lesssim n^{2D}$ by Assumption~\ref{a:rate}, and the assumption that $\lambda^{(a-b)/D}=o(n^{-(a-b)\nu_1/D})$ and $n^{-1/2}\lambda^{-(2a+1)/(4D)}=o(n^{-\nu_2})$, for $\nu_1,\nu_2>0$. Therefore, by taking $\eta>0$ small enough, we deduce from the above equation that, when $0\leq\vartheta\leq 1$, for any $e>0$, \eqref{p} holds by taking the metric $d=d_2$. 

As for the remaining processes $I_{1,n}$ and $I_{2,n}$ in \eqref{gn}, in view of \eqref{in1}, for any $e>0$ and for the metric $d$,
\begin{align*}
&\lim_{\delta\downarrow0}\,\limsup_{n\to\infty}\,\P\bigg\{\sup_{d\{(s_1,t_1),(s_2,t_2)\}\leq\delta}|I_{1,n}(s_1,t_1)+I_{2,n}(s_1,t_1)-I_{1,n}(s_2,t_2)-I_{2,n}(s_2,t_2)|>e\bigg\}\\
&\leq \limsup_{n\to\infty}\,\P\bigg\{\sup_{(s,t)\in[0,1]^2}|I_{1,n}(s,t)|+\sup_{(s,t)\in[0,1]^2}|I_{2,n}(s,t)|>e/2\bigg\}=0\,,
\end{align*}
Combining this result  with \eqref{p} proves that the process $\G_n$ is asymptotic uniformly equicontinuous w.r.t.~the metric $d$ in \eqref{p} (that is, $d=d_1$ when $\vartheta>1$, and $d=d_2$ when $0\leq\vartheta\leq 1$), which entails the asymptotic tightness of $\G_n$.

The assertion of the theorem therefore follows from Theorems~1.5.4 and 1.5.7 in \cite{vaart1996}.

\subsection{Proof of Theorem~\ref{thm:minimax}}\label{app:thm:minimax}

By taking $\lambda\asymp n^{-2D/(2D+1)}$, the upper bound in (i) follows from Lemma~\ref{lem:hatbeta} and the fact that $\Vert\beta\Vert_V^2\leq\Vert\beta\Vert_K^2$ for any $\beta\in\H$.

For (ii), we prove the lower bound in the particular case where $\e $ is a mean zero Gaussian white noise process independent of $X$ with $\E\{\e^2(t)\}\equiv\sigma_\e^2>0$. 
By Theorem~2.5 in \cite{tsybakov2008}, in order to show the lower bound, we need to show that, for $M\geq2$, $\H$ contains elements $\beta_0,\ldots,\beta_M$ that satisfy the following two conditions:
\begin{enumerate}[label=(C\arabic*),noitemsep,series=conditionC]
\item\label{c1} $\Vert\beta_{j}-\beta_{k}\Vert_V^2\geq 2c_0n^{-2D/(2D+1)}$, for $0\leq j<k\leq M$;

\item\label{c2} $M^{-1}\sum_{j=1}^M\mathcal{K}(P_j,P_0)\leq\alpha\log M$, where $0<\alpha<1/8$, $\mathcal{K}$ is the Kullback-Leibler divergence, and $P_j$ denotes joint distribution of $(X_{1,j},Y_{1,j}),\ldots,(X_{n,j},Y_{n,j})$, where $Y_{i,j}(t)=\int\beta_{j}(s,t)X_{i,j}(s)ds+\e_{i,j}(t)$, for $1\leq i\leq n$.
\end{enumerate}

For the constant $D$ in Assumption~\ref{a201}, define $\nu_n=\lfloor n^{1/(4D+2)}\rfloor$. For any $\omega=(\omega_{(\nu_n+1,\nu_n+1)},\ldots,\omega_{(2\nu_n,2\nu_n)})\in\{0,1\}^{\nu_n^2}$, let
\begin{align}\label{betaomega}
\beta_{\omega}=c_1n^{-1/2}\sum_{k=\nu_n+1}^{2\nu_n}\,\sum_{\ell=\nu_n+1}^{2\nu_n}\omega_{(k,\ell)}\,\phi_{k\ell}\,,
\end{align}
where $c_1>0$ is a constant independent of $n$ to be specified later. We first verify that the $\beta_\omega$'s are elements in $\H$. Since, by Assumption~\ref{a201}, $\{\phi_{k\ell}\}_{k,\ell\geq 1}$ diagonalizes the operator $J$ defined in \eqref{jm}, we have 
\begin{align*}
\Vert\beta_\omega\Vert_K^2&=c_1^2\,n^{-1}\Bigg\l\sum_{k=\nu_n+1}^{2\nu_n}\,\sum_{\ell=\nu_n+1}^{2\nu_n}\omega_{(k,\ell)}\,\phi_{k\ell}\,,\sum_{k'=\nu_n+1}^{2\nu_n}\,\sum_{\ell'=\nu_n+1}^{2\nu_n}\omega_{(k',\ell')}\,\phi_{k'\ell'}\Bigg\r_K\\
&=c_1^2\,n^{-1}\sum_{k=\nu_n+1}^{2\nu_n}\,\sum_{\ell=\nu_n+1}^{2\nu_n}\omega_{(k,\ell)}^2\,\Vert\phi_{k\ell}\Vert_K^2=c_1^2\,n^{-1}\sum_{k=\nu_n+1}^{2\nu_n}\,\sum_{\ell=\nu_n+1}^{2\nu_n}\omega_{(k,\ell)}^2\,(1+\lambda\rho_{k\ell})\\
&\leq c\,n^{-1}\sum_{k=\nu_n+1}^{2\nu_n}\,\sum_{\ell=\nu_n+1}^{2\nu_n}\{1+\lambda(k\ell)^{2D}\}\,.
\end{align*}
Note that this inequality and the inequality in \eqref{hk} in the proof of Proposition~\ref{prop1} holds for any $\lambda>0$. Therefore, for $\Vert\cdot\Vert_\H$ defined in \eqref{snorm}, combining these two equations, we may take $\lambda=1$ and find that
\begin{align*}
\Vert\beta_\omega\Vert_\H^2\leq c\,\Vert\beta_\omega\Vert_K^2\leq c\,n^{-1}\sum_{k=\nu_n+1}^{2\nu_n}\,\sum_{\ell=\nu_n+1}^{2\nu_n}(k\ell)^{2D}\leq c\,n^{-1}\nu_n^{2+4D}\leq c\,,
\end{align*}
which shows that, for any $\omega\in\{0,1\}^{\nu_n^2}$,  $\beta_\omega$ defined in \eqref{betaomega} is an element of $\H$.

By the Varshamov-Gilbert bound (Lemma~2.9 in \citealp{tsybakov2008}), for $\nu_n^2\geq8$, there exists a subset $\Omega=\{\omega^{(0)},\ldots,\omega^{(M)}\}\in\{0,1\}^{\nu_n^2}$ with $M\geq 2^{\nu_n^2/8}$ such that, $\omega^{(0)}=(0,\ldots,0)$ and for any $0\leq j< j'\leq M$,
\begin{align*}
{\rm H}\big(\omega^{(j_1)},\omega^{(j_2)}\big)\geq\frac{\nu_n^2}{8}\,,
\end{align*}
where ${\rm H}(\cdot,\cdot)$ is the Hamming distance. For $0\leq j\leq M$, let $\omega^{(j)}=(\omega^{(j)}_{(\nu_n+1,\nu_n+1)},\ldots,\omega^{(j)}_{(2\nu_n,2\nu_n)})$. Let $\beta_0,\ldots,\beta_M$ denote the functions defined as in \eqref{betaomega} that corresponds to $\omega^{(0)},\ldots,\omega^{(M)}\in\Omega$. For $0\leq j<j'\leq M$, in view of
\eqref{betaomega},
\begin{align}\label{betadiff}
\beta_{j}-\beta_{j'}=c_1\,n^{-1/2}\sum_{k=\nu_n+1}^{2\nu_n}\,\sum_{\ell=\nu_n+1}^{2\nu_n}\big(\omega^{(j)}_{(k,\ell)}-\omega^{(j')}_{(k,\ell)}\big)\,\phi_{k\ell}\,.
\end{align}
By Assumption~\ref{a201}, since $\{\phi_{k\ell}\}_{k,\ell\geq 1}$ diagonalizes the operator $V$ defined in \eqref{v}, we deduce from \eqref{betadiff} that
\begin{align*}
\Vert\beta_{j}-\beta_{j'}\Vert_V^2&=c_1^2\,n^{-1}\sum_{k=\nu_n+1}^{2\nu_n}\,\sum_{\ell=\nu_n+1}^{2\nu_n}\big(\omega^{(j)}_{(k,\ell)}-\omega^{(j')}_{(k,\ell)}\big)^2\,V(\phi_{k\ell},\phi_{k\ell})\\
&=c_1^2\,n^{-1}\sum_{k=\nu_n+1}^{2\nu_n}\,\sum_{\ell=\nu_n+1}^{2\nu_n}\one\big\{\omega^{(j)}_{(k,\ell)}\neq\omega^{(j')}_{(k,\ell)}\big\}\\
&=c_1^2\,n^{-1}{\rm H}\big(\omega^{(j)},\omega^{(j')}\big)\geq c_1^2\,8^{-1}n^{-1}\nu_n^{2}\geq c_1^2\,8^{-1}n^{-2D/(2D+1)}\,.
\end{align*}
By taking $c_0=c_1^2/16$, the above equation indicates that Condition~\ref{c1} is valid.

For any $0\leq j<j'\leq M$, in view of \eqref{betadiff},
\begin{align*}
\mathcal{K}(P_{j},P_{j'})&=\frac{n}{2\sigma_\e^2}\,\E\int_0^1\bigg[\int_0^1\{\beta_{j}(s,t)-\beta_{j'}(s,t)\}X(s)ds\bigg]^2dt=\frac{n}{2\sigma_\e^2}\Vert\beta_{j}-\beta_{j'}\Vert_V^2\\
&=c_1^2\,\sigma_\e^{-2}\,\sum_{k=\nu_n+1}^{2\nu_n}\sum_{\ell=\nu_n+1}^{2\nu_n}\big(\omega^{(j)}_{(k,\ell)}-\omega^{(j')}_{(k,\ell)}\big)^2\leq c_1^2\,\sigma_\e^{-2}\nu_n^2\,.
\end{align*}
Therefore, for any $0<\alpha<1/8$, by taking $0<c_1<\sigma_\e\sqrt{\alpha\log 2/8}$ in \eqref{betaomega}, we have
\begin{align*}
\frac{1}{M}\sum_{j=1}^M\mathcal{K}(P_j,P_0)\leq c_1^2\,\sigma_\e^{-2}\nu_n^2\leq\frac{\alpha\nu_n^2\log2}{8}\leq\alpha\log M\,,
\end{align*}
which verifies Condition~\ref{c2} and completes the proof.

\subsection{Proof of Theorem~\ref{thm:cb:boot}}\label{app:thm:bootstrap}

Let $\BL_1\{C([0,1]^2)\}$ denote the collection of all functionals $h:C([0,1]^2)\to[-1,1]$ such that $h$ is uniformly Lipschitz: for any $g_1,g_2\in C([0,1]^2)$, $|h(g_1)-h(g_2)|\leq \Vert g_1-g_2\Vert_\infty=\sup_{(s,t)\in[0,1]^2}|g_1(s,t)-g_2(s,t)|$. We shall show that conditionally on the data $\{(X_i,Y_i)\}_{i=1}^n$, the bootstrap process $\G_{n,q}^*$ converges to the same limit as $\G_n$ in \eqref{hatgn}. To achieve this, we shall prove that, for $Z$ in \eqref{hatgn}, as $n\to\infty$,
\begin{align*}
\sup_{h\in \BL_1\{C([0,1]^2)\}}|\E_M\{h(\G_{n,1}^*)\}-\E\{h(Z)\}|=o_p(1)\,,
\end{align*}
where $\E_M$ denote the conditional expectation given the data $\{(X_i,Y_i)\}_{i=1}^n$; see Theorem~23.7 in \cite{vandervaart1998}. Note that the results in Lemma~3.1 in \cite{bucher2019} hold if their $\ell^\infty(T)$ space is replaced by $C(T)$, and therefore, in our case, we shall show that, for any fixed $Q\geq2$, as $n\to\infty$,
\begin{align}\label{gnq}
(\G_n,\G_{n,1}^*,\ldots,\G_{n,Q}^*)\weakconverge (Z,Z_1,\ldots,Z_Q)\quad\text{ in }\{C([0,1]^2)\}^{Q+1}\,,
\end{align}
where $Z_1,\ldots,Z_Q$ are i.i.d.~copies of the process $Z$ in \eqref{hatgn}.

For $1\leq q\leq Q$, define the bootstrap version of $S_{n,\lambda}$ in \eqref{dsn} by
\begin{align*}
S_{n,q}^*(\beta)&=-\frac{1}{n}\sum_{i=1}^nM_{i,q}\,\tau\bigg[X_i\otimes\bigg\{Y_i-\int_0^1\beta(s,\cdot)X_i(s)ds\bigg\}\bigg]+W_\lambda(\beta)\,,
\end{align*}
and let $ L_{n,\lambda}^*(\beta)$ denote the bootstrap objective function in \eqref{hatbetastar}. Direct calculations yields that $L_{n,\lambda}^*(\beta)\beta_1=\l S_{n,q}^*(\beta),\beta_1\r_K$, $\E \{\mathcal D^2 L_{n,\lambda}^*(\beta)\beta_1\beta_2\}=\l \beta_1,\beta_2\r_K$ and $S_{n,q}^*(\beta_0)=-n^{-1}\sum_{i=1}^nM_{i,q}\,\tau(X_i\otimes\e_i)$. Recalling the notation of $I_{n,2}(s,t)$ in \eqref{in1in2}, we have
\begin{align*}
&\G_{n,q}^*(s,t)=\sqrt{n}\lambda^{(2a+1)/(4D)}\{\hat\beta^*_{n,q}(s,t)-\hat\beta_{n}(s,t)\}=\mathfrak J_{n,q}^*(s,t)+H_{n,q}^*(s,t)-I_{n,2}(s,t)\,,
\end{align*}
where
\begin{align*}
\mathfrak J_{n,q}^*(s,t)&=\sqrt{n}\lambda^{(2a+1)/(4D)}\{\hat\beta^*_{n,q}(s,t)-\beta_0(s,t)+S_{n,\lambda}^*\beta_0(s,t)\}\,,\\
H_{n,q}^*(s,t)&=\sqrt{n}\lambda^{(2a+1)/(4D)}\{S_{n,\lambda}\beta_0(s,t)-S^*_{n,q}\beta_0(s,t)\}\\
&=n^{-1/2}\lambda^{(2a+1)/(4D)}\sum_{i=1}^n(1-M_{i,q})\,\tau(X_i\otimes\e_i)(s,t)\,.
\end{align*}
By exactly the same arguments used in the proof of Theorem~\ref{thm:process}, it follows that $\sup_{(s,t)\in[0,1]^2}|\mathfrak J_{n,q}^*(s,t)|=o_p(1)$ as $n\to\infty$. Furthermore, recall from \eqref{in1} that $\sup_{(s,t)\in[0,1]^2}|I_{n,2}(s,t)|=o_p(1)$. Since in the proof of Theorem~\ref{thm:process} we have shown that $H_n\weakconverge Z$ in $C([0,1]^2)$, therefore, in order to show \eqref{gnq}, we shall show that $(H_{n,1}^*,\ldots,H_{n,Q}^*)\weakconverge (Z_1,\ldots,Z_Q)$ in $C([0,1]^2)$. The proof of \eqref{gnq} therefore relies on the finite dimensional convergence of $(H_{n,1}^*,\ldots,H_{n,Q}^*)$ and the asymptotic tightness of the process $H_{n,q}^*$.

We first show convergence of the finite dimensional distributions and introduce the notations $\HH_n^*=(H_{n,1}^*,\ldots,H_{n,Q}^*)^\top$ and $\bZ=(Z_1,\ldots,Z_Q)^\top$. For arbitrary $L\in\mathbb{N}$, $b_1,\ldots,b_L\in\mathbb{R}$ and $\bc_1,\ldots,\bc_L\in\mathbb{R}^{Q}$, we need to prove that
\begin{align}\label{ch}
\sum_{\ell=1}^{L}\bc_\ell^{\top}\HH^*_n(s_\ell,t_\ell)\converged \sum_{\ell=1}^L\bc_\ell^{\top}\bZ(s_\ell,t_\ell)\,.
\end{align}
For $1\leq i\leq n$, let $\mathfrak H_i^*=n^{-1/2}\lambda^{(2a+1)/(4D)}\sum_{\ell=1}^{L}\sum_{q=1}^Qc_{\ell,q}(M_{i,q}-1)\,\tau(X_{i}\otimes\e_{i})(s_\ell,t_\ell)$, and note that the $\mathfrak H_i^*$'s are i.i.d.~and $\sum_{\ell=1}^{L}\bc_\ell^{\top}\HH^*_n(s_\ell,t_\ell)=\sum_{i=1}^n\mathfrak H_i^*$. Since the $M_{i,q}$'s are i.i.d., $\E(M_{i,q})=1$ and $\E|M_{i,q}-1|^2=1$, direct calculations yield that
\begin{align*}
&\var\bigg\{\sum_{\ell=1}^{L}\bc_\ell^{\top}\HH^*_n(s_\ell,t_\ell)\bigg\}=\var(\mathfrak H_1^*)=\lambda^{(2a+1)/(2D)}\var\bigg\{\sum_{\ell=1}^{L}\sum_{q=1}^Qc_{\ell,q}(M_{1,q}-1)\,\tau(X_{1}\otimes\e_{1})(s_\ell,t_\ell)\bigg\}\\
&=\lambda^{(2a+1)/(2D)}\sum_{\ell,\ell'=1}^{L}\sum_{q,q'=1}^Qc_{\ell,q}c_{\ell',q'}\E\{(M_{1,q}-1)(M_{1,q'}-1)\}\E\big\{\tau(X_{1}\otimes\e_{1})(s_\ell,t_\ell)\tau(X_{1}\otimes\e_{1})(s_{\ell'},t_{\ell'})\big\}\\
&=\lambda^{(2a+1)/(2D)}\sum_{\ell,\ell'=1}^{L}\sum_{q=1}^Qc_{\ell,q}c_{\ell',q}\E\{(M_{1,q}-1)^2\}\E\bigg\{\tau(X_{1}\otimes\e_{1})(s_\ell,t_\ell)\tau(X_{1}\otimes\e_{1})(s_{\ell'},t_{\ell'})\bigg\}\\
&=\lambda^{(2a+1)/(2D)}\sum_{\ell,\ell'=1}^{L}\sum_{q=1}^Qc_{\ell,q}c_{\ell',q}\sum_{k,j}\frac{\phi_{kj}(s_\ell,t_\ell)\phi_{kj}(s_{\ell'},t_{\ell'})}{(1+\lambda\rho_{kj})^2}\\
&=\sum_{q=1}^Q\sum_{\ell,\ell'=1}^{L}c_{\ell,q}c_{\ell',q}\,C_Z\{(s_\ell,t_\ell),(s_{\ell'},t_{\ell'})\}+o(1)\,,
\end{align*}
as $n\to\infty$. Note that $\var\big\{\sum_{\ell=1}^L\bc_\ell^{\top}\bZ(s_\ell,t_\ell)\big\}=\sum_{q=1}^Q\sum_{\ell,\ell'=1}^{L}c_{\ell,q}c_{\ell',q}\,C_Z\{(s_\ell,t_\ell),(s_{\ell'},t_{\ell'})\}$. When $\sum_{q=1}^Q\sum_{\ell,\ell'=1}^{L}c_{\ell,q}c_{\ell',q}\,C_Z\{(s_\ell,t_\ell),(s_{\ell'},t_{\ell'})\}=0$, $\sum_{\ell=1}^L\bc_\ell^{\top}\bZ(s_\ell,t_\ell)$ has a degenerate distribution with a point mass at zero, and $\sum_{\ell=1}^{L}\bc_\ell^{\top}\HH^*_n(s_\ell,t_\ell)=o_p(1)$, so that \eqref{ch} is valid. When $\sum_{q=1}^Q\sum_{\ell,\ell'=1}^{L}c_{\ell,q}c_{\ell',q}\,C_Z\{(s_\ell,t_\ell),(s_{\ell'},t_{\ell'})\}\neq0$, using arguments similar to the ones used in the proof of Theorem~\ref{thm:process}, we can show that Lindeberg's condition is satisfied, so that \eqref{ch} is valid.

For the asymptotic tightness of the $H_{n,q}^*$, note that $|1-M_{i,q}|\leq\sqrt{2}$ almost surely. Therefore, the asymptotic tightness of $H_n$ in \eqref{hnst}, proved in Section~\ref{app:thm:process}, implies the asymptotic tightness of $H_{n,q}^*$. By Theorem~1.5.4 in \cite{vaart1996}, together with the weak convergence $H_n\weakconverge Z$ proved in Section~\ref{app:thm:process}, we have that, for any $Q\geq 2$, as $n\to\infty$,
\begin{align*}
(H_n,H_{n,1}^*,\ldots,H_{n,Q}^*)\weakconverge (Z,Z_1,\ldots,Z_Q)\quad\text{ in }\{C([0,1]^2)\}^{Q+1}\,,
\end{align*}
which validates \eqref{gnq} and completes the proof.

\subsection{Proof of Theorem~\ref{thm:lrt}}\label{app:thm:lrt}

Defining $\hat\beta_\Delta=\hat\beta_{n}-\beta_*$ and observing \eqref{dell}, \eqref{dsn} and \eqref{s}, it follows that
\begin{align*}
\L_n(\beta_*)&= L_{n,\lambda}(\beta_*)- L_{n,\lambda}(\beta_*+\hat\beta_\Delta)=-\mathcal D L_{n,\lambda}(\beta_*)\hat\beta_\Delta-\frac{1}{2}\mathcal D^2 L_{n,\lambda}(\beta_*)\hat\beta_\Delta\hat\beta_\Delta\\
&=-\big\l S_{n,\lambda}(\beta_*),\hat\beta_\Delta\big\r_K-\frac{1}{2}\big\l \mathcal DS_{n,\lambda}(\beta_*)\hat\beta_\Delta,\hat\beta_\Delta\big\r_K\\
&=-\big\l S_{n,\lambda}(\beta_*),\hat\beta_\Delta\big\r_K-\frac{1}{2}\Big\l \mathcal DS_{n,\lambda}(\beta_*)\hat\beta_\Delta-\mathcal DS_{\lambda}(\beta_*)\hat\beta_\Delta,\hat\beta_\Delta\Big\r_K-\frac{1}{2}\Vert\hat\beta_\Delta\Vert_K^2\\
&=\frac{1}{2}\Vert\hat\beta_\Delta\Vert_K^2-\big\l \hat\beta_\Delta+S_{n,\lambda}(\beta_*),\hat\beta_\Delta\big\r_K-\frac{1}{2}\Big\l \mathcal DS_{n,\lambda}(\beta_*)\hat\beta_\Delta-\mathcal DS_{\lambda}(\beta_*)\hat\beta_\Delta,\hat\beta_\Delta\Big\r_K\,,
\end{align*}
where we use the fact that $\mathcal DS_{\lambda}(\beta_*)=id$. Note that
\begin{align*}
\Vert\hat\beta_\Delta\Vert_K^2&=\big\Vert\{\hat\beta_\Delta+S_{n,\lambda}(\beta_*)\}-S_{n,\lambda}(\beta_*)\big\Vert_K^2\\
&=\Vert S_{n,\lambda}(\beta_*)\Vert_K^2-2\l S_{n,\lambda}(\beta_*),\hat\beta_\Delta+S_{n,\lambda}(\beta_*)\r_K+\Vert\hat\beta_\Delta+S_{n,\lambda}(\beta_*)\Vert_K^2\,.
\end{align*}
Therefore, we deduce that
\begin{align*}
2n\,\L_n(\beta_*)&=I_{1,n}+I_{2,n}+I_{3,n}+I_{4,n}\,,
\end{align*}
where we use the notations that
\begin{align}\label{I1n}
\begin{split}
&I_{1,n}=n\Vert S_{n,\lambda}(\beta_*)\Vert_K^2\,,\\
&I_{2,n}=-2n\big\l\hat\beta_\Delta+S_{n,\lambda}(\beta_*),\hat\beta_\Delta\big\r_K\,,\\
&I_{3,n}=-n\big\l \mathcal DS_{n,\lambda}(\beta_*)\hat\beta_\Delta-\mathcal DS_{\lambda}(\beta_*)\hat\beta_\Delta,\hat\beta_\Delta\big\r_K\,\\
&I_{4,n}=-2n\big\l S_{n,\lambda}(\beta_*),\hat\beta_\Delta+S_{n,\lambda}(\beta_*)\big\r_K+n\Vert\hat\beta_\Delta+S_{n,\lambda}(\beta_*)\Vert_K^2\,.
\end{split} 
\end{align}

We now discuss the term $I_{\ell,n}$ separately, starting with $I_{1,n}$. In view of \eqref{dsn}, we have
\begin{align}\label{nsn}
&n\Vert S_{n,\lambda}(\beta_*)\Vert_K^2=n\,\bigg\Vert-\frac{1}{n}\sum_{i=1}^n\tau(X_i\otimes\e_i)+W_{\lambda}(\beta_*)\bigg\Vert_K^2=I_{1,1,n}+I_{1,2,n}+I_{1,3,n}\,,
\end{align}
where
\begin{align}\label{i13}
\begin{split}
&I_{1,1,n}=\frac{1}{n}\,\bigg\Vert\sum_{i=1}^n\tau(X_i\otimes\e_i)\bigg\Vert_K^2\,\\
&I_{1,2,n}=-2\sum_{i=1}^n\big\l\tau(X_i\otimes\e_i),W_{\lambda}(\beta_*)\big\r_K\,\\
&I_{1,3,n}=n\,\Vert W_\lambda(\beta_*)\Vert_K^2\,.
\end{split} 
\end{align}
Observing \eqref{expansion}, we have
\begin{align}\label{expansion2}
\Vert\beta\Vert_K^2=\sum_{k,\ell,k',\ell'}\frac{\l\beta,\phi_{k\ell}\r_K\,\l\beta,\phi_{k'\ell'}\r_K}{(1+\lambda\rho_{k\ell})(1+\lambda\rho_{k'\ell'})}\l\phi_{k\ell},\phi_{k'\ell'}\r_K=\sum_{k,\ell}\frac{\l\beta,\phi_{k\ell}\r_K^2}{1+\lambda\rho_{k\ell}}\,,\quad \forall \beta\in\H\,,
\end{align}
which gives
\begin{align}\label{dterm}
I_{1,1,n}&=\frac{1}{n}\bigg\Vert\sum_{i=1}^n\tau(X_i\otimes\e_i)\bigg\Vert_K^2=\frac{1}{n}\sum_{k,\ell}\frac{1}{1+\lambda\rho_{k\ell}}\bigg\l\sum_{i=1}^n\tau(X_i\otimes\e_i),\phi_{k\ell}\bigg\r_K^2\notag\\
&=\frac{1}{n}\sum_{k,\ell}\frac{1}{1+\lambda\rho_{k\ell}}\bigg\{\sum_{i=1}^n\big\l\tau(X_i\otimes\e_i),\phi_{k\ell}\big\r_K\bigg\}^2=\frac{W_n}{n}+\frac{1}{n}\sum_{i=1}^nW_{0,i}\,,
\end{align}
where $W_n=\sum_{i_1<i_2}W_{i_1i_2}$ and
\begin{align}\label{w01}
&W_{i_1i_2}=2\sum_{k,\ell}\frac{1}{1+\lambda\rho_{k\ell}}\big\l\tau(X_{i_1}\otimes\e_{i_1}),\phi_{k\ell}\big\r_K\big\l\tau(X_{i_2}\otimes\e_{i_2}),\phi_{k\ell}\big\r_K\,,\notag\\
&W_{0,i}=\sum_{k,\ell}\frac{1}{1+\lambda\rho_{k\ell}}\big\l\tau(X_i\otimes\e_i),\phi_{k\ell}\big\r_K^2\,.
\end{align}
For $W_n$ in \eqref{dterm}, we have
\begin{align*}
\sigma_{W_n}^2&:=\E(W_n^2)=\sum_{i_1<i_2}\E(W_{i_1i_2}^2)\\
&=4\sum_{i_1<i_2}\E\Bigg[\bigg\{\sum_{k,\ell}\frac{1}{1+\lambda\rho_{k\ell}}\big\l\tau(X_{i_1}\otimes\e_{i_1}),\phi_{k\ell}\big\r_K\big\l\tau(X_{i_2}\otimes\e_{i_2}),\phi_{k\ell}\big\r_K\bigg\}\\
&\qquad\times\bigg\{\sum_{k',\ell'}\frac{1}{1+\lambda\rho_{k'\ell'}}\big\l\tau(X_{i_1}\otimes\e_{i_1}),\phi_{k'\ell'}\big\r_K\big\l\tau(X_{i_2}\otimes\e_{i_2}),\phi_{k'\ell'}\big\r_K\bigg\}\Bigg]\\
&=4\sum_{i_1<i_2}\sum_{k,k',\ell,\ell'}\bigg[\frac{1}{(1+\lambda\rho_{k\ell})(1+\lambda\rho_{k'\ell'})}\E\Big\{\big\l\tau(X_{i_1}\otimes\e_{i_1}),\phi_{k\ell}\big\r_K\big\l\tau(X_{i_1}\otimes\e_{i_1}),\phi_{k'\ell'}\big\r_K\Big\}\\
&\qquad\times\E\Big\{\big\l\tau(X_{i_2}\otimes\e_{i_2}),\phi_{k\ell}\big\r_K\big\l\tau(X_{i_2}\otimes\e_{i_2}),\phi_{k'\ell'}\big\r_K\Big\}\bigg]\\
&=2n(n-1)\sum_{k,\ell}\frac{1}{(1+\lambda\rho_{k\ell})^2}=O(n^2\lambda^{-1/(2D)})\,,
\end{align*}
by Assumption~\ref{a201}.

In order to show the asymptotic normality of $W_n$, we use Proposition~3.2 in \cite{dejong1987} and show that
\begin{align*}
&H_1=\sum_{i_1<i_2}\E(W_{i_1i_2}^4)\\
&H_2=\sum_{i_1<i_2<i_3}\big\{\E(W_{i_1i_2}^2W_{i_2i_3}^2)+\E(W_{i_2i_1}^2W_{i_2i_3}^2)+\E(W_{i_3i_1}^2W_{i_3i_2}^2)\big\}\,,\\
&H_3=\sum_{i_1<i_2<i_3<i_4}\big\{\E(W_{i_1i_2}W_{i_1i_3}W_{i_4i_2}W_{i_4i_3})+\E(W_{i_1i_2}W_{i_1i_4}W_{i_3i_2}W_{i_3i_4})+\E(W_{i_1i_3}W_{i_1i_4}W_{i_2i_3}W_{i_2i_4})\big\}\,.
\end{align*}
are of order $o(\sigma_{W_n}^4)$ as $n\to\infty$. Since $\E\{\l\tau(X_{i_1}\otimes\e_{i_1}),\phi_{k\ell}\r_K\}=0$ due to Assumption~\ref{a0}, we have $\E(W_{i_1i_2}|\e_{i_1},X_{i_1})=0$ for $i_1\neq i_2$. From \eqref{e4}, we obtain $\E\{\l\tau(X_{i}\otimes\e_{i}),\phi_{k\ell}\r_K^4\}\leq c$, which implies that
\begin{align*}
H_1=\sum_{i_1<i_2}\E(W_{i_1i_2}^4)&=16\sum_{i_1<i_2}\E\Bigg[\bigg\{\sum_{k,\ell}\frac{1}{1+\lambda\rho_{k\ell}}\big\l\tau(X_{i_1}\otimes\e_{i_1}),\phi_{k\ell}\big\r_K\big\l\tau(X_{i_2}\otimes\e_{i_2}),\phi_{k\ell}\big\r_K\bigg\}^4\Bigg]\\
&\leq c\,n^2\bigg(\sum_{k,\ell}\frac{1}{1+\lambda\rho_{k\ell}}\bigg)^4\leq c\,n^2\lambda^{-2/D}\,.
\end{align*}
Since $\E(W_{i_1i_2}^2W_{i_1i_3}^2)\leq \E(W_{i_1i_2}^4)$, we have $H_2\leq cn^3\lambda^{-2/D}$. Finally, for the term $H_3$, we use \eqref{core} and obtain,
\begin{align*}
&\E(W_{i_1i_2}W_{i_1i_3}W_{i_4i_2}W_{i_4i_3})\\
&=16\,\E\Bigg[\bigg\{\sum_{k_1,\ell_1}\frac{1}{1+\lambda\rho_{k_1\ell_1}}\big\l\tau(X_{i_1}\otimes\e_{i_1}),\phi_{k_1\ell_1}\big\r_K\big\l\tau(X_{i_2}\otimes\e_{i_2}),\phi_{k_1\ell_1}\big\r_K\bigg\}\\
&\quad\times\bigg\{\sum_{k_2,\ell_2}\frac{1}{1+\lambda\rho_{k_2\ell_2}}\big\l\tau(X_{i_1}\otimes\e_{i_1}),\phi_{k_2\ell_2}\big\r_K\big\l\tau(X_{i_3}\otimes\e_{i_3}),\phi_{k_2\ell_2}\big\r_K\bigg\}\\
&\quad\times\bigg\{\sum_{k_3,\ell_3}\frac{1}{1+\lambda\rho_{k_3\ell_3}}\big\l\tau(X_{i_4}\otimes\e_{i_4}),\phi_{k_3\ell_3}\big\r_K\big\l\tau(X_{i_2}\otimes\e_{i_2}),\phi_{k_3\ell_3}\big\r_K\bigg\}\\
&\quad\times\bigg\{\sum_{k_4,\ell_4}\frac{1}{1+\lambda\rho_{k_4\ell_4}}\big\l\tau(X_{i_4}\otimes\e_{i_4}),\phi_{k_4\ell_4}\big\r_K\big\l\tau(X_{i_3}\otimes\e_{i_3}),\phi_{k_4\ell_4}\big\r_K\bigg\}\Bigg]\\
&=16\,\sum_{k_1,k_2,k_3,k_4,\ell_1,\ell_2,\ell_3,\ell_4}\delta_{k_1k_2}\,\delta_{k_1k_3}\,\delta_{k_2k_4}\,\delta_{k_3k_4}\,\delta_{\ell_1\ell_2}\,\delta_{\ell_1\ell_3}\,\delta_{\ell_2\ell_4}\,\delta_{\ell_3\ell_4}\,\prod_{j=1}^4\frac{1}{1+\lambda\rho_{k_j\ell_j}}\\
&=16\,\sum_{k,\ell}\frac{1}{(1+\lambda\rho_{k\ell})^4}\,.
\end{align*}
We therefore deduce that $H_3\leq cn^4\lambda^{-1/(2D)}$, which yields 
\begin{align*}
H_1+H_2+H_3=O(n^3\lambda^{-2/D}+n^4\lambda^{-1/(2D)})=o(\sigma_{W_n}^4)
\end{align*}
since $n\lambda^{1/D}\to \infty$. By Proposition~3.2 in \cite{dejong1987}, it follows that
\begin{align}\label{normal}
\bigg\{2\sum_{k,\ell}\frac{1}{(1+\lambda\rho_{k\ell})^2}\bigg\}^{-1/2}\,\frac{W_n}{n}\converged N(0,1)\,.
\end{align}

Next, we examine the second term $n^{-1}\sum_{i=1}^nW_{0,i}$ in \eqref{dterm}. Note that the $W_{0,i}$'s in \eqref{w01} are i.i.d.~and satisfy
\begin{align}\label{e}
\E(W_{0,i})=\sum_{k,\ell}\frac{1}{1+\lambda\rho_{k\ell}}\E\Big\{\big\l\tau(X_i\otimes\e_i),\phi_{k\ell}\big\r_K^2\Big\}=\sum_{k,\ell}\frac{1}{1+\lambda\rho_{k\ell}}\,,
\end{align}
where we used \eqref{core} in the last step. For the variance, by \eqref{e4}, $\E\{\l\tau(X_i\otimes\e_i),\phi_{k\ell}\r_K^4\}\leq c$, so that by Assumption~\ref{a201},
\begin{align}\label{vara}
\var\Big(n^{-1}\sum_{i=1}^nW_{0,i}\Big)&=n^{-1}\,\var(W_{0,i})\leq n^{-1}\,\E(W_{0,i}^2)=n^{-1}\,\E\,\bigg|\sum_{k,\ell}\frac{1}{1+\lambda\rho_{k\ell}}\big\l\tau(X_i\otimes\e_i),\phi_{k\ell}\big\r_K^2\bigg|^2\notag\\
&\leq c\,n^{-1}\,\bigg\{\sum_{k,\ell}\frac{1}{1+\lambda\rho_{k\ell}}\bigg\}^2=O(n^{-1}\lambda^{-1/D})\,.
\end{align}
Therefore, the above equation and \eqref{e} yields that
\begin{align*}
\frac{1}{n}\sum_{i=1}^nW_{0,i}=\sum_{k,\ell}\frac{1}{1+\lambda\rho_{k\ell}}+O_p(n^{-1/2}\lambda^{-1/(2D)})=O_p(\lambda^{-1/(2D)}+n^{-1/2}\lambda^{-1/(2D)})=O_p(\lambda^{-1/(2D)})\,,
\end{align*}
where we use Assumption~\ref{a201}, which yields $\E(W_{0,i})\leq c\,\lambda^{-1/{2D}}$. In view of \eqref{dterm}, since we have shown $W_n/n=O_p(\lambda^{-1/(4D)})$, we therefore deduce from the above equation that
\begin{align}\label{111}
I_{1,1,n}=\frac{1}{n}\,\bigg\Vert\sum_{i=1}^n\tau(X_i\otimes\e_i)\bigg\Vert_K^2=O_p(\lambda^{-1/(2D)})\,.
\end{align}
Moreover, since by Assumption~\ref{a201}, $\sum_{k,\ell}(1+\lambda\rho_{k\ell})^{-2}=O(\lambda^{-1/(2D)})$, and in view of \eqref{vara}, $\var\big(n^{-1}\sum_{i=1}^nW_{0,i}\big)=O(n^{-1}\lambda^{-1/D})=o(\lambda^{-1/(2D)})$ due to the assumption that $n^{-1}\lambda^{-1/(2D)}=o(1)$ in Assumption~\ref{a:rate}, combining \eqref{dterm}, \eqref{normal} and \eqref{e},
\begin{align}\label{conv}
&\bigg\{2\sum_{k,\ell}\frac{1}{(1+\lambda\rho_{k\ell})^2}\bigg\}^{-1/2}\big\{I_{1,1,n}-\E(I_{1,1,n})\big\}\notag\\
&=\bigg\{2\sum_{k,\ell}\frac{1}{(1+\lambda\rho_{k\ell})^2}\bigg\}^{-1/2}\bigg(\frac{W_n}{n}+\frac{1}{n}\sum_{i=1}^nW_{0,i}-\sum_{k,\ell}\frac{1}{1+\lambda\rho_{k\ell}}\bigg)\converged N(0,1)\,.
\end{align}

We now consider the term $I_{1,2,n}$ in \eqref{i13}. By Assumption~\ref{a201}, we obtain 
\begin{align}\label{m8}
W_\lambda(\beta_*)=\sum_{k,\ell}V(\beta_*,\phi_{k\ell})W_\lambda(\phi_{k\ell})=\lambda\sum_{k,\ell}\frac{V(\beta_*,\phi_{k\ell})\,\rho_{k\ell}\,\phi_{k\ell}}{1+\lambda\rho_{k\ell}}\,.
\end{align}
Since $J(\beta_*,\beta_*)=\sum_{k,\ell}\rho_{k\ell}V^2(\beta_*,\phi_{k\ell})<\infty$, we have $\lambda\sum_{k,\ell}\rho_{k\ell}^2\,V^2(\beta_*,\phi_{k\ell})/(1+\lambda\rho_{k\ell})^2=o(1)$ by the dominated convergence theorem. Since $\E\{\l\tau(X_i\otimes\e_i),W_{\lambda}(\beta_*)\r_K\}=0$, it follows from \eqref{tau} and Assumption~\ref{a201} that
\begin{align}\label{esquare}
\E(I_{1,2,n}^2)&=n\,\E\Big\{\l\tau(X_i\otimes\e_i),W_{\lambda}(\beta_*)\r_K\Big\}^2=n\,\E\Big\{\big\l X\otimes\e,W_{\lambda}(\beta_*)\big\r_{L^2}^2\Big\}\notag\\
&=n\,V\{W_{\lambda}(\beta_*),W_{\lambda}(\beta_*)\}=n\lambda^2V\bigg\{\sum_{k,\ell}\frac{\rho_{k\ell}V(\beta_*,\phi_{k\ell})}{1+\lambda\rho_{k\ell}}\phi_{k\ell},\sum_{k,\ell}\frac{\rho_{k\ell}V(\beta_*,\phi_{k\ell})}{1+\lambda\rho_{k\ell}}\phi_{k\ell}\bigg\}\notag\\
&=n\lambda^2\sum_{k,\ell}\frac{\rho^2_{k\ell}\,V^2(\beta_*,\phi_{k\ell})}{(1+\lambda\rho_{k\ell})^2}=o(n\lambda)\,.
\end{align}
For the term $I_{1,3,n}=n\Vert W_\lambda(\beta_*)\Vert_K^2$ in \eqref{i13}, we use \eqref{m8}, the dominated convergence theorem and the fact that $\sum_{k,\ell}\rho_{k\ell}V^2(\beta_*,\phi_{k\ell})<\infty$, and obtain
\begin{align}\label{nwlambda}
n\Vert W_\lambda(\beta_*)\Vert_K^2=n\bigg\Vert \sum_{k,\ell}\frac{\rho_{k\ell}V(\beta_*,\phi_{k\ell})}{1+\lambda\rho_{k\ell}}\phi_{k\ell}\bigg\Vert_K^2=n\lambda^2\sum_{k,\ell}\frac{\rho_{k\ell}^2\, V^2(\beta_*,\phi_{k\ell})}{1+\lambda\rho_{k\ell}}=o(n\lambda)\,.
\end{align}
Therefore, combining \eqref{nsn}, \eqref{111}, \eqref{esquare} and \eqref{nsnlambda}, we find
\begin{align}\label{nsnlambda}
n\Vert S_{n,\lambda}(\beta_*)\Vert_K^2&=O_p(\lambda^{-1/(2D)})+o(n\lambda)+o_p(n^{1/2}\lambda^{1/2})=O_p(\lambda^{-1/(2D)})\,.
\end{align}

For the term $I_{2,n}$ in \eqref{I1n}, by Lemma~\ref{lem:hatbeta} and Theorem~\ref{thm:bahadur}, we find
\begin{align*}
|I_{2,n}|&\leq 2n\,\Vert \hat\beta_\Delta+S_{n,\lambda}(\beta_*)\Vert_K\times\Vert\hat\beta_\Delta\Vert_K= O_p(nv_n)\times O_p\big(\lambda^{1/2}+n^{-1/2}\lambda^{-1/(4D)}\big)\,,
\end{align*}
where $v_n$ is defined in \eqref{vn}. For the term $I_{3,n}$ in \eqref{I1n}, note that, in view of \eqref{dsn}, for $H_n(\cdot)$ defined in \eqref{hnbeta},
\begin{align*}
&\Vert\mathcal DS_{n,\lambda}(\beta_*)\hat\beta_\Delta-\mathcal DS_{\lambda}(\beta_*)\hat\beta_\Delta\Vert_K=n^{-1/2}\Vert H_n(\hat\beta_\Delta)\Vert_K=O_p(v_n)\,,
\end{align*}
where we used \eqref{hn}. Therefore, by Lemma~\ref{lem:hatbeta},
\begin{align*}
|I_{3,n}|\leq n\,\Vert \mathcal DS_{n,\lambda}(\beta_*)\hat\beta_\Delta-\mathcal DS_{\lambda}(\beta_*)\hat\beta_\Delta\Vert_K\times\Vert\hat\beta_\Delta\Vert_K=O_p(nv_n)\times O_p\big(\lambda^{1/2}+n^{-1/2}\lambda^{-1/(4D)}\big)\,.
\end{align*}
For the term $I_{4,n}$ in \eqref{I1n}, by \eqref{nsnlambda} and Theorem~\ref{thm:bahadur},
\begin{align*}
|I_{4,n}|\leq 2n\Vert S_{n,\lambda}(\beta_*)\Vert_K\times\Vert\hat\beta_\Delta+S_{n,\lambda}(\beta_*)\Vert_K+n\Vert\hat\beta_\Delta+S_{n,\lambda}(\beta_*)\Vert_K^2\leq O_p(\lambda^{-1/(2D)}v_n+nv_n^2)\,.
\end{align*}
Combining \eqref{111}, \eqref{esquare}, \eqref{nwlambda} and the above convergence rates of $I_{2,n},I_{3,n},I_{4,n}$, it follows that $I_{1,2,n}+I_{1,3,n}+I_{2,n}+I_{3,n}+I_{4,n}=o_p(\lambda^{-1/(2D)})$. In addition, we use Assumption~\ref{a201} and Lemma~\ref{lem:sum} to obtain that $\sum_{k,\ell}(1+\lambda\rho_{k\ell})^{-1}\asymp \lambda^{-1/(2D)}$ and $\sum_{k,\ell}(1+\lambda\rho_{k\ell})^{-2}\asymp \lambda^{-1/(2D)}$. Therefore, in view of \eqref{111}, \eqref{conv},
\begin{align*}
\bigg\{\sum_{k,\ell}\frac{2}{(1+\lambda\rho_{k\ell})^2}\bigg\}^{-1/2}\bigg(2n\,\L_n(\beta_*)-\sum_{k,\ell}\frac{1}{1+\lambda\rho_{k\ell}}\bigg)\converged N(0,1)\,.
\end{align*}
Since for $u_n$ and $\sigma_n^2$ in \eqref{usigma},
\begin{align*}
&\sqrt{u_n}=\frac{\sum_{k,\ell}(1+\lambda\rho_{k\ell})^{-1}}{\{\sum_{k,\ell}(1+\lambda\rho_{k\ell})^{-2}\}^{1/2}}\,;\quad\frac{\sigma_n^2}{\sqrt{u_n}}=\frac{\big\{\sum_{k,\ell}(1+\lambda\rho_{k\ell})^{-2}\big\}^{1/2}}{\sum_{k,\ell}(1+\lambda\rho_{k\ell})^{-2}}=\bigg\{\sum_{k,\ell}\frac{1}{(1+\lambda\rho_{k\ell})^2}\bigg\}^{-1/2}\,,
\end{align*}
the proof is therefore complete.

\subsection{Proof of Corollary~\ref{cordet} and \eqref{eq:rele}}\label{app:thm:extreme}

By assumption, we have $n^{-1/2}\lambda^{-(2a+1)/(4D)}\log(n\lambda^{(2a+1)/(2D)})=o(1)$. Therefore, Corollary~\ref{cordet} is a consequence of Theorem~B.1 in \cite{dettebio} and Theorem~\ref{thm:process}. For a proof of \eqref{eq:rele}, note that $d_\infty<\Delta$,
\begin{align*}
&\lim_{n\to\infty}\P\bigg\{\hat d_\infty>\Delta+\frac{\mathcal Q_{1-\alpha}(T_\EE)}{\sqrt{n}\lambda^{(2a+1)/(4D)}}\bigg\}\\
&=\lim_{n\to\infty}\P\Big\{\sqrt{n}\lambda^{(2a+1)/(4D)}(\hat d_\infty-d_\infty)>\sqrt{n}\lambda^{(2a+1)/(4D)}(\Delta-d_\infty)+\mathcal Q_{1-\alpha}(T_\EE)\Big\}=0\,,
\end{align*}
since $\sqrt{n}\lambda^{(2a+1)/(4D)}\to\infty$ as $n\to\infty$, where $T_\EE$ is defined in \eqref{te}. If $d_\infty=\Delta$,
\begin{align*}
&\lim_{n\to\infty}\P\bigg\{\hat d_\infty>\Delta+\frac{\mathcal Q_{1-\alpha}(T_\EE)}{\sqrt{n}\lambda^{(2a+1)/(4D)}}\bigg\}=\lim_{n\to\infty}\P\Big\{\sqrt{n}\lambda^{(2a+1)/(4D)}(\hat d_\infty-d_\infty)>\mathcal Q_{1-\alpha}(T_\EE)\Big\}=\alpha\,.
\end{align*}
Under the alternative hypothesis $H_1$ in \eqref{rele}, i.e., $d_\infty>\Delta$,
\begin{align*}
&\lim_{n\to\infty}\P\bigg\{\hat d_\infty>\Delta+\frac{\mathcal Q_{1-\alpha}(T_{\EE})}{\sqrt{n}\lambda^{(2a+1)/(4D)}}\bigg\}\\
&=\lim_{n\to\infty}\P\Big\{\sqrt{n}\lambda^{(2a+1)/(4D)}(\hat d_\infty-d_\infty)>\sqrt{n}\lambda^{(2a+1)/(4D)}(\Delta-d_\infty)+\mathcal Q_{1-\alpha}(T_\EE)\Big\}=1\,.
\end{align*}

\subsection{Proof of Theorem~\ref{thm:rt:boot}}\label{app:proof:rt}

We notice that, by the continuous mapping theorem, Theorem~\ref{thm:process} and Lemma~B.3 in \cite{dette2020aos}, conditional on the data, the bootstrap statistic $\hat T_{\EE,n,q}^*$ in \eqref{hattnq} converges to $T_\EE$ in \eqref{te}, the same limit as $\sqrt{n}\lambda^{(2a+1)/(4D)}(\hat d_\infty-d_\infty)$. Hence, the assertion in Theorem~\ref{thm:rt:boot} follows from arguments similar to the ones in the proof of \eqref{eq:rele} in Section~\ref{app:thm:extreme}.

\subsection{Proof of Theorem~\ref{thm:prediction:band}}\label{app:thm:prediction:band}

By \eqref{cz}, we obtain, for the kernel $C_{Z,x_0}$ in \eqref{czx0},
\begin{align}\label{converge}
C_{Z,{x_0}}(t_1,t_2)&=\lambda^{(2a+1)/(2D)}\sum_{k,\ell}\frac{1}{(1+\lambda\rho_{k\ell})^2}\int_0^1\phi_{k\ell}(s_1,t_1)x_0(s_1)ds_1\notag\\
&\hspace{2cm}\times\int_0^1\phi_{k\ell}(s_2,t_2)x_0(s_2)ds_2+o(1)\,,
\end{align}
since it follows from the dominated convergence theorem and the Cauchy-Schwarz inequality, uniformly in $n\geq 1$,
\begin{align*}
&\lambda^{(2a+1)/(2D)}\bigg|\sum_{k,\ell}\frac{1}{(1+\lambda\rho_{k\ell})^2}\int_0^1\phi_{k\ell}(s_1,t_1)x_0(s_1)ds_1\int_0^1\phi_{k\ell}(s_2,t_2)x_0(s_2)ds_2\bigg|\\
&\leq\lambda^{(2a+1)/(2D)}\Vert x_0\Vert_{L^2}^2\sum_{k,\ell}\frac{\Vert\phi_{k\ell}\Vert_\infty^2}{(1+\lambda\rho_{k\ell})^2}\leq c\lambda^{(2a+1)/(2D)}\Vert x_0\Vert_{L^2}^2\sum_{k,\ell}\frac{(k\ell)^{2a}}{\{1+\lambda(k\ell)^{2D}\}^2}\leq c\,.
\end{align*}

By the definition of $\tau$ in \eqref{tau},
\begin{align*}
\hat\mu_{x_0}(t)-\mu_{x_0}(t)&=\int_0^1\{\hat\beta_{n}(s,t)-\beta_0(s,t)\}x_0(s)ds=\big\l\hat\beta_{n}-\beta_0,\tau(x_0\otimes\delta_t)\big\r_K\,,
\end{align*}
where $\delta_t$ is the delta function at $t\in[0,1]$. We have, 
\begin{align*}
\sqrt{n}\lambda^{(2a+1)/(4D)}\{\hat\mu_{x_0}(t)-\mu_{x_0}(t)\}=I_{1,n}(t)+I_{2,n}(t)+I_{3,n}(t)\,,
\end{align*}
where
\begin{align*}
&I_{1,n}(t)=\sqrt{n}\lambda^{(2a+1)/(4D)}\big\l\hat\beta_{n}-\beta_0+S_{n,\lambda}\beta_0,\tau(x_0\otimes\delta_t)\big\r_K\,,\\
&I_{2,n}(t)=\sqrt{n}\lambda^{(2a+1)/(4D)}\big\l W_\lambda\beta_0,\tau(x_0\otimes\delta_t)\big\r_K\,,\\
&I_{3,n}(t)=n^{-1/2}\lambda^{(2a+1)/(4D)}\sum_{i=1}^n\big\l\tau(X_i\otimes\e_i),\tau(x_0\otimes\delta_t)\big\r_K\,.
\end{align*}

Observing \eqref{tau}, Assumption~\ref{a201} and Lemma~\ref{lem:sum}, and the Cauchy-Schwarz inequality, it follows
\begin{align}\label{taudelta}
\sup_{t\in[0,1]}\Vert\tau(x_0\otimes\delta_t)\Vert_K^2&=\sup_{t\in[0,1]}\sum_{k,\ell}\frac{\l x_0\otimes\delta_t,\phi_{k\ell}\r_{L^2(T)}^2}{1+\lambda\rho_{k\ell}}\leq\sup_{t\in[0,1]}\Vert x_0\otimes\delta_t\Vert_{L^2}^2 \times\sum_{k,\ell}\frac{\Vert\phi_{k\ell}\Vert_{\infty}^2}{1+\lambda\rho_{k\ell}}\notag\\
&\leq c\sum_{k,\ell}\frac{(k\ell)^{2a}}{1+\lambda(k\ell)^{2D}}\leq c\,\lambda^{-(2a+1)/(2D)}\,.
\end{align}
Hence, by Theorem~\ref{thm:bahadur} and the Cauchy-Schwarz inequality,
\begin{align}\label{i1n}
\sup_{t\in[0,1]}|I_{1,n}(t)|&\leq\sqrt{n}\lambda^{(2a+1)/(4D)} \Vert\hat\beta_{n}-\beta_0+S_{n,\lambda}\beta_0\Vert_K\times\sup_{t\in[0,1]}\Vert\tau(x_0\otimes\delta_t)\Vert_K\notag\\
&=O_p(\sqrt{n}v_n)=o_p(1)\,.
\end{align}
where $v_n$ is defined in \eqref{vn}.

In addition, since by assumption, $\sum_{k,\ell}\rho_{k\ell}^2V^2(\beta_0,\phi_{k\ell})<\infty$, we have
\begin{align*}
\Vert W_\lambda\beta_0\Vert_K^2=\lambda^2\,\bigg\Vert\sum_{k,\ell}\frac{\rho_{k\ell}\,V(\beta_0,\phi_{k\ell})}{1+\lambda\rho_{k\ell}}\,\phi_{k\ell}\bigg\Vert_K^2=\lambda^2\sum_{k,\ell}\frac{\rho_{k\ell}^2V^2(\beta_0,\phi_{k\ell})}{1+\lambda\rho_{k\ell}}\leq c\lambda^2\,.
\end{align*}
Therefore, we obtain, for the term $I_{2,n}$,
\begin{align}\label{i2n}
\sup_{t\in[0,1]}|I_{2,n}(t)|&\leq \sqrt{n}\lambda^{(2a+1)/(4D)}\Vert W_\lambda\beta_0\Vert_K\times\sup_{t\in[0,1]}\Vert\tau(x_0\otimes\delta_t)\Vert_K\notag\\
&\leq c\sqrt{n}\Vert W_\lambda\beta_0\Vert_K=O(\sqrt{n}\lambda)=o(1)\,.
\end{align}

Finally, using the representation $\tau(X\otimes\e)=\sum_{k,\ell}(1+\lambda\rho_{k\ell})^{-1}\l X\otimes\e,\phi_{k\ell}\r_{L^2}\phi_{k\ell}$ (see equation \eqref{tk} in the proof of Lemma~\ref{lem:etk} in Section~\ref{app:aux}), we obtain
\begin{align}\label{i3n}
I_{3,n}(t)&=n^{-1/2}\lambda^{(2a+1)/(4D)}\sum_{i=1}^n\big\l\tau(X_i\otimes\e_i),\tau(x_0\otimes\delta_t)\big\r_K\notag\\
&=n^{-1/2}\lambda^{(2a+1)/(4D)}\sum_{i=1}^n\sum_{k,\ell}\frac{1}{1+\lambda\rho_{k\ell}}\l X_i\otimes\e_i,\phi_{k\ell}\r_{L^2}\l\phi_{k\ell},\tau(x_0\otimes\delta_t)\r_K\notag\\
&=n^{-1/2}\lambda^{(2a+1)/(4D)}\sum_{i=1}^n\sum_{k,\ell}\frac{1}{1+\lambda\rho_{k\ell}}\l X_i\otimes\e_i,\phi_{k\ell}\r_{L^2}\l\phi_{k\ell},x_0\otimes\delta_t\r_{L^2}\notag\\
&=n^{-1/2}\lambda^{(2a+1)/(4D)}\sum_{i=1}^n\sum_{k,\ell}\frac{\l X_i\otimes\e_i,\phi_{k\ell}\r_{L^2}}{1+\lambda\rho_{k\ell}}\int_0^1\phi_{k\ell}(s,t)x_0(s)ds\,.
\end{align}
For $1\leq i\leq n$, let
\begin{align*}
\mathfrak U_i(t)&=n^{-1/2}\lambda^{(2a+1)/(4D)}\big\l\tau(X_i\otimes\e_i),\tau(x_0\otimes\delta_t)\big\r_K\\
&=n^{-1/2}\lambda^{(2a+1)/(4D)}\sum_{k,\ell}\frac{\l X_i\otimes\e_i,\phi_{k\ell}\r_{L^2}}{1+\lambda\rho_{k\ell}}\int_0^1\phi_{k\ell}(s,t)x_0(s)ds\,,
\end{align*}
so that $I_{3,n}(t)=\sum_{i=1}^n\mathfrak U_i(t)$. Since $\E\{\l X_i\otimes\e_i,\phi_{k\ell}\r_{L^2}\}=0$ for $k,\ell\geq1$, we have $\E\{\mathfrak U_i(t)\}=0$, and observing \eqref{core}, it follows that
\begin{align}\label{cov}
&\cov\{\mathfrak U_i(t_1),\mathfrak U_i(t_2)\}\notag\\
&=\lambda^{(2a+1)/(2D)}\sum_{k,\ell,k',\ell'}\frac{1}{(1+\lambda\rho_{k\ell})(1+\lambda\rho_{k'\ell'})}\E\Big\{\l X\otimes\e,\phi_{k\ell}\r_{L^2}\,\l X\otimes\e,\phi_{k'\ell'}\r_{L^2}\Big\}\notag\\
&\qquad\times\int\phi_{k\ell}(s_1,t_1)x_0(s_1)ds_1\times\int\phi_{k\ell}(s_2,t_2)x_0(s_2)ds_2\notag\\
&=\lambda^{(2a+1)/(2D)}\sum_{k,\ell}\frac{1}{(1+\lambda\rho_{k\ell})^2}\int\phi_{k\ell}(s_1,t_1)x_0(s_1)ds_1\times\int\phi_{k\ell}(s_2,t_2)x_0(s_2)ds_2\notag\\
&=C_{Z,x_0}(t_1,t_2)+o(1)\,,\qquad\text{as } n\to\infty\,,
\end{align}
where we used \eqref{converge} in the last step.

In order to prove the weak convergence of the finite-dimensional marginal distributions of $\hat\mu_{x_0}$, by the Cram\'er-Wold device, we shall show that, for any $q\in\mathbb{N}$, $(c_1,\ldots,c_q)^{\rm T}\in\mathbb{R}^q$ and $t_1,\ldots,t_q\in[0,1]$,
\begin{align}\label{i3nvalid}
\sum_{j=1}^qc_j\hat\mu_{x_0}(t_j)\overset{d.}{\longrightarrow}\sum_{j=1}^qc_j Z_{x_0}(t_j)\,.
\end{align}
In view of \eqref{i1n} and \eqref{i2n}, we deduce that
\begin{align*}
\sum_{j=1}^qc_j\hat\mu_{x_0}(t_j)&=\sum_{j=1}^qc_jI_{3,n}(t_j)+\sum_{j=1}^qc_j\{I_{n,1}(t_j)+I_{n,2}(t_j)\}=\sum_{i=1}^n\sum_{j=1}^qc_j\mathfrak U_i(t_j)+o_p(1)\,.
\end{align*}
Observing \eqref{cov}, we have $$
\var\bigg\{\sum_{i=1}^n\sum_{j=1}^qc_j\mathfrak U_i(t_j)\bigg\}=\sum_{j_1,j_2=1}^qc_{j_1}c_{j_2}C_{Z,x_0}(t_{j_1},t_{j_2})+o(1)
$$ 
as $n\to\infty$. If $\sum_{j_1,j_2=1}^qc_{j_1}c_{j_2}C_{Z,x_0}(t_{j_1},t_{j_2})=0$, $\sum_{j=1}^qc_j Z_{x_0}(t_j)$ has a degenerate distribution with a point mass at zero, so that \eqref{i3nvalid} is a consequence of the Markov's inequality. If $\sum_{j_1,j_2=1}^qc_{j_1}c_{j_2}C_{Z,x_0}(t_{j_1},t_{j_2})\neq0$, we have $\var\big\{\sum_{i=1}^n\sum_{j=1}^qc_j\mathfrak U_i(t_j)\big\}=\sum_{j_1,j_2=1}^qc_{j_1}c_{j_2}C_{Z,x_0}(t_{j_1},t_{j_2})+o(1)=\var\big\{\sum_{j=1}^qc_j Z_{x_0}(t_j)\big\}+o(1)$. To prove \eqref{i3nvalid}, we shall check that the triangular array of random variables $\{\sum_{j=1}^qc_j\mathfrak U_i(t_j)\}_{i=1}^n$ satisfies Lindeberg's condition. Let $\Sigma_q=\sum_{j=1}^q|c_j|$. We have $\Sigma_q>0$ since $\Sigma_q=0$ indicates $\sum_{j_1,j_2=1}^qc_{j_1}c_{j_2}C_{Z,x_0}(t_{j_1},t_{j_2})=0$. For any $e>0$, by the Cauchy-Schwarz inequality,
\begin{align}\label{temp2}
&\sum_{i=1}^n\E\bigg[\bigg|\sum_{j=1}^qc_j\mathfrak U_i(t_j)\bigg|^2\times\one\bigg\{\bigg|\sum_{j=1}^qc_j\mathfrak U_i(t_j)\bigg|>e\bigg\}\bigg]\notag\\
&=\lambda^{(2a+1)/(2D)}\,\E\bigg[\bigg|\sum_{j=1}^qc_j\big\l\tau(X_i\otimes\e_i),\tau(x_0\otimes\delta_{t_j})\big\r_K\bigg|^2\times\one\bigg\{\bigg|\sum_{j=1}^qc_j\mathfrak U_i(t_j)\bigg|>e\bigg\}\bigg]\notag\\
&\leq c\,\lambda^{(2a+1)/(2D)}\,\sup_{t\in[0,1]}\E\Big\{\big|\big\l\tau(X_i\otimes\e_i),\tau(x_0\otimes\delta_{t})\big\r_K\big|^4\Big\}^{\frac{1}{2}}\times\P\bigg\{\bigg|\sum_{j=1}^qc_j\mathfrak U_i(t_j)\bigg|>e\bigg\}^{\frac{1}{2}}\,.
\end{align}

Using \eqref{taudelta} and Lemma~\ref{lem:etk} in Section~\ref{app:aux:lem}, it follows that
\begin{align}\label{4m}
&\sup_{t\in[0,1]}\E\big|\big\l\tau(X_i\otimes\e_i),\tau(x_0\otimes\delta_{t})\big\r_K\big|^4\notag\\
&\leq\E\Vert\tau(X_i\otimes\e_i)\Vert_K^4\times\sup_{t\in[0,1]}\Vert\tau(x_0\otimes\delta_{t})\Vert_K^4\leq c\,\lambda^{-(2a+2)/D}\,.
\end{align}
In addition, for some $c_0>0$,
\begin{align*}
&\sup_{t\in[0,1]}\bigg|\sum_{k,\ell}\frac{\l X \otimes\e ,\phi_{k\ell}\r_{L^2}}{1+\lambda\rho_{k\ell}}\int_0^1\phi_{k\ell}(s,t)x_0(s)ds\bigg|\\
&\leq \Vert x_0\Vert_{L^2}\times\Vert X \otimes\e\Vert_{L^2}\times\sum_{k,\ell}\frac{\Vert\phi_{k\ell}\Vert_\infty^2}{1+\lambda\rho_{k\ell}}\leq c_0\lambda^{-(2a+1)/(2D)}\Vert X\Vert_{L^2}\,\Vert\e\Vert_{L^2}\,.
\end{align*}
Hence, by arguments similar to the ones used in \eqref{11}, by taking $c_1>a(c_\e D)^{-1}$, for $c_\e>0$ in Assumption~\ref{a:x}, we find
\begin{align*}
&\P\bigg\{\bigg|\sum_{j=1}^qc_j\mathfrak U_i(t_j)\bigg|>e\bigg\}\\
&=\P\bigg\{\sup_{t\in[0,1]}\bigg|\sum_{k,\ell}\frac{\l X \otimes\e ,\phi_{k\ell}\r_{L^2}}{1+\lambda\rho_{k\ell}}\int_0^1\phi_{k\ell}(s,t)x_0(s)ds\bigg|>e\Sigma_q^{-1}\sqrt{n}\lambda^{-(2a+1)/(4D)}\bigg\}\\
&\leq\P\Big\{\Vert X\Vert_{L^2}\,\Vert\e\Vert_{L^2}>ec_0^{-1}\Sigma_q^{-1}\sqrt{n}\lambda^{(2a+1)/(4D)}\Big\}\\
&\leq\P\Big\{\Vert X\Vert_{L^2}\geq ec_1^{-1}c_0^{-1}\Sigma_q^{-1}\sqrt{n}\lambda^{(2a+1)/(4D)}/{\log(\lambda^{-1})}\Big\}+\P\Big\{\Vert\e\Vert_{L^2}\geq c_1{\log(\lambda^{-1})}\Big\}\notag\\
&\leq\exp\big\{-c_Xec_1^{-1}c_0^{-1} \Sigma_q^{-1}\sqrt{n}\lambda^{(2a+1)/(4D)}/\log(\lambda^{-1})\big\}\E\{\exp(c_X\Vert X\Vert_{L^2})\}\notag\\
&\qquad+\lambda^{c_1c_\e}\E\{\exp(c_\e\Vert\e\Vert_{L^2})\}\notag\\
&=O\big(\lambda^{c_Xec_1^{-1}c_0^{-1}\Sigma_q^{-1} \{\sqrt{n}\lambda^{(2a+1)/(4D)}/\log^2(\lambda^{-1})\}}\big)+O(\lambda^{c_1c_\e})=o(\lambda^{1/D})\,,
\end{align*}
where we used the assumption $\sqrt{n}\lambda^{(2a+1)/(4D)}/\log^2(\lambda^{-1})\to\infty$ in the last step. Therefore, combining the above result with \eqref{temp2} and \eqref{4m} yields
\begin{align*}
&\sum_{i=1}^n\E\bigg[\bigg|\sum_{j=1}^qc_j\mathfrak U_i(t_j)\bigg|^2\times\one\bigg\{\bigg|\sum_{j=1}^qc_j\mathfrak U_i(t_j)\bigg|>e\bigg\}\bigg]\leq c\,\lambda^{(2a+1)/(2D)}\,\lambda^{-(a+1)/D} \, o(\lambda^{1/(2D)})=o(1)\,.
\end{align*}
By Lindeberg's CLT,
\begin{align*}
&\sum_{j=1}^qc_j\hat{\mu}_{x_0}(t_j)=\sum_{i=1}^n\sum_{j=1}^qc_j\mathfrak U_i(t_j)+o_p(1)\converged N\bigg(0,\sum_{j_1,j_2=1}^qc_{j_1}c_{j_2}C_{Z,x_0}(t_{j_1},t_{j_2})\bigg)\overset{d.}{=}\sum_{j=1}^qc_j Z_{x_0}(t_j)\,.
\end{align*}

Next, we prove the asymptotic tightness of $\hat\mu_{x_0} $. For any $t_1,t_2\in[0,1]$, since $\E\{I_{3,n}(t_1)-I_{3,n}(t_2)\}=0$, by \eqref{core} and the Cauchy-Schwarz inequality, we find
\begin{align*}
&\E|I_{3,n}(t_1)-I_{3,n}(t_2)|^2=n\,\E|\mathfrak U_i(t_1)-\mathfrak U_i(t_2)|^2\\
&=\lambda^{(2a+1)/(2D)}\E\bigg|\sum_{k,\ell}\frac{\l X_i\otimes\e_i,\phi_{k\ell}\r_{L^2}}{1+\lambda\rho_{k\ell}}\int_0^1\{\phi_{k\ell}(s,t_1)-\phi_{k\ell}(s,t_2)\}x_0(s)ds\bigg|^2\\
&=\lambda^{(2a+1)/(2D)}\sum_{k,\ell}\frac{1}{(1+\lambda\rho_{k\ell})^2}\bigg|\int_0^1\{\phi_{k\ell}(s,t_1)-\phi_{k\ell}(s,t_2)\}x_0(s)ds\bigg|^2\\
&\leq\lambda^{(2a+1)/(2D)}\Vert x_0\Vert_{L^2}^2\sum_{k,\ell}\frac{1}{(1+\lambda\rho_{k\ell})^2}\int_0^1|\phi_{k\ell}(s,t_1)-\phi_{k\ell}(s,t_2)|^2ds\,.
\end{align*}
By the assumption in \eqref{holder}, we have, for constants $c_0$, $\vartheta$ and $b$ specified in Theorem~\ref{thm:process},
\begin{align*}
&\E|I_{3,n}(t_1)-I_{3,n}(t_2)|^2\leq c_0\lambda^{(a-b)/D}\Vert x_0\Vert_{L^2}^2|t_1-t_2|^{2\vartheta}\leq c\,\lambda^{(a-b)/D}|t_1-t_2|^{2\vartheta}\,.
\end{align*}
Moreover, in view of \eqref{i3n}, by the Cauchy-Schwarz inequality, Assumption~\ref{a201}, we deduce that
\begin{align*}
&\sup_{t\in[0,1]}|\mathfrak U_i(t)|\\
&\leq n^{-1/2}\lambda^{(2a+1)/(4D)}\sum_{k,\ell}\frac{|\l X_i\otimes\e_i,\phi_{k\ell}\r_{L^2}|}{1+\lambda\rho_{k\ell}}\times\sup_{t\in[0,1]}\bigg|\int_0^1\phi_{k\ell}(s,t)x_0(s)ds\bigg|\\
&\leq n^{-1/2}\lambda^{(2a+1)/(4D)}\Vert X_i\otimes\e_i\Vert_{L^2}\Vert x_0\Vert_{L^2}\sum_{k,\ell}\frac{\Vert\phi_{k\ell}\Vert_{L^2}}{1+\lambda\rho_{k\ell}}\times\sup_{t\in[0,1]}\bigg\{\int_0^1|\phi_{k\ell}(s,t)|^2ds\bigg\}^{1/2}\\
&\leq c\,n^{-1/2}\lambda^{(2a+1)/(4D)}(\log n)^2\sum_{k,\ell}\frac{\Vert\phi_{k\ell}\Vert_{\infty}^2}{1+\lambda\rho_{k\ell}}\\
&\leq c\,n^{-1/2}\lambda^{(2a+1)/(4D)}(\log n)^2\sum_{k,\ell}\frac{(k\ell)^{2a}}{1+\lambda(k\ell)^{2D}}\\
&\leq c\,n^{-1/2}\lambda^{-(2a+1)/(4D)}(\log n)^2\,,
\end{align*}
almost surely, where we used Lemma~\ref{lem:sum} and the fact that $\Vert X\otimes\e\Vert_{L^2}\leq c(\log n)^2$ almost surely from Section~\ref{app:thm:process}. Therefore, by arguments similar to the ones used in the proof of Theorem~\ref{thm:process}, we find that, there exists a semi-metric $d$ on $[0,1]^2$ such that, for any $e>0$,
\begin{align*}
\lim_{\delta\downarrow0}\,\limsup_{n\to\infty}\,\P\bigg\{\sup_{d\{(s_1,t_1),(s_2,t_2)\}\leq\delta}|I_{3,n}(t_1)-I_{3,n}(t_2)|>e\bigg\}=0\,.
\end{align*}
Combining the above result with \eqref{i1n} and \eqref{i2n},
\begin{align*}
\lim_{\delta\downarrow0}\,\limsup_{n\to\infty}\,\P\bigg\{\sup_{d\{(s_1,t_1),(s_2,t_2)\}\leq\delta}|\hat\mu_{x_0}(t_1)-\hat\mu_{x_0}(t_2)|>e\bigg\}=0\,.
\end{align*}
Therefore, by applying Theorems~1.5.4 and 1.5.7 in \cite{vaart1996}, we have shown that $\sqrt{n}\lambda^{(2a+1)/(4D)}\{\hat\mu_{x_0}-\mu_{x_0}\}\weakconverge Z_{x_0}$ in $C([0,1]^2)$. By the continuous mapping theorem, $\sqrt{n}\lambda^{(2a+1)/(4D)}\sup_{t\in[0,1]}|\hat\mu_{x_0}(t)-\mu_{x_0}(t)|\converged\max_{t\in[0,1]}|Z_{x_0}(t)|$.

\newpage 

\section{Auxiliary lemmas and technical details}\label{app:aux}

\subsection{Auxiliary lemmas for the proofs in Section~\ref{app:proof}}\label{app:aux:lem}

\begin{lemma}\label{lem:change}
For any $\beta_1,\beta_2\in\H$ and $x\in L^2([0,1])$, for $\tau$ defined in \eqref{tau0},
\begin{align*}
\left\l\tau\bigg[x\otimes\bigg\{\int_0^1\beta_1(s,\cdot)x(s)ds\bigg\}\bigg],\beta_2\right\r_K&=\left\l\tau\bigg[x\otimes\bigg\{\int_0^1\beta_2(s,\cdot)x(s)ds\bigg\}\bigg],\beta_1\right\r_K\\
&=\int_{[0,1]^3}\beta_1(s_1,t)\,\beta_2(s_2,t)\,x(s_1)\,x(s_2)\,ds_1\,ds_2\,dt\,.
\end{align*}
\end{lemma}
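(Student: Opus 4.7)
The plan is to prove both equalities by a single direct computation using the defining property of $\tau$ given in \eqref{tau0}, and then observing the manifest symmetry of the resulting triple integral in $\beta_1$ and $\beta_2$.

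First I would set
$$z_1(s_2,t) \;=\; x(s_2)\int_0^1\beta_1(s_1,t)\,x(s_1)\,ds_1,$$
which is the function of $(s_2,t)$ inside the first $\tau$. Since $x\in L^2([0,1])$ and $\beta_1\in\H\subset L^2([0,1]^2)$, Cauchy--Schwarz shows that $z_1\in L^2([0,1]^2)$, so $\tau(z_1)$ is well defined by \eqref{tau0}. Applying \eqref{tau0} with $z=z_1$ and the test element $\beta_2\in\H$ gives
$$\bigl\l\tau(z_1),\beta_2\bigr\r_K \;=\; \int_0^1\!\!\int_0^1 \beta_2(s_2,t)\,z_1(s_2,t)\,ds_2\,dt.$$
Substituting the definition of $z_1$ and applying Fubini's theorem (justified by the $L^2$ bounds above) yields
$$\bigl\l\tau(z_1),\beta_2\bigr\r_K \;=\; \int_{[0,1]^3}\beta_1(s_1,t)\,\beta_2(s_2,t)\,x(s_1)\,x(s_2)\,ds_1\,ds_2\,dt.$$
This establishes the first identity of the lemma.

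For the second identity, I would simply repeat the same calculation with the roles of $\beta_1$ and $\beta_2$ interchanged, obtaining
$$\Bigl\l\tau\Bigl[x\otimes\Bigl\{\textstyle\int_0^1\beta_2(s,\cdot)x(s)ds\Bigr\}\Bigr],\beta_1\Bigr\r_K \;=\; \int_{[0,1]^3}\beta_2(s_1,t)\,\beta_1(s_2,t)\,x(s_1)\,x(s_2)\,ds_1\,ds_2\,dt,$$
and then renaming the dummy variables $s_1\leftrightarrow s_2$ in the integral to match the expression obtained above. There is essentially no obstacle here: the entire content of the lemma is the Riesz-type defining property of $\tau$ plus Fubini's theorem, and the equality of the two $\l\cdot,\cdot\r_K$ expressions reduces to the symmetry of the integrand under exchange of $s_1$ and $s_2$.
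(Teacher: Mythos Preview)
Your proof is correct and follows essentially the same approach as the paper: apply the defining property \eqref{tau0} of $\tau$, use Fubini's theorem to unfold the triple integral, and then observe the symmetry in $s_1,s_2$ to obtain the second equality. The paper's proof is the same chain of equalities written out in a single display, without the explicit Cauchy--Schwarz remark you added to justify $z_1\in L^2([0,1]^2)$.
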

\begin{proof}
By Fubini's theorem and \eqref{tau}, direct calculation yields
\begin{align*}
&\left\l\tau\bigg[x\otimes\bigg\{\int_0^1\beta_1(s,\cdot)x(s)ds\bigg\}\bigg],\beta_2\right\r_K\\
&=\int_0^1\int_0^1x(s_2)\bigg\{\int_0^1\beta_1(s_1,t)\,x(s_1)\,ds_1\bigg\}\beta_2(s_2,t)\,ds_2\,dt\\
&=\int_{[0,1]^3}\beta_1(s_1,t)\,\beta_2(s_2,t)\,x(s_1)\,x(s_2)\,ds_1\,ds_2\,dt\\
&=\int_0^1\int_0^1x(s_1)\bigg\{\int_0^1\beta_2(s_2,t)\,x(s_2)\,ds_2\bigg\}\beta_1(s_1,t)\,ds_1\,dt\\
&=\left\l\tau\bigg[x\otimes\bigg\{\int_0^1\beta_2(s,\cdot)\,x(s)\,ds\bigg\}\bigg],\beta_1\right\r_K\,.
\end{align*}
\end{proof}

\begin{lemma}\label{lem:sum}
For $D$ in Assumption~\ref{a201}, for any $0\leq\nu<D-1/2$, for $0<\lambda<1$ and for either $r=1$ or $2$, there exist constants $c_1,c_2>0$ independent of $\lambda$ such that
\begin{align}\label{sum1}
&c_1\,\lambda^{-(2\nu+1)/(2D)}\leq\sum_{k,\ell\geq 1}\frac{(k\ell)^{2\nu}}{\{1+\lambda (k\ell)^{2D}\}^r}\leq c_2\,\lambda^{-(2\nu+1)/(2D)}\,.
\end{align}

\end{lemma}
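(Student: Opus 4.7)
Since the summand depends on $(k,\ell)$ only through the product $m=k\ell$, I would first rewrite
$$S_{r,\nu}(\lambda):=\sum_{k,\ell\ge 1}\frac{(k\ell)^{2\nu}}{\{1+\lambda (k\ell)^{2D}\}^r}=\sum_{m\ge 1}\frac{d(m)\,m^{2\nu}}{(1+\lambda m^{2D})^r},$$
where $d(m)$ is the number-of-divisors function. For the lower bound I would restrict the double sum to $k=1$ and then to $\ell\le\lfloor\lambda^{-1/(2D)}\rfloor$: on that range $\lambda\ell^{2D}\le 1$, so the summand is at least $\ell^{2\nu}/2^r$, and an elementary integral comparison gives $\sum_{1\le\ell\le\lfloor\lambda^{-1/(2D)}\rfloor}\ell^{2\nu}\ge c\,\lambda^{-(2\nu+1)/(2D)}$. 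This yields the left-hand inequality with a constant depending only on $\nu$, $D$, $r$.

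For the upper bound the plan is to perform the inner sum in $\ell$ for fixed $k$. Under the standing assumption $\nu<D-1/2$ (so $\nu<Dr-1/2$ for both $r=1$ and $r=2$), the integral $\int_0^\infty y^{2\nu}/(1+y^{2D})^r\,dy$ is finite, and rescaling $y=(\lambda k^{2D})^{1/(2D)}\ell$ together with the trivial termwise estimate $1/(1+a\ell^{2D})^r\le(a\ell^{2D})^{-r}$ gives the two-regime bound
$$\sum_{\ell\ge 1}\frac{\ell^{2\nu}}{(1+\lambda k^{2D}\ell^{2D})^r}\le c\,\min\Big\{(\lambda k^{2D})^{-(2\nu+1)/(2D)},\,(\lambda k^{2D})^{-r}\Big\}.$$
After multiplying by $k^{2\nu}$ and splitting the $k$-sum at $k_0=\lfloor\lambda^{-1/(2D)}\rfloor$, the tail $k>k_0$ contributes $c\,\lambda^{-r}\sum_{k>k_0}k^{2\nu-2Dr}\asymp \lambda^{-(2\nu+1)/(2D)}$ (using $2Dr-2\nu>1$), which matches the target order.

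The main obstacle is the head $k\le k_0$, which in the naive approach above contributes $c\,\lambda^{-(2\nu+1)/(2D)}\sum_{k\le k_0}k^{-1}$, producing an extra factor $\log(\lambda^{-1})$. Equivalently, from the divisor-function representation the head equals $\sum_{m\le\lambda^{-1/(2D)}}d(m)\,m^{2\nu}/(1+\lambda m^{2D})^r$, and the partial sum $\sum_{m\le N}d(m)m^{2\nu}\sim c\,N^{2\nu+1}\log N$ is an unavoidable consequence of the logarithmic average order of $d$. A Mellin-transform check via $\sum_m d(m)/m^s=\zeta(s)^2$ confirms that $S_{r,\nu}(\lambda)$ has a leading asymptotic of order $\lambda^{-(2\nu+1)/(2D)}\log(\lambda^{-1})$, coming from a double pole at $s=(2\nu+1)/(2D)$. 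The hard part of the proof as stated is therefore to find a mechanism that cancels this logarithm, which I do not see arising from the geometry of the $(k,\ell)$-sum; failing that, the lemma should be read with an additional $\log(\lambda^{-1})$ factor in the upper bound, which can then be absorbed into the tuning conditions on $\lambda$ in Assumption~\ref{a:rate} and propagated through the downstream results in Sections~\ref{sec:asymptotic}--\ref{sec:sc} without qualitative change.
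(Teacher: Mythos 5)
Your diagnosis is correct, and it in fact exposes a flaw in the paper's own proof of Lemma~\ref{lem:sum}. Since the summand depends on $(k,\ell)$ only through $m=k\ell$, the double sum equals $\sum_{m\ge1}d(m)\,m^{2\nu}(1+\lambda m^{2D})^{-r}$, and the divisor function forces the extra logarithm: restricting to $k\le\lambda^{-1/(2D)}$ and summing the inner $\ell$-sum (which is $\asymp(\lambda k^{2D})^{-(2\nu+1)/(2D)}$ whenever $\lambda k^{2D}\le1$) already gives a \emph{lower} bound of order $\lambda^{-(2\nu+1)/(2D)}\sum_{k\le\lambda^{-1/(2D)}}k^{-1}\asymp\lambda^{-(2\nu+1)/(2D)}\log(\lambda^{-1})$, so the two-sided statement \eqref{sum1} cannot hold with $\lambda$-free constants; the correct order is $\lambda^{-(2\nu+1)/(2D)}\log(\lambda^{-1})$ for both bounds. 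The paper's argument breaks at its very first display: it asserts that $\int_0^{\infty}\int_0^{\infty}(xy)^{2\nu}\{1+(xy)^{2D}\}^{-r}\,dx\,dy<\infty$ because $2D-2\nu>1$, but that condition only controls the one-dimensional integral $\int_0^\infty t^{2\nu}(1+t^{2D})^{-r}\,dt$; the planar integral of any nonnegative function of the product $xy$ diverges, as the substitution $t=xy$ produces the factor $\int_0^\infty dy/y=\infty$. The subsequent comparison of the lattice sum with this (infinite) integral is therefore vacuous, and your Mellin-transform check via $\zeta(s)^2$ (double pole at $s=1$, i.e.\ at $s=(2\nu+1)/(2D)$ after rescaling) correctly identifies the source of the logarithm. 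Your own lower-bound argument (the $k=1$ row) is fine for the left inequality as stated, and your head/tail split for the upper bound is the right computation; the "obstacle" you could not remove is not removable.

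Where I would be more cautious is your closing remark that the extra $\log(\lambda^{-1})$ can be propagated "without qualitative change." Many downstream uses are one-sided bounds of the form $\sum_{k,\ell}(1+\lambda\rho_{k\ell})^{-r}\Vert\phi_{k\ell}\Vert_\infty^2\lesssim\lambda^{-(2a+1)/(2D)}$, where inserting a $\log$ merely tightens Assumption~\ref{a:rate} and inflates the Bahadur remainder $v_n$ by logarithmic factors. But some statements rely on two-sided control: the normalizations $\sigma_\tau^2(s,t)\asymp\lambda^{-(2a+1)/(2D)}$ in Theorem~\ref{thm:pointwise} and the limit \eqref{cz} are imposed as separate hypotheses and so survive formally, yet their consistency with Assumption~\ref{a201} now needs rechecking, the quantities $u_n,\sigma_n^2$ in Theorem~\ref{thm:lrt} change by log factors, and in Theorem~\ref{thm:minimax} the upper bound acquires a logarithm while the lower bound does not, so exact minimax optimality (rather than optimality up to $\log n$) no longer follows from the corrected lemma. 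So the right conclusion is: the lemma must be restated with the $\log(\lambda^{-1})$ factor (or the index set/weights changed so that the divisor growth is suppressed), and the downstream rate statements revisited accordingly.
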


\begin{proof}
Using change of variables, we first notice that
\begin{align*}
\int_0^{ \infty}\int_0^{ \infty}\frac{x^{2\nu}y^{2\nu}}{(1+\lambda\, x^{2D}y^{2D})^r}\,dx\,dy=\lambda^{-(2\nu+1)/(2D)}\int_{0}^{ \infty}\int_{0}^{ \infty}\frac{x^{2\nu}y^{2\nu}}{(1+x^{2D}y^{2D})^r}\,dx\,dy\,.
\end{align*}
Since $2D-2\nu>1$, we have $\int_{0}^{ \infty}\int_{0}^{ \infty}x^{2\nu}y^{2\nu}(1+x^{2D}y^{2D})^{-r}dxdy< \infty$.

Let $\mathfrak m_\lambda=\lambda^{-1/(2D)}\{\nu/(rD-\nu)\}^{1/(2D)}$. Note that the function $x^{2\nu}/(1+\lambda x^{2D})$ is increasing on $(0,\mathfrak m_\lambda)$, and is decreasing on $(\mathfrak m_\lambda, \infty)$. For any real value $x\in\mathbb{R}$, let $\lceil x\rceil$ denote the smallest integer greater than or equal to $x$, and let $\lfloor x\rfloor$ denote the largest integer smaller than or equal to $x$. Let $\one\{\cdot\}$ denote the indicator function. For the left-hand side of the inequality in \eqref{sum1}, note that
\begin{align*}
&\sum_{k,\ell\geq 1}\frac{(k\ell)^{2\nu}}{\{1+\lambda (k\ell)^{2D}\}^r}\\
&=\sum_{k,\ell\geq 1}\frac{(k\ell)^{2\nu}}{\{1+\lambda (k\ell)^{2D}\}^r}\big[\one\{k\ell< \lfloor \mathfrak m_\lambda\rfloor\}+\one\{k\ell> \lceil \mathfrak m_\lambda\rceil\}\big]\\
&\qquad+\sum_{k,\ell\geq 1}\frac{(k\ell)^{2\nu}}{\{1+\lambda (k\ell)^{2D}\}^r}\one\{\lfloor \mathfrak m_\lambda\rfloor\leq k\ell\leq \lceil \mathfrak m_\lambda\rceil\}\\
&\leq \int_0^{ \infty}\int_0^{ \infty}\frac{(xy)^{2\nu}}{\{1+\lambda (xy)^{2D}\}^r}dxdy+\sum_{k,\ell\geq 1}\frac{(k\ell)^{2\nu}}{\{1+\lambda (k\ell)^{2D}\}^r}\one\{\lfloor \mathfrak m_\lambda\rfloor\leq k\ell\leq \lceil \mathfrak m_\lambda\rceil\}\\
&\leq \int_0^{ \infty}\int_0^{ \infty}\frac{(xy)^{2\nu}}{\{1+\lambda (xy)^{2D}\}^r}dxdy+\frac{\lceil \mathfrak m_\lambda\rceil^{2\nu}}{(1+\lambda \lfloor \mathfrak m_\lambda\rfloor^{2D})^r}\sum_{k,\ell\geq 1}\one\{\lfloor \mathfrak m_\lambda\rfloor\leq 
k\ell\leq \lceil \mathfrak m_\lambda\rceil\}\\
&\leq \int_0^{ \infty}\int_0^{ \infty}\frac{(xy)^{2\nu}}{\{1+\lambda (xy)^{2D}\}^r}dxdy+\frac{2\lceil \mathfrak m_\lambda\rceil^{2\nu+1}}{(1+\lambda \lfloor \mathfrak m_\lambda\rfloor^{2D})^r}\\
&\leq \int_0^{ \infty}\int_0^{ \infty}\frac{(xy)^{2\nu}}{\{1+\lambda (xy)^{2D}\}^r}dxdy+c\,\lambda^{-(2\nu+1)/(2D)}\,,
\end{align*}
for some $c>0$ that does not depend on $\lambda$, where we used the fact that $\sum_{k,\ell\geq 1}\one\{\lfloor \mathfrak m_\lambda\rfloor\leq 
k\ell\leq \lceil \mathfrak m_\lambda\rceil\}\leq 2\lceil \mathfrak m_\lambda\rceil$. This proves the right-hand side of \eqref{sum1}.

For the left-hand side of the inequality in \eqref{sum1}, by change of variables, we find
\begin{align*}
&\sum_{k,\ell\geq 1}\frac{(k\ell)^{2\nu}}{\{1+\lambda (k\ell)^{2D}\}^r}\\
&=\sum_{k,\ell\geq 1}\frac{(k\ell)^{2\nu}}{\{1+\lambda (k\ell)^{2D}\}^r}\one\{k\ell\leq \lfloor \mathfrak m_\lambda\rfloor\}+\sum_{k,\ell\geq 1}\frac{(k\ell)^{2\nu}}{\{1+\lambda (k\ell)^{2D}\}^r}\one\{k\ell\geq \lceil \mathfrak m_\lambda\rceil\}\\
&\geq \int_0^{ \infty}\int_0^{ \infty}\frac{(xy)^{2\nu}}{\{1+\lambda (xy)^{2D}\}^r}\big[\one\{xy\leq \lfloor \mathfrak m_\lambda\rfloor\}+\one\{xy\geq \lceil \mathfrak m_\lambda\rceil\}\big]dxdy\\
&=\lambda^{-(2\nu+1)/(2D)}\int_0^{ \infty}\int_0^{ \infty}\frac{(xy)^{2\nu}}{\{1+(xy)^{2D}\}^r}\big[\one\{xy\leq \lambda^{1/(2D)}\lfloor \mathfrak m_\lambda\rfloor\}+\one\{xy\geq \lambda^{1/(2D)}\lceil \mathfrak m_\lambda\rceil\}\big]dxdy\,.
\end{align*}
Note that there exists constants $0<\tilde c_1<\tilde c_2$ independent of $\lambda$ such that $0<\tilde c_1\leq \lambda^{1/(2D)}\lfloor \mathfrak m_\lambda\rfloor$ and $\lambda^{1/(2D)}\lceil \mathfrak m_\lambda\rceil \leq \tilde c_2< \infty$. We therefore deduce from the above equation that
\begin{align*}
&\sum_{k,\ell\geq 1}\frac{(k\ell)^{2\nu}}{\{1+\lambda (k\ell)^{2D}\}^r}\geq \lambda^{-(2\nu+1)/(2D)}\int_0^{ \infty}\int_0^{ \infty}\frac{(xy)^{2\nu}}{\{1+(xy)^{2D}\}^r}\big[\one\{xy>\tilde c_2\}+\one\{xy\leq \tilde c_1\}\big]dxdy\,,
\end{align*}
which completes the proof of \eqref{sum1}.

\end{proof}

\begin{lemma}\label{lem:norm}
Under Assumptions~\ref{a1}--\ref{a:x}, for $\beta\in\H$, we have, for some constant $c_K>0$,
\begin{align*}
\Vert\beta\Vert_{L^2}\leq c_K\lambda^{-(2a+1)/(4D)}\, \Vert\beta\Vert_K\,.
\end{align*}
\end{lemma}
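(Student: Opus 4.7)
The plan is to pass through the sup-norm rather than to bound the $L^2$-norm directly, since the basis $\{\phi_{k\ell}\}$ is only $V$-orthonormal (not $L^2$-orthonormal). First I would expand $\beta = \sum_{k,\ell} V(\beta,\phi_{k\ell})\phi_{k\ell}$ using Assumption~\ref{a201} and record that, by simultaneous diagonalization,
\[
\Vert\beta\Vert_K^2 = V(\beta,\beta)+\lambda J(\beta,\beta)=\sum_{k,\ell} V(\beta,\phi_{k\ell})^2\,(1+\lambda\rho_{k\ell}).
\]

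Next, for every $(s,t)\in[0,1]^2$, I would estimate $\beta(s,t)$ pointwise by the Cauchy--Schwarz inequality applied with the weight $1+\lambda\rho_{k\ell}$:
\[
|\beta(s,t)|^2\le \Big(\sum_{k,\ell}V(\beta,\phi_{k\ell})^2(1+\lambda\rho_{k\ell})\Big)\Big(\sum_{k,\ell}\frac{\phi_{k\ell}(s,t)^2}{1+\lambda\rho_{k\ell}}\Big)=\Vert\beta\Vert_K^2\sum_{k,\ell}\frac{\phi_{k\ell}(s,t)^2}{1+\lambda\rho_{k\ell}}.
\]
Then, invoking the two bounds $\Vert\phi_{k\ell}\Vert_\infty\le c(k\ell)^{a}$ and $\rho_{k\ell}\asymp(k\ell)^{2D}$ from Assumption~\ref{a201}, the remaining sum is controlled by
\[
\sum_{k,\ell}\frac{\phi_{k\ell}(s,t)^2}{1+\lambda\rho_{k\ell}}\le c\sum_{k,\ell}\frac{(k\ell)^{2a}}{1+\lambda(k\ell)^{2D}}.
\]

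The key technical input is Lemma~\ref{lem:sum} applied with $\nu=a$, which is admissible because $D>a+1/2$ by Assumption~\ref{a201}; it yields $\sum_{k,\ell}(k\ell)^{2a}/\{1+\lambda(k\ell)^{2D}\}\le c\,\lambda^{-(2a+1)/(2D)}$ uniformly in $(s,t)$. Combining this with the pointwise Cauchy--Schwarz bound gives $\Vert\beta\Vert_\infty^2\le c_K^2\,\lambda^{-(2a+1)/(2D)}\Vert\beta\Vert_K^2$, and since $[0,1]^2$ has unit Lebesgue measure the inequality $\Vert\beta\Vert_{L^2}\le\Vert\beta\Vert_\infty$ immediately yields the claim. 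No serious obstacle is anticipated; the only points to verify carefully are the interchange of summation and the convergence of $\sum_{k,\ell}V(\beta,\phi_{k\ell})\phi_{k\ell}$ in $\H$ (hence pointwise, once one notes that $K_{(s,t)}\in\H$ and $\phi_{k\ell}(s,t)=\langle K_{(s,t)},\phi_{k\ell}\rangle_K$ from \eqref{kst2}), together with the applicability of Lemma~\ref{lem:sum}.
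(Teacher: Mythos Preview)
Your proposal is correct and is essentially the paper's own argument written in coordinates: the pointwise Cauchy--Schwarz step $|\beta(s,t)|^2\le\Vert\beta\Vert_K^2\sum_{k,\ell}\phi_{k\ell}(s,t)^2/(1+\lambda\rho_{k\ell})$ is precisely the reproducing-kernel inequality $|\beta(s,t)|=|\langle\beta,K_{(s,t)}\rangle_K|\le\Vert\beta\Vert_K\Vert K_{(s,t)}\Vert_K$ combined with the expansion \eqref{kst} of $\Vert K_{(s,t)}\Vert_K^2$, and both proofs finish by bounding $\sum_{k,\ell}(k\ell)^{2a}/\{1+\lambda(k\ell)^{2D}\}$ via Lemma~\ref{lem:sum}. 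The only cosmetic difference is that the paper integrates $\Vert K_{(s,t)}\Vert_K^2$ over $(s,t)$ (yielding $\Vert\phi_{k\ell}\Vert_{L^2}^2$ in the numerator) whereas you take the supremum first (yielding $\Vert\phi_{k\ell}\Vert_\infty^2$) and then use $\Vert\beta\Vert_{L^2}\le\Vert\beta\Vert_\infty$; since both $\Vert\phi_{k\ell}\Vert_{L^2}^2$ and $\Vert\phi_{k\ell}\Vert_\infty^2$ are bounded by $c(k\ell)^{2a}$, the two routes are equivalent.
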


\begin{proof}
For any $\beta\in\H$ and the reproducing kernel $K$ in \eqref{kst2}, we have $\beta(s,t)=\l\beta,K_{(s,t)}\r_K$, from which we deduce that $|\beta(s,t)|\leq \Vert\beta\Vert_K\,\Vert K_{(s,t)}\Vert_K$, so that
\begin{align*}
\Vert\beta\Vert_{L^2}^2\leq \Vert\beta\Vert_K^2\int_0^1\int_0^1\Vert K_{(s,t)}\Vert_K^2\,ds\,dt\,,
\end{align*}
which yields
\begin{align}\label{kst}
\Vert K_{(s,t)}\Vert_K^2=\sum_{k,\ell}\frac{\l K_{(s,t)},\phi_{k\ell}\r_K^2}{1+\lambda\rho_{k\ell}}=\sum_{k,\ell}\frac{\phi_{k\ell}^2(s,t)}{1+\lambda\rho_{k\ell}}\,.
\end{align}
Therefore, we find
\begin{align*}
\int_0^1\int_0^1\Vert K_{(s,t)}\Vert_K^2\,ds\,dt&=\sum_{k,\ell}\frac{1}{1+\lambda\rho_{k\ell}}\Vert\phi_{k\ell}\Vert^2_{L^2}\leq c\sum_{k,\ell}\frac{k^{2a}\ell^{2a}}{1+\lambda\, k^{2D}\ell^{2D}}\leq c\lambda^{-(2a+1)/(2D)}\,,
\end{align*}
where we used the assumption that $D>a+1/2$ in Assumption~\ref{a201} and Lemma~\ref{lem:sum} in the last step.

\end{proof}

\begin{lemma}\label{lem:etau2}
Under Assumptions~\ref{a1}--\ref{a:x}, for any $\beta\in\H$,
\begin{align}
&\bigg\Vert\tau\bigg[x\otimes\bigg\{\int_0^1\beta(s,\cdot)x(s)ds\bigg\}\bigg]\bigg\Vert_K\leq c_1\,\lambda^{-(2a+1)/(2D)}\,\Vert\beta\Vert_K\times\Vert x\Vert_{L^2}^2\,,\label{norm}\\
&\E\bigg\Vert\tau\bigg[X_i\otimes\bigg\{\int_0^1\beta(s,\cdot)X_i(s)ds\bigg\}\bigg]\bigg\Vert_K^2\leq c_2\,\lambda^{-1/D}\,\Vert\beta\Vert_K^2\,.\label{enorm}
\end{align}
Here, $c_1,c_2>0$ are absolute constants.
\end{lemma}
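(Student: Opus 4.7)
\medskip

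\noindent\textbf{Proof plan for Lemma A.4.}
My plan is to treat the two bounds separately. For the deterministic bound \eqref{norm}, I would start from the identity
\[
\Vert\tau(z)\Vert_K=\sup_{\Vert\gamma\Vert_K=1}\l\tau(z),\gamma\r_K
\]
applied to $z=x\otimes\{\int_0^1\beta(s,\cdot)x(s)\,ds\}$, and use Lemma~\ref{lem:change} to rewrite
\[
\l\tau(z),\gamma\r_K=\int_0^1 f_\beta(t)\,f_\gamma(t)\,dt,
\qquad
f_\beta(t)=\int_0^1\beta(s,t)x(s)\,ds,\ f_\gamma(t)=\int_0^1\gamma(s,t)x(s)\,ds.
\]
Cauchy-Schwarz in $t$ together with Cauchy-Schwarz in $s$ gives $\Vert f_\beta\Vert_{L^2}\leq\Vert x\Vert_{L^2}\Vert\beta\Vert_{L^2}$ and the analogous estimate for $f_\gamma$, so that $|\l\tau(z),\gamma\r_K|\leq \Vert x\Vert_{L^2}^2\Vert\beta\Vert_{L^2}\Vert\gamma\Vert_{L^2}$. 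Taking the supremum over $\gamma$ with $\Vert\gamma\Vert_K=1$ and invoking Lemma~\ref{lem:norm} (twice) converts each $L^2$-norm into a $K$-norm and introduces the factor $\lambda^{-(2a+1)/(2D)}$, which proves \eqref{norm}.

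For the moment bound \eqref{enorm}, the plan is to expand everything in the simultaneous-diagonalization basis from Assumption~\ref{a201}. Writing $\alpha_{k\ell}=V(\beta,\phi_{k\ell})$ and $\xi_{ik\ell}=\l X_i,x_{k\ell}\r_{L^2}$, and using $\phi_{k\ell}=x_{k\ell}\otimes\eta_\ell$ together with orthonormality of $\{\eta_\ell\}$ in $L^2([0,1])$, I would compute
\[
w_i(s,t):=X_i(s)\int_0^1\beta(s',t)X_i(s')\,ds'=X_i(s)\sum_{k,\ell}\alpha_{k\ell}\,\xi_{ik\ell}\,\eta_\ell(t),
\]
which yields $\l w_i,\phi_{k'\ell'}\r_{L^2}=\xi_{ik'\ell'}\sum_{k}\alpha_{k\ell'}\xi_{ik\ell'}$. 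Inserting this into \eqref{expansion2} applied to $\tau(w_i)$ (equivalently, using \eqref{tau}) gives
\[
\E\Vert\tau(w_i)\Vert_K^{2}=\sum_{k',\ell'}\frac{\E\bigl[\xi_{ik'\ell'}^{2}\bigl(\sum_{k}\alpha_{k\ell'}\xi_{ik\ell'}\bigr)^{2}\bigr]}{1+\lambda\rho_{k'\ell'}}.
\]

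The key analytic step is now bounding the numerator. Applying Cauchy-Schwarz and the fourth-moment condition \eqref{moment} twice gives
\[
\E\bigl[\xi_{ik'\ell'}^{2}\bigl(\sum_{k}\alpha_{k\ell'}\xi_{ik\ell'}\bigr)^{2}\bigr]\leq c'\,\E(\xi_{ik'\ell'}^{2})\cdot\E\bigl(\sum_{k}\alpha_{k\ell'}\xi_{ik\ell'}\bigr)^{2},
\]
and the diagonalization identity $V(\phi_{k\ell},\phi_{k'\ell'})=\delta_{kk'}\delta_{\ell\ell'}$ combined with the structure $\phi_{k\ell}=x_{k\ell}\otimes\eta_\ell$ with $\Vert\eta_\ell\Vert_{L^2}=1$ forces $\E(\xi_{ik\ell}^{2})=1$ and
\[
\E\bigl(\sum_{k}\alpha_{k\ell'}\xi_{ik\ell'}\bigr)^{2}=\sum_{k}\alpha_{k\ell'}^{2}.
\]
Substituting back and splitting the double sum, I obtain
\[
\E\Vert\tau(w_i)\Vert_K^{2}\leq c'^{2}\sum_{\ell'}\Bigl(\sum_{k}\alpha_{k\ell'}^{2}\Bigr)\sum_{k'}\frac{1}{1+\lambda\rho_{k'\ell'}}.
\]

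Finally, using $\rho_{k'\ell'}\asymp(k'\ell')^{2D}$ and the integral-comparison argument behind Lemma~\ref{lem:sum}, $\sum_{k'\geq 1}(1+\lambda(k'\ell')^{2D})^{-1}\lesssim \ell'^{-1}\lambda^{-1/(2D)}$; combined with $\sum_{\ell'}\ell'^{-1}\sum_k\alpha_{k\ell'}^{2}\leq\sum_{k,\ell}\alpha_{k\ell}^{2}=V(\beta,\beta)\leq\Vert\beta\Vert_K^{2}$, this yields $\E\Vert\tau(w_i)\Vert_K^{2}\lesssim \lambda^{-1/(2D)}\Vert\beta\Vert_K^{2}\leq \lambda^{-1/D}\Vert\beta\Vert_K^{2}$ for $\lambda<1$, as required. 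The main technical obstacle is the careful bookkeeping in the expansion of $\l w_i,\phi_{k'\ell'}\r_{L^2}$ and verifying that $\E(\xi_{ik\ell}^{2})=1$ from Assumption~\ref{a201}; once these identities are established, the moment estimate and the sum bounds are routine.
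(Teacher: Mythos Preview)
Your argument for \eqref{norm} is essentially the paper's: both write $\Vert\tau(z)\Vert_K=\sup_{\Vert\gamma\Vert_K=1}\l\tau(z),\gamma\r_K$, use Lemma~\ref{lem:change} to expose the bilinear integral, apply Cauchy--Schwarz, and finish with Lemma~\ref{lem:norm} twice.

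For \eqref{enorm} your route genuinely differs from the paper's. The paper never expands $\beta$ in the basis; instead it uses the swap identity of Lemma~\ref{lem:change} to write
\[
\Big\l\tau\Big[X_i\otimes\!\int\beta X_i\Big],\phi_{k\ell}\Big\r_K
=\Big\l\tau\Big[X_i\otimes\!\int\phi_{k\ell}X_i\Big],\beta\Big\r_K,
\]
bounds this by $\Vert\beta\Vert_K\,\Vert\tau[X_i\otimes\!\int\phi_{k\ell}X_i]\Vert_K$, and shows $\E\Vert\tau[X_i\otimes\!\int\phi_{k\ell}X_i]\Vert_K^2\le c_0\sum_{k',\ell'}(1+\lambda\rho_{k'\ell'})^{-1}$. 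This produces the \emph{square} of the double sum and hence the stated $\lambda^{-1/D}$. Your coordinate expansion instead keeps the $\ell'$-structure intact: because the inner product $\l w_i,\phi_{k'\ell'}\r_{L^2}$ only involves coefficients $\alpha_{k\ell'}$ with the \emph{same} $\ell'$, the $k'$-sum decouples and you can exploit the one-variable estimate $\sum_{k'}(1+\lambda(k'\ell')^{2D})^{-1}\lesssim \ell'^{-1}\lambda^{-1/(2D)}$, giving the sharper bound $\lambda^{-1/(2D)}\Vert\beta\Vert_K^2$. This is a real improvement, and it is what the paper actually needs downstream (cf.\ \eqref{snlambda}).

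One caveat: your computation of $\l w_i,\phi_{k'\ell'}\r_{L^2}$ and the identities $\E(\xi_{ik\ell}^2)=1$, $\E(\xi_{ik\ell'}\xi_{ik'\ell'})=\delta_{kk'}$ rely on $\l\eta_\ell,\eta_{\ell'}\r_{L^2}=\delta_{\ell\ell'}$. Assumption~\ref{a201} only postulates the tensor form $\phi_{k\ell}=x_{k\ell}\otimes\eta_\ell$ and the diagonalization \eqref{diag}; orthonormality of $\{\eta_\ell\}$ is not stated there (the paper carries $\Vert\eta_\ell\Vert_{L^2}^2$ explicitly in Lemmas~\ref{lem:l2} and~\ref{lem:wx}). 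In the concrete construction of Proposition~\ref{prop:1.3} the $\eta_\ell$ are the cosine basis and orthonormal, so your argument is valid there, but you should either state this as an extra hypothesis or note that without it the cross-$\ell$ terms do not vanish and the decoupling step needs modification. The paper's operator-swap proof avoids this issue entirely, at the cost of the weaker exponent.
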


\begin{proof}
By the Cauchy-Schwarz inequality,
\begin{align*}
&\bigg\Vert\tau\bigg[x\otimes\bigg\{\int_0^1\beta(s,\cdot)x(s)ds\bigg\}\bigg]\bigg\Vert_K\notag\\
&=\sup_{\Vert\gamma\Vert_K=1}\bigg|\left\l\tau\bigg[x\otimes\bigg\{\int_0^1\beta(s,\cdot)x(s)ds\bigg\}\bigg],\gamma\right\r_K\bigg|\notag\\
&=\sup_{\Vert\gamma\Vert_K=1}\bigg|\int_{[0,1]^3}\beta(s_1,t)\,\gamma(s_2,t)\,x(s_1)\,x(s_2)\,ds_1\,ds_2\,dt\bigg|\notag\\
&\leq\sup_{\Vert\gamma\Vert_K=1}\bigg\{\int_{[0,1]^3}\gamma^2(s_1,t)\,x^2(s_2)\,ds_1\,ds_2\,dt\bigg\}^{1/2}\times\bigg\{\int_{[0,1]^3}\beta^2(s_2,t)\,x^2(s_1)\,ds_1\,ds_2\,dt\bigg\}^{1/2}\notag\\
&=\sup_{\Vert\gamma\Vert_K=1}\Vert\gamma\Vert_{L^2}\times\Vert\beta\Vert_{L^2}\times\Vert x\Vert_{L^2}^2\\
&\leq c_K\,\lambda^{(2a+1)/(4D)}\,\Vert\beta\Vert_{L^2}\times\Vert x\Vert_{L^2}^2\notag\\
&\leq c_K^2\,\lambda^{(2a+1)/(2D)}\,\Vert\beta\Vert_{K}\times\Vert x\Vert_{L^2}^2\,,
\end{align*}
where we used Lemma~\ref{lem:norm}. This proves \eqref{norm}.


In order to prove \eqref{enorm}, by Lemma~\ref{lem:change}, we find
\begin{align*}
&\E\Bigg(\bigg\l\tau\bigg[X_i\otimes\bigg\{\int_0^1\phi_{k\ell}(s,\cdot)X_i(s)ds\bigg\}\bigg],\phi_{k'\ell'}\bigg\r_K^2\Bigg)\\
&=\E\bigg\{\int_{[0,1]^3} X_i(s_1)X_i(s_2)\phi_{k\ell}(s_1,t)\phi_{k'\ell'}(s_2,t)ds_1ds_2dt\bigg\}^2\\
&=\E\bigg\{\int_{[0,1]^3} X_i(s_1)X_i(s_2)x_{k\ell}(s_1)x_{k'\ell'}(s_1)ds_1ds_2\bigg\}^2\times\bigg\{\int_0^1 \eta_\ell(t)\eta_{\ell'}(t)dt\bigg\}^2\\
&=\E\Bigg[\bigg\{\int_0^1 X_i(s_1)x_{k\ell}(s_1)ds_1\bigg\}^2\times\bigg\{\int_0^1X_i(s_2)x_{k'\ell'}(s_2)ds_2\bigg\}^2\Bigg]\times\bigg\{\int_0^1 \eta_\ell(t)\eta_{\ell'}(t)dt\bigg\}^2\,.
\end{align*}
Using the Cauchy-Schwarz inequality, by Assumptions~\ref{a201} and \ref{a:x}, we deduce from the above equation that, for the constant  $c_0>0$ in \eqref{moment},
\begin{align*}
&\E\Bigg(\bigg\l\tau\bigg[X_i\otimes\bigg\{\int_0^1\phi_{k\ell}(s,\cdot)X_i(s)ds\bigg\}\bigg],\phi_{k'\ell'}\bigg\r_K^2\Bigg)\\
&\leq\Bigg[\E\bigg\{\int_0^1 X_i(s_1)x_{k\ell}(s_1)ds_1\bigg\}^4\Bigg]^{\frac{1}{2}}\Bigg[\E\bigg\{\int_0^1X_i(s_2)x_{k'\ell'}(s_2)ds_2\bigg\}^4\Bigg]^{\frac{1}{2}}\times\Vert\eta_\ell\Vert_{L^2}^2\times\Vert\eta_{\ell'}\Vert_{L^2}^2\\
&\leq c_0\,\E\bigg\{\int_0^1 X_i(s_1)x_{k\ell}(s_1)ds_1\bigg\}^2\times\E\bigg\{\int_0^1X_i(s_2)x_{k'\ell'}(s_2)ds_2\bigg\}^2\times\Vert\eta_\ell\Vert_{L^2}^2\times\Vert\eta_{\ell'}\Vert_{L^2}^2\\
&=c_0\,\bigg\{\int_{[0,1]^2} C_X(s_1,s_2)x_{k\ell}(s_1)x_{k\ell}(s_2)ds_1ds_2\bigg\}\\
&\qquad\times\bigg\{\int_{[0,1]^2} C_X(s_1,s_2)x_{k'\ell'}(s_1)x_{k'\ell'}(s_2)ds_1ds_2\bigg\}\times\Vert\eta_\ell\Vert_{L^2}^2\times\Vert\eta_{\ell'}\Vert_{L^2}^2\\
&=c_0\,\l\C_X(x_{k\ell}),x_{k\ell}\r_{L^2}\,\l\C_X(x_{k'\ell'}),x_{k'\ell'}\r_{L^2}\,\Vert\eta_\ell\Vert_{L^2}^2\,\Vert\eta_{\ell'}\Vert_{L^2}^2\\
&=c_0\,V(x_{k\ell}\otimes\eta_{\ell},x_{k\ell}\otimes\eta_{\ell})\,V(x_{k'\ell'}\otimes\eta_{\ell'},x_{k'\ell'}\otimes\eta_{\ell'})\\
&=c_0\,V(\phi_{k\ell},\phi_{k\ell})\,V(\phi_{k'\ell'},\phi_{k'\ell'})=c_0\,.
\end{align*}
In view of \eqref{expansion2}, the above equation implies that
\begin{align}\label{temp3}
&\E\,\bigg\Vert\tau\bigg[X_i\otimes\bigg\{\int_0^1\phi_{k\ell}(s,\cdot)X_i(s)ds\bigg\}\bigg]\bigg\Vert_K^2\notag\\
&=\sum_{k',\ell'}\frac{1}{1+\lambda\rho_{k'\ell'}}\,\E\Bigg(\bigg\l\tau\bigg[X_i\otimes\bigg\{\int_0^1\phi_{k\ell}(s,\cdot)X_i(s)ds\bigg\}\bigg],\phi_{k'\ell'}\bigg\r_K^2\Bigg)\notag\\
&\leq c_0\,\sum_{k',\ell'}\frac{1}{1+\lambda\rho_{k'\ell'}}\,.
\end{align}

Now, using \eqref{expansion2} once again, by Lemma~\ref{lem:change} and Cauchy-Schwarz inequality,
\begin{align*}
&\E\Bigg\Vert\tau\bigg[X_i\otimes\bigg\{\int_0^1\beta(s,\cdot)X_i(s)ds\bigg\}\bigg]\Bigg\Vert_K^2\\
&=\sum_{k,\ell}\frac{1}{1+\lambda\rho_{k\ell}}\,\E\Bigg(\bigg\l\tau\bigg[X_i\otimes\bigg\{\int_0^1\beta(s,\cdot)X_i(s)ds\bigg\}\bigg],\phi_{k\ell}\bigg\r_K^2\Bigg)\\
&=\sum_{k,\ell}\frac{1}{1+\lambda\rho_{k\ell}}\,\E\Bigg(\bigg\l\tau\bigg[X_i\otimes\bigg\{\int_0^1\phi_{k\ell}(s,\cdot)X_i(s)ds\bigg\}\bigg],\beta\bigg\r_K^2\Bigg)\\
&\leq\Vert\beta\Vert_K^2\times\sum_{k,\ell}\frac{1}{1+\lambda\rho_{k\ell}}\,\E\,\bigg\Vert\tau\bigg[X_i\otimes\bigg\{\int_0^1\phi_{k\ell}(s,\cdot)X_i(s)ds\bigg\}\bigg]\bigg\Vert_K^2\,.
\end{align*}
Combining the above result and \eqref{temp3}, by Assumption~\ref{a201} and Lemma~\ref{lem:sum}, we find
\begin{align*}
&\E\Bigg\Vert\tau\bigg[X_i\otimes\bigg\{\int_0^1\beta(s,\cdot)X_i(s)ds\bigg\}\bigg]\Bigg\Vert_K^2\leq c_0\,\Vert\beta\Vert_K^2\times\sum_{k,\ell}\frac{1}{1+\lambda\rho_{k\ell}}\times\sum_{k',\ell'}\frac{1}{1+\lambda\rho_{k'\ell'}}\\
&\leq c\,\Vert\beta\Vert_K^2\times\bigg\{\sum_{k,\ell}\frac{1}{1+\lambda({k\ell})^{2D}}\bigg\}^2\leq c\,\lambda^{-1/D}\,\Vert\beta\Vert_K^2\,,
\end{align*}
which proves \eqref{enorm}.

\end{proof}

\begin{lemma}\label{lem:l2}
Under Assumptions~\ref{a1}--\ref{a:x}, for the $x_{k\ell}$'s in Assumption~\ref{a201}, for any $\beta\in\H$ and $\breve x\in L^2([0,1])$,
\begin{align}\label{eq:core}
&\bigg\Vert\tau\bigg[\breve x\otimes\bigg\{\int_0^1\beta(s,\cdot)\breve x(s)ds\bigg\}\bigg]\bigg\Vert_K^2
\leq \Vert \breve x\Vert_{L^2}^2\,\Vert\beta\Vert_{L^2}^2\sum_{k,\ell}\frac{1}{1+\lambda\rho_{k\ell}}\,\l\breve x,x_{k\ell}\r_{L^2}^2\,\Vert\eta_\ell\Vert^2_{L^2}\,.
\end{align}

\end{lemma}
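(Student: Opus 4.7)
My plan is to compute the $K$-norm on the left-hand side by expanding in the basis $\{\phi_{k\ell}\}$ guaranteed by Assumption~\ref{a201}, evaluating each coefficient via Lemma~\ref{lem:change}, and then applying Cauchy-Schwarz twice to extract the factors $\|\breve x\|_{L^2}^2$ and $\|\beta\|_{L^2}^2$.

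First I would use the orthogonal expansion \eqref{expansion2} applied to $\gamma := \tau[\breve x \otimes \{\int_0^1 \beta(s,\cdot)\breve x(s)\,ds\}]$, which yields
\begin{align*}
\Vert\gamma\Vert_K^2 = \sum_{k,\ell}\frac{\l\gamma,\phi_{k\ell}\r_K^2}{1+\lambda\rho_{k\ell}}\,.
\end{align*}
Next, Lemma~\ref{lem:change} (with $\beta_1 = \beta$, $\beta_2 = \phi_{k\ell}$, $x = \breve x$) together with the product structure $\phi_{k\ell}(s,t) = x_{k\ell}(s)\eta_\ell(t)$ from Assumption~\ref{a201} gives, by Fubini,
\begin{align*}
\l\gamma,\phi_{k\ell}\r_K = \int_{[0,1]^3}\beta(s_1,t)\,x_{k\ell}(s_2)\,\eta_\ell(t)\,\breve x(s_1)\,\breve x(s_2)\,ds_1\,ds_2\,dt = \l\breve x,x_{k\ell}\r_{L^2}\,\l h,\eta_\ell\r_{L^2}\,,
\end{align*}
where $h(t) := \int_0^1 \beta(s,t)\,\breve x(s)\,ds$.

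The remaining step is routine Cauchy-Schwarz. First, $\l h,\eta_\ell\r_{L^2}^2 \leq \Vert h\Vert_{L^2}^2\,\Vert\eta_\ell\Vert_{L^2}^2$, and second,
\begin{align*}
\Vert h\Vert_{L^2}^2 = \int_0^1\bigg(\int_0^1\beta(s,t)\,\breve x(s)\,ds\bigg)^2 dt \leq \Vert\breve x\Vert_{L^2}^2\,\Vert\beta\Vert_{L^2}^2\,.
\end{align*}
Substituting both bounds into the $K$-norm expansion gives exactly the claimed inequality. I do not anticipate a genuine obstacle here — the lemma is essentially a bookkeeping consequence of Assumption~\ref{a201}'s tensor-product basis structure and the identity in Lemma~\ref{lem:change}; the only subtle point to make sure of is that the double sum over $(k,\ell)$ can be performed after pulling the $\ell$-dependent factor $\Vert\eta_\ell\Vert_{L^2}^2$ outside the inner-product squared, which is possible because $\Vert h\Vert_{L^2}^2$ does not depend on $(k,\ell)$.
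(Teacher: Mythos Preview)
Your proposal is correct and follows essentially the same approach as the paper: expand the $K$-norm via \eqref{expansion2}, evaluate each coefficient $\l\gamma,\phi_{k\ell}\r_K$ through Lemma~\ref{lem:change} and the tensor structure $\phi_{k\ell}=x_{k\ell}\otimes\eta_\ell$, then apply Cauchy--Schwarz. The only cosmetic difference is that the paper inserts the basis expansion of $\beta$ before collapsing it back to your integral $\int_{[0,1]^2}\beta(s,t)\breve x(s)\eta_\ell(t)\,ds\,dt$, whereas you obtain this factorization directly via Fubini; your route is slightly more streamlined but not a genuinely different argument.
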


\begin{proof}
By Assumption~\ref{a201}, the representation  $\beta(s,t)=\sum_{k,\ell}V(\beta,\phi_{k\ell})\phi_{k\ell}(s,t)=\sum_{k,\ell}V(\beta,x_{k\ell}\otimes\eta_\ell) x_{k\ell}(s)\eta_\ell(t)$
holds for any $\beta\in\H$, and  the Cauchy-Schwarz inequality and Lemma~\ref{lem:change} yield
\begin{align*}
&\bigg\l\tau\bigg[\breve x\otimes\bigg\{\int_0^1\beta(s,\cdot)\breve x(s)ds\bigg\}\bigg],\phi_{k\ell}\bigg\r_K^2\\
&=\bigg|\int_{[0,1]^3}\phi_{k\ell}(s_1,t)\,\beta(s_2,t)\,\breve x(s_1)\,\breve x(s_2)\,ds_1\,ds_2\,dt\bigg|^2\\
&=\bigg|\sum_{k',\ell'}V(\beta,x_{k'\ell'}\otimes\eta_{\ell'})\int_{[0,1]^3} x_{k\ell}(s_1)\,x_{k'\ell'}(s_2)\,\breve x(s_1)\,\breve x(s_2)\,\eta_{\ell}(t)\,\eta_{\ell'}(t)ds_1\,ds_2\,dt\bigg|^2\\
&=\bigg|\int_0^1\breve x(s_1)\,x_{k\ell}(s_1)\,ds_1\bigg|^2\times\bigg|\sum_{k',\ell'}V(\beta,x_{k'\ell'}\otimes\eta_{\ell'})\int_0^1\breve x(s_2)\,x_{k'\ell'}(s_2)\,ds_2\int_0^1\eta_\ell(t)\,\eta_{\ell'}(t)\,dt\bigg|^2\\
&=\bigg|\int_0^1\breve x(s_1)\,x_{k\ell}(s_1)\,ds_1\bigg|^2\times\bigg|\int_{[0,1]^2}\beta(s_2,t)\,\breve x(s_2)\,\eta_\ell(t)\,ds_2\,dt\bigg|^2\\
&\leq \Vert\beta\Vert_{L^2}^2\,\Vert \breve x\Vert_{L^2}^2\,\Vert\eta_\ell\Vert_{L^2}^2\bigg\{\int_0^1 \breve x(s)\,x_{k\ell}(s)\,ds\bigg\}^2\\
&=\Vert\beta\Vert_{L^2}^2\,\Vert \breve x\Vert_{L^2}^2\,\l\breve x,x_{k\ell}\r_{L^2}^2\,\Vert\eta_\ell\Vert^2_{L^2}\,.
\end{align*}
Therefore, in view of \eqref{expansion2}, we deduce from the above result that
\begin{align*}
\bigg\Vert\tau\bigg[\breve x\otimes\bigg\{\int_0^1\beta(s,\cdot)\breve x(s)ds\bigg\}\bigg]\bigg\Vert_K^2
& =\sum_{k,\ell}\frac{1}{1+\lambda\rho_{k\ell}}\,\bigg\l\tau\bigg[\breve x\otimes\bigg\{\int_0^1\beta(s,\cdot)\breve x(s)ds\bigg\}\bigg],\phi_{k\ell}\bigg\r_K^2\\
&\leq\Vert\beta\Vert_{L^2}^2\,\Vert \breve x\Vert_{L^2}^2\sum_{k,\ell}\frac{1}{1+\lambda\rho_{k\ell}}\,\l\breve x,x_{k\ell}\r_{L^2}^2\,\Vert\eta_\ell\Vert^2_{L^2}\,,
\end{align*}
which proves \eqref{eq:core}. 

\end{proof}

Recall from \eqref{wx} that, for the $\{x_{k\ell}\}_{k,\ell\geq1}$ and $\{\eta_\ell\}_{\ell\geq 1}$ in Assumption~\ref{a201},
\begin{align*}
w^2(X_i)=\Vert X_i\Vert_{L^2}^2\sum_{k,\ell}\frac{1}{1+\lambda\rho_{k\ell}}\,\l X_i,x_{k\ell}\r_{L^2}^2\,\Vert\eta_\ell\Vert^2_{L^2}\,,\quad 1\leq i\leq n\,.
\end{align*}
We have the following lemma regarding the second moment of $w(X)$.

\begin{lemma}\label{lem:wx}
Under Assumptions~\ref{a1}--\ref{a:x}, we have $\E\{w^2(X_i)\}\leq c\,\lambda^{-1/(2D)}$, where
$w(X_i)$ is defined in \eqref{wx} and $ c>0$ is an absolute constant.
\end{lemma}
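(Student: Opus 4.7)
The plan is to exchange expectation and sum, apply Cauchy--Schwarz together with the moment condition \eqref{moment}, and then reduce matters to the series $\sum_{k,\ell}(1+\lambda\rho_{k\ell})^{-1}$, which is controlled by Lemma~\ref{lem:sum}.

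First I would use Fubini to write
\begin{align*}
\E\{w^2(X)\}=\sum_{k,\ell}\frac{\Vert\eta_\ell\Vert^2_{L^2}}{1+\lambda\rho_{k\ell}}\,\E\big\{\Vert X\Vert_{L^2}^2\,\l X,x_{k\ell}\r_{L^2}^2\big\}\,,
\end{align*}
and then bound the inner expectation via Cauchy--Schwarz by $\{\E\Vert X\Vert_{L^2}^4\}^{1/2}\{\E\l X,x_{k\ell}\r_{L^2}^4\}^{1/2}$. The factor $\E\Vert X\Vert_{L^2}^4$ is finite because $\E\exp(c_X\Vert X\Vert_{L^2})<\infty$ by Assumption~\ref{a:x}. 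Applied with $\omega=x_{k\ell}$, the moment condition \eqref{moment} gives $\E\l X,x_{k\ell}\r_{L^2}^4\leq c'\{\E\l X,x_{k\ell}\r_{L^2}^2\}^2$, so the task reduces to evaluating $\E\l X,x_{k\ell}\r_{L^2}^2$.

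Next, the simultaneous diagonalization in Assumption~\ref{a201} pins down this second moment: since $\phi_{k\ell}=x_{k\ell}\otimes\eta_\ell$, splitting the $s$- and $t$-integrals in the definition of $V$ yields
\begin{align*}
1=V(\phi_{k\ell},\phi_{k\ell})=\bigg\{\int_{[0,1]^2}C_X(s_1,s_2)\,x_{k\ell}(s_1)\,x_{k\ell}(s_2)\,ds_1ds_2\bigg\}\times\Vert\eta_\ell\Vert_{L^2}^2=\E\l X,x_{k\ell}\r_{L^2}^2\times\Vert\eta_\ell\Vert_{L^2}^2\,,
\end{align*}
so $\E\l X,x_{k\ell}\r_{L^2}^2=\Vert\eta_\ell\Vert_{L^2}^{-2}$. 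Substituting back, the $\Vert\eta_\ell\Vert_{L^2}^2$ prefactors cancel and I arrive at $\E\{w^2(X)\}\leq c\sum_{k,\ell}(1+\lambda\rho_{k\ell})^{-1}$.

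Finally, I would invoke Lemma~\ref{lem:sum} with $\nu=0$, $r=1$, using $\rho_{k\ell}\asymp(k\ell)^{2D}$ from Assumption~\ref{a201}, to obtain the stated bound $\E\{w^2(X)\}\leq c\,\lambda^{-1/(2D)}$. I do not anticipate any real obstacle; the only minor care points are verifying that the exponential moment in Assumption~\ref{a:x} indeed yields $\E\Vert X\Vert_{L^2}^4<\infty$, and that the normalization $V(\phi_{k\ell},\phi_{k\ell})=1$ in Assumption~\ref{a201} is the ingredient which forces the $\Vert\eta_\ell\Vert_{L^2}^2$ weights to cancel exactly.
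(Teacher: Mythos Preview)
Your proposal is correct and follows essentially the same approach as the paper's proof: Fubini, Cauchy--Schwarz on $\E\{\Vert X\Vert_{L^2}^2\langle X,x_{k\ell}\rangle_{L^2}^2\}$, the moment condition \eqref{moment}, the identity $V(\phi_{k\ell},\phi_{k\ell})=1$ to cancel the $\Vert\eta_\ell\Vert_{L^2}^2$ factor, and finally Lemma~\ref{lem:sum}. The only cosmetic difference is that the paper writes $\Vert\eta_\ell\Vert_{L^2}^2\,\E\langle X,x_{k\ell}\rangle_{L^2}^2=V(\phi_{k\ell},\phi_{k\ell})=1$ directly rather than first isolating $\E\langle X,x_{k\ell}\rangle_{L^2}^2=\Vert\eta_\ell\Vert_{L^2}^{-2}$, but the argument is identical.
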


\begin{proof}

By Assumption~\ref{a201}, for $k,\ell\geq1$,
\begin{align*}
V(\phi_{k\ell},\phi_{k\ell})=\Vert\eta_\ell\Vert_{L^2}^2\int C_{X}(s_1,s_2)x_{k\ell}(s_1) x_{k\ell}(s_2)ds_1ds_2=1\,.
\end{align*}
Using the Cauchy-Schwarz inequality and Assumption~\ref{a:x},
\begin{align*}
\E\{w^2(X_i)\}
%
&=\sum_{k,\ell}\frac{1}{1+\lambda\rho_{k\ell}}\,\Vert\eta_\ell\Vert_{L^2}^2\E\Bigg[\Vert X_i\Vert_{L^2}^2\,\bigg\{\int_0^1 X_i(s)\,x_{k\ell}(s)\,ds\bigg\}^2\Bigg]\\
&\leq\Big(\E\Vert X_i\Vert_{L^2}^4\Big)^{1/2}\sum_{k,\ell}\frac{1}{1+\lambda\rho_{k\ell}}\,\Vert\eta_\ell\Vert_{L^2}^2\Bigg[\E\bigg\{\int_0^1 X_i(s)\,x_{k\ell}(s)\,ds\bigg\}^4\Bigg]^{1/2}\\
&\leq c\Big(\E\Vert X_i\Vert_{L^2}^4\Big)^{1/2}\sum_{k,\ell}\frac{1}{1+\lambda\rho_{k\ell}}\,\Vert\eta_\ell\Vert_{L^2}^2\,\E\bigg\{\int_0^1 X_i(s)\,x_{k\ell}(s)\,ds\bigg\}^2\\
&=c\Big(\E\Vert X_i\Vert_{L^2}^4\Big)^{1/2}\sum_{k,\ell}\frac{1}{1+\lambda\rho_{k\ell}}\,\Vert\eta_\ell\Vert_{L^2}^2\int C_{X}(s_1,s_2)\, x_{k\ell}(s_1)\, x_{k\ell}(s_2)\,ds_1\,ds_2\\
&=c\Big(\E\Vert X_i\Vert_{L^2}^4\Big)^{1/2}\sum_{k,\ell}\frac{1}{1+\lambda\rho_{k\ell}}\,V(\phi_{k\ell},\phi_{k\ell})\\
&=c\Big(\E\Vert X_i\Vert_{L^2}^4\Big)^{1/2}\sum_{k,\ell}\frac{1}{1+\lambda\rho_{k\ell}}\,.
\end{align*}
Since $\E\Vert X_i\Vert_{L^2}^4$ is finite by Assumption~\ref{a:x}, by Assumption~\ref{a201} and Lemma~\ref{lem:sum}, we deduce from the above equation that
\begin{align*}
\E\{w^2(X_i)\}\leq c\,\sum_{k,\ell}\frac{1}{1+\lambda(k\ell)^{2D}}\leq c\,\lambda^{-1/(2D)}\,.
\end{align*}

\end{proof}

\begin{lemma}\label{lem:etk}
Under Assumptions~\ref{a1}--\ref{a:x},
\begin{align*}
&\E\Vert\tau(X\otimes\e)\Vert_K^2=\sum_{k,\ell}\frac{1}{1+\lambda\rho_{k\ell}}\,,\\
&\E\Vert\tau(X\otimes\e)\Vert_K^4\leq c\,\lambda^{-1/D}\,,
\end{align*}
where $c>0$ is an absolute constant.
\end{lemma}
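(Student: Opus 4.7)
The plan is to combine the series expansion \eqref{expansion2} with the defining identity $\l\tau(z),\phi_{k\ell}\r_K=\l z,\phi_{k\ell}\r_{L^2}$ of the Riesz representer \eqref{tau0}, which gives at once
\[
\Vert\tau(X\otimes\e)\Vert_K^2=\sum_{k,\ell}\frac{\l X\otimes\e,\phi_{k\ell}\r_{L^2}^2}{1+\lambda\rho_{k\ell}}.
\]
For the first identity, I would take term-by-term expectation and condition on $X$: Assumption~\ref{a0} gives $\E\{\e(t_1)\e(t_2)|X\}=\delta(t_1,t_2)$, so a Fubini calculation collapses the four-fold integral representing $\E\l X\otimes\e,\phi_{k\ell}\r_{L^2}^2$ to $\int_{[0,1]^3}C_X(s_1,s_2)\phi_{k\ell}(s_1,t)\phi_{k\ell}(s_2,t)\,ds_1ds_2dt=V(\phi_{k\ell},\phi_{k\ell})=1$, where the final equality is the diagonalisation in Assumption~\ref{a201}; summing delivers the claim.

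For the fourth-moment bound, I would factorise $\phi_{k\ell}=x_{k\ell}\otimes\eta_\ell$ and set $A_{k\ell}=\int_0^1 Xx_{k\ell}$, $B_\ell=\int_0^1\e\eta_\ell$, so $\l X\otimes\e,\phi_{k\ell}\r_{L^2}=A_{k\ell}B_\ell$. Squaring the series above and exchanging expectation with summation yields
\[
\E\Vert\tau(X\otimes\e)\Vert_K^4=\sum_{k,\ell,k',\ell'}\frac{\E\{A_{k\ell}^2A_{k'\ell'}^2B_\ell^2B_{\ell'}^2\}}{(1+\lambda\rho_{k\ell})(1+\lambda\rho_{k'\ell'})},
\]
and Cauchy--Schwarz bounds the numerator by $\{\E(A_{k\ell}^4 B_\ell^4)\E(A_{k'\ell'}^4 B_{\ell'}^4)\}^{1/2}$. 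Conditioning on $X$, the $L^2([0,1]^4)$-integrability of $\E\{\e(\cdot)\e(\cdot)\e(\cdot)\e(\cdot)|X\}$ from Assumption~\ref{a:x}, combined with one further Cauchy--Schwarz against $\eta_\ell^{\otimes 4}$, yields $\E[B_\ell^4|X]\leq c\Vert\eta_\ell\Vert_{L^2}^4$ almost surely; the moment condition \eqref{moment} applied with $\omega=x_{k\ell}$ then gives $\E A_{k\ell}^4\leq c(\E A_{k\ell}^2)^2=c\Vert\eta_\ell\Vert_{L^2}^{-4}$, where $\E A_{k\ell}^2=\Vert\eta_\ell\Vert_{L^2}^{-2}$ is read off $V(\phi_{k\ell},\phi_{k\ell})=1$. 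The two $\Vert\eta_\ell\Vert_{L^2}$-factors cancel, so the numerator is bounded by an absolute constant independent of $k,k',\ell,\ell'$. Invoking Assumption~\ref{a201} together with Lemma~\ref{lem:sum} (with $\nu=0$, $r=1$) on the resulting denominator sum produces
\[
\E\Vert\tau(X\otimes\e)\Vert_K^4\leq c\Bigl(\sum_{k,\ell}\frac{1}{1+\lambda(k\ell)^{2D}}\Bigr)^2\leq c\,\lambda^{-1/D},
\]
which is the claimed bound.

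The main obstacle is the rigorous treatment of the fourth conditional moment of $B_\ell$ under the white-noise covariance in Assumption~\ref{a0}: because $\e$ is specified only through a distributional covariance, $B_\ell$ and $B_\ell^4$ must be interpreted through the joint-moment kernels from Assumption~\ref{a:x}, and the uniform $\Vert\eta_\ell\Vert_{L^2}^4$ scaling of $\E[B_\ell^4|X]$ is what ultimately forces the cancellation with the $\Vert\eta_\ell\Vert_{L^2}^{-4}$ coming from $\E A_{k\ell}^4$. Verifying this cancellation carefully, together with justifying the exchange of expectation and the infinite double sum via Fubini--Tonelli on the non-negative integrands (and monotone convergence on the diagonal for the second-moment identity), is the technical crux; once the uniform numerator bound is in place, the remaining work is the direct application of Lemma~\ref{lem:sum} already catalogued above.
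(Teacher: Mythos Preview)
Your proposal is correct and follows essentially the same route as the paper: the series expansion \eqref{expansion2} together with $\l\tau(z),\phi_{k\ell}\r_K=\l z,\phi_{k\ell}\r_{L^2}$, the white-noise collapse to $V(\phi_{k\ell},\phi_{k\ell})=1$ for the second moment, and for the fourth moment the factorisation $\phi_{k\ell}=x_{k\ell}\otimes\eta_\ell$, Cauchy--Schwarz across the double index, the bound $\E[B_\ell^4\mid X]\le c\Vert\eta_\ell\Vert_{L^2}^4$ via the $L^2([0,1]^4)$-assumption on the conditional fourth-moment kernel, the moment condition \eqref{moment} giving $\E A_{k\ell}^4\le c\Vert\eta_\ell\Vert_{L^2}^{-4}$, and finally Lemma~\ref{lem:sum}. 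The only cosmetic difference is that the paper records the off-diagonal identity $\E\{\l X\otimes\e,\phi_{k\ell}\r_{L^2}\l X\otimes\e,\phi_{k'\ell'}\r_{L^2}\}=\delta_{kk'}\delta_{\ell\ell'}$ along the way (needed elsewhere), whereas you use only the diagonal case, which suffices for the lemma itself.
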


\begin{proof}
In view of \eqref{expansion} and \eqref{expansion2}, we have
\begin{align}\label{tk}
\begin{split}
&\tau(X\otimes\e)=\sum_{k,\ell}\frac{\l\tau(X\otimes\e),\phi_{k\ell}\r_{K}}{1+\lambda\rho_{k\ell}}\,\phi_{k\ell}=\sum_{k,\ell}\frac{\l X\otimes\e,\phi_{k\ell}\r_{L^2}}{1+\lambda\rho_{k\ell}}\,\phi_{k\ell}\,,\\
&\Vert\tau(X\otimes\e)\Vert_K^2=\sum_{k,\ell}\frac{\l\tau(X\otimes\e),\phi_{k\ell}\r_{K}^2}{1+\lambda\rho_{k\ell}}=\sum_{k,\ell}\frac{\l X\otimes\e,\phi_{k\ell}\r_{L^2}^2}{1+\lambda\rho_{k\ell}}\,.
\end{split}
\end{align}
Recall from Assumption~\ref{a0} that $C_\e(s_1,s_2)=\E\{\epsilon(s_1)\epsilon(s_2)|X\}=\delta(s_1,s_2)$ (we have assumed $\sigma_\e^2=1$ without loss of generality; see the beginning of Section~\ref{app:thm:minimax}). Observing the definition of $V$ in \eqref{v} and $\tau$ in
\eqref{tau}, we find, for $k,k',\ell,\ell'\geq 1$,
\begin{align}\label{core}
&\E\Big\{\big\l \tau(X\otimes\e),\phi_{k\ell}\big\r_{K}\,\big\l \tau(X\otimes\e),\phi_{k'\ell'}\big\r_{K}\Big\}\notag\\
&=\E\Big\{\big\l X\otimes\e,\phi_{k\ell}\big\r_{L^2}\,\big\l X\otimes\e,\phi_{k'\ell'}\big\r_{L^2}\Big\}\notag\\
&=\E\Bigg[\bigg\{\int_{[0,1]^2}X(s)\epsilon(t)\phi_{k\ell}(s,t)\,ds\,dt\bigg\}\times\bigg\{\int_{[0,1]^2}X(s')\epsilon(t')\phi_{k'\ell'}(s',t')\,ds'\,dt'\bigg\}\Bigg]\notag\\
&=\E\bigg[\int_{[0,1]^4}\E\{\e(t)\e(t')|X\}\,X(s)X(s')\,\phi_{k\ell}(s,t)\,\phi_{k'\ell'}(s',t')\,ds\,ds'\,dt\,dt'\bigg]\notag\\
&=\int_{[0,1]^4}C_\epsilon(t,t')\,C_X(s,s')\,\phi_{k\ell}(s,t)\,\phi_{k'\ell'}(s',t')\,ds\,ds'\,dt\,dt'\notag\\
&=\int_{[0,1]^3}C_X(s,s')\,\phi_{k\ell}(s,t)\,\phi_{k'\ell'}(s',t)\,ds\,ds'\,dt\notag\\
&=V(\phi_{k\ell},\phi_{k'\ell'})=\delta_{kk'}\,\delta_{\ell\ell'}\,,
\end{align}
where we used Assumption~\ref{a201} in the last step. From the above equation we have obtained
\begin{align*}
\E\Big(\big\l X\otimes\e,\phi_{k\ell}\big\r_{L^2}^2\Big)&=1\,.
\end{align*}
In view of \eqref{tk},
\begin{align*}
\E\Vert\tau(X\otimes\e)\Vert_K^2=\sum_{k,\ell}\frac{\E\big(\l X\otimes\e,\phi_{k\ell}\r_{L^2}^2\big)}{1+\lambda\rho_{k\ell}}=\sum_{k,\ell}\frac{1}{1+\lambda\rho_{k\ell}}\,.
\end{align*}

To show the order of $\E\Vert\tau(X\otimes\e)\Vert_K^4$ we use  Assumptions~\ref{a201} and \ref{a:x}
to obtain 
\begin{align}\label{e4}
&\E\big(\l X\otimes\e,\phi_{k\ell}\r_{L^2}^4\big)=\E\bigg\{\int_0^1\int_0^1X(s)\epsilon(t)\phi_{k\ell}(s,t)\,ds\,dt\bigg\}^4\notag\\
&=\E\Bigg[\bigg\{\int_0^1X(s)x_{k\ell}(s)ds\bigg\}^4\int_{[0,1]^4}\E\{\e(t_1)\e(t_2)\e(t_3)\e(t_4)|X\}\eta_\ell(t_1)\eta_\ell(t_2)\eta_\ell(t_3)\eta_\ell(t_4)dt_1dt_2dt_3dt_4\Bigg]\notag\\
&\leq \bigg(\int_{[0,1]^4}\Big[\E\{\e(t_1)\e(t_2)\e(t_3)\e(t_4)|X\}\Big]^2dt_1dt_2dt_3dt_4\bigg)^{1/2}\times\Vert\eta_\ell\Vert_{L^2}^4\times\Bigg[\E\bigg\{\int_0^1X(s)x_{k\ell}(s)ds\bigg\}^2\Bigg]^2\notag\\
&= c\,\Bigg[\E\bigg\{\int_0^1\int_0^1C_X(s_1,s_2)x_{k\ell}(s_1)x_{k\ell}(s_2)ds_1ds_2\bigg\}^2\Bigg]^2=c\,\{V(x_{k\ell},x_{k\ell})\}^2=c\,.
\end{align}
Therefore, we deduce that
\begin{align*}
\E\Vert\tau(X\otimes\e)\Vert_K^4&=\sum_{k_1,\ell_1,k_2,\ell_2}\frac{1}{(1+\lambda\rho_{k_1\ell_1})(1+\lambda\rho_{k_2\ell_2})}\E\Big(\l X\otimes\e,\phi_{k_1\ell_1}\r_{L^2}^2\l X\otimes\e,\phi_{k_2\ell_2}\r_{L^2}^2\Big)\\
&\leq\Bigg[\sum_{k,\ell}\frac{1}{1+\lambda\rho_{k\ell}}\Big\{\E\big(\l X\otimes\e,\phi_{k\ell}\r_{L^2}^4\big)\Big\}^{1/2}\Bigg]^2\leq c\,\bigg\{\sum_{k,\ell}\frac{1}{1+\lambda\rho_{k\ell}}\bigg\}^2\leq c\lambda^{-1/D}\,.
\end{align*}

\end{proof}

The following lemma is a modified version of Lemma~A.1 in \cite{kley2016}, which we use to prove Theorem~\ref{thm:process}.
\begin{lemma}\label{lem:kley}
For any non-decreasing, convex function $\Psi:\mathbb{R}^+\to\mathbb{R}^+$ with $\Psi(0)=0$ and for any real-valued random variable $Z$, let $\Vert Z\Vert_\Psi=\inf\{c>0:\E\{\Psi(|Z|/c)\}\leq 1\}$ denote the Orlicz norm. Let $\{H(s,t):(s,t)\in[0,1]^2\}$ be a separable stochastic process with $\Vert H(s_1,t_1)-H(s_2,t_2)\Vert_\Psi\leq c\,d\{(s_1,t_1),(s_2,t_2)\}$ for any $(s_1,t_1),(s_2,t_2)\in[0,1]^2$ with $d\{(s_1,t_1),(s_2,t_2)\}\geq \overline\eta/2\geq0$ and for some constant $c>0$. Let $\D(w,d)$ denote the packing number of the metric space $([0,1]^2,d)$. Then, for any $\delta>0$, $\eta>\overline\eta$, there exists a random variable $S$ and a constant $K>0$ such that
\begin{align*}
\sup_{d\{(s_1,t_1),(s_2,t_2)\}\leq\delta}|H(s_1,t_1)-H(s_2,t_2)|\leq S+2\sup_{\substack{d\{(s_1,t_1),(s_2,t_2)\}\leq\overline\eta\\(s_1,t_1)\in[0,1]^2}}|H(s_1,t_1)-H(s_2,t_2)|
\end{align*}
and
\begin{align*}
\Vert S\Vert_\Psi\leq K\bigg[\int_{\overline\eta/2}^\eta\Psi^{(-1)}\{\D(\e,d)\}d\e+(\delta+2\overline\eta)\,\Psi^{(-1)}\{\D^2(\eta,d)\}\bigg]\,,
\end{align*}
where $\Psi^{(-1)}$ is the inverse function of $\Psi$, and the set $[0,1]^2$ contains at most $\D(\overline\eta,d)$ points. 

\end{lemma}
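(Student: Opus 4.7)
\textbf{Proof proposal for Lemma~\ref{lem:kley}.} The plan is to adapt the standard chaining argument used in Theorem~2.2.4 of \cite{vaart1996} and Lemma~A.1 of \cite{kley2016}, with the essential modification that the Orlicz-Lipschitz bound on increments only holds for pairs at $d$-distance at least $\overline\eta/2$. The chain will therefore be stopped at that scale, and the uncontrolled ``tail'' of the chain will be absorbed into the second term on the right-hand side, which is where the finite set of at most $\D(\overline\eta,d)$ points comes from.

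First I would fix a geometric grid of scales $\epsilon_k=\eta\,2^{-k}$ for $k=0,1,\dots,K_0$, where $K_0$ is the largest integer with $\epsilon_{K_0}\geq\overline\eta/2$, and for each $k$ choose a maximal $\epsilon_k$-packing $T_k\subset[0,1]^2$ with $|T_k|\leq\D(\epsilon_k,d)$; the set appearing in the right-hand side of the lemma will be $T_{K_0}$, and by monotonicity $|T_{K_0}|\leq\D(\overline\eta,d)$. Let $\pi_k:[0,1]^2\to T_k$ send each point to a nearest element of $T_k$ (so $d\{(s,t),\pi_k(s,t)\}\leq\epsilon_k$). For any pair $(s_1,t_1),(s_2,t_2)$ with $d\{(s_1,t_1),(s_2,t_2)\}\leq\delta$, I would write the telescoping identity
\begin{align*}
H(s_1,t_1)-H(s_2,t_2)
&=\{H(\pi_0(s_1,t_1))-H(\pi_0(s_2,t_2))\}\\
&\quad+\sum_{k=0}^{K_0-1}\{H(\pi_{k+1}(s_1,t_1))-H(\pi_k(s_1,t_1))\}\\
&\quad-\sum_{k=0}^{K_0-1}\{H(\pi_{k+1}(s_2,t_2))-H(\pi_k(s_2,t_2))\}\\
&\quad+\{H(s_1,t_1)-H(\pi_{K_0}(s_1,t_1))\}-\{H(s_2,t_2)-H(\pi_{K_0}(s_2,t_2))\}.
\end{align*}
The last two (residual) terms involve pairs at $d$-distance $\leq\epsilon_{K_0}\leq\overline\eta$, with one endpoint in $T_{K_0}$; their absolute values are therefore each bounded by the second term on the right-hand side of the lemma's conclusion, which contributes the factor $2$ in the statement.

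For the chained part, I would define the random variable $S$ to be the supremum over $(s_1,t_1),(s_2,t_2)\in[0,1]^2$ with $d\{(s_1,t_1),(s_2,t_2)\}\leq\delta$ of the absolute value of the first three lines in the display above, and bound $\|S\|_\Psi$ by the classical maximal Orlicz inequality (Lemma~8.2 of \cite{kosorok2007}): if $Z_1,\dots,Z_N$ have Orlicz norm at most $a$, then $\|\max_i|Z_i|\|_\Psi\leq K\,\Psi^{(-1)}(N)\,a$. At level $k$, the increments $H(\pi_{k+1}(s,t))-H(\pi_k(s,t))$ range over at most $|T_{k+1}|\leq\D(\epsilon_{k+1},d)$ distinct values, and each satisfies $d\{\pi_{k+1}(s,t),\pi_k(s,t)\}\leq\epsilon_k+\epsilon_{k+1}\leq 2\epsilon_k\geq\overline\eta$, so the Orlicz-Lipschitz hypothesis applies and yields Orlicz norm $\lesssim\epsilon_k$. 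Likewise the coarsest-level increment $H(\pi_0(s_1,t_1))-H(\pi_0(s_2,t_2))$ takes at most $|T_0|^2\leq\D^2(\eta,d)$ values with $d$-distance at most $\delta+2\eta\geq\eta\geq\overline\eta$, giving Orlicz norm $\lesssim\delta+2\eta$. Using the convexity-based estimate $\Psi^{(-1)}(N^2)\leq 2\Psi^{(-1)}(N)$ and comparing the resulting dyadic sum
\begin{equation*}
\sum_{k=0}^{K_0-1}\epsilon_k\,\Psi^{(-1)}\bigl(\D^2(\epsilon_{k+1},d)\bigr)
\end{equation*}
to the integral $\int_{\overline\eta/2}^{\eta}\Psi^{(-1)}\{\D(\epsilon,d)\}\,d\epsilon$ (via the monotonicity of $\D(\cdot,d)$ and a standard telescoping step), together with the coarsest-level contribution $(\delta+2\overline\eta)\,\Psi^{(-1)}\{\D^2(\eta,d)\}$, will yield the asserted bound on $\|S\|_\Psi$.

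The main subtlety, and what I expect to be the only non-routine point, is the careful handling of the truncation at scale $\overline\eta$: one must verify that the residual pieces $H(s,t)-H(\pi_{K_0}(s,t))$ are indeed covered by the second supremum in the lemma's conclusion (rather than requiring an Orlicz bound we do not have), and that the replacement of $(\delta+2\eta)$ by $(\delta+2\overline\eta)$ in the final estimate is consistent. This is purely bookkeeping — triangle inequalities and choosing $K_0$ as the largest index with $\epsilon_{K_0}\geq\overline\eta/2$ — but has to be done with care because the separability of $H$ is used precisely to reduce the sup over $[0,1]^2$ to a countable dense subset on which the chaining argument operates.
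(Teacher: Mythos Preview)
The paper does not give its own proof of this lemma; it is simply stated as ``a modified version of Lemma~A.1 in \cite{kley2016}'' and then used. Your chaining sketch is precisely the argument behind that cited result (itself an adaptation of Theorem~2.2.4 in \cite{vaart1996}), so there is nothing in the paper to compare against and your approach is the intended one.

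That said, there is one genuine slip in your outline. When you write that each chaining link ``satisfies $d\{\pi_{k+1}(s,t),\pi_k(s,t)\}\leq\epsilon_k+\epsilon_{k+1}\leq 2\epsilon_k\geq\overline\eta$, so the Orlicz--Lipschitz hypothesis applies'', you are conflating an \emph{upper} bound on the link length with the \emph{lower} bound $\geq\overline\eta/2$ that the hypothesis actually requires. Nothing in your construction prevents a link from having length below $\overline\eta/2$, and for such a link you cannot invoke the assumed increment bound. The fix, which is how the argument in \cite{kley2016} runs, is to take the $T_k$ as nested \emph{maximal} $\epsilon_k$-packings, $T_0\subset T_1\subset\cdots\subset T_{K_0}$: then both endpoints of a level-$k$ link lie in $T_{k+1}$, and any two distinct points of a maximal $\epsilon_{k+1}$-packing are at distance $>\epsilon_{k+1}\geq\epsilon_{K_0}\geq\overline\eta/2$, so each link is either trivial (identical endpoints, increment zero) or long enough for the hypothesis to apply. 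The same observation handles the coarsest-level term, since distinct points of $T_0$ are at distance $>\eta>\overline\eta$. With this repair the remainder of your bookkeeping, including the passage from $(\delta+2\eta)$ to $(\delta+2\overline\eta)$ that you flag, goes through exactly as in the cited reference.
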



\subsection{An example for Assumption~\ref{a201}}\label{app:cond:prop1.3}\label{app:aux:cond}


In this section we provide a concrete example that satisfies Assumption~\ref{a201}. We use the cosine basis of $L^2([0,1])$ defined by
\begin{align}\label{fourier}
%
\eta_1(t) \equiv 1~,~~
\eta_\ell(t) = \sqrt{2}\cos\{(\ell-1)\pi t\}
~~~ \ell=  2,3, \ldots  
\end{align}
Now, the derivatives of these functions are orthogonal with respect to the $L^2$-inner product $\l\cdot,\cdot\r_{L^2}$, that is, for any integer $\theta\geq 0$ and $\ell,\ell'\geq2$, $\l\eta_{\ell}^{(\theta)},\eta_{\ell'}^{(\theta)}\r_{L^2}=\delta_{\ell\ell'}\,\Vert\eta_{\ell}^{(\theta)}\Vert_{L^2}^2=\delta_{\ell\ell'}(\ell-1)^{2\theta}\pi^{2\theta}$; for any $\theta\geq0$ and $\ell\geq2$, $\l\eta_{1}^{(\theta)},\eta_{\ell}^{(\theta)}\r_{L^2}=0$.
Given $\{\eta_{\ell}\}_{\ell\geq1}$, the  functions $x_{k\ell}$ 
are defined as the solution of a series of integral-differential equations whose parameters depend on $\Vert\eta_\ell^{(\theta)}\Vert_{L^2}^2$, such that \eqref{diag} is satisfied.
In particular, we have the following proposition, which provides an example of an eigen-system that satisfies Assumption~\ref{a201}
and is proved in Section~\ref{app:proof:prop1.3}
using  the theory of integro-differential equations.

\begin{proposition}\label{prop:1.3} 
For each $\ell\geq 1$, let $\{(\rho_{k\ell},\tilde x_{k\ell})\}_{k\geq1}$ denote the eigenvalue-eigenfunction pairs of the following integro-differential equations with boundary conditions:
\begin{align}\label{id}
\hspace{-0.4em}
\left\{
\begin{aligned}
&\displaystyle\rho_\ell\int_{0}^1C_X(s,s')\tilde x(s')ds'=(-1)^{m}\tilde x^{(2m)}(s)+\sum_{\theta=0}^{m-1}{m\choose\theta}(-1)^{\theta}\{(\ell-1)\pi\}^{2m-2\theta}\tilde x^{(2\theta)}(s)\,,\\
&\tilde x^{(\theta)}(0)=\tilde x^{(\theta)}(1)=0\,,\qquad\text{ for }m\leq\theta\leq 2m-1\,.
\end{aligned}\right.
\end{align}
For the functions $\eta_\ell$ defined in \eqref{fourier}, let $x_{k\ell}=\l\C_X(\tilde x_{k\ell}),\tilde x_{k\ell}\r^{-1/2}_{L^2}\,\tilde x_{k\ell}$ and $\phi_{k\ell}=x_{k\ell}\otimes\eta_\ell$, where the operator $\C_X$ is defined by \eqref{m1a}. Suppose Condition~{\rm\ref{sudosy}} below
is satisfied for some constant $r\geq 0$.
Then, the pairs $(\rho_{k\ell},\phi_{k\ell})_{k,\ell}$ satisfy Assumption~\ref{a201} with $D=m+r+1$ and $a=r+1$.
\end{proposition}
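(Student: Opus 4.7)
The plan is to verify the three requirements of Assumption~\ref{a201}: the bi-orthogonality relations \eqref{diag}, the growth rate $\rho_{k\ell}\asymp(k\ell)^{2D}$ with $D=m+r+1$ together with the sup-norm bound with exponent $a=r+1$, and the completeness of the $\{\phi_{k\ell}\}$-expansion. The central observation making everything go through is that the right-hand side of the ODE \eqref{id} equals $L_\ell\tilde x$, where
\[
L_\ell:=\bigl(-\partial_s^2+\{(\ell-1)\pi\}^2\bigr)^m
\]
is a Bessel-type operator of order $2m$, and the Neumann-type conditions $\tilde x^{(\theta)}(0)=\tilde x^{(\theta)}(1)=0$ for $m\leq\theta\leq 2m-1$ are precisely the natural boundary conditions that make $L_\ell$ self-adjoint. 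Consequently, the quadratic form of $L_\ell$ admits the explicit representation $\langle L_\ell x,y\rangle_{L^2}=\sum_{\theta=0}^m\binom{m}{\theta}\{(\ell-1)\pi\}^{2(m-\theta)}\int_0^1 x^{(\theta)}(s)y^{(\theta)}(s)\,ds$ for all $y\in H^m([0,1])$.

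I would first use this to establish the two diagonalization identities. Expanding $V(\phi_{k\ell},\phi_{k'\ell'})$ from \eqref{v} and $J(\phi_{k\ell},\phi_{k'\ell'})$ from \eqref{jm} with $\phi_{k\ell}=x_{k\ell}\otimes\eta_\ell$, and integrating in $t$ first using the orthogonality identity $\int_0^1\eta_\ell^{(m-\theta)}\eta_{\ell'}^{(m-\theta)}\,dt=\delta_{\ell\ell'}\{(\ell-1)\pi\}^{2(m-\theta)}$ (understood as $0$ when $\ell=1$ and $m-\theta\geq 1$) produces
\[
V(\phi_{k\ell},\phi_{k'\ell'})=\delta_{\ell\ell'}\langle\C_X x_{k\ell},x_{k'\ell}\rangle_{L^2},\qquad J(\phi_{k\ell},\phi_{k'\ell'})=\delta_{\ell\ell'}\langle L_\ell x_{k\ell},x_{k'\ell}\rangle_{L^2}.
\]
Since $L_\ell$ is self-adjoint with positive discrete spectrum and $\C_X$ is self-adjoint, compact and positive (by Assumption~\ref{a1}), the generalized eigenvalue problem $L_\ell\tilde x=\rho\,\C_X\tilde x$ in \eqref{id} yields a sequence $\{\tilde x_{k\ell}\}_{k\geq 1}$ that is orthogonal with respect to $\langle\C_X\cdot,\cdot\rangle_{L^2}$, and the normalization $x_{k\ell}=\langle\C_X\tilde x_{k\ell},\tilde x_{k\ell}\rangle^{-1/2}_{L^2}\tilde x_{k\ell}$ gives $\langle\C_X x_{k\ell},x_{k'\ell}\rangle_{L^2}=\delta_{kk'}$. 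This immediately yields $V(\phi_{k\ell},\phi_{k'\ell'})=\delta_{kk'}\delta_{\ell\ell'}$, and substituting the ODE \eqref{id} into $\langle L_\ell x_{k\ell},x_{k'\ell}\rangle_{L^2}$ yields $J(\phi_{k\ell},\phi_{k'\ell'})=\rho_{k\ell}\delta_{kk'}\delta_{\ell\ell'}$.

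For the growth rate, I would apply the Courant--Fischer min-max principle to \eqref{id}. The operator $L_\ell$ has, for each fixed $\ell$, purely discrete spectrum whose eigenvalues grow in $k$ at rate $k^{2m}$, and carries an $\ell$-dependent scaling of its quadratic form of order $\ell^{2m}$; combined with the polynomial decay rate of the eigenvalues of $\C_X$ at order $k^{-2(r+1)}$ encoded by Condition~\ref{sudosy}, this gives $\rho_{k\ell}\asymp(k\ell)^{2(m+r+1)}$, so $D=m+r+1$. The sup-norm bound $\|\phi_{k\ell}\|_\infty\leq c(k\ell)^{r+1}$ then follows from $\|\eta_\ell\|_\infty\leq\sqrt{2}$ together with a one-dimensional Sobolev estimate for $x_{k\ell}$: the norms $\|x_{k\ell}^{(\theta)}\|_{L^2}$ for $\theta\leq m$ are controlled via the $J$-identity by $\rho_{k\ell}^{1/2}\asymp(k\ell)^{m+r+1}$, and the Sobolev embedding $H^m([0,1])\hookrightarrow C([0,1])$ (valid since $m>1/2$) converts this into the desired $L^\infty$ bound. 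Finally, the expansion $\beta=\sum_{k,\ell}V(\beta,\phi_{k\ell})\phi_{k\ell}$ converging in $\|\cdot\|_K$ follows from the bi-orthogonality combined with the completeness of $\{\tilde x_{k\ell}\otimes\eta_\ell\}_{k,\ell\geq 1}$ in $\H$, which is itself a direct consequence of the spectral theorem for the self-adjoint generalized eigenvalue problem.

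The main obstacle will be the two-parameter Weyl-type estimate $\rho_{k\ell}\asymp(k\ell)^{2(m+r+1)}$: although the individual scalings in $k$ and in $\ell$ come cleanly from one-parameter min-max arguments, interlocking them requires a uniform-in-$\ell$ analysis of $L_\ell$ and a careful application of Condition~\ref{sudosy} to extract the combined rate while avoiding constants that hide one parameter inside another. The remaining steps --- the orthogonality identities, the sup-norm bound and the completeness --- are essentially standard consequences of the ODE structure and the spectral theorem.
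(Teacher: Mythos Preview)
Your treatment of the bi-orthogonality identities \eqref{diag} is essentially correct and matches what the paper does at the end of its proof: both arguments boil down to integrating by parts in $J$ using the boundary conditions in \eqref{id} and the orthogonality of the cosine-derivative system $\{\eta_\ell^{(\theta)}\}$. The completeness argument via the spectral theorem is likewise fine.

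The genuine gap is the sup-norm bound. Your Sobolev embedding argument cannot deliver the exponent $a=r+1$. From the $J$-identity you only get
\[
\binom{m}{\theta}\,\{(\ell-1)\pi\}^{2(m-\theta)}\,\|x_{k\ell}^{(\theta)}\|_{L^2}^2\ \le\ \rho_{k\ell}\asymp(k\ell)^{2(m+r+1)},
\]
so $\|x_{k\ell}^{(\theta)}\|_{L^2}\lesssim k^{m+r+1}\ell^{r+1+\theta}$. Plugging this into $H^1\hookrightarrow C$ yields $\|x_{k\ell}\|_\infty\lesssim k^{m+r+1}\ell^{r+2}$, which is off by a factor $k^m\ell$ from the required $(k\ell)^{r+1}$; using the full $H^m$-norm only makes matters worse. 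Since Assumption~\ref{a201} demands $D>a+1/2$, an exponent of order $D=m+r+1$ is fatal. The paper obtains the sharp bound by an entirely different route: it writes $\tilde x_{k\ell}=\psi_{k\ell}^{(r)}$, differentiates the integro-differential equation $r+2$ times (using the jump condition in Condition~\ref{sudosy}) to convert it into a pure ODE of order $2D=2(m+r+1)$, and then invokes the Birkhoff--Tamarkin asymptotic formulae for eigenfunctions of regular boundary value problems. That explicit representation gives $\|\psi_{k\ell}^{(r+\theta)}\|_\infty\asymp\|\psi_{k\ell}^{(r+\theta)}\|_{L^2}\asymp(k\ell)^{r+\theta}$ for every $0\le\theta\le m$; the normalization constant $\langle\C_X\psi_{k\ell}^{(r)},\psi_{k\ell}^{(r)}\rangle_{L^2}\asymp(k\ell)^{-2}$ then produces the correct $(k\ell)^{r+1}$.

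For the eigenvalue growth, your min-max sketch is a different strategy from the paper's, and the ``main obstacle'' you flag is real: the quadratic form of $L_\ell$ behaves like $(k^2+\ell^2)^m$ on cosine modes, not like $(k\ell)^{2m}$, so the product structure $(k\ell)^{2D}$ does not drop out of a na\"ive variational comparison. The paper sidesteps this completely. The same transformation to a pure ODE of order $2D$ mentioned above is followed by the rescaling $\tilde\psi(s)=\psi(s/\ell)$, which pulls out a factor $\ell^{2D}$ and leaves an $\ell$-independent differential equation (on the stretched interval) to which Birkhoff's eigenvalue asymptotics $\breve\varrho_k\asymp k$ apply uniformly; recombining gives $\rho_{k\ell}\asymp(k\ell)^{2D}$. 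Thus in the paper the eigenvalue rate and the sharp sup-norm bound come from the \emph{same} Birkhoff--Tamarkin machinery, and decoupling them as you propose loses exactly the precision needed for the sup-norm.
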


For a constant $r\geq0$, we now state as follows Condition~\ref{sudosy} for Proposition~\ref{prop:1.3}, which was proposed in \cite{shang2015}. Let $\Omega_+=\{(s,t)\in[0,1]:s>t\}$, $\Omega_-=\{(s,t)\in[0,1]:s<t\}$ and let ${\rm cl}(A)$ denote the closure of $A\subset[0,1]^2$. Recall that $C_X$ defined in \eqref{m2a} is the covariance function of $X$. 

\begin{newcondition}{B($r$)}\label{sudosy}

Suppose that there exists a constant $r\geq0$ such that one of the following two assumptions is satisfied: (i) $r=0$; (ii) $r\geq 1$, and for any $j=0,1,\ldots,r-1$, $C_X^{(j,0)}(0,t)=0$, for any $0\leq t\leq 1$. Assume $C_X$ satisfies the following {\it pseudo SY conditions} of order $r$:
\begin{enumerate}[label=(\arabic*),nolistsep]
\item $L(s_1,s_2):=C_X^{(r,r)}(s_1,s_2)$ is continuous on $[0,1]^2$. All the partial derivatives of $L(s_1,s_2)$ up to order $2m+2r+2$ are continuous on $\Omega_+\cup\Omega_-$, and continuously extendable to ${\rm cl}(\Omega_+)$ and ${\rm cl}(\Omega_+)$.

\item $a(s):=L_-^{(1,0)}(s,s)-L_+^{(1,0)}(s,s)$ has a positive lower bound for any $s\in[0,1]$, where $L_-^{(1,0)}$ and $L_+^{(1,0)}$ are two different extensions of $L^{(1,0)}$ to $[0,1]^2$ that are continuous on ${\rm cl}(\Omega_-)$ and ${\rm cl}(\Omega_+)$, respectively.

\item $L^{(2,0)}_+(s_1,s_2)$ is bounded over $[0,1]^2$, where $L_+^{(2,0)}$ is the extension of $L^{(2,0)}$ to $[0,1]^2$ that is continuous on ${\rm cl}(\Omega_+)$. 
\end{enumerate}
In addition, assume that the integro-differential equations with the boundary conditions in Proposition~\ref{prop:1.3} are \emph{regular} in the the sense of \cite{birk1908} (see Definition~\ref{def:regular} below).

\end{newcondition}

\begin{definition}[Regular boundary conditions of even order; \citealp{birk1908}]\label{def:regular}
Consider the linear differential equation of order $2k$ in $\phi$: $\phi^{(2k)}(x)+\sum_{\ell=0}^{2k-2}p_\ell(x)\phi^{(\ell)}(x)+\gamma\phi(x)=0$ on an interval $[\mathfrak a,\mathfrak b]$ with $2k$ linear homogeneous boundary conditions $W_i(\phi)=0$ in $\phi(\mathfrak a),\phi'(\mathfrak a),\phi^{(k-1)}(\mathfrak a),\ldots,\phi(\mathfrak b),\phi'(\mathfrak b),\phi^{(k-1)}(\mathfrak b)$, for $1\leq i\leq 2k$. Applying the linear transformation on the $W_i$'s to obtain the normalized boundary conditions in the form $W_i(\phi)=W_{i\mathfrak a}(\phi)+W_{i\mathfrak b}(\phi)=0$, where $W_{i\mathfrak a}(\phi)=a_{i}\phi^{(j_i)}(\mathfrak a)+\sum_{\ell=0}^{j_i-1}a_{i\ell}\phi^{(\ell)}(\mathfrak a)$, $W_{i\mathfrak b}(\phi)=b_{i}\phi^{(j_i)}(\mathfrak b)+\sum_{\ell=0}^{j_i-1}b_{i\ell}\phi^{(\ell)}(\mathfrak b)$, and where $j_1\geq\ldots\geq j_n$ are such that no successive three of them are equal. Define $\zeta_0,\zeta_1,\zeta_2\in\mathbb{C}$ through the identity
\begin{align*}
\zeta_0+\zeta_1s+\frac{\zeta_2}{s}\equiv
\left|\begin{matrix}
a_1w_1^{j_1} & \cdots& a_1w_{k-1}^{j_1}&(a_1+b_1s)w_k^{j_1}&(a_1+\frac{b_1}{s})w_{k+1}^{j_1}&b_1w_{k+2}^{j_1}&\cdots&b_1w_n^{j_1}\\
a_2w_1^{j_2} & \cdots& a_2w_{k-1}^{j_2}&(a_2+b_2s)w_k^{j_2}&(a_2+\frac{b_2}{s})w_{k+1}^{j_2}&b_2w_{k+2}^{j_2}&\cdots&b_2w_n^{j_2}\\
\cdots & \cdots& \cdots&\cdots&\cdots&\cdots&\cdots&\cdots\\
a_nw_1^{j_n} & \cdots& a_nw_{k-1}^{j_n}&(a_n+b_ns)w_k^{j_n}&(a_n+\frac{b_n}{s})w_{k+1}^{j_2}&b_nw_{k+2}^{j_n}&\cdots&b_nw_n^{j_n}\\
\end{matrix}\right|\,,
\end{align*}
where the $w_i$'s are the $2k$-th root of unity ordered according to ${\rm Re}(\rho w_1)<{\rm Re}(\rho w_2)<\cdots<{\rm Re}(\rho w_n)$ and $\rho=\gamma^{1/(2k)}$. Then, the boundary conditions $W_1(\phi),\ldots,W_{2k}(\phi)$ are \emph{regular} if $\zeta_1\neq0$ and $\zeta_2\neq 0$.

\end{definition}

\subsection{Proof of Proposition~\ref{prop:1.3}}\label{app:proof:prop1.3}

We follow the proof of Proposition~2.2 in \cite{shang2015} and we assume that $a(\cdot)$ in Condition~\ref{sudosy} satisfies $a\equiv 1$ without loss of generality. For the integro-differential equation \eqref{id}, taking $\ell=1$ yields
\begin{align*}
\begin{cases}
\displaystyle\rho_{1}\int_{0}^1C_X(s_1,s_2)\,x(s_2)\,ds_2=(-1)^{m}\,\tilde x^{(2\theta)}(s_1)\,,\\
\tilde x^{(\theta)}(0)=\tilde x^{(\theta)}(1)=0\,,\hspace{1cm} \text{ for }m\leq\theta\leq 2m-1\,.
\end{cases}
\end{align*}
This case was proved by \cite{shang2015}. We therefore focus on the case where $\ell\geq2$ in equation \eqref{id}, which is equivalent to, for $\ell\geq2$,
\begin{align}\label{int0}
\begin{cases}
\displaystyle\rho_{\ell}\int_{0}^1C_X(s_1,s_2)\,x(s_2)\,ds_2=\sum_{\theta=0}^m{m\choose\theta}(-1)^{\theta}\{(\ell-1)\pi\}^{2m-2\theta}\,\tilde x^{(2\theta)}(s_1)\,,\\
\tilde x^{(\theta)}(0)=\tilde x^{(\theta)}(1)=0\,,\hspace{1cm} \text{ for }m\leq\theta\leq 2m-1\,.
\end{cases}
\end{align}
By virtue of simple presentation, without loss of generality, we change the subscript of $\rho_\ell$ to $\ell-1$ in \eqref{int0} to write, for $\ell\geq 1$,
\begin{align}\label{int}
\begin{cases}
\displaystyle\rho_{\ell}\int_{0}^1C_X(s_1,s_2)\,x(s_2)\,ds_2=\sum_{\theta=0}^m{m\choose\theta}(-1)^{\theta}(\ell\pi)^{2m-2\theta}\,\tilde x^{(2\theta)}(s_1)\,,\\
\tilde x^{(\theta)}(0)=\tilde x^{(\theta)}(1)=0\,,\hspace{1cm} \text{ for }m\leq\theta\leq 2m-1\,.
\end{cases}
\end{align}
In the sequel we show the results in Proposition~\ref{prop:1.3} based on \eqref{int}. Let $N(s_1,s_2)=L_+^{(2,0)}(s_1,s_2)$ for $L_+$ in Section~\ref{app:cond:prop1.3} and let $M(s_1,s_2)$ denote its reciprocal kernel such that the following reciprocal property (\citealp{tam1927}) is satisfied
\begin{align}\label{repro}
M(s_1,s_2)+N(s_1,s_2)=\int_0^1M(s_1,\xi)N(\xi,s_2)d\xi=\int_0^1N(s_1,\xi)M(\xi,s_2)d\xi\,.
\end{align}


For $0\leq\theta\leq m$, let $c_\theta={m\choose\theta}(-1)^{\theta}\pi^{2m-2\theta}$. For $r$ in Condition~\ref{sudosy}, we have \eqref{int} is equivalent to the following equation
\begin{align}\label{integro2}
\begin{cases}
&\displaystyle\sum_{\theta=0}^mc_\theta\,\ell^{2m-2\theta}\psi^{(2\theta+r)}(s_1)=\rho_\ell \int_{0}^1C_X(s_1,s_2)\,\psi^{(r)}(s_2)\,ds_2\,,\\
&\psi^{(\upsilon)}(0)=\psi^{(\upsilon)}(1)=0\,,\text{ for }m+r\leq \upsilon\leq 2m+r-1\,,\\
&\psi^{(\upsilon)}(1)=0\,,\text{ for }0\leq \upsilon\leq r-1\,.
\end{cases}
\end{align}
That is, $\tilde x=\psi^{(r)}$ being the solution to \eqref{int} is equivalent to $\psi$ being the solution to \eqref{integro2}. From the first equation in \eqref{integro2}, by integration by parts we find
\begin{align*}
\sum_{\theta=0}^mc_\theta\,\ell^{2m-2\theta}\psi^{(2\theta+r)}(s_1)=(-1)^r\rho_\ell\int_{0}^1C_X^{(0,r)}(s_1,s_2)\,\psi(s_2)\,ds_2\,,
\end{align*}
due to the assumption that $C_X^{(0,\upsilon)}(s,0)=1$ for $0\leq\upsilon\leq r-1$, and that $\psi^{(\upsilon)}(1)=0$, for $0\leq \upsilon\leq r-1$ in \eqref{integro2}. Taking partial derivatives of the above equation yields that, for $0\leq\upsilon\leq r$,
\begin{align*}
&\sum_{\theta=0}^mc_\theta\,\ell^{2m-2\theta}\psi^{(2\theta+r+\upsilon)}(s_1)=(-1)^r\rho_\ell \int_{0}^1C_X^{(\upsilon,r)}(s_1,s_2)\,\psi(s_2)\,ds_2\,,\\
&\sum_{\theta=0}^mc_\theta\,\ell^{2m-2\theta}\psi^{(2\theta+2r+1)}(s_1)=(-1)^r\rho_\ell \int_{0}^1L^{(1,0)}(s_1,s_2)\,\psi(s_2)\,ds_2\,,\\
&\sum_{\theta=0}^mc_\theta\,\ell^{2m-2\theta}\psi^{(2\theta+2r+2)}(s_1)=(-1)^{r+1}\rho_\ell\,\psi(s_1)+(-1)^r\rho_\ell \int_{0}^1L_+^{(2,0)}(s_1,s_2)\,\psi(s_2)\,ds_2\,,
\end{align*}
due to the fact that $a(s)=1$ and $\int_{0}^1L^{(1,0)}(s_1,s_2)\psi(s_2)\,ds_2=\int_{s_1}^1L_-^{(1,0)}(s_1,s_2)\psi(s_2)ds_2+\int_0^{s_1}L_+^{(1,0)}(s_1,s_2)\psi(s_2)ds_2$. Hence we find that \eqref{integro2} is equivalent to the following boundary value problem
\begin{align}\label{integro3}
\begin{cases}
&\displaystyle \sum_{\theta=0}^mc_\theta\,\ell^{2m-2\theta}\psi^{(2\theta+2r+2)}(s_1)+(-1)^r\rho_\ell\bigg\{\psi(s_1)-\int_{0}^1L_+^{(2,0)}(s_1,s_2)\psi(s_2)ds_2\bigg\}=0\,,\\
&\psi^{(\upsilon)}(0)=\psi^{(\upsilon)}(1)=0\,,\qquad \text{ for }m+r\leq \upsilon\leq 2m+r-1\,,\\
&\psi^{(\upsilon)}(1)=0\,,\qquad\qquad\qquad\  \text{ for }0\leq \upsilon\leq r-1\,,\\
&\displaystyle \sum_{\theta=0}^mc_\theta\,\ell^{2m-2\theta}\psi^{(2\theta+r+\upsilon)}(0)=(-1)^{r}\rho_\ell \int_0^1C_X^{(\upsilon,r)}(0,s)\psi(s)ds\,,\ \  0\leq \upsilon\leq r+1\,.
\end{cases}
\end{align}

Recall from the paragraph above \eqref{repro} that $N(s_1,s_2)=L_+^{(2,0)}(s_1,s_2)$. For the first equation in \eqref{integro3}, using the reciprocal property in \eqref{repro},
\begin{align*}
&(-1)^{r+1}\sum_{\theta=0}^mc_\theta\,\ell^{2m-2\theta}\int_0^1M(\xi,s)\,\psi^{(2\theta+2r+2)}(s)ds\\
&=\rho_\ell \bigg\{\int_0^1M(\xi,s)\psi(s)ds-\int_0^1\int_0^1M(\xi,s_1)N(s_1,s_2)\psi(s_2)ds_1ds_2\bigg\}\\
&=\rho_\ell\bigg[\int_0^1M(\xi,s)\psi(s)ds-\int_0^1\{M(\xi,s)+N(\xi,s)\}\psi(s)ds\bigg]\\
&=-\rho_\ell\breve\int_0^1N(\xi,s)\psi(s)ds=-\rho _\ell \int_{0}^1L_+^{(2,0)}(\xi,s)\psi(s)\,ds\,.
\end{align*}
Combining the above equation with the first equation of \eqref{integro3} yields
\begin{align}\label{s10}
\rho_\ell\psi(s_1)&=(-1)^{r+1}\sum_{\theta=0}^mc_\theta\,\ell^{2m-2\theta}\left\{\psi^{(2\theta+2r+2)}(s_1)-\int_0^1M(s_1,s_2)\,\psi^{(2\theta+2r+2)}(s_2)ds_2\right\}\,.
\end{align}
Combining \eqref{s10} with the first equation of \eqref{integro3} and using the reciprocal property in \eqref{repro}, we find
\begin{align*}
&(-1)^{r+1}\sum_{\theta=0}^mc_\theta\,\ell^{2m-2\theta}\psi^{(2\theta+2r+2)}(s_1)\\
&=\rho_\ell\psi(s_1)-\int_0^1N(s_1,s_2)\\
&\quad\times\left[(-1)^{r+1}\sum_{\theta=0}^mc_\theta\,\ell^{2m-2\theta}\left\{\psi^{(2\theta+2r+2)}(s_2)-\int_0^1M(s_2,\xi)\,\psi^{(2\theta+2r+2)}(\xi)d\xi\right\}\right]ds_2\\
&=\rho_\ell\psi(s_1)-(-1)^{r+1}\sum_{\theta_1=0}^mc_\theta\,\ell^{2m-2\theta}\int_0^1N(s_1,s_2)\psi^{(2\theta+2r+2)}(s_2)ds_2\\
&\quad+(-1)^{r+1}\sum_{\theta=0}^mc_\theta\,\ell^{2m-2\theta}\int_0^1\int_0^1N(s_1,s_2)M(s_2,\xi)\psi^{(2\theta+2r+2)}(\xi)d\xi ds_2\\
&=\rho_\ell\psi(s_1)-(-1)^{r+1}\sum_{\theta=0}^mc_\theta\,\ell^{2m-2\theta}\int_0^1N(s_1,s_2)\psi^{(2\theta+2r+2)}(s_2)ds_2\\
&\quad+(-1)^{r+1}\sum_{\theta=0}^mc_\theta\,\ell^{2m-2\theta}\int_0^1N(s_1,\xi)\psi^{(2\theta+2r+2)}(\xi)d\xi\\
&\quad+(-1)^{r+1}\sum_{\theta=0}^mc_\theta\,\ell^{2m-2\theta}\int_0^1M(s_1,\xi)\psi^{(2\theta+2r+2)}(\xi)d\xi\\
&=\rho_\ell\psi(s_1)+(-1)^{r+1}\sum_{\theta=0}^mc_\theta\,\ell^{2m-2\theta}\int_0^1M(s_1,s_2)\psi^{(2\theta+2r+2)}(s_2)ds_2\,.
\end{align*}

Using integration by parts, we deduce from the above equation that
\begin{align}\label{temp}
&(-1)^{r+1}\sum_{\theta=0}^mc_\theta\,\ell^{2m-2\theta}\psi^{(2\theta+2r+2)}(s_1)\notag\\
&=\rho_\ell\psi(s_1)+(-1)^{r+1}\sum_{\theta=0}^mc_\theta\,\ell^{2m-2\theta}\int_0^1M^{(0,2\theta+2r+2)}(s_1,s_2)\,\psi(s_2)ds_2\notag\\
&+\sum_{\theta=0}^m\sum_{j=1}^{2\theta+2r+2}(-1)^{r+j}c_\theta\,\ell^{2m-2\theta}\Big\{M^{(0,j-1)}(s_1,1)\psi^{(2\theta+2r+2-j)}(1)-M^{(0,j-1)}(s_1,0)\psi^{(2\theta+2r+2-j)}(0)\Big\}\notag\\
&=\rho_\ell\psi(s_1)+ L_\ell\psi(s_1)\,,
\end{align}
if we define
\begin{align}\label{Lell}
L_\ell\psi(s_1)&=(-1)^{r+1}\sum_{\theta=0}^mc_\theta\,\ell^{2m-2\theta}\int_0^1M^{(0,2\theta+2r+2)}(s_1,s_2)\,\psi(s_2)ds_2+\sum_{\theta=0}^m\sum_{j=1}^{2\theta+2r+2}(-1)^{r+j}c_\theta\,\ell^{2m-2\theta}\notag\\
&\quad\times\Big\{M^{(0,j-1)}(s_1,1)\,\psi^{(2\theta+2r+2-j)}(1)-M^{(0,j-1)}(s_1,0)\,\psi^{(2\theta+2r+2-j)}(0)\Big\}\,.
\end{align}

For the last boundary value condition in \eqref{integro3}, by \eqref{s10} and integration by parts, we find that, for $0\leq \upsilon\leq r+1$,
\begin{align*}
&(-1)^{r}\sum_{\theta=0}^mc_\theta\,\ell^{2m-2\theta}\psi^{(2\theta+r+\upsilon)}(0)\\
&=\int_0^1C_X^{(r,\upsilon)}(s_1,0)\Bigg[(-1)^{r+1}\sum_{\theta=0}^mc_\theta\,\ell^{2m-2\theta}\bigg\{\psi^{(2\theta+2r+2)}(s_1)-\int_0^1M(s_1,s_2)\,\psi^{(2\theta+2r+2)}(s_2)ds_2\bigg\}\Bigg]ds_1\\
&=(-1)^{r+1}\sum_{\theta=0}^mc_\theta\,\ell^{2m-2\theta}\bigg\{\int_0^1C_X^{(r,\upsilon)}(s_1,0)\,\psi^{(2\theta+2r+2)}(s_1)ds_1\\
&\qquad-\int_0^1\int_0^1C_X^{(r,\upsilon)}(s_1,0)M(s_1,s_2)\,\psi^{(2\theta+2r+2)}(s_2)ds_2ds_1\bigg\}\\
&=(-1)^{r+1}\sum_{\theta=0}^mc_\theta\,\ell^{2m-2\theta}\Bigg[\int_0^1C_X^{(2\theta+3r+2,\upsilon)}(s_1,0)\,\psi(s_1)ds_1\\
&\qquad-\int_0^1\int_0^1C_X^{(r,\upsilon)}(s_1,0)M^{(0,2\theta+2r+2)}(s_1,s_2)\,\psi(s_2)ds_2ds_1\\
&\qquad+\sum_{j=1}^{2\theta+2r+2}(-1)^{j-1}\bigg\{C_X^{(r+j-1,\upsilon)}(1,0)\,\psi^{(2\theta+2r+2-j)}(1)-C_X^{(r+j-1,\upsilon)}(0,0)\,\psi^{(2\theta+2r+2-j)}(0)\\
&\qquad-\psi^{(2\theta+2r+2-j)}(1)\int_0^1C_X^{(r,\upsilon)}(s_1,0)M^{(0,j-1)}(s_1,1)ds_1\\
&\qquad+\psi^{(2\theta+2r+2-j)}(0)\int_0^1C_X^{(r,\upsilon)}(s_1,0)M^{(0,j-1)}(s_1,0)ds_1\,\bigg\}\Bigg]\,.
\end{align*}
For $0\leq \upsilon\leq r+1$ and $1\leq j\leq 2m+2r+2$, if we denote
\begin{align}\label{ab}
&A_{\ell,\upsilon}(s)=\sum_{\theta=0}^mc_\theta\,\ell^{2m-2\theta}\left\{C_X^{(2\theta+3r+2,\upsilon)}(s,0)-\int_0^1C_X^{(r,\upsilon)}(\xi,0)M^{(0,2\theta+2r+2)}(\xi,s)d\xi\right\}\,;\notag\\
&a_{j,\upsilon}=(-1)^{j}\bigg\{C_X^{(r+j-1,\upsilon)}(0,0)-\int_0^1C_X^{(r,\upsilon)}(s,0)M^{(0,j-1)}(s,0)ds\bigg\}\,;\notag\\
&b_{j,\upsilon}=(-1)^{j+1}\bigg\{C_X^{(r+j-1,\upsilon)}(1,0)-\int_0^1C_X^{(r,\upsilon)}(s,0)M^{(0,j-1)}(s,1)ds\bigg\}\,,
\end{align}
we have that for $0\leq \upsilon\leq r+1$,
\begin{align*}
&\sum_{\theta=0}^m\sum_{j=1}^{2\theta+2r+2}c_\theta\,\ell^{2m-2\theta}\Big\{a_{j,\upsilon}\,\psi^{(2\theta+2r+2-j)}(0)+b_{j,\upsilon}\,\psi^{(2\theta+2r+2-j)}(1)\Big\}\\
&\hspace{3cm}+\sum_{\theta=0}^mc_\theta\,\ell^{2m-2\theta}\,\psi^{(2\theta+r+\upsilon)}(0)+\int A_{\ell,\upsilon}(s)\psi(s)ds=0\,.
\end{align*}

By assumption, we have $C_X^{(j,0)}(0,s)\equiv0$ for $0\leq j\leq r-1$, so that $C_X^{(q,\upsilon)}(s,0)=C_X^{(\upsilon,q)}(0,s)\equiv0$ for $0\leq \upsilon\leq r-1$ and $1\leq q\leq 2m+3r+2$. Hence $a_{j,\upsilon}=b_{j,\upsilon}=0$ and $A_{\ell,\upsilon}(s)\equiv0$, for $0\leq j\leq r-1$ and $1\leq \upsilon\leq 2\theta+2r+2$. Therefore, in view of \eqref{temp}, from the above calculations, if we let $D=m+r+1$, we find that \eqref{int}, \eqref{integro2} and \eqref{integro3} are equivalent, and are equivalent to the following boundary value problem
\begin{align}\label{integro4}
\left\{\begin{array}{l}
\displaystyle\psi^{(2D)}(s)+(-1)^{m}\sum_{\theta=0}^{m-1}c_\theta\,\ell^{2m-2\theta}\psi^{(2D-2\theta)}(s)+(-1)^{m+r}\rho_\ell\psi(s)+L_\ell\psi(s)=0\,,\\
\psi^{(\upsilon)}(0)=0\,,\qquad\text{ for }m+r\leq \upsilon\leq 2m+r-1\,,\\
\psi^{(\upsilon)}(1)=0\,,\qquad\text{ for }0\leq \upsilon\leq r-1\text{ and }m+r\leq \upsilon\leq 2m+r-1\,,\\
\displaystyle\sum_{\theta=0}^m\sum_{j=1}^{2\theta+2r+2}c_\theta\,\ell^{2m-2\theta}\Big\{a_{j,\upsilon}\,\psi^{(2\theta+2r+2-j)}(0)+b_{j,\upsilon}\,\psi^{(2\theta+2r+2-j)}(1)\Big\}\\
\displaystyle\qquad+\sum_{\theta=0}^mc_\theta\,\ell^{2m-2\theta}\psi^{(2\theta+r+\upsilon)}(0)+\int A_{\ell,\upsilon}(s)\psi(s)ds=0\,,\quad \text{ for } \upsilon=r,r+1\,.
\end{array}\right.
\end{align}
The \emph{auxiliary problem} (cf.~\citealp{tamarkin1928}, p.~459) of \eqref{integro4} is
\begin{align}\label{integro5} 
\begin{cases}
&\displaystyle\psi^{(2D)}(s)+(-1)^{m}\sum_{\theta=0}^{m-1}c_\theta\,\ell^{2m-2\theta}\psi^{(2D-2\theta)}(s)+(-1)^{m+r}\breve\rho_\ell\psi(s)=0\,,\\
&\psi^{(\upsilon)}(0)=0\,,\qquad \text{ for }m+r\leq \upsilon\leq 2m+r-1\,,\\
&\psi^{(\upsilon)}(1)=0\,,\qquad \text{ for }0\leq \upsilon\leq r-1\text{ and }m+r\leq \upsilon\leq 2m+r-1\,,\\
&\displaystyle\sum_{\theta=0}^m\sum_{j=1}^{2\theta+2r+2}c_\theta\,\ell^{2m-2\theta}\Big\{a_{j,\upsilon}\,\psi^{(2\theta+2r+2-j)}(0)+b_{j,\upsilon}\,\psi^{(2\theta+2r+2-j)}(1)\Big\}\\
&\displaystyle\qquad+\sum_{\theta=0}^mc_\theta\,\ell^{2m-2\theta}\psi^{(2\theta+r+\upsilon)}(0)=0\,,\quad \text{ for } \upsilon=r,r+1\,.
\end{cases}
\end{align}
The above boundary value problem in \eqref{integro5} is a linear differential equation of the order of $2m+2r+2=2D$ in $\psi$, with $2D$ linear homogeneous conditions on the $\psi^{(j)}(0)$ and $\psi^{(j)}(1)$, for $0\leq j\leq 2m+2r+1$; furthermore, the coefficients of the odd-order derivatives of $\psi$ in the first equation of \eqref{integro5} are all zero. The \emph{characteristic value} (see \citealp{birk1908}) of \eqref{integro5} is $(-1)^{m+r}\breve\rho_\ell$. Letting $\breve\rho_\ell=(-1)^{D+1}\ell^{\,2D}\breve\varrho^{2D}$, we have the first equation in \eqref{integro5} is
\begin{align}\label{diffcore}
\psi^{(2D)}(s)+\sum_{\theta=1}^{m}c_{m-\theta}\,\ell^{2\theta}\psi^{(2D-2\theta)}(s)+\ell^{\,2D}\breve\varrho^{2D}\,\psi(s)=0\,,
\end{align}
Let $\tilde\psi(s)=\psi(s/\ell)$, so that $\tilde\psi^{(\upsilon)}(s)=\ell^{-\upsilon}\psi^{(\upsilon)}(s/\ell)$, for $0\leq\upsilon\leq 2D$. The key of this proof is that $\psi(s)$ being the solution to \eqref{diffcore} is equivalent to $\tilde\psi(s)$ being the solution to the following ordinary differential equation corresponding to the characteristic value $\varrho^{2D}$ that is independent of $\ell$:
\begin{align}\label{rescale}
\tilde\psi^{(2D)}(s)+\sum_{\theta=1}^{m}c_{m-\theta}\,\tilde\psi^{(2D-2\theta)}(s)+\breve\varrho^{2D}\,\tilde\psi(s)=0\,.
\end{align}
In view of \eqref{integro5}, together with the boundary conditions, $\tilde\psi$ is the solution of the following boundary value problem.
\begin{align}\label{a21}
\begin{cases}
&\displaystyle\tilde\psi^{(2D)}(s)+\sum_{\theta=1}^{m}c_{m-\theta}\,\tilde\psi^{(2D-2\theta)}(s)+\breve\varrho^{2D}\,\tilde\psi(s)=0\,,\\
&\tilde\psi^{(\upsilon)}(0)=0\,,\qquad \text{ for }m+r\leq \upsilon\leq 2m+r-1\,,\\
&\tilde\psi^{(\upsilon)}(\ell)=0\,,\qquad \text{ for }0\leq \upsilon\leq r-1\text{ and }m+r\leq \upsilon\leq 2m+r-1\,,\\
&\displaystyle\sum_{\theta=0}^m\sum_{j=1}^{2\theta+2r+2}c_\theta\,\ell^{2D-j}\Big\{a_{j,\upsilon}\,\tilde\psi^{(2\theta+2r+2-j)}(0)+b_{j,\upsilon}\,\tilde\psi^{(2\theta+2r+2-j)}(\ell)\Big\}\\
&\displaystyle\qquad+\ell^{2m+r+\upsilon}\sum_{\theta=0}^mc_\theta\tilde\psi^{(2\theta+r+\upsilon)}(0)=0\,,\qquad \text{ for } \upsilon=r,r+1\,.
\end{cases}
\end{align}
Rearranging the last two boundary conditions in \eqref{a21} yields that, for $\upsilon=r+1,r$ and for $a_{j,\upsilon},b_{j,\upsilon}$ in \eqref{ab},
\begin{align*}
&\sum_{\theta=0}^m\sum_{j=0}^{2\theta+2r+1}c_{\theta}\ell^{2m-2\theta+j}\Big\{a_{2\theta+2r+2-j,\upsilon}\tilde\psi^{(j)}(0)+b_{2\theta+2r+2-j,\upsilon}\tilde\psi^{(j)}(\ell)\Big\}+\ell^{2m+r+\upsilon}\sum_{\theta=0}^mc_{\theta}\tilde\psi^{(2\theta+r+\upsilon)}(0)\\
&=\ell^{2D-1}\Bigg[\sum_{j=0}^{2m+2r+1}\Big\{\tilde a_{j,\upsilon,\ell}\,\tilde\psi^{(j)}(0)+\tilde b_{j,\upsilon,\ell}\,\tilde\psi^{(j)}(\ell)\Big\}+\ell^{\upsilon-r-1}\sum_{\theta=0}^mc_{\theta}\,\tilde\psi^{(2\theta+r+\upsilon)}(0)\Bigg]=0\,.
\end{align*}
For convenience of presentation, let $a_{j,\upsilon}=0$, for $-2m+1\leq j\leq 0$. For $\upsilon=r,r+1$ and $0\leq j\leq 2m+2r+1$, denote
\begin{align}\label{tildeab}
&\tilde a_{j,\upsilon,\ell}=\sum_{\theta=0}^m\ell^{j-2\theta-2r-1}c_{\theta}\,a_{2\theta+2r+2-j,\upsilon}\,,\qquad\tilde b_{j,\upsilon,\ell}=\sum_{\theta=0}^m\ell^{j-2\theta-2r-1}c_{\theta}\,b_{2\theta+2r+2-j,\upsilon}\,.
\end{align}
Let 
\begin{align*}
&\tilde W_1(\tilde\psi)=\sum_{j=0}^{2m+2r+1}\Big\{\tilde a_{j,r+1,\ell}\,\tilde\psi^{(j)}(0)+\tilde b_{j,r+1,\ell}\,\tilde\psi^{(j)}(\ell)\Big\}+\sum_{\theta=0}^mc_{\theta}\tilde\psi^{(2\theta+2r+1)}(0)\\
&\tilde W_2(\tilde\psi)=\sum_{j=0}^{2m+2r+1}\Big\{\tilde a_{j,r,\ell}\,\tilde\psi^{(j)}(0)+\tilde b_{j,r,\ell}\,\tilde\psi^{(j)}(\ell)\Big\}+\ell^{-1}\sum_{\theta=0}^mc_{\theta}\tilde\psi^{(2\theta+2r)}(0)
\end{align*}
In view of \eqref{integro5}, for the $\tilde a_{j,\upsilon,\ell}$'s and $\tilde b_{j,\upsilon,\ell}$'s in \eqref{tildeab}, $\tilde\psi$ satisfies the following differential equation with normalized boundary conditions (\citealp{birk1908}, p.~382) is
\begin{align}\label{tildepsi}
\begin{cases}
&\displaystyle\tilde\psi^{(2D)}(s)+\sum_{\theta=1}^{m}c_\theta\,\tilde\psi^{(2D-2\theta)}(s)+\breve\varrho^{2D}\,\tilde\psi(s)=0\\
&\displaystyle \tilde W_1(\tilde\psi)=\sum_{j=0}^{2m+2r+1}\Big\{\tilde a_{j,r+1,\ell}\,\tilde\psi^{(j)}(0)+\tilde b_{j,r+1,\ell}\,\tilde\psi^{(j)}(\ell)\Big\}+\sum_{\theta=0}^mc_{\theta}\tilde\psi^{(2\theta+2r+1)}(0)=0\,,\\
&\displaystyle\tilde W_2(\tilde\psi)=\sum_{j=0}^{2m+2r+1}\Big\{\tilde a_{j,r,\ell}\,\tilde\psi^{(j)}(0)+\tilde b_{j,r,\ell}\,\tilde\psi^{(j)}(\ell)\Big\}+\ell^{-1}\sum_{\theta=0}^mc_{\theta}\tilde\psi^{(2\theta+2r)}(0)=0\,,\\
&\tilde\psi^{(\upsilon)}(0)=0\,,\quad\tilde\psi^{(\upsilon)}(\ell)=0\,,\quad \text{ for }m+r\leq \upsilon\leq 2m+r-1\,,\\
&\tilde\psi^{(\upsilon)}(\ell)=0\,,\quad \text{ for }0\leq \upsilon\leq r-1\,.
\end{cases}
\end{align}
Let $\tilde W_3(\tilde\psi),\ldots,\tilde W_{2D}(\tilde\psi)$ denote left hand side of the boundary conditions in the rest of the last two lines of the above equations. Now, $\psi$ being the solution to \eqref{integro5} is equivalent to $\tilde\psi$ being the solution to \eqref{tildepsi}.

Next, we draw the conclusion of the growing rate of the characteristic value $\breve\varrho^{2D}$ in \eqref{tildepsi}. For $\breve\varrho\in\mathbb{C}$, let $\tilde w_{1},\ldots,\tilde w_{2D}$ denote the roots of $w^{2D}+1=0$, whose subscript is ordered according to
\begin{align*}
{\rm Re}(\breve\varrho\,\tilde w_{1})\leq{\rm Re}(\breve\varrho\,\tilde w_{2})\leq\cdots\leq{\rm Re}(\breve\varrho\,\tilde w_{2D})\,.
\end{align*}
By \cite{birk1908a} and Theorem III' in \cite{stone}, for any $\varrho\in\mathbb{C}$, \eqref{rescale} has $2D$ linear independent analytic solutions $\breve\psi_{\ell,1},\ldots,\breve\psi_{\ell,2D}$ in the form of
\begin{align}
\breve\psi^{(\upsilon)}_{\ell,j}(s)=(\breve\varrho\tilde w_{j})^{\upsilon}\exp(\breve\varrho\tilde w_{j}s)\bigg\{1+\sum_{q=1}^{M-1}\frac{B_{q,\upsilon,\ell}(s)}{(\breve\varrho\tilde w_{j})^q}+\frac{E_{j,\upsilon,\ell}(s,\breve\varrho)}{\breve\varrho^M}\bigg\}\,,
\end{align}
for some uniformly bounded functions $B_{q,\upsilon,\ell}$ and $E_{j,\upsilon,\ell}$. The condition that $\varrho^{2D}$ is the characteristic value of \eqref{tildepsi} is that
\begin{align}\label{tildeDelta}
\tilde\Delta\equiv\left|\begin{matrix}
\tilde W_1(\breve\psi_{\ell,1}) & \tilde W_1(\breve\psi_{\ell,2}) & \cdots & \tilde W_1(\breve\psi_{\ell,2D})\\
\tilde W_2(\breve\psi_{\ell,1}) & \tilde W_2(\breve \psi_{\ell,2}) & \cdots & \tilde W_2(\breve \psi_{\ell,2D})\\
%
%
\cdots & \cdots & \cdots & \cdots\\
\tilde W_{2D}(\breve\psi_{\ell,1}) & \tilde W_{2D}(\breve\psi_{\ell,2}) & \cdots & \tilde W_{2D}(\breve\psi_{\ell,2D})
\end{matrix}\right|=0\,.
\end{align}
In order to analyse the condition in \eqref{tildeDelta}, \cite{birk1908} introduced the definition of regular boundary conditions; see \cite{birk1908}, p.~382 and Definition~\ref{def:regular} in Section~\ref{app:cond:prop1.3}. For the $\tilde a_{j,\upsilon,\ell}$'s and $\tilde b_{j,\upsilon,\ell}$'s in \eqref{tildeab}, the boundary conditions in \eqref{tildepsi} is regular if $\zeta_{0,\ell},\zeta_{1,\ell},\zeta_{2,\ell}$ defined through the following equation according to the boundary conditions in \eqref{tildepsi} is such that $\zeta_{1,\ell}\,\zeta_{2,\ell}\neq 0$ for any $\ell\geq 1$:
\newcommand{\colvec}[2][.8]{%
  \scalebox{#1}{%
    \renewcommand{\arraystretch}{.8}%
    $\begin{bmatrix}#2\end{bmatrix}$%
  }
}
\begin{align*}
&\zeta_{0,\ell}+\zeta_{1,\ell}s+\frac{\zeta_{2,\ell}}{s}\equiv\\
&{\rm det}\colvec[.59]{
(\tilde a_{j,r+1,\ell}+c_0)\tilde w_1^{2D-1} & \cdots& (\tilde a_{j,r+1,\ell}+c_0)\tilde w_{D-1}^{2D-1}\, &\, \{(\tilde a_{j,r+1,\ell}+c_0)+\tilde b_{j,r+1,\ell}s\}\tilde w_D^{2D-1}\, &\, \{(\tilde a_{j,r+1,\ell}+c_0)+\frac{\tilde b_{j,r+1,\ell}}{s}\}\tilde w_{D+1}^{2D-1}\ &\ \tilde b_{j,r+1,\ell}\tilde w_{D+2}^{2D-1}&\cdots&\tilde b_{j,r+1,\ell}\tilde w_{2D}^{2D-1}\\
\tilde a_{j,r,\ell}\tilde w_1^{2D} & \cdots& \tilde a_{j,r,\ell}\tilde w_{D-1}^{2D}&(\tilde a_{j,r,\ell}+\tilde b_{j,r,\ell}s)\tilde w_D^{2D}&(\tilde a_{j,r,\ell}+\frac{\tilde b_{j,r,\ell}}{s})\tilde w_{D+1}^{2D}&\tilde b_{j,r,\ell}\tilde w_{D+2}^{2D}&\cdots&\tilde b_{j,r,\ell}\tilde w_{2D}^{2D}\\
\tilde w_1^{2m+r-1} & \cdots& \tilde w_{D-1}^{2m+r-1}&\tilde w_D^{2m+r-1}&\tilde w_{D+1}^{2m+r-1}&0&\cdots&0\\
0&\cdots&0&s\tilde w_D^{2m+r-1}&\frac{1}{s}\tilde w_D^{2m+r-1}&\tilde w_{D+2}^{2m+r-1}&\cdots&\tilde w_{2D}^{2m+r-1}\\
\tilde w_1^{2m+r-2} & \cdots& \tilde w_{D-1}^{2m+r-2}&\tilde w_D^{2m+r-2}&\tilde w_{D+1}^{2m+r-2}&0&\cdots&0\\
0&\cdots&0&s\tilde w_D^{2m+r-2}&\frac{1}{s}\tilde w_D^{2m+r-2}&\tilde w_{D+2}^{2m+r-2}&\cdots&\tilde w_{2D}^{2m+r-2}\\
\cdots & \cdots& \cdots&\cdots&\cdots&\cdots&\cdots&\cdots\\
\tilde w_1^{m+r} & \cdots& \tilde w_{D-1}^{m+r}&\tilde w_D^{m+r}&\tilde w_{D+1}^{m+r}&0&\cdots&0\\
0&\cdots&0&s\tilde w_D^{m+r}&\frac{1}{s}\tilde w_{D+1}^{m+r}&\tilde w_{D+2}^{m+r}&\cdots&\tilde w_{2D}^{m+r}\\
0 & \cdots& 0&s\tilde w_D^{r-1}&\frac{1}{s}\tilde w_{D+1}^{r-1}&\tilde w_{D+2}^{r-1}&\cdots&b_n\tilde w_{2D}^{r-1}\\
\cdots & \cdots& \cdots&\cdots&\cdots&\cdots&\cdots&\cdots\\
0 & \cdots& 0&s\tilde w_D^{0}&\frac{1}{s}\tilde w_{D+1}^{0}&\tilde w_{D+2}^{0}&\cdots&b_n\tilde w_{2D}^{0}
}\,.
\end{align*}
Then, letting $\i=\sqrt{-1}$, by the theorem in \cite{birk1908}, p.~383, we find that the eigenvalue $\varrho_k$ in \eqref{tildepsi} is of the following form:
\begin{align*}
\breve\varrho_k=\pm\frac{2k\pi\i}{\tilde w_{D}}+\frac{1}{\tilde w_{D}}\log\bigg(-\frac{\zeta_{1,\ell}}{\zeta_{2,\ell}}\bigg)+\sum_{j=1}^{M_0-1}\frac{e_{j,\ell}}{\breve\varrho^j}+\frac{E_{\ell}(\breve\varrho)}{\breve\varrho^{M_0}}\,,
\end{align*}
where $e_{j,\ell}(s)$ and $E_{2,\ell}$ are the coefficients of the equal or higher order terms of $\breve\varrho^{-1}$ from the determinant $\tilde\Delta$ in \eqref{tildeDelta}, and $|e_{j,\ell}|\leq c_1$ and $|E_{\ell}|\leq c_2$ uniformly in $k,\ell\geq 1$ and $\breve\varrho$. Moreover, $c_1\leq|\zeta_{1,\ell}/\zeta_{2,\ell}|\leq c_2$ for some $c_1,c_2>0$. Therefore, $(-1)^{D+1}\breve\varrho_k^{2D}\asymp k^{2D}$. In conclusion, the eigenvalue of \eqref{integro5} is $\breve\rho_{k\ell}=(-1)^{D+1}\ell^{\,2D}\breve\varrho_k^{2D}\asymp(k\ell)^{2D}$.

Suppose $\varrho^{2D}$ is a characteristic value of \eqref{integro4} and suppose $\psi(s)=\tilde\psi(\ell s)$ is the solution to the problem \eqref{integro4} corresponding to $\varrho^{2D}$. We rewrite the first equation in \eqref{integro4} that $\tilde\psi$ satisfies. For the $L_\ell$ in \eqref{integro4} defined in \eqref{Lell}, substituting $\psi(s)$ by $\tilde\psi(\ell s)$ yields
\begin{align*}
&L_\ell\psi(s_1)=\sum_{\theta=0}^m\ell^{-2\theta-2r-2}c_{\theta}\int_0^1 M^{(0,2\theta+2r+2)}(s_1,s_2)\,\tilde\psi(\ell s_2)ds_2+\sum_{\theta=0}^m\sum_{j=1}^{2\theta+2r+2}(-1)^{m+r+j}\ell^{-j}c_{\theta}\\
&\hspace{2cm}\times\Big\{M^{(0,j-1)}(s_1,1)\,\tilde\psi^{(2\theta+2r+2-j)}(\ell)-M^{(0,j-1)}(s_1,0)\,\tilde\psi^{(2\theta+2r+2-j)}(0)\Big\}\\
&=\int_0^1\mathfrak H_\ell(s,\xi)\,\tilde\psi(\ell\xi)d\xi+\sum_{\theta=0}^m\sum_{j=1}^{2\theta+2r+2}\Big\{\mathfrak A_{j,\theta,\ell}(s)\tilde\psi^{(2\theta+2r+2-j)}(\ell)-\mathfrak B_{j,\theta,\ell}(s)\tilde\psi^{(2\theta+2r+2-j)}(0)\Big\}\,,
\end{align*}
where, for $0\leq \theta\leq m$ and $1\leq j\leq 2\theta+2r+2$,
\begin{align*}
&\mathfrak H_\ell(s_1,s_2)=\sum_{\theta=0}^m\ell^{-2\theta-2r-2}c_{\theta}M^{(0,2\theta+2r+2)}(s_1,s_2)\,,\\
&\mathfrak A_{j,\theta,\ell}(s)=(-1)^{m+r+j}\ell^{-j}c_{\theta}\,M^{(0,j-1)}(s,1)\,,\\
&\mathfrak B_{j,\theta,\ell}(s)=(-1)^{m+r+j}\ell^{-j}c_{\theta}\,M^{(0,j-1)}(s,0)\,.
\end{align*}
We have $|\mathfrak H_\ell(s_1,s_2)|\leq c\ell^{-2r-2}$ uniformly in $s_1,s_2\in[0,1]$; $|\mathfrak A_{j,\theta,\ell}(s)|,|\mathfrak B_{j,\theta,\ell}(s)|\leq c\ell^{-1}$ uniformly in $s\in[0,1]$, for $0\leq \theta\leq m$ and $1\leq j\leq 2\theta+2r+2$. Therefore, for $A_{\ell,\upsilon}$ is as in \eqref{ab}, we have that $\tilde\psi(s)$ is the solution to the following equations
\begin{align}\label{integro10}
\begin{cases}
&\displaystyle\tilde\psi^{(2D)}(s)+\sum_{\theta=1}^{m}c_\theta\tilde\psi^{(2D-2\theta)}(s)+\varrho^{2D}\,\tilde\psi(s)+\int_0^1\mathfrak H_\ell(s,\xi)\,\tilde\psi(\ell\xi)d\xi\\
&\hspace{1cm}\displaystyle+\sum_{\theta=0}^m\sum_{j=1}^{2\theta+2r+2}\Big\{\mathfrak A_{j,\theta,\ell}(s)\tilde\psi^{(2\theta+2r+2-j)}(\ell)-\mathfrak B_{j,\theta,\ell}(s)\tilde\psi^{(2\theta+2r+2-j)}(0)\Big\}=0\,,\\
&\tilde\psi^{(\upsilon)}(0)=0\,,\qquad\text{ for }m+r\leq \upsilon\leq 2m+r-1\,,\\
&\tilde\psi^{(\upsilon)}(\ell)=0\,,\qquad\text{ for }0\leq \upsilon\leq r-1\text{ and }m+r\leq \upsilon\leq 2m+r-1\,,\\
&\displaystyle\sum_{\theta=0}^m\sum_{j=1}^{2\theta+2r+2}\ell^{2D-j}c_{\theta}\Big\{a_{j,\upsilon}\,\tilde\psi^{(2\theta+2r+2-j)}(0)+b_{j,\upsilon}\,\tilde\psi^{(2\theta+2r+2-j)}(\ell)\Big\}\\
&\displaystyle\qquad+\sum_{\theta=0}^m\ell^{2m+r+\upsilon}c_{\theta}\,\tilde\psi^{(2\theta+r+\upsilon)}(0)+\int A_{\ell,\upsilon}(s)\tilde\psi(s)ds=0\,,\quad \text{ for } \upsilon=r,r+1\,.
\end{cases}
\end{align}
Following the proof of Theorem 7 in \cite{tam1927}, the characteristic value $\varrho_k$ of \eqref{integro10} and the characteristic value $\breve\varrho_k$ of \eqref{tildepsi} have the same growing rate uniformly in $\ell$, so that $\rho_{k\ell}\asymp(k\ell)^{2D}$.


%
%

For the order of $\Vert x_{k\ell}\Vert_\infty$, 
Let $\tilde\gamma_{k\ell}=\rho_{k\ell}^{1/(2D)}\in\mathbb{R}$ and $\gamma_{k\ell}=\i\tilde\gamma_{k\ell}\exp\{\pi\i/(2D)\}$. We therefore have $\gamma_{k\ell}^{2k}=(-1)^{m+r}\rho_{k\ell}$, and $\Re(\gamma_{k\ell})=-\tilde\gamma_{k\ell}\sin\{\pi/(2D)\}$, and $\Im(\gamma_{k\ell})=\tilde\gamma_{k\ell}\cos\{\pi/(2D)\}$. 
Let $w_{k\ell,1},\ldots,w_{k\ell,2D}\in\big\{\exp\{(2\nu-1)\pi\i/(2D)\}\big\}_{1\leq\nu\leq2D}$ denote the solution of $w^{2D}+1=0$, of which the subscripts are assigned according to the order
\begin{align*}
{\rm Re}(\ell\gamma_{k\ell}\, w_{k\ell,1})\leq{\rm Re}(\ell\gamma_{k\ell}\, w_{k\ell,2})\leq\cdots\leq{\rm Re}(\ell\gamma_{k\ell}\, w_{k\ell,2D})\,,
\end{align*}
so that ${\rm Re}(\gamma_{k\ell}\, w_{k\ell,j})<0$ when $1\leq j\leq D$, and ${\rm Re}(\gamma_{k\ell}\, w_{k\ell,j})\geq0$ when $D+1\leq \nu\leq 2D$. Following \cite{tamarkin1928}, pp.~467--469, the solution $\psi(s)$ corresponding to the eigenvalue $\rho_{k\ell}$ in \eqref{integro4} is such that, for $0\leq \upsilon\leq 2D-1$,
\begin{align}\label{solution}
\psi_{k\ell}^{(\upsilon)}(s)&=\gamma_{k\ell}^{\upsilon}\bigg[\sum_{j=1}^{D}\exp(\gamma_{k\ell}\,w_{k\ell,j}s)[Q_{\ell,j}w_{k\ell,j}^{\upsilon}]+\sum_{j=D+1}^{2D}\exp\{\gamma_{k\ell}\,w_{k\ell,j}(s-1)\}[Q_{\ell,j}w_{k\ell,j}^{\upsilon}]\bigg]\,,
\end{align}
where for $z\in\mathbb{C}$, $[z]$ is such that $|[z]-z|=O(k^{-1}+\ell^{-1})$, and $Q_{\ell,1},\ldots,Q_{\ell,2D}$ are real-valued constants that does not depend on $k$ and are bounded, and at least one of these $2D$ constants is non-zero. Without loss of generality we may assume that $\Im(w_{k\ell,D})<0$ and $\Im(w_{k\ell,D+1})>0$, so that $\gamma_{k\ell}\, w_{k\ell,D}=-\i\tilde\gamma_{k\ell}$ and $\gamma_{k\ell}\, w_{k\ell,D+1}=\i\tilde\gamma_{k\ell}$; when $j\neq D$ and $2D$, $|{\rm Re}(\gamma_{k\ell}\, w_{k\ell,j})|\geq\tilde\gamma_{k\ell}\sin(\pi/D)$. Now, since $s\in[0,1]$, when $1\leq\nu\leq D-1$, $|\exp(\gamma_{k\ell}\,w_{k\ell,j}s)|=\exp\{\Re(\gamma_{k\ell}\,w_{k\ell,j})s\}\leq 1$; when $D+2\leq\nu\leq 2D$, $|\exp\{\gamma_{k\ell}\,w_{k\ell,j}(s-1)\}|=\exp\{\Re(\gamma_{k\ell}\,w_{k\ell,j})(s-1)\}\leq 1$; $|\exp(\gamma_{k\ell}\,w_{k\ell,D}s)|=|\exp\{\gamma_{k\ell}\,w_{k\ell,D+1}(s-1)\}|=1$. Therefore, from \eqref{solution}, we find that, for $0\leq\theta\leq m$,
\begin{align}\label{sup}
&\sup_{s\in[0,1]}\big|\psi_{k\ell}^{(r+\theta)}(s)\big|\notag\\
&\leq|\gamma_{k\ell}|^{r+\theta}\sup_{s\in[0,1]}\bigg[\sum_{ j =1}^{D}|\exp(\gamma_{k\ell}\,w_{k\ell, j }s)|\,|Q_{\ell, j }|+\sum_{ j =D+1}^{2D}|\exp\{\gamma_{k\ell}\,w_{k\ell, j }(s-1)\}|\,|Q_{\ell, j }|+O(k^{-1})\bigg]\notag\\
&\leq |\gamma_{k\ell}|^{r+\theta}\bigg\{\sup_{\ell\geq1}\sum_{ j =1}^{2D}|Q_{\ell, j }|+O(k^{-1})\bigg\}\asymp (\ell k)^{r+\theta}\,.
\end{align}
Let
\begin{align*}
Z_{k\ell,\theta,j}(s)=
\begin{cases}
\exp(\gamma_{k\ell}\,w_{k\ell, j }s)[Q_{\ell, j }w_{k\ell, j }^{r+\theta}]\,,\hspace{2cm} \text{for } 1\leq j \leq D\,,\\
\exp\{\gamma_{k\ell}\,w_{k\ell, j }(s-1)\}[Q_{\ell, j }w_{k\ell, j }^{r+\theta}]\,,\hspace{0.85cm} \text{for } D+1\leq j \leq 2D\,,\\
\end{cases}
\end{align*}
so that $\psi_{k\ell}^{(r+\theta)}(s)=\gamma_{k\ell}^{r+\theta}\sum_{j=1}^{2D}Z_{k\ell,\theta,j}(s)$. Note that for $1\leq j_1\leq D-1$ and for $1\leq j_2\leq 2D$,
\begin{align}\label{t21}
\big|Z_{k\ell,\theta,j_1}(s)\,\overline{Z_{k\ell,\theta,j_2}(s)}\big|&\leq\big|\exp(\gamma_{k\ell}\,w_{k\ell, j_1 }s)\big|\times|[Q_{\ell, j_1 }]|\times|[Q_{\ell, j_2 }]|+O(k^{-1})\notag\\
&=\exp\{\Re(\gamma_{k\ell}\,w_{k\ell,j_1})s\}\times|Q_{\ell, j_1 }Q_{\ell, j_2 }|+O(k^{-1})\notag\\
&\leq\exp\{-\tilde\gamma_{k\ell}\sin(\pi/D)s\}\times|Q_{\ell, j_1 }Q_{\ell, j_2 }|+O(k^{-1})\notag\\
&\leq c_{\ell}\exp(-c_{\ell}'ks)\,,
\end{align}
for $c_\ell,c_\ell'>0$. Hence, we deduce from the above equation that $\int_0^1\big|Z_{k\ell,\theta,j_1}(s)\,\overline{Z_{k\ell,\theta,j_2}(s)}\big|ds\lesssim k^{-1}$. Likewise, for $D+2\leq j_1\leq 2D$ and for any $1\leq j_2\leq 2D$,
\begin{align}\label{t22}
\big|Z_{k\ell,\theta,j_1}(s)\,\overline{Z_{k\ell,\theta,j_2}(s)}\big|&\leq\big|\exp\{\gamma_{k\ell}\,w_{k\ell, j_1 }(s-1)\}\big|\times|[Q_{\ell, j_1 }]|\times|[Q_{\ell, j_2 }]|+O(k^{-1})\notag\\
&=\exp\{\Re(\gamma_{k\ell}\,w_{k\ell,j_1})(s-1)\}\times|Q_{\ell, j_1 }Q_{\ell, j_2 }|+O(k^{-1})\notag\\
&\leq\exp\{\tilde\gamma_{k\ell}\sin(\pi/D)(s-1)\}\times|Q_{\ell, j_1 }Q_{\ell, j_2 }|+O(k^{-1})\notag\\
&\leq c_{\ell}\exp\{-c_{\ell}'k(s-1)\}\,,
\end{align}
for $c_\ell,c_\ell'>0$. Hence, we find that $\int_0^1\big|Z_{k\ell,\theta,j_1}(s)\,\overline{Z_{k\ell,\theta,j_2}(s)}\big|ds\lesssim k^{-1}$. Recall that $\gamma_{k\ell}\, w_{k\ell,D}=-\i\tilde\gamma_{k\ell}$ and $\gamma_{k\ell}\, w_{k\ell,D+1}=\i\tilde\gamma_{k\ell}$. We have
\begin{align*}
&\big|Z_{k\ell,\theta,D}(s)\big|^2=\exp\{2\Re(\gamma_{k\ell}\,w_{k\ell,D})s\}\times|Q_{\ell,D}|^2+O(k^{-1})=|Q_{\ell,D}|^2+O(k^{-1})\,.\\
&\big|Z_{k\ell,\theta,D+1}(s)\big|^2=\exp\{2\Re(\gamma_{k\ell}\,w_{k\ell,D+1})(s-1)\}\times|Q_{\ell,D+1}|^2+O(k^{-1})=|Q_{\ell,D+1}|^2+O(k^{-1})\,.
\end{align*}
Therefore,
\begin{align*}
\Vert\psi_{k\ell}^{(r+\theta)}\Vert_{L^2}^2&=|\gamma_{k\ell}|^{2r+2\theta}\sum_{j_1=1}^{2D}\sum_{j_2=1}^{2D}\int Z_{k\ell,\theta,j_1}(s)\,\overline{Z_{k\ell,\theta,j_2}(s)}ds\\
&=|\gamma_{k\ell}|^{2r+2\theta}\bigg\{\int\big|Z_{k\ell,\theta,D}(s)\big|^2ds+\int\big|{Z_{k\ell,\theta,D+1}(s)}\big|^2ds+O(k^{-1})\bigg\}\\
&=|\gamma_{k\ell}|^{2r+2\theta}\Big\{|Q_{\ell,D}|^2+|Q_{\ell,D+1}|^2+O(k^{-1})\Big\}\,.
\end{align*}
Now we show that in the above equation $|Q_{\ell,D}|^2+|Q_{\ell,D+1}|^2\neq 0$, which can be proved by contradiction. Suppose $Q_{\ell,D}=Q_{\ell,D+1}=0$. Then, in view of the boundary condition in \eqref{integro5}, for $m+r\leq \upsilon_1\leq 2m+r-1$, and for $\upsilon_2$ such that $0\leq \upsilon_2\leq r-1$ or $m+r\leq \upsilon_2\leq 2m+r-1$,
\begin{align*}
&0=\psi_{k\ell}^{(\upsilon_1)}(0)=\gamma_{k\ell}^{\upsilon_1}\bigg\{\sum_{j=1}^{D-1}[Q_{\ell,j}w_{k\ell,j}^{\upsilon_1}]+\sum_{j=D+2}^{2D}\exp(-\gamma_{k\ell}\,w_{k\ell,j})[Q_{\ell,j}w_{k\ell,j}^{\upsilon_1}]+O(k^{-1})\bigg\}\,,\\
&0=\psi_{k\ell}^{(\upsilon_2)}(1)=\gamma_{k\ell}^{\upsilon_2}\bigg\{\sum_{j=1}^{D-1}\exp(\gamma_{k\ell}\,w_{k\ell,j})[Q_{\ell,j}w_{k\ell,j}^{\upsilon_2}]+\sum_{j=D+2}^{2D}[Q_{\ell,j}w_{k\ell,j}^{\upsilon_2}]+O(k^{-1})\bigg\}\,.
\end{align*}
Following the arguments in \cite{shang2015b}, pp.~7--9, letting $k\to\infty$ yields $Q_{\ell,j}=0$ for all $1\leq j\leq 2D$, which contradicts with the fact that at least one of these $2D$ constants is nonzero. Therefore, we deduce that $|Q_{\ell,D}|^2+|Q_{\ell,D+1}|^2\neq 0$, so that $\Vert\psi_{k\ell}^{(r+\theta)}\Vert_{L^2}^2\asymp |\gamma_{k\ell}|^{2r+2\theta}\asymp (\ell k)^{2r+2\theta}$, for $0\leq\theta\leq m$. From this we deduce that
\begin{align*}
\big\l\C_X\psi_{k\ell}^{(r)},\psi_{k\ell}^{(r)}\big\r_{L^2}=\rho_{k\ell}^{-1}\sum_{\theta=0}^m{m\choose\theta}\Vert\eta_\ell^{(m-\theta)}\Vert_{L^2}^2\big\Vert\psi_{k\ell}^{(r+\theta)}\big\Vert_{L^2}^2\asymp (k\ell)^{-2}\,.
\end{align*}
Recall that
$x_{k\ell}=\l\C_X(\psi_{k\ell}^{(r)}),\psi_{k\ell}^{(r)}\r^{-1/2}_{L^2}\,\psi_{k\ell}^{(r)}$. Hence, from the above equation and \eqref{sup} we conclude that $\Vert x_{k\ell}\Vert_\infty=\l\C_X(\psi_{k\ell}^{(r)}),\psi_{k\ell}^{(r)}\r^{-1/2}_{L^2}\,\Vert\psi_{k\ell}^{(r)}\Vert_\infty\lesssim (k\ell)^{r+1}$.

Since the cosine series $\{\eta_\ell\}_{\ell\geq 1}$ is a complete basis of $L^2([0,1])$ (Theorem~2.4.18 in \citealp{hsing2015}), by the argument similar to the ones in \cite{shang2015b} p.~7, we have $\{\phi_{k\ell}\}_{k,\ell\geq1}$ is complete in $\H$, and any $\beta\in\H$ admits the Fourier expansion $\beta=\sum_{k,\ell}V(\beta,\phi_{k\ell})\phi_{k\ell}$.

In order to show \eqref{diag}, due to Assumption~\ref{a1}, $V_X(x_1,x_2)\equiv\l\C_Xx_1,x_2\r_{L^2}$ defines an inner product, for $x_1,x_2\in L^2([0,1])$. For each $\ell\geq1$, we may orthonormalize the $\breve x_{k\ell}$'s, $k\geq1$, w.r.t.~$V_X$ to obtain the $x_{k\ell}$'s. That is, $V_X(x_{k\ell},x_{k'\ell})=\delta_{kk'}$, for $k,k'\geq1$. Recall that $\phi_{k\ell}(s,t)=x_{k\ell}(s)\eta_\ell(t)$, $k,\ell\geq1$. For $V$ in \eqref{v},
\begin{align*}
V(\phi_{k\ell},\phi_{k'\ell'})&=\int_{s_1,s_2\in[0,1];\,t\in[0,1]}C_X(s_1,s_2)\big\{x_{k\ell}(s_1)\eta_\ell(t)\big\}\big\{x_{k'\ell'}(s_2)\eta_{\ell'}(t)\big\}\,ds_1ds_2dt\\
&=\int_{[0,1]^2}C_X(s_1,s_2)x_{j\ell}(s_1)x_{k\ell'}(s_2)ds_1ds_2\times\int_0^1\eta_\ell(t)\eta_{\ell'}(t)\,dt\\
&=\big\l\C_X(x_{k\ell}),x_{k\ell'}\big\r_{L^2}\times\l\eta_\ell,\eta_{\ell'}\r_{L^2}=V_X(x_{k\ell},x_{k'\ell})\,\delta_{\ell\ell'}=\delta_{kk'}\,\delta_{\ell\ell'}\,.
\end{align*}
For $J$ in \eqref{jm} and the $\eta_\ell$ in \eqref{fourier}, we have $\l\eta_\ell^{(\theta)},\eta_{\ell'}^{(\theta)}\r_{L^2}=0$ for $0\leq\theta\leq m$ and $\ell\geq1$, so that
\begin{align*}
J(\phi_{k\ell},\phi_{k'\ell'})&=J( x_{k\ell}\otimes\eta_\ell, x_{k'\ell'}\otimes\eta_{\ell'})=\sum_{\theta=0}^m{m\choose \theta}\big\l x_{k\ell}^{(\theta)}, x_{k'\ell'}^{(\theta)}\big\r_{L^2}\big\l\eta_\ell^{(m-\theta)},\eta_{\ell'}^{(m-\theta)}\big\r_{L^2}\\
&=\sum_{\theta=0}^m{m\choose \theta}\big\l x_{k\ell}^{(\theta)}, x_{k'\ell'}^{(\theta)}\big\r_{L^2}\big\l\eta_\ell^{(m-\theta)},\eta_{\ell'}^{(m-\theta)}\big\r_{L^2}\\
&=\delta_{\ell\ell'}\sum_{\theta=0}^m{m\choose\theta}\big\l x_{k\ell}^{(\theta)}, x_{k'\ell'}^{(\theta)}\big\r_{L^2}\Vert\eta_\ell^{(m-\theta)}\Vert_{L^2}^2\,.
\end{align*}
Using integration by parts and the boundary conditions, in view of \eqref{int}, we deduce from the above equation that
\begin{align*}
J(\phi_{k\ell},\phi_{k'\ell'})&=\delta_{\ell\ell'}\sum_{\theta=0}^m{m\choose\theta}(-1)^{\theta}\,\big\l x_{k\ell}^{(2\theta)}, x_{k'\ell'}\big\r_{L^2}\,\Vert\eta_\ell^{(m-\theta)}\Vert_{L^2}^2\\
&=\delta_{\ell\ell'}\left\l  \sum_{\theta=0}^m{m\choose\theta}(-1)^{\theta}\,\Vert\eta_\ell^{(m-\theta)}\Vert_{L^2}^2\,x_{k'\ell'}^{(2\theta)}\,,x_{k'\ell'}\right\r_{L^2}\\
&=\delta_{\ell\ell'}\left\l\rho_{k\ell}\int_{0}^1C_X(s_1,s_2)\,x_{k\ell'}(s_2)\,ds_2\,,x_{k'\ell'}\right\r_{L^2}\\
&=\delta_{\ell\ell'}\,\rho_{k\ell}\,V_X(x_{j\ell},x_{k\ell'})=\rho_{k\ell}\,\delta_{kk'}\,\delta_{\ell\ell'}\,,
\end{align*}
which completes the proof.

{\centering

}


\begin{thebibliography}{99}

\bibitem[\protect\citeauthoryear{Aitchison}{Aitchison}{1964}]{aitchison1964}
Aitchison, J. (1964).
\newblock Confidence-region tests.
\newblock {\em J. R. Stat. Soc. Ser. B},~{\bf 26},
  462--476.

\bibitem[Aue et al., 2018]{AueRiceSonmez2015}Aue, A., Rice, G. and S\"onmez, O. (2018). Detecting and dating structural breaks in functional data without dimension reduction. \emph{J. R. Stat. Soc. Ser. B}, \textbf{80}, 509--529.


\bibitem[Benatia et al., 2017]{benatia2017}Benatia, D., Carrasco, M. and Florens, J. P. (2017). Functional linear regression with functional response. \emph{J. Econometrics}, \textbf{201}, 269--291.



\bibitem[\protect\citeauthoryear{Bosq}{Bosq}{2000}]{bosq2000}
Bosq, D. (2000).
\newblock {Linear Processes in Function Spaces: Theory and Applications}.
\newblock {\em Lecture Notes in Statistics. Springer New York}.

\bibitem[Cai and Yuan, 2012]{caiyuan2012}Cai, T. T. and Yuan, M. (2012). Minimax and adaptive prediction for functional linear regression. \emph{J. Amer. Statist. Assoc.}, \textbf{107}, 1201--1216.

\bibitem[\protect\citeauthoryear{Cardot, Ferraty, and Sarda}{Cardot et~al.}{2003}]{cardot2003}
Cardot, H., Ferraty, F. and Sarda, P. (2003).
\newblock Spline estimators for the functional linear model.
\newblock {\em Statist. Sinica},~{\bf 13}, 571--591.

\bibitem[\protect\citeauthoryear{Cardot, Ferraty, Mas, and Sarda}{Cardot  et~al.}{2003}]{cardot2003b}
Cardot, H., Ferraty, F., Mas, A., and Sarda, P. (2003).
\newblock Testing hypotheses in the functional linear model.
\newblock {\em Scand. J. Stat.,}~{\bf 30}, 241--255.

\bibitem[\protect\citeauthoryear{Cardot, Goia, and Sarda}{Cardot
  et~al.}{2004}]{cardot2004}
Cardot, H., Goia, A. and Sarda, P. (2004).
\newblock Testing for no effect in functional linear regression models, some
  computational approaches.
\newblock {\em 	Comm. Statist. Simulation Comput.,}~{\bf 33}, 179--199.



\bibitem[Cucker and Smale, 2001]{cucker2001}Cucker, F. and Smale, S. (2001). On the mathematical foundations of learning. \emph{Bull. Amer. Math. Soc.}, \textbf{39}, 1--49.


\bibitem[Dette and Kokot, 2021a]{dettebio}Dette, H. and Kokot, K. (2021a). Bio-equivalence tests in functional data by maximum deviation. \emph{Biometrika}, to appear.


\bibitem[Dette and Kokot, 2021b]{dette2021} Dette, H. and Kokot, K. (2021b). Detecting relevant differences in the covariance operators of functional time series--a sup-norm approach. \emph{Ann. Inst. Statist. Math.}, to appear.

\bibitem[Dette et al., 2020]{dette2020aos}Dette, H., Kokot, K. and Aue, A. (2020). Functional data analysis in the Banach space of continuous functions. \emph{Ann. Stat.}, \textbf{48}, 1168--1192.

\bibitem[Fan et al., 2001]{fan2001}Fan, J., Zhang, C. and Zhang, J. (2001). Generalized likelihood ratio statistics and Wilks phenomenon. \emph{Ann. Stat.}, \textbf{29}, 153--193.

\bibitem[\protect\citeauthoryear{Ferraty and Vieu}{Ferraty and
  Vieu}{2010}]{FerratyVieu2010}
Ferraty, F. and P.~Vieu (2010).
\newblock {\em Nonparametric {F}unctional {D}ata {A}nalysis}.
\newblock Springer-Verlag, New York.

\bibitem[Fogarty and Small, 2014]{fogarty2014}
{Fogarty, C.~B.} and {Small, D.~S.} (2014).
\newblock Equivalence testing for functional data with an application to
  comparing pulmonary function devices.
\newblock \textit{Ann. Appl. Stat.}, \textbf{8}, 2002--2026.

\bibitem[Gu, 2013]{gu2013}Gu, C. (2013). {\em Smoothing Spline ANOVA Models}. Springer Science \& Business Media.

\bibitem[Hall and Horowitz, 2007]{hall2007} Hall, P. and Horowitz, J. L. (2007). Methodology and convergence rates for functional linear regression. \emph{Ann. Stat.}, \textbf{35}, 70--91.

\bibitem[Hao et al., 2021]{hao2021}Hao, M., Liu, K. Y., Xu, W. and Zhao, X. (2021). Semiparametric inference for the functional Cox model. \emph{J. Amer. Statist. Assoc.}, to appear.

\bibitem[\protect\citeauthoryear{Hilgert, Mas, and Verzelen}{Hilgert et~al.}{2013}]{hilgertetal2010}
Hilgert, N., Mas, A. and Verzelen, N. (2013).
\newblock {Minimax adaptive tests for the functional linear model}.
\newblock {\em Ann. Stat.,}~{\bf 41}, 838--869.

\bibitem[Horv\'ath and Kokoszka, 2012]{horvath2012}Horv\'ath, L. and Kokoszka, P. (2012). Inference for functional data with applications. \emph{Springer Science \& Business Media}.

\bibitem[\protect\citeauthoryear{Hsing and Eubank}{Hsing and
  Eubank}{2015}]{hsingeubank2015}
Hsing, T. and Eubank, R. (2015).
\newblock {\em Theoretical Foundations of Functional Data Analysis, with an
  Introduction to linear Operators}.
\newblock New York: Wiley.


\bibitem[Imaizumi and Kato, 2019]{imaizumi2019}Imaizumi, M. and Kato, K. (2019). A simple method to construct confidence bands in functional linear regression. \emph{Statist. Sinica}, \textbf{29}, 2055--2081.

\bibitem[James, 2002]{james2002}James, G. M. (2002). Generalized linear models with functional predictors. \emph{J. R. Stat. Soc. Ser. B}, \textbf{64}, 411--432.

\bibitem[\protect\citeauthoryear{Kong, Staicu, and Maity}{Kong
  et~al.}{2016}]{kong2016}
Kong, D., Staicu, A. M. and Maity, A (2016).
\newblock Classical testing in functional linear models.
\newblock {\em J. Nonparametr. Stat.,}~{\bf 28}, 813--838.

\bibitem[Le Cam, 1964]{lecam1964}Le Cam, L. (1964). Sufficiency and approximate sufficiency. \emph{Ann. Math. Statist.}, \textbf{35}, 1419--1455.

\bibitem[Le Cam and Yang, 2012]{lecam2012}Le Cam, L. and Yang, G. L. (1990). {\em Asymptotics in Statistics: Some Basic Concepts}. Springer Science \& Business Media.

\bibitem[Lei, 2014]{lei2014}Lei, J. (2014). Adaptive global testing for functional linear models. \emph{J. Amer. Statist. Assoc.}, \textbf{109}, 624--634.

\bibitem[Li and Zhu, 2020]{li2020}Li, T. and Zhu, Z. (2020). Inference for generalized partial functional linear regression. \emph{Statist. Sinica}, \textbf{30}, 1379--1397.

\bibitem[Lian, 2007]{lian2007}Lian, H. (2007). Nonlinear functional models for functional responses in reproducing kernel Hilbert spaces. \emph{Can. J. Stat.}, \textbf{35}, 597--606.

\bibitem[Lian, 2015]{lian2015}Lian, H. (2015). Minimax prediction for functional linear regression with functional responses in reproducing kernel Hilbert spaces. \emph{J. Multivar. Anal.}, \textbf{140}, 395--402.

\bibitem[Luo and Qi, 2017]{luo2017}Luo, R. and Qi, X. (2017). Function-on-function linear regression by signal compression. \emph{J. Amer. Statist. Assoc.}, \textbf{112}, 690--705.


\bibitem[Meister, 2011]{meister2011}Meister, A. (2011). Asymptotic equivalence of functional linear regression and a white noise inverse problem. \emph{Ann. Stat.}, \textbf{39}, 1471--1495.

\bibitem[Meister, 2016]{meister2016}Meister, A. (2016). Optimal classification and nonparametric regression for functional data. \emph{Bernoulli}, \textbf{22}, 1729--1744.

\bibitem[M\"uller and Stadtm\"uller, 2005]{muller2005}M\"uller, H. G. and Stadtm\"uller, U. (2005). Generalized functional linear models. \emph{Ann. Stat.}, \textbf{33}, 774--805.

\bibitem[Pearce and Wand, 2006]{pearce2006}Pearce, N. D. and Wand, M. P. (2006). Penalized splines and reproducing kernel methods. \emph{Amer. Statist.}, \textbf{60}, 233--240.

\bibitem[Qu and Wang, 2017]{qu2017}Qu, S. and Wang, X. (2017). Optimal global test for functional regression. \emph{arXiv preprint arXiv:1710.02269}.

\bibitem[Ramsay and Silverman, 2005]{ramsay2005}Ramsay, J. O. and Silverman, B. W. (2005). {\em Functional data analysis}. New
York: Springer-Verlag.

\bibitem[Ramsay et al., 2014]{ramsay2014}Ramsay, J. O. Wickham, H., Graves, S. and Hooker, G. (2014). fda: Functional data
analysis. R package version 2.4.3. \nolinkurl{http://CRAN.R-project.org/package=fda}.

\bibitem[Shang and Cheng, 2015]{shang2015}Shang, Z. and Cheng, G. (2015). Nonparametric inference in generalized functional linear models. \emph{Ann. Stat.}, \textbf{43}, 1742--1773.

\bibitem[Scheipl and Greven, 2016]{scheipl2016}Scheipl, F. and Greven, S. (2016). Identifiability in penalized function-on-function regression models. \emph{Electron. J. Stat.}, \textbf{10}, 495--526.

\bibitem[Sun et al., 2018]{sun2018} Sun, X., Du, P., Wang, X. and Ma, P. (2018). Optimal penalized function-on-function regression under a reproducing kernel Hilbert space framework. \emph{J. Amer. Statist. Assoc.}, \textbf{113}, 1601--1611.



\bibitem[Tekbudak et al., 2019]{Tekbudak} Tekbudak, M. Y., Alfaro-C\'ordoba, M., Maity, A. and Staicu, A. M. (2019). A comparison of testing methods in scalar-on-function regression. \emph{AStA Adv. Stat. Anal.}, \textbf{103}, 411--436.


\bibitem[van der Vaart and Wellner, 1996]{vaart1996}Van der Vaart, A. W. and Wellner, J. A. (1996). {\em Weak Convergence and Empirical Processes}. Springer, New York.


\bibitem[Wahba, 1990]{whaba1990}Wahba, G. (1990). {\em Spline Models for Observational Data}. Society for industrial and applied mathematics.

\bibitem[Wellner, 2003]{wellner2003}Wellner, J. A. (2003). Gaussian white noise models: some results for monotone functions. \emph{Institute of Mathematical Statistics Lecture Notes--Monograph Series}, 87--104.

\bibitem[Wood, 2003]{wood2003}Wood, S. N. (2003). Thin plate regression splines. \emph{J. R. Stat. Soc. Ser. B}, \textbf{65}, 95--114.

\bibitem[Yao et al., 2005]{yao2005}Yao, F., M\"uller, H. G. and Wang, J. L. (2005). Functional linear regression analysis for longitudinal data. \emph{Ann. Stat.}, \textbf{33}, 2873--2903.

\bibitem[Yuan and Cai, 2010]{yuancai}Yuan, M. and Cai, T. T. (2010). A reproducing kernel Hilbert space approach to functional linear regression. \emph{Ann. Stat.}, \textbf{38}, 3412--3444.


%
%
%
%
%
%
%
%
%
%


\end{thebibliography}

\begin{thebibliography}{}
\baselineskip=13pt

\bibitem[Birkhoff, 1908a]{birk1908a}Birkhoff, G. D. (1908a). On the asymptotic character of the solutions of certain linear differential equations containing a parameter. \emph{Trans. Amer. Math. Soc.}, \textbf{9}, 219--231.


\bibitem[Birkhoff, 1908b]{birk1908}Birkhoff, G. D. (1908b). Boundary value and expansion problems of ordinary linear differential equations. \emph{Trans. Amer. Math. Soc.}, \textbf{9}, 373--395.

\bibitem[Birman and Solomjak, 1967]{birman1967}Birman, M. \v{S}. and Solomjak, M. Z. (1967). Piecewise-polynomial approximations of functions of the classes. \emph{Mathematics of the USSR-Sbornik}, \textbf{2}, 295--317.

\bibitem[B\"ucher and Kojadinovic, 2019]{bucher2019}B\"ucher, A. and Kojadinovic, I. (2019). A note on conditional versus joint unconditional weak convergence in bootstrap consistency results. \emph{J. Theor. Probab.}, \textbf{32}, 1145--1165.



\bibitem[Hsing and Eubank, 2015]{hsing2015}Hsing, T. and Eubank, R. (2015). {\em Theoretical Foundations of Functional Data Analysis, with an Introduction to Linear Operators}. John Wiley \& Sons.

\bibitem[de Jong, 1987]{dejong1987}de Jong, P. (1987). A central limit theorem for generalized quadratic forms. \emph{Probab. Theory Related Fields}, \textbf{75}, 261--277.


\bibitem[Kley et al., 2016]{kley2016}Kley, T., Volgushev, S., Dette, H. and Hallin, M. (2016). Quantile spectral processes: Asymptotic analysis and inference. \emph{Bernoulli}, \textbf{22}, 1770--1807.

\bibitem[Kosorok, 2007]{kosorok2007}Kosorok, M. R. (2007). {\em Introduction to empirical processes and semiparametric inference}. Springer Science \& Business Media.

\bibitem[Pinelis, 1994]{pinelis1994}Pinelis, I. (1994). Optimum bounds for the distributions of martingales in Banach spaces. \emph{Ann. Probab.}, \textbf{22}, 1679--1706.

\bibitem[Ritter et al., 1995]{ritter1995}Ritter, K., Wasilkowski, G. W. and Wozniakowski, H. (1995). Multivariate integration and approximation for random fields satisfying Sacks-Ylvisaker conditions. \emph{Ann. Appl. Probab.}, \textbf{5}, 518--540.

\bibitem[Shang and Cheng, 2015b]{shang2015b}Shang, Z. and Cheng, G. (2015b) Supplement to ``Nonparametric inference in generalized functional linear models". DOI:10.1214/15-AOS1322SUPP.

\bibitem[Stone, 1926]{stone}Stone, M. H. (1926). A comparison of the series of Fourier and Birkhoff. \emph{Trans. Amer. Math. Soc.}, \textbf{28}, 695--761.

\bibitem[Tamarkin, 1927]{tam1927}Tamarkin, J. D. (1927). The notion of the Green's function in the theory of integro-differential equations. \emph{Trans. Amer. Math. Soc.}, \textbf{29}, 755--800.

\bibitem[Tamarkin, 1930]{tam1930}Tamarkin, J. D. (1930). The Notion of the Green's Function in the Theory of Integro-Differential Equations, II. \emph{Trans. Amer. Math. Soc.}, \textbf{32}, 860--868.

\bibitem[Tamarkin and Langer, 1928]{tamarkin1928}Tamarkin, J. D. and Langer, R. E. (1928). On integral equations with discontinuous kernels. \emph{Trans. Amer. Math. Soc.}, \textbf{30}, 453--471.

\bibitem[Tsybakov, 2008]{tsybakov2008}Tsybakov, A. B. (2008). {\em Introduction to nonparametric estimation}. Springer Science \& Business Media.

\bibitem[Van der Vaart, 1998]{vandervaart1998}Van der Vaart, A. W. (1998). {\em Asymptotic Statistics}. Cambridge university press.



\end{thebibliography}
\end{document}